\definecolor{darkgreen}{cmyk}{1,0,1,.2}
\definecolor{m}{rgb}{1,0.1,1}
\begin{document}

%ALSO THIS NEXT ONE.

%\vspace*{-2cm}

\def\sp{\operatorname{span}}
\def\trace{\operatorname{trace}}
\def\cK{\mathcal{K}}
\def\A{\mathcal{A}}
\def\B{\mathcal{B}}
\def\H{\mathcal{H}}
\def\h{\mathcal{H}}
\def\E{\mathcal{E}}
\def\G{\mathcal{G}}
\def\F{\mathcal F}
\def\X{\mathcal X}
\def\AS{\mathcal{AS}}
\def\mor{\operatorname{mor}}
\def\spin{\operatorname{spin}}
\def\hot{\hat{\otimes}}
\def\rt{\operatorname{rt}}
\def\lt{\operatorname{lt}}
\def\Res{\operatorname{Res}}

\newcommand*{\KK}{\mathrm{KK}}
\newcommand*{\RKK}{\mathrm{RKK}}
\newcommand*{\K}{\mathrm{K}}
\newcommand*{\Ktop}{\mathrm{K}^{\mathrm{top}}}
\newcommand*{\KX}{\mathrm{KX}}
\newcommand*{\RRKK}{\mathscr{R}\KK}
\newcommand*{\Prim}{\textup{Prim}}
\newcommand{\ord}{\operatorname{ord}}

\def\RRKK{\mathcal{R}\KK}

\newcommand{\lam}{\Lambda}
\newcommand{\dulam}{\widehat{\Lambda}}

\newcommand{\Cc}{\mathcal{C}}

\newcommand*{\Comp}{{\mathbb{K}}}
\newcommand*{\Bound}{{\mathbb{B}}}
\newcommand*{\cross}{\mathbin{\rtimes}}

\newcommand{\T}{\mathbb{T}}
\newcommand{\Q}{\mathbb{Q}}
\newcommand{\Z}{\mathbb{Z}}
\newcommand{\C}{\mathbb{C}}
\newcommand{\N}{\mathbb{N}}
\newcommand{\R}{\mathbb{R}}
\newcommand{\Ktimes}{\times\!\!\!\!\!\times}

\newcommand{\Lambdah}{\widehat{\Lambda}}
\newcommand{\Deltah}{\widehat{\Delta}}
\newcommand{\ip}[1]{\langle #1 \rangle}

\newcommand{\Kn}{\mathcal{K}_{\nu}}
\newcommand{\Ko}{\mathcal{K}_{\omega}}
\newcommand{\Kob}{\mathcal{K}_{\bar{\omega}}}
\newcommand{\ltg}{\ell^2G}
\newcommand{\tG}{\tilde{G}}
\newcommand{\tN}{\tilde{N}}
\newcommand{\Go}{G_{\omega}}
\newcommand{\Co}{\mathbb{C}_{\omega}}
\newcommand{\ob}{\bar{\omega}}
\newcommand{\Gob}{G_{\bar{\omega}}}
\newcommand{\Cob}{\mathbb{C}_{\bar{\omega}}}
\newcommand{\ltgt}{\ltg_{\tau}}
\newcommand{\talpha}{\tilde{\alpha}}
\newcommand{\ttau}{\tilde{\tau}}
\newcommand{\FUpi}{\mathcal{F}_U\hot_{C_0(X),p_i}\Gamma_0(E)}
\newcommand{\FUpzero}{\mathcal{F}_U\hot_{C_0(X),p_0}\Gamma_0(E)}
\newcommand{\FUpone}{\mathcal{F}_U\hot_{C_0(X),p_1}\Gamma_0(E)}
\newcommand*{\defeq}{\mathrel{:=}}
\newcommand{\Hom}{\operatorname{Hom}}
\def\RR{\mathbb R}

\newcommand{\sour}{\mathscr{P}}
\newcommand{\PD}{\mathrm{PD}}
\newcommand{\pd}{\mathscr{PD}}
\newcommand{\lefschetz}{\mathscr{L}}
\newcommand{\Isom}{\mathrm{Isom}}
\newcommand{\Eul}{\mathrm{Eul}}
\newcommand{\Cliff}{\mathrm{Cliff}}
\newcommand*{\Ztwo}{{\mathbb Z/2}}
\newcommand{\dR}{\mathrm{dR}}
\newcommand{\comb}{\mathrm{comb}}
\newcommand{\Spinor}{\mathrm{Spinor}}
\newcommand*{\Dslash}{{\mathsf /\!\!\!\!D}}
\newcommand{\End}{\mathrm{End}}
\newcommand{\DD}{\mathscr{D}}
\newcommand*{\DslashS}{{\mathsf /\!\!\!\!D_{\mathrm{S}}}}
\newcommand*{\capslash}{{\; /\!\!\!\!\cap \;}}

\newcommand{\thmcite}[1]{{\bfseries\upshape \cite{#1}}}
\newcommand{\thmcitemore}[2]{{\bfseries\upshape \cite[#2]{#1}}}
\newcommand{\citemore}[2]{{\upshape \cite[#2]{#1}}}
\newcommand{\ltwoforms}{L^2(\Lambda^*_\C X)}
\newcommand{\ltwoformsg}{L^2(\Lambda^*_\C X)\hot \ell^2G}

\def\Cl{\mathcal{C}l}
\def\Aut{\operatorname{Aut}}
\def\Gx{X\rtimes G}
\def\Bott{\operatorname{Bott}}
\def\CC{\mathbb C}
\def\ClV{C_{\!{}_{V}}}
\def\comp{\operatorname{comp}}
\def\ddS{\stackrel{\scriptscriptstyle{o}}{S}}
\def\intW{\stackrel{\scriptscriptstyle{o}}{W}}
\def\intU{\stackrel{\scriptscriptstyle{o}}{U}}
\def\intT{\stackrel{\scriptscriptstyle{o}}{T}}
\def\E{\mathcal E}
\def\EE{\mathbb E}
\def\Egx{\mathcal{E}(G)\times X}
\def\EGG{\mathcal{E}(\mathcal{G})}
\def\EGT{\mathcal{E}(G/G_0)}
\def\EG{\underline{EG}}
\def\EGN{\mathcal{E}(G/N)}
\def\EH{\mathcal{E}(H)}
\def\I{{\operatorname{I}}}
\def\i{{\operatorname{i}}}
\def\Ind{\operatorname{Ind}}
\def\Id{\operatorname{Id}}
\def\infl{\operatorname{inf}}
\def\L{\mathcal L}
\def\lk{\langle}
\def\rk{\rangle}
\def\NN{\mathbb N}
\def\oplus{\bigoplus}
\def\pt{\operatorname{pt}}
\def\QQ{\mathbb Q}
\def\res{\operatorname{res}}
\def\sm{\backslash}
\def\top{\operatorname{top}}
\def\ZM{{\mathcal Z}M}
\def\ZZ{\mathbb Z}

\def\om{\omega}
\def\Om{\Omega}

\setcounter{section}{-1}

\def\Inf{\operatorname{Inf}}
\def\SO{\operatorname{SO}}
\def\GL{\operatorname{GL}}
\def\Ad{\operatorname{Ad}}
\def\id{\operatorname{id}}
\def\U{\mathcal U}
\def\PU{\mathcal PU}
\def\BG{\operatorname{BG}}
\def\eps{\epsilon}
\def\ev{\operatorname{ev}}
\def\om{\omega}
\def\Om{\Omega}
\def\F{\mathcal{F}}
\def\TT{\mathbb T}
\def\dach{{\!\widehat{\ \ }}}
\def\SX{\operatorname{Stab}(X)}
\def\SZ{\operatorname{Stab}(Z)}
\def\SY{\operatorname{Stab}(Y)}
\def\SR{\operatorname{Stab}(\RR^3)}
\def\ST{\operatorname{Stab}(\TT^2)}
\def\SXhat{\SX\dach}
\def\SRhat{\SR\dach}
\def\SYhat{\SY\dach}
\def\SZhat{\SZ\dach}
\def\SThat{\ST\dach}
\def\Rep{\operatorname{Rep}}

%^{X\rtimes G}}
%\def\F{\mathcal{F}^{X\rtimes G}}
%\def\FH{\mathcal{F}^{X\rtimes H}}
\def\Flk{\F_{L,K}}
\def\mg{\mu_{G,A}}
\def\mgx{\mu_{\Gx,A}}
\def\ts{\hot}
\def\ga{\gamma}
\def\Br{\operatorname{Br}}
\def\Ab{\operatorname{Ab}}
\def\la{\lambda}
\def\gr{\operatorname{gr}}
\def\br{\Br^G_{\gr}(X)}
\def\mod{\operatorname{mod}}
\def\hato{\hat{\hot}}
\def\hotimes{{\hot}}
\def\hatoX{\hat{\hot}_{C_0(X)}}
\def\Ctau{C_{\tau}(X)}
\def\ctau{C_{\tau}}
\theoremstyle{plain}
 \newtheorem{theorem}{Theorem}[section]
\newtheorem{corollary}[theorem]{Corollary}
\newtheorem{problem}[theorem]{Problem}
 \newtheorem{lemma}[theorem]{Lemma}
\newtheorem{lem}[theorem]{Lemma}
\newtheorem{prop}[theorem]{Proposition}
\newtheorem{proposition}[theorem]{Proposition} \newtheorem{lemdef}[theorem]{Lemma and Definition}
\theoremstyle{defn}
\newtheorem{defn}[theorem]{Definition}
\theoremstyle{definition}
\newtheorem{definition}[theorem]{Definition}
\newtheorem{defremark}[theorem]{\bf Definition and Remark}
\theoremstyle{remark}
\newtheorem{remark}[theorem]{Remark}
\newtheorem{assumption}[theorem]{Assumption}
\newtheorem{example}[theorem]{Example}
\newtheorem{note}[theorem]{Note}
\numberwithin{equation}{section} \emergencystretch 25pt
\renewcommand{\theenumi}{\roman{enumi}}
\renewcommand{\labelenumi}{(\theenumi)}

\newcommand*{\abs}[1]{\lvert#1\rvert}

\newcommand{\Rn}{\R^{n}}

\newcommand{\ltwo}{L^{2}}
\newcommand{\ltwotau}{L^2_\tau (F_\epsilon)}
\newcommand{\ltwotaugk}{L^2_\tau (F_{p,\epsilon,g}) }
\newcommand{\ltwotauk}{L^2_\tau (F_{p,\epsilon}) }
\newcommand{\ltwotaukv}{L^2_\tau (V_{p,\epsilon}) }
\newcommand{\ltwoformsrn}{\ltwo (\Lambda^{*}_\C \Rn)}

\newcommand{\repon}{\mathrm{R}\bigl(\mathrm{O}(n,\R) \bigr)}
\newcommand{\repgamma}{\mathrm{R}(\Gamma)}
\newcommand{\indaon}{\mathrm{ind}_{\mathrm{a}}^{\mathrm{O}(n,\R)}}
\newcommand{\indton}{\mathrm{ind}_{\mathrm{t}}^{\mathrm{O}(n,\R)}}
\newcommand{\indagamma}{\mathrm{ind}_{\mathrm{a}}^{\Gamma}}
\newcommand{\indtgamma}{\mathrm{ind}_{\mathrm{t}}^{\Gamma}}
\newcommand{\on}{\mathrm{O}(n,\R)}
\newcommand{\kgamma}{\K^{0}_\Gamma }
\newcommand{\ext}{\Lambda^{*}_\C}
\newcommand{\card}{\mathrm{card}}
\def\ph{\varphi}
\def\supp{\operatorname{supp}}
\def\ind{\operatorname{ind}}
\newcommand{\Eulc}{\chi^{\Eul}}
\newcommand{\VK}{\textup{VK}}
\newcommand{\Laurs}{\C[X_1, X_1^{-1}, \ldots, X_n, X_n^{-1}]}
\newcommand{\Grd}{\mathcal{G}}

\title[Crossed products by proper actions]
  {Structure and K-theory of crossed products by proper actions}

\author[Echterhoff]{Siegfried
  Echterhoff}
   \address{Westf\"alische Wilhelms-Universit\"at M\"unster,
  Mathematisches Institut, Einsteinstr. 62 D-48149 M\"unster, Germany}
\email{echters@uni-muenster.de}

  \author[Emerson]{Heath
  Emerson}
  \address{Department of Mathematics and Statistics,
  University of Victoria,
  PO BOX 3045 STN CSCVictoria,
  B.C.Canada}
  \email{hemerson@math.uvic.ca}
  
  \thanks{The research for this paper was partially supported by the German Research Foundation
  (SFB 478 and  SFB 878) and the EU-Network Quantum SpacesÐNoncommutative Geometry (Contract
No. HPRN-CT-2002-00280).}
\maketitle

\begin{abstract}
We study the C*-algebra 
crossed product $C_0(X)\rtimes G$
of a locally compact group $G$ acting properly on  a locally compact Hausdorff space $X$. 
Under some mild extra conditions, which are automatic if $G$ is discrete or a Lie group,
 we describe in detail, and in terms of the action,
  the primitive ideal space of such crossed products
   as a topological space, in particular with respect to its fibring over the 
 quotient space $G\backslash X$. We also give some results on the $\K$-theory of such 
 C*-algebras. These more or less compute the $\K$-theory in the case of  
 isolated orbits with non-trivial (finite) stabilizers. We also give a purely $\K$-theoretic proof
 of a result  due to Paul Baum and Alain Connes on \(\K\)-theory with 
 complex coefficients of crossed products by finite 
 groups. 
\end{abstract}

\section{Introduction}

The C*-algebra crossed products \(C_0(X)\rtimes G\) associated to 
finite group actions on smooth compact 
manifolds give the simplest non-trivial examples of noncommutative 
spaces. Such group actions also play a 
role in various other parts of mathematics and physics,
 \emph{e.g.} the
linear action 
of a Weyl group on a complex torus in 
representation theory. 

As soon as 
the action of \(G\) on \(X\) is not free, the primitive ideal space of the 
crossed product \(C_0(X)\rtimes G\) is non-Hausdorff, although the quotient space
\(G\backslash X\) is Hausdorff. In fact 
\(\mathrm{Prim}\bigl( C_0(X)\rtimes G\bigr)\) fibres over 
\(G\backslash X\), in a canonical way, with finite fibres. 
As a fibration of 
sets this is easy enough to describe in direct, geometric terms, and 
it is `well-known to experts': the 
primitive ideal space is 
in a natural set bijection with 
\(G\backslash \SXhat\), where \(\SXhat = \{ (x,\pi) \mid 
x\in X, \; \pi \in \widehat{G}_x\}\), and the first coordinate 
projection defines the required fibration; the 
fibre over \(Gx\) is thus the unitary dual
 \(\widehat{G}_x\) of the stabilizer $G_x$. However, for purposes, for 
 example, 
 of \(\K\)-theory computation,  what is wanted here is a description of the 
 open sets of \(G\backslash \SXhat\) which correspond to the 
 open subsets of the primitive ideal space equipped with the 
 Fell topology, since this describes the space of ideals of 
 \(C_0(X)\rtimes G\) and potentially leads to a method of \(\K\)-theory 
 computation using excision. This description is the main contribution 
 of this article.

Proper actions of general locally compact groups naturally generalize actions 
of compact, or finite,
 groups -- because every proper action 
is `locally induced' from actions by compact subgroups. 
Such crossed products are important in operator algebras, 
because of the Baum-Connes conjecture. For an amenable, locally compact group \(G\) with \(G\)-compact universal proper \(G\)-space 
\(\EG\), Kasparov's
Fredholm representation ring \(\textup{R}(G) \defeq \KK^G(\C, \C)\) is 
canonically isomorphic to the \(\K\)-theory of 
$C_0(\EG)\rtimes G$. This is one statement of the Baum-Connes 
conjecture (and is due to Higson and Kasparov). In fact there 
are several possible statements of the Baum-Connes conjecture, 
but the general idea is that 
computation of \(\K\)-theory of the C*-algebra crossed products involving 
\emph{arbitrary} group actions can be, in 
certain circumstances, be  
reduced to the case where the action is proper. 
Thus the importance of computing 
\(\K\)-theory for proper actions. Since the analysis of the structure of the crossed product 
runs along similar lines in the case of proper actions of locally compact groups
as for actions of compact (or finite) groups, we treat the more general problem 
in this article.

If \(G\) is a locally compact group acting properly on $X$, 
we show that $C_0(X)\rtimes G$ is isomorphic to a certain
generalized fixed-point algebra, denoted 
 $C_0(X\times_G\cK)$, with respect to 
the diagonal action of $G$ on $C_0(X)\otimes\cK(L^2(G))$, with action 
 of $G$ on the second factor  given by the adjoint of 
the right regular representation $\rho$ of $G$ (in fact, we show a 
more general result along these lines  for crossed products $B\rtimes G$
where $B$ is a `fibred' over some proper $G$-space $X$).
It follows that $C_0(X)\rtimes G$ is the algebra of $C_0$-sections of a
 continuous bundle of C*-algebras over the orbit space 
$G\backslash X$ with fiber over $Gx$ isomorphic to the fixed point 
algebra $\cK(L^2(G))^{G_x}$, where $G_x$ denotes the 
(compact) stabilizer at $x$ which acts via conjugation by the restriction of right regular representation
of $G$ to $G_x$. 

This result has been shown by Bruce Evans
 in \cite{Evans} for compact group actions, but we are not aware of any reference  for the 
 more general class of proper actions.

The Peter-Weyl theorem implies that 
the fixed-point algebras $\cK(L^2(G))^{G_x}$ decompose into  direct sums
of algebras of compact operators indexed over the unitary duals
$\widehat{G}_x$ of  $G_x$. It follows that 
the primitive ideal space of $A=C_0(X)\rtimes G$ is 
in a natural set bijection with 
\(G\backslash \SXhat\), where \(\SXhat = \{ (x,\pi) \mid 
x\in X, \; \pi \in \widehat{G}_x\}\), which is a bundle, by the 
first coordinate projection, over the space \(G\backslash X\) of 
orbits, the fibre over \(Gx\) being the unitary dual
 \(\widehat{G}_x\) of the stabilizer $G_x$ (\emph{c.f.} the 
 first paragraph of this Introduction). 
 
It is possible to describe 
the topology on \(G\backslash \SXhat\) corresponding to the 
Fell topology on \(\Prim (A)\), in direct terms of the action. 
We do this in the case where the action of $G$ on $X$ satisfies
Palais's slice property, which means that $X$ is locally induced 
from the \emph{stabilizer} subgroups of \(G\) (see \S 1 for this notion). By a famous theorem of Palais,
this property is always satisfied if $G$ is a Lie group.

In general, \(G\backslash X\) is always an open subset of
\(G\backslash \SXhat\), and therefore corresponds to 
an ideal of \(C_0(X)\rtimes G\), as remarked above. Therefore to compute the \(\K\)-theory of the crossed product, it suffices to compute the 
\(\K\)-theory of the quotient space \(G\backslash X\) together with the boundary maps in the associated
six-term sequence. In the case of isolated 
fixed points and discrete $G$ this is fairly straightforward, at least up to torsion; in general, it is 
non-trivial, but we show in examples how the knowledge of the ideal structure of
$C_0(X)\rtimes G$ can still 
help in $\K$-theory computations, even when fixed points are 
not isolated. 

The problem of computing the \(\K\)-theory \emph{in general}
does not have an obvious solution. Indeed, it may not have any solution at all. 
However, if one ignores torsion, the problem gets much easier, at least for 
compact group actions. The reason is 
that if \(G\) is compact, the \(\K\)-theory of \(C_0(X)\rtimes G\) is a module 
over the representation ring \(\Rep (G)\), and similarly the \(\K\)-theory 
tensored by \(\C\) is a module over \(\Rep (G)\otimes_\Z \C\). For many 
groups \(G\) of interest, like finite groups, or connected groups, the 
complex representation ring \(\Rep (G)\otimes_\Z \C\) 
is quite a tractable ring, and the 
module structure of the complex \(\K\)-theory of 
\(C_0(X)\rtimes G\) gives significant additional 
information when used in conjunction with a `localization principal' 
developed mainly by Atiyah and Segal in the 1960's in connection with 
the Index Theorem. These ideas were exploited by Paul Baum and Alain Connes
in the 1980's to give a very beautiful formula for the \(\K\)-theory (tensored by 
\(\C\)) of the crossed product 
in the case of \emph{finite} group actions. We give a full proof of the 
theorem of Baum and Connes in this article, without attempting to generalize it 
to proper actions of locally compact groups.  The formula of 
Baum and Connes means that the difficulty in computing 
\(\K_*( C_0(X)\rtimes G\bigr)\) for finite group actions, is 
concentrated in the problem of computing the torsion subgroup. 
We do not shed much light on this problem.

This paper is to some extent expository. Our goal is to 
provide a readable synopsis of what is known, and what can 
be proved without too much difficulty, about crossed products
by proper actions, and their \(\K\)-theory. The paper is 
organized as follows: after giving  some preliminaries
on proper actions in \S \ref{sec-prel} we give a detailed discussion of the bundle structure
of $C_0(X)\rtimes G$ in \S \ref{sec-bundles}. 
In \S \ref{sec-mrg} we discuss the  natural bijection
between $\Prim(C_0(X)\rtimes G)$ and the quotient space 
$G\backslash \SXhat$ and in  \S \ref{sec-topology} we  show that
 this bijection is a homeomorphism for a quite naturally defined topology on $\SXhat$ if the action 
 satisfies the slice property.
All $\K$-theoretic discussions can be found in \S \ref{sec-K-theory}.

All spaces considered in this paper with the obvious exceptions of 
primitive ideal spaces of C*-algebras and the like, are assumed locally 
compact Hausdorff. 
\medskip 

A good part of this paper has been written while the first named author 
visited the University of Victoria in Summer 2008. He is very grateful to the second named author
and his colleagues for their warm hospitality during that stay! 
The authors are also grateful for some useful conversations with Wolfgang L\"uck and Jan Spakula.

\section{Preliminaries on proper actions}\label{sec-prel}
Assume that $G$ is a locally compact group. Suppose that \(G\) acts 
on the (locally compact 
Hausdorff) space $X$. The action is \emph{proper} if the map
$$G\times X\to X\times X: (g,x)\mapsto (gx,x)$$
is proper -- \emph{i.e.} if inverse images of compact sets are compact. 
Since $X$ is locally compact, this 
is equivalent to the condition that for every compact subset $K\subseteq X$ the set
$\{ g\in G: g^{-1}K\cap K\neq \emptyset\}$ is compact in $G$. 
In particular, \emph{any} action of a compact group on a locally compact 
Hausdorff space is proper.  We use to Palais's fundamental paper \cite{Pal}
as  a basic reference.

It is immediately clear that if \(G\) acts properly on \(X\) then the 
stabilizers $G_x:=\{g\in G: gx=x\}$ are compact.
Moreover, as 
one can show (e.g. see \cite[Theorem 1.2.9]{Pal}) without much 
difficulty, the quotient space $G\backslash X$ endowed with the 
quotient topology is 
a locally compact Hausdorff space.

If $H\subseteq G$ is a closed subgroup of 
$G$ which acts on some space $Y$, then the induced $G$-space
$G\times_HY$ is defined as the quotient space $(G\times Y)/H$ with respect to the diagonal action
$h\cdot (g,y)=(gh^{-1}, hy)$. The action of $H$ on $G\times Y$ is obviously free; it is a good exercise 
to prove that it is also proper (see \cite[\S 1.3]{Pal}). Hence 
$G\times_HY$ is a locally compact Hausdorff space. It carries a natural 
\(G\)-action by left translation on the first 
factor. The process of going from \(H\) acting on \(Y\) to 
\(G\) acting on \(G\times_HY \) is called `induction.'

\begin{prop}[{c.f. \cite[Corollary]{ech-induced}}]\label{prop-induction}
Suppose that $X$ is a $G$-space and $H$ is a closed subgroup of $G$.
Then the following are equivalent:
\begin{enumerate}
\item[(1)] There exists an $H$-space $Y$ such that $X\cong G\times_HY$ as $G$-spaces.
\item[(2)] There exists a continuous $G$-map $\varphi:X\to G/H$.
\end{enumerate}
In case of (1), the corresponding $G$-map $\varphi:G\times_HY\to G/H$ is given by $\varphi([g,y])=gH$
and in case of (2), the corresponding $H$-space $Y$ is the closed subset
$Y:=\varphi^{-1}(\{eH\})$ of $X$. The $G$-homeomorphism from $\Phi:G\times_HY\to X$
is then given by $\Phi([g,y])=gy$.
\end{prop}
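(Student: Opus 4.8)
The plan is to verify the two implications directly, treating (1)$\Rightarrow$(2) as essentially formal and concentrating the real work on the homeomorphism assertion in (2)$\Rightarrow$(1). For (1)$\Rightarrow$(2), assuming $X\cong G\times_HY$, I would define $\varphi\colon G\times_HY\to G/H$ by $\varphi([g,y])=gH$. This is well defined because the defining relation $(g,y)\sim(gh^{-1},hy)$ alters the first coordinate only by an element of $H$; it is continuous because it is the descent to the quotient of the plainly continuous map $(g,y)\mapsto gH$ on $G\times Y$; and it is $G$-equivariant by inspection.

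For (2)$\Rightarrow$(1), put $Y:=\varphi^{-1}(\{eH\})$. Since $H$ is closed, $G/H$ is Hausdorff, so $\{eH\}$ is closed and hence $Y$ is a closed subset of $X$; it is $H$-invariant because $\varphi(hy)=h\varphi(y)=hH=eH$ for $h\in H$, $y\in Y$, so $Y$ is a genuine $H$-space. Define $\Phi\colon G\times_HY\to X$ by $\Phi([g,y])=gy$. Well-definedness, continuity (again as a descent of the restricted action map $G\times Y\to X$), and $G$-equivariance are immediate. For bijectivity I would apply $\varphi$: if $g_1y_1=g_2y_2$, then comparing $\varphi$-images gives $g_1H=g_2H$, whence $y_1$ and $y_2$ differ by the corresponding element of $H$ and $[g_1,y_1]=[g_2,y_2]$; surjectivity follows since $G$ acts transitively on $G/H$, so every $x$ satisfies $\varphi(x)=gH$ for some $g$, and then $g^{-1}x\in Y$ with $\Phi([g,g^{-1}x])=x$.

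The only real obstacle is showing that $\Phi$ is a homeomorphism, i.e.\ that $\Phi^{-1}$ is continuous; one cannot simply write down a continuous inverse, since the quotient map $q\colon G\to G/H$ need not admit continuous local sections for a general locally compact group. To circumvent this I would introduce the pullback
\[
P:=\{(g,x)\in G\times X : gH=\varphi(x)\},
\]
on which $H$ acts by $h\cdot(g,x)=(gh^{-1},x)$. The map $(g,y)\mapsto(g,gy)$ is an $H$-equivariant homeomorphism $G\times Y\to P$ (its inverse $(g,x)\mapsto(g,g^{-1}x)$ lands in $G\times Y$ because $\varphi(g^{-1}x)=eH$), so it descends to a homeomorphism $G\times_HY\cong P/H$ under which $\Phi$ becomes the descent of the second projection $\mathrm{pr}_2\colon P\to X$. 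It then suffices to show that $\mathrm{pr}_2$ is an open surjection whose fibres are exactly the $H$-orbits: the fibre over $x$ is the coset $\{g:gH=\varphi(x)\}$, on which $H$ acts freely and transitively, and surjectivity is clear. The key computation is openness, and here the point is that a basic open set $(W_1\times W)\cap P$ maps onto $W\cap\varphi^{-1}(q(W_1))$, which is open precisely because $q\colon G\to G/H$ is an open map. Since a continuous open surjection whose fibres are the group orbits induces a homeomorphism on the orbit quotient, $\mathrm{pr}_2$ identifies $P/H$ with $X$, and therefore $\Phi$ is the desired $G$-homeomorphism.
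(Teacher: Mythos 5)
Your proof is correct. Note that the paper itself offers no proof of this proposition: it is stated with a reference to the Corollary of \cite{ech-induced}, where the result is obtained in the broader context of induced covariant systems. Your argument is a clean, self-contained topological one, and you have correctly isolated the only delicate point, namely that $\Phi^{-1}$ cannot be written down directly because $q\colon G\to G/H$ need not admit continuous local sections. Your resolution via the pullback $P=\{(g,x): gH=\varphi(x)\}$ works: the map $(g,y)\mapsto (g,gy)$ is indeed an $H$-equivariant homeomorphism $G\times Y\to P$, the fibres of $\mathrm{pr}_2$ are exactly the $H$-orbits, and the openness computation $\mathrm{pr}_2\bigl((W_1\times W)\cap P\bigr)=W\cap\varphi^{-1}(q(W_1))$ is right, using that $q$ is open. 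Since a continuous open surjection whose fibres are the orbits induces a homeomorphism on the orbit space, the conclusion follows. The only cosmetic remark is that in the injectivity step one should be explicit that $g_2=g_1h$ forces $y_1=hy_2$ and hence $[g_2,y_2]=[g_1h,y_2]=[g_1,hy_2]=[g_1,y_1]$ under the relation $(g,y)\sim(gh^{-1},hy)$, which you essentially do.
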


Following Palais \cite{Pal}, we shall call a closed subset $Y\subseteq X$ a {\em global $H$-slice} 
if there exists a map $\varphi:X\to G/H$ as in part (2) of the above theorem with 
$Y=\varphi^{-1}(\{eH\})$, i.e., $Y\subseteq X$ is a global $H$-slice if and only if $Y$ is $H$-invariant and $X\cong G\times_HY$ as a $G$-space.
If $U$ is a $G$-invariant open subset of $X$, then we say that $Y\subset U$ is a {\em local $H$-slice}
of $X$, if $Y$ is a global $H$-slice for the $G$-space $U$. 
 The following observation is well known, but by lack of a direct 
reference, we give the proof.

\begin{lem}\label{lem-induced-proper}
Suppose that $H$ is a closed subgroup of $G$ and that $Y$ is an $H$-space.
Then $G\times_HY$ is a proper $G$-space if and only if $Y$ is a proper $H$-space.
\end{lem}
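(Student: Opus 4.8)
The plan is to prove both directions by exploiting the definition of properness via compact sets, together with the explicit description of the induced space $G\times_HY$ and the quotient map $q\colon G\times Y\to G\times_HY$. I will write $[g,y]$ for the class of $(g,y)$ and recall that the $G$-action is $g'\cdot[g,y]=[g'g,y]$.

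\medskip

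For the easy direction, suppose $G\times_HY$ is a proper $G$-space. I would embed $Y$ into $G\times_HY$ as an $H$-subspace. Concretely, the map $\iota\colon Y\to G\times_HY$, $y\mapsto[e,y]$, is a homeomorphism onto its image (the set of classes with a representative having first coordinate $e$), and it is $H$-equivariant, since $h\cdot[e,y]=[h,y]=[e,hy]$ using the defining relation $[gh^{-1},hy]=[g,y]$. Properness is inherited by closed invariant subspaces, so once I check that $\iota(Y)$ is closed and $H$-invariant, the properness of the $H$-action on $Y$ follows from that of the $G$-action on $G\times_HY$ restricted to $H$. Here I would use the compact-set criterion: for a compact $K\subseteq Y$, the set $\{h\in H: hK\cap K\neq\emptyset\}$ sits inside $\{g\in G: g\iota(K)\cap\iota(K)\neq\emptyset\}$, which is compact by hypothesis, and since $H$ is closed in $G$ the intersection with $H$ is again compact.

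\medskip

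The substantive direction is the converse: assuming $Y$ is a proper $H$-space, show $G\times_HY$ is a proper $G$-space. The natural $G$-map $\varphi\colon G\times_HY\to G/H$, $[g,y]\mapsto gH$, from Proposition~\ref{prop-induction} is the key tool, because it exhibits $Y$ as the fibre $\varphi^{-1}(\{eH\})$ and lets me reduce statements about compact subsets of $G\times_HY$ to compact subsets of $G/H$ and of $Y$. I would verify the compact-set criterion directly. Let $C\subseteq G\times_HY$ be compact; I want $\{g\in G: gC\cap C\neq\emptyset\}$ to be compact. Applying $\varphi$, the image $\varphi(C)$ is compact in $G/H$, and any $g$ with $gC\cap C\neq\emptyset$ must satisfy $g\varphi(C)\cap\varphi(C)\neq\emptyset$; since the left $G$-action on $G/H$ is proper (the stabilizers being conjugates of the compact-in-$G/H$ situation, or more simply because $G/H$ is $G$-homeomorphic to an orbit of the proper action on itself is not quite right — I will instead argue via slices below), this confines the relevant $g$ modulo $H$ to a compact set.

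\medskip

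The hard part will be this last confinement step, and I expect to handle it by a local trivialization argument rather than by invoking properness of $G/H$ abstractly. The idea is that $\varphi\colon G\times_HY\to G/H$ is a fibre bundle with fibre $Y$ and structure controlled by $H$; over a neighborhood of $eH$ it trivializes so that the $G\times_HY$-picture reduces, after translating by a suitable compact set of coset representatives, to the product picture where properness of $Y$ under $H$ can be applied fibrewise. Concretely, I would cover $\varphi(C)$ by finitely many translates $g_iH$, use that $G\to G/H$ admits local sections (or pass to $G\times Y$ via the quotient map $q$ and lift $C$ to a compact $\tilde C\subseteq G\times Y$ with $q(\tilde C)=C$, which exists since $q$ is a quotient by a proper, hence closed, $H$-action and compact sets lift), and then the condition $gC\cap C\neq\emptyset$ translates into a relation $(gg_1,y_1)\sim(g_2,y_2)$ for points of $\tilde C$, i.e. $g_2=gg_1h$ and $y_2=hy_1$ for some $h\in H$. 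The first-coordinate projections of $\tilde C$ being compact in $G$ bound $gg_1h$ and $g_1$ in compact sets, while $y_1,y_2$ range over the compact projection $\pi_Y(\tilde C)\subseteq Y$; properness of the $H$-action on $Y$ forces $h$ into a compact subset of $H$, and then $g=g_2h^{-1}g_1^{-1}$ lies in a compact set. Assembling these finitely many bounds yields compactness of $\{g: gC\cap C\neq\emptyset\}$, completing the proof.
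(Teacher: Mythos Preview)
Your argument is correct and, once you reach the final paragraph, follows essentially the paper's proof: the paper directly covers a compact $K\subseteq G\times_HY$ by a set of the form $C\times_HD$ with $C\subseteq G$, $D\subseteq Y$ compact (equivalent to your lift to a compact $\tilde C\subseteq G\times Y$ and projection to the factors), then uses properness of $H$ on $Y$ to trap the $h\in H$ in a compact set $F$ and concludes $g\in CFC^{-1}$. The detour through $\varphi$ and local sections of $G\to G/H$ is unnecessary (and such sections need not exist for general locally compact $G$), but you already supply the working alternative in the parenthetical.
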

\begin{proof} Suppose first that $X=G\times_HY$ is a proper $G$-space. Then it is also 
a proper $H$-space, and since $\varphi:Y\to G\times_HY; \varphi(y)=[e,y]$ includes $Y$ as 
an $H$-invariant closed subset of $X$, it must be a proper $H$-space, too.

Conversely, if $Y$ is a proper $H$-space and $K\subseteq G\times_H Y$ is any compact set,
then we may choose compact sets $C\subseteq G$ and $D\subseteq Y$ such that 
$K\subseteq C\times_HD:=\{[c,d]\in G\times_HY: c\in C, d\in D\}$.
Let $F:=\{h\in H: h^{-1}D\cap D\neq \emptyset\}$. Since $H$ acts properly on $Y$, $F$ 
is a compact subset of $H$. 
Suppose now that $g\in G$ such $g^{-1}(C\times_HD)\cap (C\times_HD)\neq \emptyset$.
Then there exist $c_1, c_2\in C$ and $d_1, d_2\in D$ such that 
$[g^{-1}c_1,d_1]=[c_2, d_2]$, which in turn implies that there exists en element $h\in H$ such 
that $(g^{-1}c_1h, h^{-1}d_1)=(c_2, d_2)$. Then $h\in F$ and $g^{-1}c_1h=c_2$ implies that
$g=c_1hc_2^{-1}\in CFC^{-1}$, which is compact in $G$.
\end{proof}

The above lemma shows in particular that every $G$-space 
which is induced from some compact subgroup $L$ of $G$ must be proper. This observation 
has a partial converse, as the following theorem of Abels (\cite{Abels}) shows:

\begin{theorem}[Abels]\label{thmlocalind}
Suppose that $X$ is a proper $G$-space. Then for each $x\in X$ there exist a 
$G$-invariant open neighborhood $U_x$ of $x$, a compact subgroup \(L_x\) of 
\(G\), and a continuous $G$-map
$\varphi_x:U_x\to G/L_x$.
Thus $Y_x:=\varphi^{-1}(\{L_x\})$ is a local $L_x$-slice for $X$ and $U_x\cong G\times_{L_x}Y_x$.
\end{theorem}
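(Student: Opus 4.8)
The plan is to reduce the statement to the construction of a single continuous equivariant map and then invoke Proposition \ref{prop-induction}. Since the action is proper, the stabilizer $G_x$ is compact, so the natural candidate for the compact subgroup is $L_x = G_x$ itself. By Proposition \ref{prop-induction}, producing the data $(U_x, L_x, \varphi_x)$ amounts to finding a $G$-invariant open neighborhood $U_x$ of $x$ together with a continuous $G$-map $\varphi_x \colon U_x \to G/G_x$; the slice $Y_x = \varphi_x^{-1}(\{eG_x\})$ and the homeomorphism $U_x \cong G \times_{G_x} Y_x$ are then delivered automatically by that proposition, and Lemma \ref{lem-induced-proper} confirms that the resulting $Y_x$ is a proper $G_x$-space, consistent with the global picture.

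First I would construct a local transverse slice at $x$. Using properness in the form that the return set $R_C = \{g \in G : gC \cap C \neq \emptyset\}$ is compact for every compact neighborhood $C$ of $x$, I would choose $C$ small and then, using the compactness of $G_x$ (e.g.\ by intersecting the finitely many $G_x$-translates needed to cover a small neighborhood, or by averaging a cut-off function), arrange a $G_x$-invariant set $S \ni x$ that is transverse to the orbit direction. The heuristic is to separate the part of the compact return set coming from $G_x$ (which fixes $x$) from its complement, and to shrink $S$ so that only the $G_x$-part can carry $S$ back into itself.

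Next I would globalize. Put $U_x := GS$ and consider the \emph{tube map} $G \times_{G_x} S \to U_x$, $[g,s] \mapsto gs$. The target is $G$-invariant by construction, and I would show that it is open and that the tube map is a $G$-equivariant homeomorphism; granting this, $\varphi_x$ is the composition $U_x \cong G \times_{G_x} S \to G/G_x$, $[g,s] \mapsto gG_x$, which is exactly the map demanded by Proposition \ref{prop-induction} with $Y_x = S$. Equivalently, one checks directly that $y \mapsto \{\, gG_x : g^{-1}y \in S \,\}$ is a single well-defined coset and depends continuously on $y$.

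The heart of the argument, and the step I expect to be the main obstacle, is the \emph{slice condition} $gS \cap S \neq \emptyset \Rightarrow g \in G_x$: this is precisely the injectivity of the tube map, it forces $\varphi_x$ to be single-valued, and together with openness of $U_x$ it yields the homeomorphism property. Properness supplies the compactness of the return set but not, by itself, its collapse onto $G_x$ after shrinking $S$, so one must control how the orbits near $Gx$ sit transverse to $S$. For a Lie group this can be done concretely via a $G_x$-invariant Riemannian metric and the normal exponential map, as in Palais; for a general locally compact $G$ one must instead exploit the structure of proper actions near a compact stabilizer—approximating $G$ by Lie groups through its almost-connected structure and maximal compact subgroups—which is exactly the technical content underlying Abels' theorem.
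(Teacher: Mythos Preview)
The paper does not prove this theorem; it is stated with attribution to Abels \cite{Abels} and no argument is given. So there is nothing to compare against on the paper's side.

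Your proposal, however, has a genuine gap. You take $L_x = G_x$ from the outset and then aim for the slice condition $gS \cap S \neq \emptyset \Rightarrow g \in G_x$. But the paper immediately after the theorem gives an explicit example (the action of $\prod_{n}\Z/2$ on $\prod_n \TT$) in which the stabilizer $G_x$ is trivial and yet \emph{no} slice with $L_x = G_x$ exists: every local slice must use a co-finite subgroup $L_x \supsetneq G_x$. So the entire strategy of shrinking $S$ until the return set collapses onto $G_x$ is doomed for general locally compact $G$; the return set near $x$ simply need not shrink to the stabilizer. This is precisely why the statement of Abels' theorem only promises \emph{some} compact subgroup $L_x$, and why Palais' theorem (Theorem \ref{thm-Palais}), which does give $L_x = G_x$, requires the Lie hypothesis.

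Your final paragraph concedes this and gestures at ``approximating $G$ by Lie groups through its almost-connected structure and maximal compact subgroups,'' but that is not a proof; it is a description of where the difficulty lies. A correct argument must produce the larger compact group $L_x$ explicitly (Abels does this via the existence of arbitrarily small compact normal subgroups $N$ of a suitable open almost-connected subgroup with $G/N$ Lie, then applying Palais to the induced Lie action and pulling the slice back), and nothing in your outline does so. As written, your proposal proves at most the Lie case, which is Palais' theorem rather than Abels'.
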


Thus, combining the Theorem with Proposition \ref{prop-induction}, we see 
that every proper $G$-space is locally induced from compact subgroups.

\begin{remark}
\label{rem:slices_the_basics}
We make two minor remarks about slices whose easy 
verification we leave to the
reader: 

\begin{itemize}
\item We may 
always choose $L_x$ and $\varphi_x$ in the theorem 
in such a way that $\varphi_x(x)= eL_x$, and 
hence $x\in Y_x$ (at the expense of changing the 
subgroup \(L_x\) to a conjugate subgroup). 
\item If \(Y\subset U\) is a slice for the subgroup \(L_x\) then the 
isotropy subgroups of \(G\) in \(U\) are all subconjugate to \(L_x\), 
\emph{i.e.} are conjugate to subgroups of \(L_x\). 
\item If $U\subseteq X$ is a $G$-invariant subset with $L_x$-slice $Y_x$ 
then the intersection $Y_x\cap V$ is a $L_x$-slice for any given $G$-invariant 
subset  $V\subseteq U$.
\end{itemize}
\end{remark}

\begin{example}
Let \(G \defeq \prod_{n\in \N} \ \Z/2\), realize 
\(\Z/2\) as \(\{\pm 1\}\subset \TT\),  and let 
\(X \defeq \prod_{n \in \N} \TT\), with action of $G$ on $X$ given by 
translation. This is a free and proper action of a 
compact, totally disconnected group. It is easy to 
construct many local slices. Let \(I\) be a small interval 
neighbourhood of \(1\in \TT\), \(J = I \cup -I\). 
A \(G\)-neighbourhood basis 
(in the sense of giving a neighbourhood in the quotient space)
of \(x \defeq (1,1,1, \cdots) \in X\) 
is supplied by basic product sets of the form 
\(U = J\times \cdots \times J \times \TT \times \TT \times \cdots\). 
Thus any open \(G\)-invariant 
neighbourhood of \(x\) must contain 
one of these. 
Each of these open subsets has \(2^k\) components, 
where \(k\) is the 
number of factors of \(J\) occurring, and 
under any \(G\)-map from \(U\) to a totally disconnected space like 
\(G\) itself must thus map the above open subset to a 
finite subset of the target.  
A slice at \( x \) produces a \(G\)-map 
from any sufficiently small one of these \(G\)-neighbourhoods, 
with target some 
\(G/L_x\). Since this is also a totally disconnected space, 
the map must have image in a finite subset, since 
the subset must be \(G\)-invariant, it must be that 
\(G/L_x\) is itself finite.  Hence all slices must use closed 
subgroups \(L_x\) of finite index. 

In particular there is no slice through \(x\) with 
\(L_x \) the trivial subgroup of \(G\), \emph{i.e.} no 
slices through \(x\) 
with group \(L_x\) exactly equal to the isotropy group 
\(G_x\). 

If \(L_x\) is co-finite, it must be a subgroup of 
one of the `obvious' ones 
\(L_x = \{ (x_i) \mid x_i =1\; \textup{if}\; i\le n\}\),
 the quotient  
 \(G/L_x\) \(G\)-equivariantly identifies with 
 a finite product of \(\Z/2\)'s and it is easy to 
 produce a slice, \emph{i.e.} a \(G\)-map
 \(U \defeq J\times J \times \cdots J \times \TT \times \TT\times \cdots\) 
 to \(G/L_x\), by identifying \(J \cong I\times \Z/2\) and 
 using \(Y_x \defeq I\times \cdots \times I \times \TT \times \TT\times \cdots\). 
 
\end{example} 

In particular, whenever one has a slice, 
the stabilizer \(G_x\) of \(x\) is a closed subgroup 
of \(L_x\), but \(G_x\subseteq L_x\) may be strict. 
However, this can always be avoided when \(G\) is a 
\emph{Lie group}. This is the content of the 
following well-known result of Palais. 

\begin{theorem}[Palais's Slice Theorem]\label{thm-Palais}
Suppose that the Lie group $G$ acts properly on the locally compact
$G$-space $X$. Then for every $x\in X$ there exists an open $G$-invariant neighborhood
$U_x$ of $X$ which admits a $G_x$-slice $Y_x\subseteq U_x$ with $x\in Y_x$.
\end{theorem}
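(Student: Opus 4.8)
The plan is to deduce the sharper statement---that one may take the slice group to be \emph{exactly} the stabilizer $G_x$---from Abels's Theorem~\ref{thmlocalind}, which already supplies a local slice for \emph{some} compact subgroup $L_x\supseteq G_x$. The idea is to solve the problem \emph{inside the compact group} $L_x$ and then reassemble by induction in stages, using Proposition~\ref{prop-induction} twice: a $G_x$-slice is the same thing as a $G$-equivariant map to $G/G_x$ on a $G$-invariant neighborhood, and likewise a local slice for the compact group $L_x$ corresponds to an $L_x$-equivariant map to $L_x/G_x$. The Lie hypothesis enters only at the compact stage.

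By Theorem~\ref{thmlocalind} (and the first point of Remark~\ref{rem:slices_the_basics}) I may write $U_x\cong G\times_{L_x}Y$ with $L_x$ compact, $x=[e,x]$ for some $x\in Y$, and with the $L_x$-stabilizer of $x$ in $Y$ equal to $G_x$: indeed $G_x\subseteq L_x$ fixes $x$, while conversely any $k\in L_x$ fixing $x$ lies in $G_x$. Suppose one can produce an $L_x$-slice $S\ni x$ for the compact group $L_x$ acting on $Y$, that is, an $L_x$-invariant open set $W=L_x\cdot S$ with $W\cong L_x\times_{G_x}S$. Then $G\times_{L_x}W$ is open and $G$-invariant in $U_x$, and induction in stages gives
\[
G\times_{L_x}W\;\cong\;G\times_{L_x}\bigl(L_x\times_{G_x}S\bigr)\;\cong\;G\times_{G_x}S,
\]
so that $Y_x:=S$ is, by Proposition~\ref{prop-induction}, a $G_x$-slice for a $G$-invariant open neighborhood of $x$, with $x\in Y_x$. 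Thus it suffices to prove the slice theorem for the compact Lie group $L_x$.

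Write $K:=L_x$ and $H:=G_x$. By Proposition~\ref{prop-induction} it is enough to build a $K$-equivariant map $\theta\colon W\to K/H$, on a $K$-invariant neighborhood $W$ of the orbit $Kx$, with $\theta(x)=eH$. First, using complete regularity of $Y$ (it is locally compact Hausdorff, hence Tychonoff) together with averaging over normalized Haar measure, construct a continuous $K$-equivariant map $f\colon Y\to V$ into a finite-dimensional real representation $V$ of $K$ equipped with a $K$-invariant inner product, arranged so that the stabilizer of $f(x)$ is exactly $H$. Equivariance forces $H\subseteq K_{f(x)}$; the reverse inclusion is obtained by enlarging $V$ and choosing the averaged function so that the $K$-translates of $f$ separate the points of the orbit $Kx\cong K/H$, which is possible by the Peter--Weyl theorem. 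Setting $v_0:=f(x)$, the orbit $Kv_0$ is $K$-homeomorphic to $K/H$ and, because $K$ is a \emph{Lie} group, is a compact embedded submanifold of the Euclidean space $V$. By continuity $f$ carries a $K$-invariant neighborhood $W$ of $Kx$ into a $K$-invariant tubular neighborhood of $Kv_0$, and composing with the smooth, $K$-equivariant nearest-point retraction of that tube onto $Kv_0\cong K/H$ yields the required $\theta$ with $\theta(x)=eH$.

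The reduction to the compact case and the induction-in-stages step are formal; the genuine content, and the point at which the Lie hypothesis is indispensable, is the construction in the previous paragraph. The main obstacle is twofold: arranging that the equivariant map $f$ hits a vector whose stabilizer is precisely $H$ rather than something larger, and, above all, the existence of the $K$-equivariant tubular retraction onto the orbit, which needs $K/H$ to carry a smooth manifold structure. For a totally disconnected compact group no such retraction exists, and this is exactly the mechanism underlying the example preceding the theorem, where there is no slice through $x$ with group equal to $G_x$. I expect the delicate separation argument guaranteeing $K_{f(x)}=H$ to be the most technical part of a complete write-up.
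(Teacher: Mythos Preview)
The paper does not give its own proof of Theorem~\ref{thm-Palais}: it states the result and refers to Palais's original paper~\cite{Pal} (see the sentence immediately after the theorem). So there is nothing in the paper to compare your proposal against.

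Your strategy is nonetheless a correct and standard modern route to the result. The reduction via Abels's theorem and induction-in-stages is valid and cleanly isolates where the Lie hypothesis enters: note that $L_x$, being a compact and hence closed subgroup of the Lie group $G$, is itself Lie by Cartan's theorem, so the compact Lie case is the right endpoint. Your sketch of that case---embed $K/H$ equivariantly into a finite-dimensional representation via Peter--Weyl, then compose an equivariant map from a neighborhood of the orbit into $V$ with the $K$-equivariant tubular retraction onto $Kv_0\cong K/H$---is essentially Mostow's argument and is correct in outline. One streamlining: rather than building $f$ first and then massaging $V$ so that $K_{f(x)}=H$, it is cleaner to \emph{first} choose $v_0\in V$ with $K_{v_0}=H$ (a standard consequence of Peter--Weyl for compact Lie $K$ and closed $H$), and \emph{then} construct an equivariant $f$ on a $K$-invariant neighborhood of $Kx$ with $f(x)=v_0$ by extending the map $kx\mapsto kv_0$ via Tietze on a compact neighborhood and averaging over Haar measure. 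This bypasses the ``separation argument'' you flag as the most delicate step. Also, $f$ need only be defined on a neighborhood of the orbit, not on all of $Y$.
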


We should point out that Palais's original theorem (see \cite[Proposition 2.3.1]{Pal}) 
is stated for completely regular proper spaces, and therefore is actually slightly more general; we will not need the extra generality here. 

Motivated by 
Palais's theorem we give the following

\begin{defn}\label{defn-SP}\rm
Let $G$ be a locally compact group and let $X$ be a proper $G$-space. 
We say that $(G,X)$ satisfies \emph{Palais's slice property} (SP) 
if the conclusion of  Theorem \ref{thm-Palais} holds
for $(G,X)$, i.e., if $X$ is locally induced from the stabilizers $G_x$. 
\end{defn}

\begin{remark}\label{rem-subconj}
We emphasise that property (SP) for $(G,X)$ implies 
that every point $x\in X$ has a $G$-invariant neighborhood $U_x$ such  that 
the stabilizers $G_y$ for all $y\in U_x$ are sub-conjugate to $G_x$ (\emph{c.f.}
Remark \ref{rem:slices_the_basics}.) 
\end{remark}

\section{Proper actions and C*-algebra bundles}\label{sec-bundles}

If $X$ is a locally compact $G$-space, then there is a corresponding action on the 
C*-algebra $C_0(X)$ of all functions on $X$ which vanish at $\infty$ given by 
$(g\cdot f)(g')=f(g^{-1}g')$. The main object of this paper is the study of the 
crossed product \mbox{$C_0(X)\rtimes G$} in case where $X$ is a proper $G$-space.
For the general theory of crossed products we refer to Dana Williams's book
\cite{Dana-book}.

The construction of crossed products for group actions on spaces goes back to
early work of Glimm (see \cite{Glimm0}). Consider 
the space $C_c(G\times X)$ of continuous functions with compact supports on $G\times X$
equipped with convolution and involution given by the formulas
$$\ph*\psi(g,x)=\int_G\ph(t,x)\psi(t^{-1}g, t^{-1}x)\, dt\quad\text{and}\quad
\ph^*(g,x)=\Delta(g^{-1})\overline{\ph(g^{-1}, g^{-1}x)}.$$
Let $L^1(G, X)$ denote the completion of $C_c(G\times X)$ with respect to the 
norm $\|f\|=\int_G\|f(g, \cdot)\|_{\infty}dg$. Then $C_0(X)\rtimes G$ is the envelopping 
$C^*$-algebra of the Banach-*-algebra $L^1(G,X)$. 
It enjoys the universal property for covariant  representations of the pair $(C_0(X), G)$
as explained in detail in \cite[Proposition 2.40]{Dana-book}.

It follows from \cite{Dana-cont} that 
the crossed product $C_0(X)\rtimes G$ for any proper $G$-space $X$ has a 
canonical structure as an algebra of sections of a continuous C*-algebra bundle
over $G\backslash X$. In this section we want to give a more detailed description of this bundle.

%Note that to our knowledge, the content of this section did not appear elsewhere 
%in the literature, but we assume that much of the results presented here must be well known to 
%some experts in the field. In particular, the first author remembers 
% a personal discussion with Paul Baum, in which he mentioned that the 
% main structure result given in this section should be true.
 
Recall that if $Z$ is any locally compact space, then
 a C*-algebra $A$ is called a {\em $C_0(Z)$-algebra} (or an {\em upper semi-continuous bundle of 
 C*-algebras over $Z$}) if there exists  
 a $*$-homomorphism 
 $\phi:C_0(Z)\to ZM(A)$, the center of the multiplier algebra of $A$, such that
 $\phi(C_0(Z))A=A$. For $z\in Z$ put  $I_z:=\phi(C_0(Z\smallsetminus\{z\}))A$ and $A_z=A/I_z$.
 Then $A_z$ is  called the \emph{fibre of $A$ at $z$}. 
 Then every $a\in A$ can be viewed as a section of the bundle of C*-algebras
 $\{A_z: z\in Z\}$ via $a: z\mapsto a_z:=a+I_z$. The resulting positive function 
 $z\mapsto \|a_z\|$ is always upper semi continuous and we have $\|a\|=\sup_{z\in Z}\|a_z\|$ 
 for all $a\in A$. We say that $A$ is a 
 {\em continuous bundle of C*-algebras over $Z$} if in addition 
  all functions $z\mapsto \|a_z\|$ are continuous. We refer to \cite[C.2]{Dana-book}
 for the general properties of $C_0(Z)$-algebras.
 In what follows we shall usually suppress the name of the structure map $\phi:C_0(Z)\to ZM(A)$
 and we shall simply write $fa$ for $\phi(f)a$ if $f\in C_0(Z)$ and $a\in A$.
 
 A $*$-homomorphism $\Psi: A\to B$ between two $C_0(Z)$-algebras $A$ and $B$
 is called $C_0(Z)$-linear, if it commutes with the $C_0(Z)$-actions, that is if
 $\Psi(fa)=f\Psi(a)$ for all $f\in C_0(Z)$, $a\in A$. A $C_0(Z)$-linear homomorphism
 $\Psi$ induces $*$-homomorphisms $\Psi_z:A_z\to B_z$  for all $z\in Z$ by defining
 $\Psi_z(a+I_z^A)=\Psi(a)+I_z^B$ for all $z\in Z$, $a\in A$. 
 
If $A$ is a $C_0(Z)$-algebra and $G$ acts on $A$ 
by $C_0(Z)$-linear automorphisms, then the full crossed product $A\rtimes G$ is again
a $C_0(Z)$-algebra with respect to the composition
$$C_0(Z)\stackrel{\Phi}{\longrightarrow} ZM(A)\stackrel{i_{M(A)}}{\longrightarrow} ZM(A\rtimes G),$$
where $i_{M(A)}:M(A)\to M(A\rtimes G)$ denotes the extension to $M(A)$ of the canonical inclusion
$i_A: A\to M(A\rtimes G)$ (see \cite[Proposition 2.3.4]{Dana-book} for the definition of $i_A$). The action $\alpha$ then induces actions $\alpha^z$ of $G$  on each fiber $A_z$ and
it follows then from the exactness of the maximal crossed product functor that
the fibre $(A\rtimes G)_z$ of the crossed product at a point $z\in Z$ coincides with $A_z\rtimes G$
(e.g., see \cite[Theorem 8.4]{Dana-book}).
The following well-known lemma 
is often useful. 
 
 \begin{lemma}\label{lem-iso}
 Suppose that $\Phi:A\to B$ is a $C_0(Z)$-linear $*$-homomorphism between
 the $C_0(Z)$-algebras $A$ and $B$. Then 
$\Phi$ is injective (resp. surjective, resp. bijective) if and only if every fibre map $\Phi_z$ is injective
(resp. surjective, resp. bijective).
\end{lemma}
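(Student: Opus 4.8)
The plan is to treat the three assertions separately, to derive bijectivity from the other two, and, within injectivity and surjectivity, to isolate the trivial implication from the substantial one. Throughout I will use the two standing facts about $C_0(Z)$-algebras recorded above, namely that $\|a\|=\sup_{z\in Z}\|a_z\|$ and that $z\mapsto\|a_z\|$ is upper semicontinuous, together with the nondegeneracy $C_0(Z)A=A$ and the identity $\Phi(fa)=f\Phi(a)$ coming from $C_0(Z)$-linearity. I will also invoke the standard fibre-norm description (see \cite[C.2]{Dana-book}): for $a\in A$ and $z\in Z$,
\[
\|a_z\|=\inf\bigl\{\|fa\| : f\in C_c(Z),\ 0\le f\le 1,\ f\equiv 1 \text{ near } z\bigr\}.
\]

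\emph{Injectivity.} One direction is immediate: if every $\Phi_z$ is injective and $\Phi(a)=0$, then $\Phi_z(a_z)=\Phi(a)_z=0$ forces $a_z=0$ for all $z$, whence $\|a\|=\sup_z\|a_z\|=0$. For the converse I would use that an injective $*$-homomorphism of C*-algebras is isometric. Suppose $\Phi$ is injective and $\Phi_z(a_z)=0$, i.e.\ $\|\Phi(a)_z\|=0$. Given $\epsilon>0$, the fibre-norm formula applied in $B$ produces $f\in C_c(Z)$ with $f\equiv1$ near $z$ and $\|f\Phi(a)\|<\epsilon$; by $C_0(Z)$-linearity $f\Phi(a)=\Phi(fa)$, and by isometry $\|fa\|=\|\Phi(fa)\|<\epsilon$. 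The same formula applied in $A$ now gives $\|a_z\|\le\|fa\|<\epsilon$, so $a_z=0$ and $\Phi_z$ is injective.

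\emph{Surjectivity.} Again one direction is easy: if $\Phi$ is onto and $b_z\in B_z$, lift $b_z$ to $b\in B$, write $b=\Phi(a)$, and then $\Phi_z(a_z)=b_z$. The reverse implication is the heart of the lemma, and I expect the patching it requires to be the main obstacle, since one must globalize fibrewise surjectivity. The image $\Phi(A)$ is a closed, $C_0(Z)$-invariant $*$-subalgebra of $B$, so it suffices to approximate an arbitrary $b\in B$ in norm. By nondegeneracy I may first replace $b$ by $gb$ for a suitable $g\in C_c(Z)$ with $0\le g\le1$, at the cost of an error $<\epsilon$. Set $K:=\supp g$. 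For each $z\in K$, surjectivity of $\Phi_z$ yields $a^{(z)}\in A$ with $\Phi(a^{(z)})_z=b_z$, and upper semicontinuity of $w\mapsto\|(\Phi(a^{(z)})-b)_w\|$ gives an open neighbourhood $U_z$ of $z$ on which this quantity stays below $\epsilon$. Covering the compact set $K$ by finitely many $U_{z_1},\dots,U_{z_n}$ and choosing $f_i\in C_c(Z)$ with $\supp f_i\subseteq U_{z_i}$ and $\sum_i f_i=g$ (e.g.\ $f_i=g\psi_i$ for a partition of unity $\{\psi_i\}$ on $K$ subordinate to the cover), I would set $a:=\sum_i f_i\,a^{(z_i)}\in A$.

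The computation that closes the argument is fibrewise: for every $w\in Z$,
\[
(\Phi(a)-gb)_w=\sum_i f_i(w)\,\bigl(\Phi(a^{(z_i)})-b\bigr)_w,
\]
since $\sum_i f_i\equiv g$ cancels the diagonal term. Hence $\|(\Phi(a)-gb)_w\|\le\sum_i f_i(w)\,\epsilon=g(w)\epsilon\le\epsilon$, using that $f_i(w)\neq0$ forces $w\in U_{z_i}$; taking the supremum over $w$ gives $\|\Phi(a)-gb\|\le\epsilon$, and with $\|b-gb\|<\epsilon$ this places $b$ in the closure of $\Phi(A)$, which equals $\Phi(A)$. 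Finally, $\Phi$ is bijective iff it is both injective and surjective, iff every $\Phi_z$ is both, iff every $\Phi_z$ is bijective. The only genuinely delicate point is the patching step; everything else is a formal consequence of the two norm properties of $C_0(Z)$-algebras and the fact that injective $*$-homomorphisms are isometric.
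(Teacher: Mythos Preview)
Your proof is correct. The overall strategy matches the paper's: handle injectivity and surjectivity separately and combine for bijectivity. Your direct $\epsilon$-argument for the converse injectivity direction (via isometry of $\Phi$ and the fibre-norm formula) is a slightly different presentation from the paper's, which instead observes that $\Phi\colon A\to B'\defeq\Phi(A)$ is a $C_0(Z)$-linear isomorphism and hence induces fibre-wise inverses $A_z\to B'_z\subseteq B_z$; but the paper's implicit use of $B'_z\hookrightarrow B_z$ ultimately rests on the same fibre-norm formula you invoke, so the content is the same. For the converse surjectivity direction the paper simply cites \cite[Proposition C.24]{Dana-book}, whereas you spell out the partition-of-unity patching that proves that proposition. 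So your argument is essentially the paper's, made self-contained.
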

\begin{proof} Since $\|a\|=\sup_{z\in Z}\|a_z\|$ for all $a\in A$, it is clear that 
$\Phi$ is injective if $\Phi_z$ is injective for all $z\in Z$. Conversely assume $\Phi$ 
is injective. Then $B':=\Phi(A)\subseteq B$ is a $C_0(X)$-subalgebra of $B$ and 
there exists a $C_0(X)$-linear inverse $\Phi^{-1}:B'\to A$ which induces  fibre-wise inverses
$\Phi^{-1}_z$ for $\Phi_z:A_z\to B'_z\subseteq B_z$. 

Surjectivity of $\Phi$
 clearly implies surjectivity of $\Phi_z$ for all $z\in Z$. Conversely, if all $\Phi_z$ are surjective,
 then $\Phi(A)\subseteq B$ satisfies the conditions of \cite[Proposition C.24]{Dana-book},
 which then implies that $\Phi(A)=B$.
  \end{proof}

Assume now that $X$ is a proper $G$-space. We
 proceed with some general constructions of bundles over $G\backslash X$: 
For this assume that $B$ is any C*-algebra equipped with an action 
$\beta: G\to \Aut(B)$ of $G$. We define the algebra $C_0(X\times_{G,\beta}B)$
(we shall simply write $C_0(X\times_GB)$ if there is no doubt about the given action)
as the set of all bounded continuous functions 
$$F:X\to B\quad\text{such that}\quad
 F(gx)=\beta_g(F(x))$$
  for all $x\in X$ and $g\in G$ and such that the function 
$x\mapsto \|F(x)\|$ (which is constant on $G$-orbits) vanishes at infinity on $G\backslash X$.
It is easily checked that  $C_0(X\times_GB)$ becomes a C*-algebra when equipped 
with pointwise addition, multiplication, involution and the sup-norm. Note that this construction is well
known under the name of the {\em generalized fixed point algebras} for the proper action of
$G$ on $C_0(X,B)$ (e.g. see \cite{RW-pull-back, Kas, Meyer-fix}).

\begin{lemma}\label{lem-bundle}
$C_0(X\times_G B)$ is the section algebra of a 
continuous bundle of C*-algebras over \(G\backslash X\) with fibre over the orbit 
$Gx$ 
isomorphic to the fixed point algebra $B^{G_x}$, where $G_x=\{g\in G: gx=x\}$ is the 
stabilizer of $x$ in $G$. 
\end{lemma}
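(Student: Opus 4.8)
The plan is to establish three things in turn: that $A := C_0(X\times_G B)$ is a $C_0(G\backslash X)$-algebra, that its fibre over an orbit $Gx$ is isomorphic to $B^{G_x}$ via evaluation at $x$, and that the resulting field of norms is continuous. The last point will come essentially for free once the fibre has been identified, so the real work lies in the fibre computation, and within that in surjectivity of the evaluation map.

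First I would set up the \emph{$C_0(G\backslash X)$-structure}. Each $h\in C_0(G\backslash X)$ pulls back to a bounded continuous $G$-invariant function $\tilde h$ on $X$, which I let act on $A$ by the pointwise product $(\tilde h\cdot F)(x)=\tilde h(x)F(x)$. Since $\tilde h(x)$ is a scalar, $\tilde h\cdot F$ is again $\beta$-equivariant and $x\mapsto|\tilde h(x)|\,\|F(x)\|$ still vanishes at infinity on $G\backslash X$, so $\tilde h\cdot F\in A$; this gives a $*$-homomorphism $C_0(G\backslash X)\to ZM(A)$ with central image. For nondegeneracy I would use that for $F\in A$ the function $Gx\mapsto\|F(x)\|$ lies in $C_0(G\backslash X)$: given $\epsilon>0$ the set $\{Gx:\|F(x)\|\ge\epsilon\}$ is compact, and choosing $h\in C_c(G\backslash X)$ with $0\le h\le1$ and $h\equiv1$ there yields $\|F-\tilde h\cdot F\|\le\epsilon$. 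Hence $C_0(G\backslash X)A$ is dense in $A$.

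Next I would \emph{identify the fibre}. Fix $x$ and consider $\ev_x\colon A\to B$, $F\mapsto F(x)$. Specialising $F(gx)=\beta_g(F(x))$ to $g\in G_x$ shows $F(x)\in B^{G_x}$, so $\ev_x$ is a $*$-homomorphism into $B^{G_x}$. If $h(Gx)=0$ then $(\tilde h\cdot F)(x)=0$, so $\ev_x$ annihilates the generators of $I_{Gx}$, and being continuous it annihilates $I_{Gx}$. For the reverse inclusion, if $F(x)=0$ then (by equivariance) $F$ vanishes on the whole orbit; repeating the cutoff argument above, but now choosing $h\equiv1$ on $\{Gy:\|F(y)\|\ge\epsilon\}$ with additionally $h(Gx)=0$, gives $\tilde h\cdot F\in I_{Gx}$ and $\|F-\tilde h\cdot F\|\le\epsilon$. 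Thus $\ker\ev_x=I_{Gx}$, so $\ev_x$ descends to an injective $*$-homomorphism $A_{Gx}\hookrightarrow B^{G_x}$. The \emph{main obstacle} is surjectivity. Given $b\in B^{G_x}$, I would pick $\psi\in C_c(X)$ with $0\le\psi\le1$ and $\psi\equiv1$ near $x$, and form the $G$-invariant $d(y)=\int_G\psi(g^{-1}y)\,dg$ together with $N(y)=\int_G\psi(g^{-1}y)\beta_g(b)\,dg$. Properness makes the relevant $g$-supports compact, so both integrals converge and vary continuously, and the substitution $g\mapsto tg$ gives $N(ty)=\beta_t(N(y))$, $d(ty)=d(y)$. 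On the $G$-invariant open set $U=\{d>0\}$, which contains the orbit since $d(x)>0$, the quotient $F_0:=N/d$ is a continuous equivariant $B$-valued function; cutting off by a $G$-invariant $\eta\in C_c(G\backslash X)$ with $\eta\equiv1$ near $Gx$ and $\supp\tilde\eta\subseteq U$ produces $\tilde\eta\cdot F_0\in A$ with value $F_0(x)$ at $x$. Finally $\|F_0(x)-b\|\le\sup\{\|\beta_g(b)-b\|: g^{-1}x\in\supp\psi\}$, and here properness forces $\{g:g^{-1}x\in\supp\psi\}$ to shrink to $G_x$ as $\supp\psi\downarrow\{x\}$, while $g\mapsto\beta_g(b)$ is continuous and equals $b$ on the compact group $G_x$; hence this supremum tends to $0$. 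So $b$ lies in the closure of the image of $\ev_x$, and since the range of a $*$-homomorphism of C*-algebras is closed, $\ev_x$ is onto and $A_{Gx}\cong B^{G_x}$.

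Finally, \emph{continuity of the bundle} is immediate: the isomorphism $A_{Gx}\cong B^{G_x}$ is isometric (an injective $*$-homomorphism of C*-algebras), so the fibre norm of $F\in A$ at $Gx$ equals $\|F(x)\|_B$, and $Gx\mapsto\|F(x)\|$ is continuous on $G\backslash X$ because $F$ is norm-continuous and its norm is $G$-invariant. I expect the delicate point throughout to be the surjectivity step, where properness is used twice — once to guarantee convergence and continuity of the orbit integrals, and once to control the approximation $F_0(x)\to b$ through the collapse of $\{g:g^{-1}x\in\supp\psi\}$ onto the stabilizer $G_x$; everything else is bookkeeping with the $C_0(G\backslash X)$-module structure.
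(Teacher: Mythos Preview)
Your proof is correct and follows essentially the same route as the paper's own argument: the $C_0(G\backslash X)$-structure by pointwise multiplication, the identification of $I_{Gx}$ with functions vanishing on the orbit, and surjectivity of the evaluation map by averaging $\beta_g(b)$ against a bump function that shrinks to $x$. The paper normalizes its bump $f_U$ so that $\int_G f_U(g^{-1}x)\,dg=1$ at the single point $x$ (rather than dividing by $d$ and cutting off as you do), but this is cosmetic; you supply more detail than the paper on nondegeneracy, on the equality $\ker\ev_x=I_{Gx}$, and on why properness forces the convergence $F_0(x)\to b$, all of which the paper leaves to the reader.
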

\begin{proof} Assume first that the action of $G$ on $X$ is transitive, i.e., $X=Gx $ for some $x\in X$.
Then it  is straightforward to check that evaluation at $x$ induces an isomorphism 
$C_0(Gx \times_GB)\to B^{G_x}; F\mapsto F(x)$.

For the general case we first note that multiplication with functions in $C_0(G\backslash X)$
provides $C_0(X\times_GB)$ with the structure of a $C_0(G\backslash X)$-algebra.
The ideal $I_{Gx }=C_0((G\backslash X)\smallsetminus \{Gx \})C_0(X\times_GB)$ then coincides with the set of functions $F\in C_0(X\times_GB)$ which vanish on $Gx $, and hence with
the kernel of the restriction map $F\mapsto F|_{Gx }$ from $C_0(X\times_GB)$ into $C_0(Gx \times_GB)$. If we compose this with the evaluation at $x$ we now see that the 
map $F\mapsto F(x)$ factors through an injective $*$-homomorphism of 
the fiber $C_0(X\times_GB)_{Gx }$  into $B^{G_x}$.
We need to show that this map is surjective.
 
Since images of $*$-homomorphism between C*-algebras are closed, it suffices to show that 
the evaluation map has dense image.
For this fix $b\in B^{G_x}$. For any neighborhood $U$ of $x$
 choose a positive function $f_U\in C_c(X)$ such that $\supp f_U\subseteq U$ and 
$\int_G f_U(g^{-1}x)\, dg=1$.
Then define $F_U\in C_0(X\times_GB)$ by
$F_U(y):=\int_G f_U(g^{-1}y)\beta_g(b)\,dg$ for all $y\in X$.
One checks that $F_U\in C_0(X\times_GB)$ and that $F_U(x)\to b$ as 
$U$ shrinks to $x$. This shows the desired density result.

 Finally, the fact that the $C_0(X\times_GB)$  is a continuous bundle 
  follows from the fact that the continuous function $x\mapsto \|F(x)\|$ is constant on 
  $G$-orbits in $X$, and hence factors through a continuous function on $G\backslash X$.
\end{proof}

For an induced proper $G$-space $X=G\times_HY$ we get the following
result.

\begin{lem}\label{lem-induced-bundle}
Suppose that $H$ is a closed subgroup of $G$ and $B$ is a $G$-algebra. Then there 
is a canonical isomorphism 
$$\Phi:C_0\big((G\times_HY)\times_GB\big)\to C_0(Y\times_HB)$$
given by $\Phi(F)(y)=F([e,y])$ for $F\in C_0\big((G\times_HY)\times_GB\big)$ and $y\in Y$.
\end{lem}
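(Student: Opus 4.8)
The plan is to prove the statement directly, by checking that $\Phi$ takes values in $C_0(Y\times_H B)$ (where $B$ carries the restricted action $\beta|_H$) and by writing down an explicit inverse, rather than appealing to the fibrewise criterion of Lemma~\ref{lem-iso}. A preliminary point I would record is the homeomorphism of orbit spaces $G\backslash(G\times_H Y)\cong H\backslash Y$ sending the $G$-orbit of $[g,y]$ to the $H$-orbit of $y$; this is well defined because every $G$-orbit in $G\times_H Y$ contains a point of the form $[e,y]$ (namely $g^{-1}\cdot[g,y]$), and $[e,y]$, $[e,y']$ lie in the same $G$-orbit precisely when $y'\in Hy$. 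This identification is what makes the two ``vanishing at infinity'' conditions correspond.

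With this in hand, given $F\in C_0\big((G\times_H Y)\times_G B\big)$ I would verify that $\Phi(F)\colon y\mapsto F([e,y])$ lies in $C_0(Y\times_H B)$: continuity and boundedness are inherited from $F$ along the continuous map $y\mapsto[e,y]$; the $H$-equivariance follows from $[e,hy]=[h,y]=h\cdot[e,y]$, which gives $\Phi(F)(hy)=F(h\cdot[e,y])=\beta_h\big(\Phi(F)(y)\big)$; and the norm function $y\mapsto\|F([e,y])\|$ is exactly the $G$-invariant norm function of $F$ read off along the orbit identification above, hence vanishes at infinity on $H\backslash Y$. That $\Phi$ is a $\ast$-homomorphism is immediate, since all operations are pointwise.

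For injectivity and surjectivity together I would construct the inverse $\Psi$ explicitly by $\Psi(\tilde F)([g,y])\defeq\beta_g\big(\tilde F(y)\big)$. This is independent of the representative: if $[g,y]=[gh^{-1},hy]$ with $h\in H$, then $\beta_{gh^{-1}}\big(\tilde F(hy)\big)=\beta_{gh^{-1}}\beta_h\big(\tilde F(y)\big)=\beta_g\big(\tilde F(y)\big)$ using $H$-equivariance of $\tilde F$. The $G$-equivariance $\Psi(\tilde F)(g'\cdot[g,y])=\beta_{g'}\big(\Psi(\tilde F)([g,y])\big)$ is immediate, each $\beta_g$ is isometric so $\|\Psi(\tilde F)([g,y])\|=\|\tilde F(y)\|$ gives boundedness and (again via the orbit identification) the vanishing at infinity, and one checks $\Phi\circ\Psi=\id$ and $\Psi\circ\Phi=\id$ directly from the formulas.

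The one genuinely delicate step, and the step I expect to be the main obstacle, is the continuity of $\Psi(\tilde F)$ as a function on $G\times_H Y$. Since $G\times_H Y$ carries the quotient topology from $G\times Y$, it suffices to check that the pullback $(g,y)\mapsto\beta_g\big(\tilde F(y)\big)$ is continuous on $G\times Y$; it is automatically constant on the $H$-orbits, so it then factors through the quotient. This continuity is where the hypothesis that $\beta$ is a strongly continuous action enters in an essential way: it guarantees that the map $G\times B\to B$, $(g,b)\mapsto\beta_g(b)$, is \emph{jointly} continuous, and composing with the continuous map $(g,y)\mapsto(g,\tilde F(y))$ yields the desired continuity. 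Everything else is a routine pointwise verification.
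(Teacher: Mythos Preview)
Your proof is correct and follows exactly the paper's approach: the paper simply asserts that $\Phi$ is a well-defined $*$-homomorphism with inverse $\Phi^{-1}(F)([g,y])=\beta_g(F(y))$, leaving the verifications you have carefully spelled out as ``straightforward to check.'' Your expanded treatment of well-definedness, equivariance, the vanishing-at-infinity condition via the orbit-space identification, and especially the joint continuity argument for $\Psi$, are precisely the details being suppressed.
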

\begin{proof} It is straightforward to check that $\Phi$ is a well defined $*$-homomorphism with
inverse $\Phi^{-1}: C_0(Y\times_HB)\to C_0\big((G\times_HY)\times_GB\big)$ given by
$\Phi^{-1}(F)([g,y])=\beta_g(F(y))$, where $\beta:G\to \Aut(B)$ denotes the given action on $B$.
\end{proof}

In some cases the algebra $C_0(X\times_GB)$ has a much easier description.

\begin{definition}\label{def-fundamentaldomain}
Suppose that $G$ acts on the locally compact space $X$. A closed subspace $Z\subseteq X$ is 
called a {\em topological fundamental domain} for the action of $G$ on $X$ if the mapping
$Z\to G\backslash X; z\mapsto Gz$ is a homeomorphism.
\end{definition}

Of course,  a topological fundamental domain as in the definition, does not exist in most cases, but
the following examples show that there are at least some interesting cases where they do exist:

\begin{example}\label{ex:fundamentaldomain}
For the first example we consider the obvious action of $\SO(n)$ on $\RR^n$. Then 
the set $Z=\{(x,0,\ldots,0): x\geq 0\}$ is a topological fundamental domain for this action.
\end{example}

\begin{example}\label{ex-D4}
For the second example
let $G$ (the dihedral group $D_4$ ) be generated by a rotation $R$ around 
the origin in $\RR^2$ by 
$\frac{\pi}{2}$  radians, and the reflection $S$ through the line 
$l_S :=  \{(x,0): x\in \RR\}$. Writing $R=\left(\begin{smallmatrix} 0&-1\\1&0\end{smallmatrix}\right)$
and $S=\left(\begin{smallmatrix}1&0\\0&-1\end{smallmatrix}\right)$, the group $G$ has the
elements $\{E, R, R^2, R^3, S, SR, SR^2, SR^3\}$, where $E$ denotes the unit matrix. 
Since $G\subseteq \GL(2,\ZZ)$, it's action on $\RR^2$ fixes $\ZZ^2$, and therefore 
factors through an action on $\TT^2\cong\RR^2/\ZZ^2$. If we study this action on the 
fundamental domain $(-\frac{1}{2},\frac{1}{2}]^2\subseteq\RR^2$ for the translation action
of $\ZZ^2$ on $\RR^2$, it is an easy exercise to check that
the set
$$Z:=\{(e^{2\pi i s}, e^{2\pi i t}): 0\leq t\leq\frac{1}{2}, 0\leq s\leq t\}$$
is a topological fundamental domain for the action of $G$ on $\TT^2$.
\medskip

Of course, if we restrict the above action to the subgroup $H:=\lk R\rk\subseteq G$, we obtain 
an example of a group action with no topological fundamental domain.
\end{example}

\begin{proposition}\label{prop:fundamentaldomain}
Suppose that $Z\subseteq X$ is a topological fundamental domain for the proper action of $G$ on $X$.
Then, for any $G$-algebra $B$, there is an isomorphism
$$C_0(X\times_G B)\cong \{f:Z\to B: f(z)\in B^{G_z}\}$$
given by $F\mapsto F|_Z$.
\end{proposition}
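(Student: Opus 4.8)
The plan is to exhibit the restriction map $\rho\colon C_0(X\times_G B)\to\{f\colon Z\to B : f(z)\in B^{G_z}\}$, $\rho(F)=F|_Z$, as a bijective $*$-homomorphism; since a bijective $*$-homomorphism between C*-algebras is automatically an isometric isomorphism, this suffices. First I would check that $\rho$ is well defined, reading the target as the continuous $B$-valued functions on $Z$ that vanish at infinity and satisfy the pointwise fixed-point condition. For $F\in C_0(X\times_G B)$, $z\in Z$ and $g\in G_z$ we have $\beta_g(F(z))=F(gz)=F(z)$, so $F(z)\in B^{G_z}$; moreover $z\mapsto\|F(z)\|$ is the composite of the homeomorphism $q|_Z$ (where $q\colon X\to G\backslash X$ is the quotient map) with the $C_0$-function on $G\backslash X$ to which $\|F(\cdot)\|$ descends, so it lies in $C_0(Z)$. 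As restriction respects the pointwise $*$-algebra operations, $\rho$ is a $*$-homomorphism into the target.

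The candidate inverse is an extension map. Write $s\colon G\backslash X\to Z$ for the inverse of $q|_Z$, so that $s(q(x))$ is the unique point of $Z$ on the orbit of $x$. Given $f$ in the target, define $F_f\colon X\to B$ by $F_f(x)=\beta_g(f(z))$ whenever $x=gz$ with $z\in Z$. This is well defined: if $g_1z_1=g_2z_2$ with $z_i\in Z$, then $z_1=z_2=:z$ by uniqueness of the representative in $Z$, and $g_2^{-1}g_1\in G_z$, whence $\beta_{g_1}(f(z))=\beta_{g_2}\beta_{g_2^{-1}g_1}(f(z))=\beta_{g_2}(f(z))$ because $f(z)\in B^{G_z}$. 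By construction $F_f(hx)=\beta_h(F_f(x))$, $F_f|_Z=f$, and $\|F_f(gz)\|=\|f(z)\|$, so $x\mapsto\|F_f(x)\|$ factors through $G\backslash X$ and vanishes at infinity there. Granting the continuity of $F_f$ discussed below, one checks at once that $\rho(F_f)=f$ and, using the equivariance of $F$, that $F_{\rho(F)}=F$ for every $F\in C_0(X\times_G B)$; hence $\rho$ and $f\mapsto F_f$ are mutually inverse bijections.

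The main obstacle is the continuity of $F_f$, since the choice of $g$ with $x=gz$ is neither unique nor continuous in $x$. Here I would use properness. Fix $x_0=g_0z_0$ with $z_0=s(q(x_0))$ and suppose, toward a contradiction, that along some net $x_i\to x_0$ one has $\|F_f(x_i)-F_f(x_0)\|\ge\varepsilon$; write $x_i=g_iz_i$ with $z_i=s(q(x_i))$. Continuity of $s\circ q$ gives $z_i\to z_0$, so the pairs $(g_iz_i,z_i)=(x_i,z_i)$ eventually lie in a fixed compact neighbourhood of $(x_0,z_0)$ in $X\times X$. Since $(g,x)\mapsto(gx,x)$ is proper, the net $(g_i,z_i)$ eventually lies in a compact subset of $G\times X$, so after passing to a subnet $g_i\to g_\infty$ in $G$. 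Joint continuity of the action (which follows from strong continuity together with the fact that each $\beta_g$ is isometric) then yields $x_i=g_iz_i\to g_\infty z_0$, forcing $g_\infty z_0=x_0=g_0z_0$, i.e. $g_0^{-1}g_\infty\in G_{z_0}$. The same joint continuity gives $F_f(x_i)=\beta_{g_i}(f(z_i))\to\beta_{g_\infty}(f(z_0))=\beta_{g_0}(f(z_0))=F_f(x_0)$, where the middle equality uses $g_0^{-1}g_\infty\in G_{z_0}$ and $f(z_0)\in B^{G_{z_0}}$. This contradicts the choice of $\varepsilon$, so $F_f$ is continuous.

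With continuity established, $f\mapsto F_f$ lands in $C_0(X\times_G B)$ and is a two-sided inverse to $\rho$, so $\rho$ is a bijective $*$-homomorphism and hence the asserted isomorphism. I expect the properness argument in the third paragraph to be the only genuinely delicate point; everything else is a routine check of the algebraic and $C_0$-conditions.
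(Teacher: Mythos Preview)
Your argument is correct. The construction of the inverse $f\mapsto F_f$ and the properness argument for its continuity are sound; the passage to a subnet using properness of $(g,x)\mapsto(gx,x)$ to extract a convergent subnet of the $g_i$'s is the right move, and the contradiction with the $\varepsilon$-separation then goes through for that subnet.

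Your approach differs from the paper's. Rather than building an explicit inverse, the paper observes that both $C_0(X\times_G B)$ and the target algebra are $C_0(G\backslash X)$-algebras (identifying $Z\cong G\backslash X$ via the hypothesis) and that the restriction map $F\mapsto F|_Z$ is $C_0(G\backslash X)$-linear. Lemma~\ref{lem-iso} then reduces the question to checking that the induced maps on fibres are bijections; by (the proof of) Lemma~\ref{lem-bundle} the fibre of the domain at $Gz$ is $B^{G_z}$ via evaluation at $z$, and the fibre map is then seen to be the identity on $B^{G_z}$. This sidesteps the continuity issue entirely: continuity of the inverse is absorbed into the general fact that a fibrewise-bijective $C_0(Z)$-linear $*$-homomorphism is an isomorphism. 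Your route is more hands-on and self-contained---it does not lean on the bundle machinery of \S\ref{sec-bundles}---at the cost of the properness argument in your third paragraph, which is precisely the work the paper delegates to Lemma~\ref{lem-iso}.
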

\begin{proof}
This is an easy consequence of (the proof of) Lemma \ref{lem-bundle} together with
Lemma \ref{lem-iso}.
\end{proof}

%If $G$ acts properly on $X$ and $x\in X$, then by a {\em local slice} at $x$ we 
%shall understand a locally compact (i.e, locally closed) subset $S$ of $X$ 
%such that
%\begin{itemize}
%\item $x\in S$ and $U=G\cdot S\subseteq X$ is a $G$-invariant open neighborhood of $x$ in $X$;
%\item the quotient map $q:X\to G\backslash X$ restricts to a homeomorphism $q:S\to q(S)\subseteq G\backslash X$.
%\end{itemize}
%In general such slices do not have to exist, even if $G$ is acting freely on $X$.
%But it follows from Palais' slice theorem that local slices always exist if $G$ is a Lie group and,
%in particular, if $G$ is discrete. The proof of the following lemma is straight-forward.

%\begin{lemma}\label{lem-slice}
%Suppose that $S\subseteq X$ is a local slice for the proper $G$-action on $X$ and 
%let $B$ be any $G$-algebra. Let $U=G\cdot S\subseteq X$. Then 
%$$\Phi_S: C_0(U\times_GB)\to C_0(S, B); \Phi_S(F)=F|_S$$
%is a faithful $*$-homomorphism. Its image consists of all function $f:S\to B$ such that
%$f(x)\in B^{G_x}$ for all $x\in S$.
%\end{lemma}

Assume now that we have two commuting actions $\alpha,\beta:G\to \Aut(B)$ 
of $G$ on the same C*-algebra $B$. Then $\beta$ induces an action $\tilde{\beta}$
on the crossed product $B\rtimes_{\alpha}G$ in the canonical way. On the other 
hand, we also obtain an action $\tilde{\alpha}:G\to \Aut(C_0(X\times_{G,\beta}B))$ via
$$\big(\tilde{\alpha}_g(F)\big)(x)=\alpha_g(F(x)).$$
We want to show the following

\begin{proposition}\label{prop-commute1}
In the above situation we have a canonical isomorphism
$$C_0(X\times_{G,\beta}B)\rtimes_{\tilde\alpha}G\cong 
C_0\big(X\times_{G,\tilde\beta}(B\rtimes_\alpha G)\big).$$
\end{proposition}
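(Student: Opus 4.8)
The plan is to realise both sides as $C_0(G\backslash X)$-algebras, to write down a natural comparison homomorphism on a dense convolution subalgebra, and then to verify that it is an isomorphism fibre-by-fibre using Lemma \ref{lem-iso}. This reduces the whole statement to a purely fibrewise computation over a single orbit, where the group acting is the compact stabilizer $G_x$.

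First I would record the two bundle structures. By Lemma \ref{lem-bundle}, $C_0(X\times_{G,\beta}B)$ is a continuous bundle over $G\backslash X$ with fibre $B^{G_x}$ over $Gx$, and a direct check shows $\tilde\alpha$ acts by $C_0(G\backslash X)$-linear automorphisms (multiplication by $f\in C_0(G\backslash X)$ is the scalar $f(Gx)$ in each fibre, which commutes with applying $\alpha_g$ pointwise). Hence the left-hand crossed product is again a $C_0(G\backslash X)$-algebra, and by the fibre computation for crossed products recalled just before Lemma \ref{lem-iso} its fibre over $Gx$ is $B^{G_x}\rtimes_\alpha G$: here one uses that $\alpha_g$ preserves $B^{G_x}$ (because $\alpha$ and $\beta$ commute) and that evaluation at $x$ intertwines $\tilde\alpha$ with $\alpha$, so the induced fibre action is simply $\alpha$ restricted to $B^{G_x}$. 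On the right-hand side, Lemma \ref{lem-bundle} applied to the $G$-algebra $(B\rtimes_\alpha G,\tilde\beta)$ identifies the fibre over $Gx$ with the fixed-point algebra $(B\rtimes_\alpha G)^{G_x}$ for $\tilde\beta|_{G_x}$.

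Next I would build the comparison map on the dense subalgebra $C_c(G, C_0(X\times_{G,\beta}B))$. Writing $\Phi(g)(x)=\phi(g,x)\in B$, I define $\Psi(\Phi)(x)$ to be the element of $B\rtimes_\alpha G$ represented by the function $g\mapsto\phi(g,x)$. A short calculation with the convolution and involution formulas shows $\Psi$ is a $*$-homomorphism into the right-hand algebra: the equivariance $\phi(g,hx)=\beta_h(\phi(g,x))$ translates exactly into $\Psi(\Phi)(hx)=\tilde\beta_h(\Psi(\Phi)(x))$, and the two convolution products agree term by term. Since $\|\Psi(\Phi)\|\le\sup_x\int_G\|\phi(g,x)\|\,dg\le\|\Phi\|_{L^1}$, the map $\Psi$ is $L^1$-bounded, hence extends by the universal property of the full crossed product to a $*$-homomorphism $\Psi\colon C_0(X\times_{G,\beta}B)\rtimes_{\tilde\alpha}G\to C_0(X\times_{G,\tilde\beta}(B\rtimes_\alpha G))$, and one checks directly that it is $C_0(G\backslash X)$-linear.

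By Lemma \ref{lem-iso} it then suffices to prove each fibre map $\Psi_{Gx}$ is bijective. Tracing the identifications, $\Psi_{Gx}$ is precisely the natural map $j\colon B^{G_x}\rtimes_\alpha G\to(B\rtimes_\alpha G)^{G_x}$ coming from the inclusion $B^{G_x}\hookrightarrow B$ (whose image does land in the $\tilde\beta(G_x)$-fixed points). This is where the real work lies; set $K:=G_x$, a compact group acting via $\beta$ and commuting with $\alpha$. For surjectivity I would use the faithful conditional expectation $\mathcal E=\int_K\tilde\beta_h\,dh$ onto $(B\rtimes_\alpha G)^{K}$: on $C_c(G,B)$ it is $f\mapsto(g\mapsto\int_K\beta_h(f(g))\,dh)$, so it carries the dense subalgebra into $C_c(G,B^K)\subseteq j(B^K\rtimes_\alpha G)$; since $j$ has closed range and $\mathcal E(C_c(G,B))$ is dense in $(B\rtimes_\alpha G)^{K}$, the range of $j$ is the whole fibre. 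For injectivity I would use the $\alpha$-equivariant conditional expectation $E=\int_K\beta_h\,dh\colon B\to B^K$ (equivariant exactly because $\alpha,\beta$ commute): the induced completely positive map $B\rtimes_\alpha G\to B^K\rtimes_\alpha G$ on full crossed products is a left inverse to $j$, whence $j$ is injective. I expect this last point to be the main obstacle, since for \emph{full} crossed products an equivariant inclusion of a subalgebra need not stay injective after crossing with $G$, and it is precisely the existence of the equivariant retraction $E$ (and the functoriality of the full crossed product for it) that saves injectivity here. Bijectivity of all fibre maps, together with Lemma \ref{lem-iso}, then yields the asserted isomorphism.
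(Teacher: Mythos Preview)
Your global strategy coincides with the paper's: build a $C_0(G\backslash X)$-linear $*$-homomorphism between the two sides (you work directly on $C_c(G,\,\cdot\,)$, the paper uses a covariant pair $(\Phi,U)$; these are equivalent) and then invoke Lemma~\ref{lem-iso} to reduce to the fibrewise assertion that the inclusion-induced map $\iota\rtimes G\colon B^{K}\rtimes_\alpha G\to (B\rtimes_\alpha G)^{K}$ is an isomorphism for each compact stabilizer $K=G_x$. The paper records this fibre statement separately as Lemma~\ref{lem-fix}.

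The genuine difference is in how that fibre isomorphism is established. The paper uses a Hilbert-module argument: it realises $B^{K}$ as the compact operators on a right $B\rtimes K$-module $X_B$, applies descent to identify $B^{K}\rtimes G$ with $\mathcal K(X_B\rtimes G)$, performs the analogous construction for $(B\rtimes G)^{K}$, and then matches the two modules over $(B\rtimes K)\rtimes G\cong(B\rtimes G)\rtimes K$. Your route via conditional expectations is more direct, and your surjectivity step is fine. For injectivity, however, you assert that the $\alpha$-equivariant conditional expectation $E\colon B\to B^{K}$ induces a completely positive left inverse $B\rtimes_\alpha G\to B^{K}\rtimes_\alpha G$ on \emph{full} crossed products. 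That is true, but it does not follow from any universal property of the full crossed product, which only sees covariant $*$-representations and not c.p.\ maps; one needs an equivariant Stinespring/KSGNS dilation to show that for every covariant representation $(\pi,U)$ of $(B^{K},G,\alpha)$ the map $\pi\circ E$ dilates to a covariant representation of $(B,G,\alpha)$, which then yields the required norm bound. This is exactly the content you are hiding in the word ``functoriality''. The paper's opening remark in the proof of Lemma~\ref{lem-fix} (``the lemma is not obvious, since\ldots the full crossed product $D\rtimes G$ does not include faithfully into $B\rtimes G$'') is aimed squarely at this point, and the Hilbert-module argument is chosen precisely to bypass it. So your outline is sound, but at the injectivity step you should either supply the dilation argument explicitly or switch to the module approach.
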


For the proof we first need 

\begin{lemma}\label{lem-fix}
Let $K$ be a compact group and $G$ a locally compact group such that $\beta:K\to \Aut(B)$, $\alpha:G\to \Aut(B)$ are commuting actions of $K$ and $G$ on the C*-algebra $B$. Then the fixed-point algebra $B^K$ for the action of $K$ on $B$  is $G$-invariant and the inclusion $\iota:B^K\to B$ induces an isomorphism 
$$\iota\rtimes G: B^K\rtimes G\stackrel{\cong}{\longrightarrow} (B\rtimes G)^K,$$
where the fixed-point algebra on the right hand side is taken with respect to the action $\tilde{\beta}$
of $K$ on $B\rtimes G$ induced by $\beta$ in the canonical way. 
\end{lemma}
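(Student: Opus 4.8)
The plan is to realise both sides as completions of the common dense $*$-subalgebra $C_c(G,B^K)$ and to show that $\iota\rtimes G$ is the resulting isometric isomorphism, using averaging over the compact group $K$ to produce both the surjectivity and a left inverse. First I would record the formal facts. Since $\alpha$ and $\beta$ commute, $\beta_k(\alpha_g(b))=\alpha_g(\beta_k(b))$, so $\alpha_g(B^K)\subseteq B^K$ and $B^K$ is a $G$-invariant C*-subalgebra; the inclusion $\iota\colon B^K\hookrightarrow B$ is then $G$-equivariant and induces the $*$-homomorphism $\iota\rtimes G$, which on $C_c(G,B^K)$ is just the inclusion into $C_c(G,B)$. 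The action $\tilde\beta$ of $K$ on $B\rtimes G$ is given on $C_c(G,B)$ by $(\tilde\beta_k f)(g)=\beta_k(f(g))$, and commutation of $\alpha,\beta$ is exactly what makes this a well-defined action by automorphisms. For $f\in C_c(G,B^K)$ one has $\tilde\beta_k f=f$, so the image of $\iota\rtimes G$ lands in the fixed-point algebra $(B\rtimes G)^K$.

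Next I would prove surjectivity onto $(B\rtimes G)^K$. Averaging over $K$ yields a faithful conditional expectation $\mathcal{E}\colon B\rtimes G\to(B\rtimes G)^K$, $\mathcal{E}(x)=\int_K\tilde\beta_k(x)\,dk$, whose range is precisely $(B\rtimes G)^K$. On $C_c(G,B)$ it acts by $\mathcal{E}(f)(g)=\int_K\beta_k(f(g))\,dk=E(f(g))$, where $E\colon B\to B^K$, $E(b)=\int_K\beta_k(b)\,dk$, is the canonical $G$-equivariant conditional expectation onto $B^K$. Hence $\mathcal{E}$ maps $C_c(G,B)$ onto $C_c(G,B^K)$ and, being continuous, maps $B\rtimes G$ into $\overline{C_c(G,B^K)}$. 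Since the image of a $*$-homomorphism between C*-algebras is closed, the image of $\iota\rtimes G$ equals the closure of $C_c(G,B^K)$ in $B\rtimes G$, which therefore coincides with $\mathcal{E}(B\rtimes G)=(B\rtimes G)^K$.

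For injectivity I would construct a left inverse. The expectation $E$ is a $G$-equivariant completely positive contraction, and I claim it induces a contraction $E\rtimes G\colon B\rtimes G\to B^K\rtimes G$ on the full crossed products, determined by $(E\rtimes G)(f)(g)=E(f(g))$ for $f\in C_c(G,B)$. Granting this, $(E\rtimes G)\circ(\iota\rtimes G)$ is the identity on the dense subalgebra $C_c(G,B^K)$, hence on all of $B^K\rtimes G$, so $\iota\rtimes G$ is injective; combined with the previous paragraph this makes it an isomorphism onto $(B\rtimes G)^K$.

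The main obstacle is exactly the boundedness of $E\rtimes G$ on the \emph{full} crossed product, i.e.\ the functoriality of the full crossed product under the equivariant completely positive map $E$. I would obtain it by an equivariant Stinespring dilation: given any covariant representation $(\rho,V)$ of $(B^K,G)$ on a Hilbert space $\mathcal{H}$, the composite $\rho\circ E\colon B\to\mathcal{B}(\mathcal{H})$ is a $G$-equivariant completely positive map, hence dilates to a covariant representation $(\sigma,\tilde V)$ of $(B,G)$ on some $\tilde{\mathcal{H}}$ together with an isometry $W\colon\mathcal{H}\to\tilde{\mathcal{H}}$ satisfying $\tilde V_gW=WV_g$ and $\rho(E(b))=W^*\sigma(b)W$. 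A direct computation then gives $(\rho\rtimes V)\bigl((E\rtimes G)(f)\bigr)=W^*(\sigma\rtimes\tilde V)(f)\,W$ for $f\in C_c(G,B)$, whence $\|(\rho\rtimes V)((E\rtimes G)(f))\|\le\|f\|_{B\rtimes G}$; taking the supremum over all covariant $(\rho,V)$ yields $\|(E\rtimes G)(f)\|_{B^K\rtimes G}\le\|f\|_{B\rtimes G}$. The remaining verifications---that $\tilde\beta$ is an action, that $\mathcal{E}$ is a faithful conditional expectation with range $(B\rtimes G)^K$, and that the dilation intertwines the relevant integrals---are routine, so the full weight of the argument rests on this equivariant dilation step.
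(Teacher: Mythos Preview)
Your proof is correct but follows a different route from the paper's. The paper identifies $B^K$ with the compact operators $\mathcal{K}(X_B)$ on a Hilbert $B\rtimes K$-module $X_B$ (built from $B$ itself), and similarly realises $(B\rtimes G)^K$ as $\mathcal{K}(X_{B\rtimes G})$; it then shows that the descent module $X_B\rtimes G$ and $X_{B\rtimes G}$ are isomorphic as Hilbert $B\rtimes(G\times K)$-modules, using the iterated crossed product identification $(B\rtimes K)\rtimes G\cong(B\rtimes G)\rtimes K$, and concludes by the general principle that an isomorphism of modules induces an isomorphism of their compact operators. Your argument instead stays entirely on the crossed-product side: you get surjectivity from the $K$-averaging expectation $\mathcal{E}$ on $B\rtimes G$, and injectivity from the existence of a bounded left inverse $E\rtimes G$, whose boundedness on the \emph{full} crossed product you establish via an equivariant Stinespring dilation. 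Both approaches confront the same obstacle the paper flags---that a $G$-invariant subalgebra need not give a faithful inclusion of full crossed products---but you resolve it with a completely positive splitting rather than with Morita theory. Your route is more direct and avoids the Hilbert-module bookkeeping, at the cost of invoking the equivariant Stinespring theorem; the paper's route is more structural and fits naturally with the imprimitivity-bimodule language used elsewhere in the article.
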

\begin{proof}
%Note first that there are canonical conditional expectations $E:B\to B^K$ and
% $E\rtimes G: B\rtimes G\to (B\rtimes G)^K$ given by 
% $$E_B(b)=\int_K \beta_k(b)\, dk\quad\text{and}\quad E_{B\rtimes G}(x)=\int_K\tilde\beta_k(x)\,dk.$$
%It is easy to check that $E\rtimes G\big(C_c(G,B)\big)=C_c(G, B^K)$,  which implies that 
%that the latter is dense in $(B\rtimes G)^K$. It follows from this that 
%$\iota\rtimes G: B^K\rtimes G\to B\rtimes G$ has image $(B\rtimes G)^K$. 
Note that the lemma is not obvious, 
since there might exist $G$-invariant sub-algebras $D\subseteq B$ such that the {\em full} crossed
product $D\rtimes G$ does not include faithfully into $B\rtimes G$ (while this would always be true for the reduced crossed products).

For the proof  we use the
fact that $B^K$ identifies with the compact operators of a $B\rtimes K$-Hilbert module
defined as follows: we
make $B$ into a pre-$B\rtimes K$ Hilbert module, 
with completion denoted by $X_B$, 
by defining the $B\rtimes K$-valued inner product 
$$\lk a,b\rk_{B\rtimes K}=\big(k\mapsto \beta_k(a^*)b\big)\in C(K,B)$$
for $a,b\in X_B$, and right action of $C(K,B)\subseteq B\rtimes K$ on $B\subseteq X_B$ by
$$a\cdot f=\int_Ka\beta_k(f(k^{-1}))\,dk, \quad a\in X_B, f\in C(K,B).$$
We can also check that $X_B$ carries a structure of a {\em full} left Hilbert-$B^K$-module
given by $_{B^K}\lk a,b\rk=\int_K \beta_k(ab^*)\,dk$ and a left action of $B^K$ on $B\subseteq X_B$ given by multiplication in $B$. One easily checks that these Hilbert-module structures on $X_B$ are compatible in the sense that
$$_{B^K}\lk a,b\rk c=a\lk b,c\rk_{B\rtimes K}$$
for all $a,b,c\in X_B$. Thus, the left action of $B^K$ on $X_B$ identifies $B^K$ with $\mathcal K(X_B)$.
We can then consider the descent module $X_B\rtimes G$, which is a 
$(B\rtimes K)\rtimes G$-Hilbert module with $\mathcal K(X_B\rtimes G)\cong B^K\rtimes G$ (e.g., see \cite{Combes} for the formulas for the actions and inner products).
%
%$B^K\rtimes G-(B\rtimes K)\rtimes G$ Hilbert bimodule which  implements a Morita equivalence 
%between $B^K\rtimes G$ and the ideal  $J_B^K\rtimes G\subseteq (B\rtimes K)\rtimes G$.

Similarly,  via the action of $K$ on $B\rtimes G$  we obtain a 
$(B\rtimes G)\rtimes K$-Hilbert bimodule $X_{B\rtimes G}$ 
with compact operators isomorphic to $(B\rtimes G)^K$.
Since the actions of $K$ and $G$ on $B$ commute, we can identify 
$$(B\rtimes G)\rtimes K\cong B\rtimes (G\times K)\cong (B\rtimes K)\rtimes G.$$
We now check that under this identification we obtain an isomorphism 
of right  Hilbert $B\rtimes (G\times K)$-modules between $X_B\rtimes G$ and $X_{B\rtimes G}$,
such that $\iota\rtimes G$ intertwines the left actions of $B^K\rtimes G$ and $(B\rtimes G)^K$.
Since isomorphisms between Hilbert modules induce isomorphisms between their compact operators,
this will imply that $\iota\rtimes G$ is an isomorphism.
Note that by construction both modules $X_B\rtimes G$ and $X_{B\rtimes G}$ contain $C_c(G,B)$ 
as a dense $C_c(G\times K, B)$-submodule, where we view $C_c(G\times K,B)$ as a dense 
subalgebra of $B\rtimes (G\times K)$. One then checks that the identity map on $C_c(B\rtimes G)$
induces the desired module isomorphism such that $\iota\rtimes G$ commutes with the left 
actions of $C_c(G, B^K)=C_c(G,B)^K$ sitting as dense subalgebra in $B^K\rtimes G$ and $(B\rtimes G)^K$, respectively. Thus the identity on $C_c(G,B)$ extends to the desired isomorphism 
$X_B\rtimes G\cong X_{B\rtimes G}$.
\end{proof}

\begin{proof}[Proof of Proposition \ref{prop-commute1}]
We shall describe the isomorphism via a covariant pair $(\Phi, U)$.
For this let $(i_B, i_G) : (B, G) \to M(B\rtimes_{\alpha}G)$ denote the canonical inclusions
(see \cite[Proposition 2.3.4]{Dana-book}).
We then define 
$$\Phi:C_0(X\times_{G,\beta}B)\to 
M\big(C_0\big(X\times_{G,\tilde\beta}(B\rtimes_\alpha G)\big)\big)$$
by sending $F\in C_0(X\times_{G,\beta}B)$ to the multiplier of
$C_0\big(X\times_{G,\tilde\beta}(B\rtimes_\alpha G)\big)$ given by pointwise multiplication 
with $x\mapsto i_B(F(x))$. Similarly, for $g\in G$ we define 
$U_g\in  M\big(C_0\big(X\times_{G,\tilde\beta}(B\rtimes_\alpha G)\big)\big)$ by 
the pointwise application of $i_G(g)$. One checks that $(\Phi, U)$  gives a well defined 
covariant homomorphism of $(C_0(X\rtimes_{G,\beta}B), G)$ into $M\big(C_0\big(X\times_{G,\tilde\beta}(B\rtimes_\alpha G)\big)\big)$
whose integrated form $\Phi\rtimes U$
is $C_0(G\backslash X)$-linear, since $\Phi$ is $C_0(G\backslash X)$-linear.

Thus it suffices to check that $\Phi\rtimes U$
induces isomorphisms of the fibres. For this fix any $x\in X$. We then obtain a commutative 
diagram
$$
\begin{CD}
C_0(X\times_{G,\beta}B)\rtimes_{\tilde{\alpha}}G     @> \Phi\times U >>   
C_0\big(X\times_{G,\tilde\beta}(B\rtimes_\alpha G)\big) \\
@V\epsilon_x VV      @VV\epsilon_x V\\
B^{G_x}\rtimes G       @>>i_B\times i_G>   (B\rtimes G)^{G_x}
\end{CD}
$$ 
and the result follows from Lemma \ref{lem-fix}. 
\end{proof}

We are now going to describe the crossed product $C_0(X)\rtimes G$ in terms 
of section algebras of suitable C*-algebra bundles. 
Consider the algebra $\cK=\cK(L^2(G))$ 
equipped with the action $\Ad\rho:G\to \Aut (\cK)$, where $\rho:G\to U(L^2(G))$ denotes
the right regular representation of $G$ given by $\rho(g)\xi(t)=\sqrt{\Delta_G(g)}\xi(tg)$ 
for $\xi\in L^2(G)$. Let $\lt,\rt:G\to \Aut(C_0(G))$ denote the actions given by left and right translation on $G$, respectively.
It then follows from the extended version of the Stone-von Neumann theorem
(see \cite[Theorem 4.24]{Dana-book}, but see \cite{Rief-heis} or a more direct proof) that
\begin{equation}\label{eq-SvN}
M\times \lambda: C_0(G)\rtimes_{\lt} G\stackrel{\cong}{\longrightarrow}\cK(L^2(G)),
\end{equation}
where $M\times \lambda$ is the integrated form of the covariant pair $(M,\lambda)$
with $M: C_0(G)\to B(L^2(G))$ being the representation by multiplication operators and
$\lambda: G\to U(L^2(G))$ the left regular representation of $G$. 
Let $\tilde\rt: G\to \Aut(C_0(G)\rtimes_{\lt}G)$ denote the action induced from the 
right translation action $\rt:G\to \Aut(C_0(G))$.
For $f\in C_0(G)$ one  checks that $M(\rt_g(f))=\rho(g)M(f)\rho(g^{-1})$.
From this and the fact that $\rho$ commutes with $\lambda$ it follows that
$$M\times \lambda(\tilde{\rt}_g(\varphi))=\rho(g)\big(M\times\lambda(\varphi))\rho(g^{-1})$$
for all $g\in G$ and $\ph\in C_0(G)\rtimes_{\lt} G$. We shall use all this for the proof of
following result, where we write $\cK$ for $\cK(L^2(G))$:

\begin{theorem}\label{thm-crossed}
Let $G$ be a locally compact group acting properly on the locally compact space
$X$ with corresponding action $\tau:G\to \Aut(C_0(X))$ and let $\beta:G\to \Aut(B)$
be any action of $G$ on a C*-algebra $B$. Then there is a canonical isomorphism
$$(C_0(X)\otimes B)\rtimes_{\tau\otimes\beta}G\cong C_0\big(X\times_{G,\Ad\rho\otimes\beta}(\cK\otimes B)\big).$$
\end{theorem}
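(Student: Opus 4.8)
The plan is to obtain the isomorphism by applying Proposition~\ref{prop-commute1} to a cleverly chosen algebra carrying two commuting actions, and then to identify both sides of the resulting formula with the two sides of the theorem, using the Stone--von Neumann isomorphism \eqref{eq-SvN} on one end and an explicit ``induction'' isomorphism on the other. Concretely, I would apply Proposition~\ref{prop-commute1} with $B$ replaced by $D:=C_0(G)\otimes B$, equipped with the two commuting actions $\alpha:=\lt\otimes\id_B$ and $\gamma:=\rt\otimes\beta$ of $G$; these commute because $\lt$ and $\rt$ commute on $C_0(G)$ and $\id_B$ commutes with $\beta$. Here $\alpha$ plays the role of the integrated action and $\gamma$ the role of the action defining the generalized fixed point algebra.

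First I would dispose of the right-hand side. Proposition~\ref{prop-commute1} produces $C_0\big(X\times_{G,\tilde\gamma}(D\rtimes_\alpha G)\big)$. Since $\alpha$ is trivial on the $B$-factor, the crossed product splits as $D\rtimes_\alpha G=(C_0(G)\otimes B)\rtimes_{\lt\otimes\id}G\cong(C_0(G)\rtimes_\lt G)\otimes B$, and \eqref{eq-SvN} identifies the left factor's crossed product with $\cK$, giving $D\rtimes_\alpha G\cong\cK\otimes B$. Writing $\gamma=(\rt\otimes\id_B)\circ(\id\otimes\beta)$ and using the relation $M\times\lambda(\tilde\rt_g(\varphi))=\rho(g)\big(M\times\lambda(\varphi)\big)\rho(g^{-1})$ recorded just before the theorem, the induced action $\tilde\gamma$ is carried to $\Ad\rho\otimes\beta$ on $\cK\otimes B$. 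Hence the right-hand side of Proposition~\ref{prop-commute1} is exactly $C_0\big(X\times_{G,\Ad\rho\otimes\beta}(\cK\otimes B)\big)$.

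The heart of the matter is the left-hand side: I must identify $C_0(X\times_{G,\gamma}D)\rtimes_{\tilde\alpha}G$ with $(C_0(X)\otimes B)\rtimes_{\tau\otimes\beta}G$. For this I would construct a map $\Psi:C_0(X)\otimes B=C_0(X,B)\to C_0(X\times_{G,\gamma}D)$ by $\Psi(h)(x)(s):=\beta_s^{-1}(h(sx))$ and show it is a $C_0(G\backslash X)$-linear $*$-isomorphism intertwining $\tau\otimes\beta$ with $\tilde\alpha$. A direct check shows $\Psi(h)$ is $\gamma$-equivariant and that the $\beta_s$-twist in $\Psi$ exactly cancels the twist coming from $\gamma$-equivariance when one transports $\tilde\alpha$ back to $C_0(X,B)$, yielding $\tau\otimes\beta$. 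The one genuinely analytic point is well-definedness: that $s\mapsto\beta_s^{-1}(h(sx))$ lies in $C_0(G,B)$ for each $x$ and has orbit-wise sup-norm vanishing at infinity on $G\backslash X$. This is precisely where properness is used, since for a proper action each orbit map $G\to X$, $s\mapsto sx$, is proper, so $s\mapsto h(sx)$ vanishes at infinity. Rather than prove bijectivity by hand, I would invoke Lemma~\ref{lem-iso}: $\Psi$ is $C_0(G\backslash X)$-linear, and by Lemma~\ref{lem-bundle} its fibre over $Gx$ is the map $C_0(G/G_x)\otimes B\to(C_0(G)\otimes B)^{G_x}$ sending $\bar h$ to $s\mapsto\beta_s^{-1}(\bar h(sx))$, which is a bijection with inverse $f\mapsto\big(sx\mapsto\beta_s(f(s))\big)$; so $\Psi$ is an isomorphism.

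Chaining the three isomorphisms completes the proof: $\Psi$ (crossed with $G$), then Proposition~\ref{prop-commute1}, then \eqref{eq-SvN}. I expect the main obstacle to be bookkeeping the conventions for $\lt,\rt,\rho$ and for the induced actions $\tilde\alpha,\tilde\gamma$ so that all the twists cancel correctly --- in particular confirming that $\tilde\gamma$ maps to $\Ad\rho\otimes\beta$ and not to some variant --- together with the properness argument underlying the well-definedness of $\Psi$. Everything else reduces to direct computation or to Proposition~\ref{prop-commute1}, \eqref{eq-SvN}, and Lemmas~\ref{lem-iso} and~\ref{lem-bundle}.
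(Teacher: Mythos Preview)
Your proposal is correct and follows the same strategy as the paper: apply Proposition~\ref{prop-commute1} to $C_0(G,B)$ equipped with two commuting $G$-actions built from $\lt$, $\rt$, and $\beta$, then identify one side via the Stone--von Neumann isomorphism \eqref{eq-SvN} and the other via an explicit induction-type isomorphism. The only (cosmetic) difference is that you pair $\beta$ with $\rt$ while the paper pairs it with $\lt$, so the Takesaki--Takai untwist $f(s)\mapsto\beta_s^{-1}f(s)$ is absorbed into your map $\Psi$ on the fixed-point side, whereas the paper applies it separately at the end to reduce $C_0(G,B)\rtimes_{\lt\otimes\beta}G$ to $\cK\otimes B$.
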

\begin{proof} We consider the
 commuting actions $\rt\otimes\id_B$ and $\lt\otimes\beta$ of 
$G$ on $C_0(G,B)=C_0(G)\otimes B$. It follows from Proposition  \ref{prop-commute1}  that 
there is a canonical isomorphism
$$C_0\big(X\times_{G,\widetilde{\rt\otimes\id_B}}(C_0(G,B)\rtimes_{\lt\otimes\beta} G)\big)
\cong C_0(X\times_{G,\rt\otimes\id_B}C_0(G,B))\rtimes_{\widetilde{\lt\otimes\beta}}G.$$
If we define $X\times_{G,\rt}G=G\backslash(X\times G)$ with respect to the 
action $g(x,h)=(gx, hg^{-1})$, and if we equip this space with the $G$-action given by left translation 
on the second factor, called $\tilde{\lt}$ below, we obtain a 
$\tilde{\lt}\otimes\beta-\widetilde{\lt\otimes\beta}$-equivariant isomorphism 
$$C_0(X\times_{G,\rt} G)\otimes B\cong C_0(X\times_{G,\rt\otimes \id_B}C_0(G,B)),$$
which induces an isomorphism of the respective crossed-products.
We further observe that the map $X\times_{G,\rt}G\to X; [x,g]\mapsto gx$ is a homeomorphism 
(with inverse given by $x\mapsto [x,e]$) which transforms $\tilde{\lt}$ into the given action $\tau$ on 
$X$. Combining all this, we obtain a canonical isomorphism
$$C_0\big(X\times_{G,\rt\otimes\id_B}C_0(G,B)\big)\rtimes_{\widetilde{\lt\otimes\beta}}G\cong C_0(X,B)\rtimes_{\tau\otimes \beta}G.$$
Using the isomorphism $C_0\big(X\times_{G,\rt\otimes\id_B}C_0(G,B)\big)\rtimes_{\widetilde{\lt\otimes\beta}}G
\cong C_0\big(X\times_{G,\widetilde{\rt\otimes\id_B}}(C_0(G,B)\rtimes_{{\lt\otimes\beta}}G)\big)$,
all remains to do is  to identify $C_0(G,B)\rtimes_{\lt\otimes\beta} G$ with 
$\cK(L^2(G))\otimes B$
equivariantly with respect to the action $\widetilde{\rt\otimes\id_B}$ and $\Ad\rho\otimes \beta$.
But such isomorphism is well known from Takesaki-Takai duality: it is straightforward 
to check that the isomorphism
$\Phi: C_0(G,B)\to C_0(G,B)$ given by $\Phi(f)(g)=\beta_{g^{-1}}(f(g))$ transforms
the action $\lt\otimes\beta$ to the action $\lt\otimes\id_B$ and the action 
$\rt\otimes\id_B$ to the action $\rt\otimes\beta$. 
This induces an isomorphism
\begin{align*}
C_0\big(X\times_{G,\widetilde{\rt\otimes\id_B}}(C_0(G,B)\rtimes_{\lt\otimes\beta} G)\big)
&\cong 
C_0\big(X\times_{G,\tilde{\rt}\otimes\beta}(C_0(G)\rtimes_{\lt}G)\otimes B)\big)\\
&\cong C_0\big(X\times_{G,\Ad\rho\otimes\beta}(\cK\otimes B)\big).
\end{align*}
\end{proof}

\begin{corollary}\label{cor-proper}
Let $G$ be a locally compact group acting properly on the locally compact space $X$
and let $\cK=\cK(L^2(G))$.
Then 
$$C_0(X)\rtimes_{\tau} G\cong C_0(X\times_{G,\Ad\rho}\cK).$$
\end{corollary}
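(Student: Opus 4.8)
The plan is to read off the corollary as the special case of Theorem \ref{thm-crossed} in which $B = \C$, equipped with the (necessarily trivial) action $\beta = \id$. First I would substitute $B = \C$ into the statement of the theorem and record the canonical simplifications that result: the C*-tensor products collapse, via $C_0(X) \otimes \C \cong C_0(X)$ and $\cK \otimes \C \cong \cK$, while the diagonal actions reduce to $\tau \otimes \id = \tau$ on the left-hand side and $\Ad\rho \otimes \id = \Ad\rho$ on the right-hand side. Under these identifications the isomorphism supplied by the theorem becomes
$$C_0(X) \rtimes_\tau G \cong C_0\big(X \times_{G, \Ad\rho} \cK\big),$$
which is precisely the assertion of the corollary.

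There is no real obstacle to overcome, since all of the analytic work --- the extended Stone--von Neumann identification \eqref{eq-SvN}, the Takesaki--Takai twisting isomorphism $\Phi$, and the interchange of fixed-point algebras with crossed products from Proposition \ref{prop-commute1} --- has already been carried out in the proof of Theorem \ref{thm-crossed}. The only point I would pause to verify is that $\C$ admits no nontrivial $G$-action, so that the general construction specializes without ambiguity; this is immediate, because the only unital $*$-automorphism of $\C$ is the identity, forcing $\beta = \id$. The corollary then follows at once.
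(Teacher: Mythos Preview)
Your proposal is correct and matches the paper's approach exactly: the corollary is stated immediately after Theorem~\ref{thm-crossed} with no separate proof, so the intended argument is precisely the specialization $B=\C$ with trivial action that you describe.
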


\begin{remark}\label{rem-evaluation}
Explicitly, for any orbit $Gx\in G\backslash X$ the evaluation map
$q_x: C_0(X\times_G\cK)\to \cK^{G_x}$ can be described on the level of
$C_0(X)\rtimes G$  by the composition 
of maps 
$$C_0(X)\rtimes G\stackrel{q_{Gx }}{\longrightarrow} C_0(Gx)\rtimes G
\cong C_0(G/G_x)\rtimes G\stackrel{M\times \lambda}{\longrightarrow} \cK^{G_x},$$
where the first map is induced by the $G$-equivariant restriction map $C_0(X)\to C_0(Gx)$ and
the second is induced by the $G$-homeomorphism $G/G_x\to Gx; gG_x\mapsto gx$.
\end{remark}

\begin{example}
\label{ex:weyl_rank_one}
Let \(X \defeq \TT\), \(G \defeq \Z/2\) acting by conjugation on the 
circle. Then 
\[C(\TT)\rtimes \Z/2  \cong \{ f\in C\bigl([0,1], M_2(\C)\bigr) \mid f(0) \; \textup{and}\; 
f(1)\; \textup{are diagonal}\}.\]
This is immediate from Corollary \ref{cor-proper}, using the 
basis \( \{\frac{1}{\sqrt{2}}(1,1), \frac{1}{\sqrt{2}}(1,-1)\}\) for \(\ell^2(\Z/2)\)
 to identify it with 
\(\C^2\) (this diagonalizes the \(\Z/2\)-action.)  A continuous function 
\(f\colon \TT \to \cK\bigl( \ell^2(\Z/2))\) such that 
\( f(gx) = \Ad g \bigl (f(x)\bigr)\) is determined by its restriction to 
\( \{ z\in \TT \mid \textup{Im}(z)\ge 0\}\), where, using the 
above basis, we can identify it with
a map  
\(f\colon [0,1]\to M_2(\C)\) such that 
\(f(0)\) and \(f(1)\) commute with the matrix 
\(\left[ \begin{matrix} 1 & 0\\ 0 & -1\end{matrix}\right]\).
The commutant of this matrix consists of the diagonal 
matrices. 
\end{example}

Theorem \ref{thm-crossed} can be extended to 
the case of proper actions on general C*-algebras $B$, \emph{i.e.} such that 
$B$ is an $X\rtimes G$-algebra for  some proper $G$-space $X$. 
Thus, \(B\) is a 
$C_0(X)$-algebra equipped with a $G$-action $\beta:G\to \Aut(B)$
such that the structure map $\phi:C_0(X)\to ZM(B)$ is $G$-equivariant.
In this situation  the generalized fixed point algebra $B^{G,\beta}$ can be constructed as 
follows: we consider the algebra $C_0(X\times_GB)$ as studied above. 
If $b\in B$, we write $b(y)$ for the evaluation of $B$ in the fiber $B_y$, $y\in X$.
Similarly, for $F\in C_0(X\times_GB)$ we write $F(x,y)$ for the evaluation of
the element  $F(x)\in B$ in the fiber $B_y$.
Then $C_0(X\times_GB)$
becomes a $C_0(G\backslash(X\times X))$-algebra via the structure map
$$\Phi: C_0(G\backslash(X\times X))\to ZM(C_0(X\times GB)); \big(\Phi(\varphi)F\big)(x,y)=
\varphi([x,y])F(x,y).$$
We then define $B^{G,\beta}$ (or just $B^G$ if $\beta$ is understood) 
as the restriction of $C_0(X\times_GB)$ to
$G\backslash\Delta(X)\cong G\backslash X$, where $\Delta(X)=\{(x,x):x\in X\}$.
Note that with this notation we have 
$C_0(X\times_{G,\beta}B)\cong 
\big(C_0(X)\otimes B\big)^{G,\tau\otimes \beta}$ 
(if $\tau$ denotes the corresponding action on $C_0(X)$). 
Moreover, if $G$ is compact, this notation coincides with the usual notation of the 
fixed-point algebra $B^{G}$.

With this notation we get

\begin{theorem}\label{thm-extended}
Suppose that $B$ is an $X\rtimes G$-algebra for the proper $G$-space $X$
via some action $\beta:G\to \Aut(B)$. Then 
$B\rtimes G$ is isomorphic to $(\cK\otimes B)^{G, \Ad\rho\otimes\beta}$.
\end{theorem}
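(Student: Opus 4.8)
The plan is to deduce Theorem \ref{thm-extended} from Theorem \ref{thm-crossed} by restricting to the diagonal, exploiting the extra structure that $B$ carries as an $X\rtimes G$-algebra. First I would apply Theorem \ref{thm-crossed} to the underlying $G$-algebra $B$ (momentarily forgetting its $C_0(X)$-structure) to obtain the canonical isomorphism
\[\Psi\colon (C_0(X)\otimes B)\rtimes_{\tau\otimes\beta}G\stackrel{\cong}{\longrightarrow} C_0\bigl(X\times_{G,\Ad\rho\otimes\beta}(\cK\otimes B)\bigr).\]
Now I bring the $G$-equivariant central map $\phi\colon C_0(X)\to ZM(B)$ back into play. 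Via $C_0(X)$ (first factor) and $\phi$ (second factor), $C_0(X)\otimes B$ becomes a $C_0(X\times X)$-algebra on which $\tau\otimes\beta$ covers the diagonal $G$-action on $X\times X$, so the left-hand side is a $C_0(G\backslash(X\times X))$-algebra. The right-hand side is a $C_0(G\backslash(X\times X))$-algebra by the very construction recalled just before the theorem: the first $X$ is the proper space and the second enters through the $C_0(X)$-structure of the fibres $\cK\otimes B$. Note that $C_0\bigl(X\times_G(\cK\otimes B)\bigr)$ is literally the object appearing on the right of Theorem \ref{thm-crossed}, and $(\cK\otimes B)^{G,\Ad\rho\otimes\beta}$ is by definition its restriction to $G\backslash\Delta(X)$.

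Granting that $\Psi$ is $C_0(G\backslash(X\times X))$-linear, I would then restrict both sides to the closed $G$-invariant diagonal $G\backslash\Delta(X)\cong G\backslash X$. On the right this restriction is, by definition, $(\cK\otimes B)^{G,\Ad\rho\otimes\beta}$. On the left, the $G$-equivariant multiplication map $m\colon C_0(X)\otimes B\to B$, $f\otimes b\mapsto \phi(f)b$, is surjective (since $\phi(C_0(X))B=B$) with kernel exactly the ideal $I_\Delta$ of sections vanishing on the diagonal; thus $m$ realizes the diagonal restriction of the $C_0(X\times X)$-algebra $C_0(X)\otimes B$ as $B$, equivariantly. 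Since the full crossed product functor is exact, applying $\rtimes G$ to $0\to I_\Delta\to C_0(X)\otimes B\stackrel{m}{\to} B\to 0$ shows that $I_\Delta\rtimes G$ is the ideal of $(C_0(X)\otimes B)\rtimes G$ corresponding to the open set $G\backslash((X\times X)\setminus\Delta)$, with quotient $B\rtimes G$. Transporting the isomorphism $\Psi$ to these quotients (Lemma \ref{lem-iso}) then yields $B\rtimes G\cong(\cK\otimes B)^{G,\Ad\rho\otimes\beta}$, as desired.

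The main obstacle is exactly the $C_0(G\backslash(X\times X))$-linearity of $\Psi$, equivalently the statement that $\Psi$ carries the diagonal ideal $I_\Delta\rtimes G$ onto its counterpart on the right. Since $\Psi$ is already $C_0(G\backslash X)$-linear in the proper-space direction, what remains is to see that it respects the $C_0(X)$-structure coming from $B$, and this forces one to track $\phi$ through each step in the proof of Theorem \ref{thm-crossed}: the Stone--von Neumann identification \eqref{eq-SvN}, the homeomorphism $X\times_{G,\rt}G\cong X$, and Proposition \ref{prop-commute1}. The delicate step is the Takesaki--Takai twist $\Phi(f)(g)=\beta_{g^{-1}}(f(g))$ on $C_0(G,B)$, which mixes the group variable with the fibre direction of $B$ and so does not respect the naive $C_0(X)$-structure; one must check that, combined with the remaining identifications, its net effect still preserves the diagonal.

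As a cross-check that the target is correct (and as an alternative that localizes the argument), I would verify the claimed isomorphism fibrewise over $G\backslash X$ using Lemma \ref{lem-iso}. Over an orbit $Gx$ the left fibre is $(B|_{Gx})\rtimes G\cong\Ind_{G_x}^G(B_x)\rtimes G$, where $B_x$ is the fibre $G_x$-algebra. Writing $\Ind_{G_x}^G(B_x)=(C_0(G)\otimes B_x)^{G_x}$ with $G_x$ acting by $\rt\otimes\beta^x$ and $G$ acting by $\lt\otimes\id$, Lemma \ref{lem-fix} gives $\Ind_{G_x}^G(B_x)\rtimes G\cong\bigl((C_0(G)\otimes B_x)\rtimes_{\lt}G\bigr)^{G_x}$, which by \eqref{eq-SvN} (and the compatibility $M\times\lambda\circ\tilde{\rt}_g=\Ad\rho(g)\circ M\times\lambda$) equals $(\cK\otimes B_x)^{G_x}$. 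On the other side, Lemma \ref{lem-bundle} applied to the $G$-algebra $\cK\otimes B$ gives $G_x$-fixed points $(\cK\otimes B)^{G_x}$ over $Gx$, and evaluating at the $G_x$-fixed point $x$ produces the same fibre $(\cK\otimes B_x)^{G_x}$. Thus the fibres match, confirming Theorem \ref{thm-extended}.
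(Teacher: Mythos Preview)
Your approach is essentially the same as the paper's: apply Theorem~\ref{thm-crossed} to $C_0(X)\otimes B$, observe that both sides are $C_0(G\backslash(X\times X))$-algebras, and restrict to the diagonal using $C_0(X,B)|_{\Delta(X)}\cong B$. The paper likewise identifies the $C_0(G\backslash(X\times X))$-linearity of $\Psi$ as the point requiring verification, dispatching it with the phrase ``a careful look at the proof \ldots\ reveals that this isomorphism preserves the $C_0(G\backslash(X\times X))$-structures''; your explicit isolation of the Takesaki--Takai twist as the delicate step, and your fibrewise cross-check via Lemma~\ref{lem-fix} and Stone--von~Neumann, go a bit further than the paper in spelling out what must be checked, but the overall architecture is identical.
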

\begin{proof}
Consider the $C_0(X\times X)$-algebra $C_0(X,B)$. By Theorem \ref{thm-crossed}
we have $C_0(X,B)\rtimes_{\tau\otimes\beta} G\cong C_0(X\times_{G,\Ad\rho\otimes\beta}(\cK\otimes B))$. The crossed product $C_0(X,B)\rtimes G$ carries a canonical 
structure as a $C_0(G\backslash(X\times X))$-algebra which is induced from the $C_0(X\times X)$-structure 
of $C_0(X,B)$.
A careful look at the proof of the isomorphism $C_0(X,B)\rtimes G\cong C_0(X\times_G(\cK\otimes B))$
reveals that this isomorphism preserves the $C_0(G\backslash(X\times X))$-structures 
on both algebras. Using the $G$-isomorphism $C_0(X,B)|_{\Delta(X)}\cong B$ 
which is induced from the $*$-homomorphism $C_0(X,B)\cong C_0(X)\otimes B\to B; (f\otimes b)\mapsto fb$, we now
obtain a chain of isomorphisms 
\begin{align*}
B\rtimes G&\cong \big(C_0(X,B)|_{\Delta(X)}\big)\rtimes G\\
&\cong \big(C_0(X,B)\rtimes G)|_{G\backslash\Delta(X)}\\
&\cong 
\big(C_0(X\times_G(\cK\otimes B))\big)|_{G\backslash\Delta(X)}\\
&=(\cK\otimes B)^{G, \Ad\rho\otimes\beta}.
\end{align*}
\end{proof}

\begin{remark} Using this, it is not difficult to check that 
for any $X\rtimes G$-algebra $B$ for some proper $G$-space $X$ the 
crossed product $B\rtimes G$ is a $C_0(G\backslash X)$-algebra with fiber 
at an orbit $Gx$ isomorphic to  $(\cK\otimes B_x)^{G_x, \Ad\rho\otimes\beta^x}$,
where $\beta^x:G_x\to \Aut(B_x)$ is the action induced from $\beta$ in the canonical way.
\end{remark}

The results of this section fit into the framework of 
generalized fixed-point algebras (see the work of 
Marc Rieffel and also Ralf 
Meyer in \cite{Rief-proper, Rief-proper1, Meyer-fix}); 
our aim here is not generality, but explicitness, and we 
have taken a direct approach.

\section{The Mackey-Rieffel-Green theorem
for proper actions}\label{sec-mrg}
The Mackey-Rieffel-Green theorem
 (or Mackey-Rieffel-Green machine)
supplies, under some suitable conditions 
on a given C*-dynamical system $(A,G,\alpha)$, a systematic way of describing the 
irreducible representations (or primitive ideals) 
 of the crossed product $A\rtimes_\alpha G$ in terms of the 
associated action of $G$ on $\Prim(A)$ by inducing representations (or ideals) 
from the stabilizers for this action. 
We refer to \cite{Dana-book, EchWil} for  recent discussions of 
this general machinery, and to \cite{Green1, GootRos} for some important contributions towards 
 this theory.

In this section we want to give a self-contained exposition of the Mackey-Rieffel-Green machine
in the special case of crossed products by proper actions on spaces, in which the
result will follow easily from the bundle description of the crossed product $C_0(X)\rtimes G$
as obtained in the previous section and the following explicit description of the fibers.
This explicit description of the fibers will also play an important r\^ole in our 
description of the Fell topology on 
$(C_0(X)\rtimes G)\dach$ as given in \S \ref{sec-topology} below.

From Corollary \ref{cor-proper} and  Lemma \ref{lem-bundle}, if 
$G$ acts properly on $X$, then
 the fibre $(C_0(X)\rtimes G)_{G(x)}$ of the crossed product 
$C_0(X)\rtimes G\cong C_0\big(X\times_{G,\Ad\rho}\cK(L^2(G)\big)$ at the orbit $G(x)$ is 
isomorphic to \(C_0(G/G_x)\rtimes G\), equivalently, to 
the fixed-point algebra $\cK(L^2(G))^{G_x,\Ad\rho}$ for the compact stabilizer 
$G_x$ at $x$, where $\rho: G\to U(L^2(G))$ denotes the right regular representation of $G$. We now analyse the structure of this fibre, using 
the Peter-Weyl theorem. 

Let us recall some basic constructions in representation theory.
If $\H$ is any Hilbert space we denote by $\H^*$ its {\em adjoint Hilbert space}, that is
$$\H^*=\{\xi^*: \xi\in \H\}$$
with the linear operations $\lambda\xi^*+\mu\eta^*=(\bar{\lambda}\xi+\bar\mu\eta)^*$ 
and the inner product $\lk \xi^*,\eta^*\rk=\lk\eta,\xi\rk$. Note that $\H^*$ identifies canonically 
with the space of continuous linear functionals on $\H$. 
If $\sigma:G\to \U(\H)$ is a representation of the group $G$ on the Hilbert space 
$\H$, then its {\em adjoint representation} $\sigma^*:G\to \U(H^*)$ is given by
$$\sigma^*(g)\xi^*:=(\sigma(g)\xi)^*.$$
Assume now that $K$ is a compact subgroup of $G$ and let $\sigma:K\to U(V_\sigma)$
be a unitary representation of $K$. We then define a representation 
$\pi^{\sigma}=P^\sigma\times U^\sigma$ of the crossed product $C_0(G/K)\rtimes G$ 
as follows: we define
\begin{equation}\label{eq-ind1}
\H_{U^\sigma}:=\{\xi\in L^2(G, V_\sigma): \xi(gk)=\sigma(k^{-1})\xi(g)\;\forall g\in G, k\in K\}.
\end{equation}
Then define the covariant representation $(P^\sigma, U^\sigma)$ of $C_0(G/K)\rtimes G$ 
on $\H_{U^\sigma}$ by
\begin{equation}\label{eq-ind2}
\big(P^\sigma(\ph)\xi\big)(g)=\ph(gK)\xi(g)\quad\text{and}\quad (U^\sigma(t)\xi)(g)=\xi(t^{-1}g).
\end{equation}

Note that in classical representation theory of locally compact groups the covariant pair 
$(P^\sigma, U^\sigma)$ is often called 
the ``system of imprimitivity''  induced from the representation $\sigma$ of $K$
(e.g. see \cite{Blatt}).

In what follows, if $\sigma\in \widehat{K}$, we denote by $p_\sigma\in C^*(K)$ the central projection 
corresponding to $\sigma$, i.e., we have $p_{\sigma^*}=d_\sigma\chi_{\sigma^*}$, where 
$\chi_{\sigma^*}(k)=\trace\sigma^*(k)$ denotes the character of the adjoint $\sigma^*$ of $\sigma$
and $d_\sigma$ denotes the dimension of $V_\sigma$.
Note that it follows from the Peter-Weyl theorem  (e.g. see \cite[Chapter 7]{DE})
that $\sigma(p_\sigma)=1_{V_\sigma}$ and $\tau(p_\sigma)=0$ for all $\tau\in \widehat{K}$ 
not equivalent to $\sigma$, and 
that  $\sum_{\sigma\in \widehat{K}}p_\sigma$ converges strictly 
to the unit $1\in M(C^*(K))$.

\begin{lemma}\label{lem-fixed}
Let $G$ be a locally compact group and let $K$ be a compact subgroup of $G$ acting 
on $\cK=\cK(L^2(G))$ via $k\mapsto \Ad\rho(k)$.
 For each $\sigma\in \widehat{K}$ let
  $L^2(G)_\sigma:=\rho(p_{\sigma^*})L^2(G)$.  Then the following are true:
\begin{enumerate}
\item $L^2(G)= \oplus_{\sigma\in \widehat{K}} L^2(G)_\sigma$;
\item each space $L^2(G)_\sigma$ is $\rho(K)$-invariant 
and decomposes into a tensor product $\H_{U^\sigma}\otimes V_\sigma^*$
such that $\rho(k)|_{L^2(G)_\sigma}=1_{\H_{U^\sigma}}\otimes \sigma^*(k)$ for all $k\in K$;
\item $\cK^K\cong \oplus_{\sigma\in \widehat{K}} \cK(\H_{U^\sigma})$, where the
 isomorphism is given 
by sending an operator $T\in  \cK(\H_{U^\sigma})$ to the operator 
$T\otimes 1_{V_{\sigma}^*}\in \cK(L^2(G)_\sigma)$ under the decomposition of {\rm{(ii)}};
\item the projection of $C_0(G/K)\rtimes G\cong \cK^K$ onto the factor $\cK(\H_{U^\sigma})$
in the decomposition in {\rm{(iii)}} is equal to the representation $\pi^\sigma=P^\sigma\times U^\sigma$
constructed above in \eqref{eq-ind1} and \eqref{eq-ind2}.
\end{enumerate}
\end{lemma}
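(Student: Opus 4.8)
The plan is to reduce all four statements to a single computation: the decomposition of the unitary representation $\rho|_K$ of the \emph{compact} group $K$ on $L^2(G)$ into its $K$-isotypic components, together with an explicit identification of the multiplicity space of $\sigma^*$ with the induced-representation space $\H_{U^\sigma}$. Throughout I will use that $K$ is compact, hence unimodular, and that $\Delta_G|_K\equiv 1$ (the image of the compact group $K$ under the continuous homomorphism $\Delta_G$ is a compact subgroup of $\R_{>0}$, hence trivial), so that $(\rho(k)\xi)(g)=\xi(gk)$ for $k\in K$, and the Weil decomposition $\int_G=\int_{G/K}\int_K$. For (i) I extend $\rho|_K$ to a nondegenerate representation $\rho\colon C^*(K)\to B(L^2(G))$; the Peter--Weyl statement already recorded gives $\sum_{\sigma\in\widehat K}p_\sigma\to 1$ strictly in $M(C^*(K))$, and applying $\rho$ and using that $\sigma\mapsto\sigma^*$ permutes $\widehat K$ yields $\sum_\sigma\rho(p_{\sigma^*})=1$ strongly. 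Since the $p_{\sigma^*}$ are orthogonal central projections, the $\rho(p_{\sigma^*})$ are mutually orthogonal, giving the orthogonal decomposition $L^2(G)=\bigoplus_\sigma L^2(G)_\sigma$.

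The crux is (ii). For each $\sigma\in\widehat K$ I define a linear map $\Phi_\sigma\colon\H_{U^\sigma}\otimes V_\sigma^*\to L^2(G)$ by $\Phi_\sigma(\eta\otimes w^*)(g)=\langle\eta(g),w\rangle_{V_\sigma}$, where $w^*\in V_\sigma^*$ corresponds to $w\in V_\sigma$. A direct computation using $\eta(gk)=\sigma(k^{-1})\eta(g)$ and $\Delta_G|_K\equiv1$ shows $\rho(k)\Phi_\sigma(\eta\otimes w^*)=\Phi_\sigma(\eta\otimes\sigma^*(k)w^*)$, so the image is $\sigma^*$-isotypic, $\rho(k)$ acts on it as $1\otimes\sigma^*(k)$, and in particular the image lies in $L^2(G)_\sigma$. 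Computing $\|\Phi_\sigma(\eta\otimes w^*)\|^2$ by splitting $\int_G=\int_{G/K}\int_K$ and invoking the Schur orthogonality relations for the irreducible $\sigma$ gives $\|\Phi_\sigma(\eta\otimes w^*)\|^2=d_\sigma^{-1}\|\eta\|^2\|w\|^2$, so $\sqrt{d_\sigma}\,\Phi_\sigma$ is isometric. To see it is onto $L^2(G)_\sigma$ I identify $\H_{U^\sigma}$ with the full multiplicity space $\Hom_K(V_\sigma^*,L^2(G))$: given a $K$-intertwiner $T$, the vector-valued function determined by $\langle\eta(g),w\rangle:=d_\sigma^{-1/2}(Tw^*)(g)$ satisfies $\eta(gk)=\sigma(k^{-1})\eta(g)$ as a consequence of the relation $T\circ\sigma^*(k)=\rho(k)\circ T$ (again using $\Delta_G|_K\equiv1$), so $\eta\in\H_{U^\sigma}$ and $\eta\mapsto\sqrt{d_\sigma}\,\Phi_\sigma(\eta\otimes\,-\,)$ inverts the assignment. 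I expect the main difficulty to be exactly here: the equivariance, the correct normalizing constant $\sqrt{d_\sigma}$ (which emerges only from Schur orthogonality), and the surjectivity must be verified in tandem.

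Part (iii) is then formal. By (i) and (ii) we have $L^2(G)=\bigoplus_\sigma\H_{U^\sigma}\otimes V_\sigma^*$ with $\rho(k)=\bigoplus_\sigma 1\otimes\sigma^*(k)$. Any $T\in\cK^K$ commutes with every $\rho(k)$; since distinct $\sigma^*$ are inequivalent it preserves each summand, and on $\H_{U^\sigma}\otimes V_\sigma^*$ the commutant of $1\otimes\sigma^*(K)$ is $B(\H_{U^\sigma})\otimes\C1$ by Schur's lemma (irreducibility of $\sigma^*$ together with $\dim V_\sigma<\infty$). Intersecting with the compacts, and noting that compactness of $T$ forces the block norms to vanish at infinity over $\widehat K$, gives $\cK^K\cong\bigoplus_\sigma\cK(\H_{U^\sigma})$, with $T\in\cK(\H_{U^\sigma})$ corresponding to $T\otimes1_{V_\sigma^*}$, as asserted.

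Finally, for (iv), recall from Remark \ref{rem-evaluation} that for the transitive space $G/K$ the isomorphism $C_0(G/K)\rtimes G\cong\cK^K$ is the integrated form $\tilde M\times\lambda$ of the covariant pair on $L^2(G)$ given by $(\tilde M(\phi)\xi)(g)=\phi(gK)\xi(g)$ and $(\lambda(t)\xi)(g)=\xi(t^{-1}g)$. Transporting these operators through $\Phi_\sigma$ is immediate from the defining formulas \eqref{eq-ind2}: one checks $\tilde M(\phi)\Phi_\sigma(\eta\otimes w^*)=\Phi_\sigma(P^\sigma(\phi)\eta\otimes w^*)$ and $\lambda(t)\Phi_\sigma(\eta\otimes w^*)=\Phi_\sigma(U^\sigma(t)\eta\otimes w^*)$, so on the $\sigma$-block one has $\tilde M(\phi)=P^\sigma(\phi)\otimes1$ and $\lambda(t)=U^\sigma(t)\otimes1$. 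Compressing the integrated form to the factor $\cK(\H_{U^\sigma})$, which is precisely the projection constructed in (iii), therefore returns $P^\sigma\times U^\sigma=\pi^\sigma$, completing the proof.
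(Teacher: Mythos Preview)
Your proof is correct and follows essentially the same route as the paper: the same map $\Phi_\sigma(\eta\otimes w^*)(g)=\sqrt{d_\sigma}\,\langle\eta(g),w\rangle$ is used, isometry is checked via Schur orthogonality, and parts (i), (iii), (iv) proceed identically. The only notable difference is in how surjectivity of $\Phi_\sigma$ onto $L^2(G)_\sigma$ is established in (ii): the paper first realizes $L^2(G)$ as the induced space $\H_{U^{\lambda_K}}$ for the regular representation $\lambda_K$ of $K$, then applies the Peter--Weyl decomposition $L^2(K)=\bigoplus_\sigma V_\sigma\otimes V_\sigma^*$ inside this induced picture to obtain the global decomposition $L^2(G)\cong\bigoplus_\sigma\H_{U^\sigma}\otimes V_\sigma^*$, and finally checks that the image of $\Phi_\sigma$ lands in $L^2(G)_\sigma$; you instead identify $\H_{U^\sigma}$ directly with the multiplicity space $\Hom_K(V_\sigma^*,L^2(G))$ by an explicit inverse, which is slightly more abstract but avoids the detour through inducing $\lambda_K$. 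Both arguments are standard and yield the same conclusion.
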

\begin{proof} The proof is basically a consequence of the Peter-Weyl Theorem for the compact group 
$K$. Item (i)  follows from the fact that the  central 
projections $p_{\sigma^*}$ add up to the unit in $M(C^*(K))$ with respect to
the strict topology. 

For the proof of (ii) we first consider the induced $G$-representation 
$U^{\lambda_K}$, where $\lambda_K$ denotes the left regular representation of $K$.
It acts on the Hilbert space 
$$\H_{U^\lambda}:=\{\xi\in L^2(G, L^2(K)): \xi(gk, l)=\xi(g, kl)\;\forall g\in G, k,l\in K\}.$$
But a short computation shows that $U^{\lambda_K}\cong \lambda_G$, 
the left regular representation of $G$, where a unitary intertwining operator is 
given by 
$$\Phi_\lambda: \H_{U^\lambda}\stackrel{\cong}{\longrightarrow} L^2(G); \Phi_\lambda(\xi)(g)=\xi(g,e).$$
By the Peter-Weyl theorem we know that $L^2(K)$ decomposes into the direct sum
$\oplus_{\sigma\in \widehat{K}} V_\sigma\otimes V_\sigma^*$ in such a way that
the left regular representation decomposes as 
$\lambda_K\cong \oplus_{\sigma} \sigma\otimes 1_{V_\sigma^*}$
and the right regular representation decomposes as 
$\rho_K\cong \oplus_{\sigma} 1_{V_\sigma}\otimes \sigma^*$. 
(e.g. see \cite[Theorem 7.2.3]{DE} together with the obvious isomorphism $V_\sigma\otimes V_\sigma^*\cong \End(V_\sigma)$) This induces a decomposition
\begin{equation}\label{eq-decom}
L^2(G)\cong \H_{U^\lambda}\cong\oplus_{\sigma\in \widehat{K}} \H_{U^\sigma}\otimes V_\sigma^*.
\end{equation}
To see how the isomorphism $\Phi_\lambda$ restricts to the direct summand
$\H_{U^\sigma}\otimes V_\sigma^*$ we should note that the inclusion of 
$V_\sigma\otimes V_\sigma^*$ into $L^2(K)$ is given by sending 
an elementary vector $v\otimes w^*$ to the function $k\mapsto \sqrt{d_\sigma}\lk \sigma(k^{-1})v, w\rk$.
The corresponding inclusion of $\H_{U^\sigma}\otimes V_\sigma^*$ into $L^2(G)$
is therefore given by sending an elementary vector $\xi\otimes w^*$  
to the $L^2$-function $g\mapsto \sqrt{d_\sigma} \lk \xi(g), w\rk$. One can easily check directly, using the 
orthogonality relations for matrix coefficients on $K$, that this defines an 
isometry $\Phi_\sigma$ from $\H_{U^\sigma}\otimes V_\sigma^*$ into $L^2(G)$. 
To show that the image lies in $L^2(G)_\sigma$ we compute
\begin{align*}
\big(\rho(p_{\sigma^*})\Phi_\sigma(\xi\otimes v^*)\big)(g)&=
\int_K p_{\sigma^*}(k) \sqrt{d_\sigma}\lk \xi(gk), v\rk\, dk\\
&=\int_K p_{\sigma^*}(k)\sqrt{d_\sigma}\lk \xi(g),\sigma(k)v\rk\,dk\\
&=\sqrt{d_\sigma}\lk \xi(g), v\rk=\Phi_\sigma(\xi\otimes v^*)(g)
\end{align*}
which proves the claim.  Using \rm{(i)} and (\ref{eq-decom}), this also implies that the image is all of $L^2(G)_\sigma$.
One easily checks that $\Phi_\sigma$ intertwines 
the representation $1\otimes\sigma^*$ on $\H_{U^\sigma}\otimes V_\sigma^*$
with the restriction of $\rho$ to $K$. 

For the proof of (iii) we first observe that $T\in \cK(L^2(G))^K$ if and only if 
$T$ commutes with $\rho(k)$ for all $k\in K$, which then implies, via integration, 
 that $T$ commutes with  $\rho(p_{\sigma})$ for all $\sigma\in \widehat{K}$. It follows that 
 $\cK^K$ lies in $\oplus_{\sigma\in \widehat{K}} \cK(L^2(G)_\sigma)\subseteq \cK(L^2(G))$.
 Now, using the decomposition $L^2(G)_\sigma=\H_{U^\sigma}\otimes V_\sigma^*$ as in (ii) we get
 $$\cK(L^2(G)_\sigma)^{\Ad\rho(K)}\cong 
 \big(\cK(\H_{U^\sigma})\otimes \cK(V_{\sigma}^*)\big)^{\Id\otimes \Ad\sigma^*(K)}=
 \cK(\H_{U^\sigma})\otimes \C1_{V_{\sigma}^*}.$$
 
Finally, item \rm{(iv)} now follows from the fact that the restriction of the representation 
$M\times \lambda: C_0(G/K)\rtimes G\stackrel{\cong}{\to} \cK(L^2(G))^K$ to the 
subspace $\H_{U^\sigma}\otimes V_\sigma^*$ clearly coincides with $(P^\sigma\times U^\sigma)\otimes 1_{V_\sigma^*}$.
 \end{proof}

 \begin{example} The above decomposition becomes easier in the case where the 
compact subgroup $K$ of $G$ is abelian, since in 
this case the irreducible representations $\sigma$ of $K$ 
are one-dimensional. We therefore get $p_\sigma=\bar{\sigma}$ viewed as an element of 
$C(K)\subseteq C^*(K)$ and then, for $\xi\in L^2(G)$ we have 
$\xi\in L^2(G)_\sigma=\rho(p_\sigma)L^2(G)$ if and only if
$$\xi(g)=(\rho(p_\sigma)\xi)(g)=\int_K \bar{\sigma}(k)\xi(gk)\,dk$$
for almost all $g\in G$. For $l\in K$ we then get
\begin{align*}
\xi(gl)&=\int_K \bar{\sigma}(k)\xi(glk)\,dk\stackrel{k\mapsto l^{-1}k}{=}
\int_K \bar{\sigma}(l^{-1}k)\xi(gk)\,dk\\
&=\sigma(l)\xi(g),
\end{align*}
which shows that in this situation we have $L^2(G)_\sigma=\H_{U^\sigma}$ for all $\sigma\in \widehat{K}$. Thus we get the direct decompositions
$$L^2(G)=\oplus_{\sigma\in \widehat{K}} \H_{U^\sigma}\quad\text{and}\quad
\cK(L^2(G))^{K,\Ad\rho}=\oplus_{\sigma\in \widehat{K}} \cK(\H_{U^\sigma}).$$
This picture becomes even more transparent if $G$ happens also to be abelian. 
In that case one checks that the Fourier transform $\mathcal F:L^2(G)\to L^2(\widehat{G})$ 
maps the subspace $L^2(G)_\sigma$ of $L^2(G)$  to the subspace $L^2(\widehat{G}_\sigma)$ 
of $L^2(\widehat{G})$ in which 
$$\widehat{G}_\sigma:=\{\chi\in \widehat{G}: \chi|_K=\sigma\}$$
(we leave the details 
as an exercise to the reader). In the special case $G=\TT$ and $K=\C(n)$, the group of all $n$th roots 
of unity, we get $\widehat{\C(n)}\cong \ZZ/n\ZZ$ and, using Fourier transform, the composition 
of Lemma \ref{lem-fixed} becomes 
$$L^2(\TT)\cong \ell^2(\ZZ)\cong\oplus_{[l]\in \ZZ/n\ZZ} \ell^2(l\ZZ)
\quad\text{and}\quad \cK(L^2(\TT))^{C(n)}\cong \oplus_{[\l]\in \ZZ/n\ZZ} \cK(\ell^2(l\ZZ)).$$
\end{example}
\medskip

If $L\subseteq K$ are two compact subgroups of the locally compact 
group $G$, then we certainly have $\cK(L^2(G))^{\Ad\rho(K)}\subseteq \cK(L^2(G))^{\Ad\rho(L)}$.
For later use, it is important for us to have a precise understanding of  this inclusion.
In the following lemma we denote by $\Rep(L)$ the equivalence classes of 
{\bf all} unitary representations of a group $L$ and $\Rep(A)$ denotes the 
equivalence classes of all non-degenerate $*$-representation of a C*-algebra $A$.
In this notation we obtain a map 
$$\Ind_L^G: \Rep(L)\to\Rep(C_0(G/L)\rtimes G); \sigma\mapsto \Ind_L^G\sigma:=P^\sigma\times U^\sigma,$$
and similarly for $K$, with $P^\sigma$ and $U^\sigma$ defined as in (\ref{eq-ind1}) and (\ref{eq-ind2}).
Moreover, induction of unitary representations gives a mapping $\Ind_L^K:\Rep(L)\to\Rep(K)$
and the inclusion $C_0(G/K)\rtimes G$ into $C_0(G/K)\rtimes G$ induced by the obvious inclusion 
of $C_0(G/K)$ into $C_0(G/L)$ induces a mapping 
$$\Res_{G/L}^{G/K} : \Rep(C_0(G/L)\rtimes G)\to \Rep(C_0(G/K)\rtimes G).$$

\begin{lemma}\label{lem-ind-res}
Suppose that $L\subseteq K$ and $G$ are as above. Then the diagram
\begin{equation}\label{diagram-ind-res}
\begin{CD}
\Rep(K)  @>\Ind_K^G >>  \Rep(C_0(G/K)\rtimes G)\\
@A\Ind_L^K AA    @AA \Res_{C_0(G/L)}^{C_0(G/K)} A\\
\Rep(L) @>> \Ind_L^G > \Rep(C_0(G/L)\rtimes G)
\end{CD}
\end{equation}
commutes.
\end{lemma}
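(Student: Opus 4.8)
The plan is to prove commutativity by exhibiting, for each $\sigma\in\Rep(L)$, an explicit unitary intertwiner between the two composite representations $\Res_{C_0(G/L)}^{C_0(G/K)}\bigl(\Ind_L^G\sigma\bigr)$ and $\Ind_K^G\bigl(\Ind_L^K\sigma\bigr)$ of $C_0(G/K)\rtimes G$. This is the crossed-product/system-of-imprimitivity incarnation of the classical transitivity of induction in stages, and the cleanest route is a direct construction rather than an appeal to abstract functoriality.

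First I would write down the relevant spaces using \eqref{eq-ind1} and \eqref{eq-ind2}. Put $\tau:=\Ind_L^K\sigma$, realized on $W=\{\eta\in L^2(K,V_\sigma): \eta(kl)=\sigma(l^{-1})\eta(k)\}$ by left translation, so that $\Ind_K^G\tau$ lives on $\H_{U^\tau}=\{\zeta\in L^2(G,W):\zeta(gk)=\tau(k^{-1})\zeta(g)\}$ with $(P^\tau(\psi)\zeta)(g)=\psi(gK)\zeta(g)$ and $(U^\tau(t)\zeta)(g)=\zeta(t^{-1}g)$. On the other side, $\Ind_L^G\sigma$ lives on $\H_{U^\sigma}=\{\xi\in L^2(G,V_\sigma):\xi(gl)=\sigma(l^{-1})\xi(g)\}$, and applying $\Res$ along the inclusion $C_0(G/K)\hookrightarrow C_0(G/L)$ (legitimate since the fibres $K/L$ are compact, so pulled-back functions still vanish at infinity) leaves the Hilbert space and the $U$-part unchanged and reparametrizes the $P$-part to $(P(\psi)\xi)(g)=\psi(gK)\xi(g)$ for $\psi\in C_0(G/K)$.

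The intertwiner I would use is $\Psi\colon\H_{U^\sigma}\to\H_{U^\tau}$ defined by $(\Psi\xi)(g)(k)=\xi(gk)$. The easy verifications come first: (a) $\Psi$ is well defined, since the $L$-covariance of $\xi$ says exactly that $(\Psi\xi)(g)\in W$, and the identity $(\Psi\xi)(gk')(k)=\xi(gk'k)=\bigl(\tau(k'^{-1})(\Psi\xi)(g)\bigr)(k)$ gives the $K$-covariance defining $\H_{U^\tau}$; (b) $\Psi$ intertwines the $U$-parts, both sides producing $\xi(t^{-1}gk)$, and the restricted $P$-parts, because $\psi(gkK)=\psi(gK)$. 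These are all one-line computations from the defining formulas.

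The one genuinely computational point — and the step I expect to be the main obstacle — is unitarity. For isometry I would unwind
$\|\Psi\xi\|^2=\int_{G/K}\!\int_{K/L}\|\xi(gk)\|^2\,d(kL)\,d(gK)$
and invoke the Weil integration formula for the tower $L\subseteq K\subseteq G$; because $K$ is compact all intervening quotients carry invariant measures and no modular factors intrude, so this equals $\int_{G/L}\|\xi(g)\|^2\,d(gL)=\|\xi\|^2$ once the Haar measures on $L,K,G$ are fixed compatibly (otherwise $\Psi$ is isometric up to a positive scalar, which may be absorbed). Surjectivity then follows from the formal inverse $\zeta\mapsto\zeta(\cdot)(e)$, checked on the dense subspace of continuous vectors and combined with the closed range of the isometry $\Psi$. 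Since $\Psi$ is a unitary intertwining both the $P$- and $U$-parts, it intertwines the integrated forms, yielding the asserted unitary equivalence $\Res_{C_0(G/L)}^{C_0(G/K)}\bigl(\Ind_L^G\sigma\bigr)\cong\Ind_K^G\bigl(\Ind_L^K\sigma\bigr)$ and hence commutativity of \eqref{diagram-ind-res}.
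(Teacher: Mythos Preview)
Your proposal is correct and follows essentially the same approach as the paper: both construct the explicit intertwiner $\H_{U^\sigma}\to\H_{U^\tau}$ sending $\xi$ to the function $(g,k)\mapsto\xi(gk)$, verify the covariance conditions, and check compatibility with the $P$- and $U$-parts. Your treatment of unitarity via the Weil integration formula for the tower $L\subseteq K\subseteq G$ is in fact more detailed than the paper's, which simply declares the operator to be unitary with the stated inverse and leaves the measure-theoretic verification to the reader.
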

\begin{proof} Let $\sigma\in \Rep(L)$. Recall from (\ref{eq-ind1})  that the Hilbert space $\H_{U^\sigma}$ for the
induced representation $ \Ind_L^G\sigma=P^\sigma\times U^\sigma$ is defined as
$$\H_{U^\sigma}=\{\xi\in L^2(G, V_\sigma): \xi(gl)=\sigma(l^{-1})\xi(g)\; \forall g\in G, l\in L\},$$
and similar constructions  give the 
Hilbert spaces for the representations $\Ind_L^K\sigma$ and $\Ind_K^G\tau$ for some $\tau\in \Rep(K)$.
In particular, for $\tau:=\Ind_L^K\sigma$, we deduce  the formula
\begin{align*}
\H_{U^\tau}=\{\eta\in & L^2(G, L^2(K, V_\sigma)): \eta(gk, h)=\eta(g, kh)\\
&\quad\;\text{and}\;
\eta(g,hl)=\sigma(l^{-1})\eta(g,h)\;\forall g\in G, k,h\in K, l\in L\}.
\end{align*}
It is then straightforward to check that the operator
$$V: \H_{U^\sigma}\to \H_{U^\tau}; \;(V\xi)(g,h)=\xi(gh)$$
is a unitary  with inverse $V^{-1}$ given by $(V^{-1}\eta)(g)=\eta(g,e)$ 
such that $V$ intertwines the $G$-representations $U^\sigma$ and $U^\tau$. 
Moreover, for any $\varphi\in C_0(G/K)$ we
get 
\begin{align*}
(P^\tau(\varphi)V\xi)(g,h)&=\varphi(gK)(V\xi)(g,h)=\varphi(gK)\xi(gh)=\varphi(ghK)\xi(gh)\\
&=(P^\sigma(\varphi)\xi)(gh)=(VP^\sigma(\varphi)\xi)(g,h).
\end{align*}
This proves that that $V$ intertwines $\Res_{G/L}^{G/K} \circ \Ind_L^G\sigma$
with $\Ind_K^L\tau=\Ind_K^G\circ\Ind_L^K\sigma$.
\end{proof}

By Lemma \ref{lem-fixed} we have  isomorphisms 
$$C_0(G/K)\rtimes G\stackrel{M\rtimes\lambda}{\cong} \cK(L^2(G))^{\Ad\rho(K)}=
\oplus_{\tau\in \widehat{K}} \cK(\H_{U^\tau})\otimes 1_{V_\tau^*}$$
where the right equation is induced by the decomposition
$$L^2(G)= \oplus_{\tau\in \widehat{K}} \H_{U^\tau}\otimes V_\tau^*.$$
Thus we we see that, as a subalgebra of $\cK(L^2(G))$, the algebra 
$\cK(L^2(G))^{\Ad\rho(K)}$
decomposes in blocks of compact operators $\cK(\H_{U^\tau})$ such that each block
$\cK(\H_{U^\tau})$ appears with the multiplicity $\dim V_\tau$ in this decomposition.
If $L\subseteq K$, we get a similar
decomposition of $\cK(L^2(G))^{\Ad\rho(L)}$ indexed over all $\sigma \in \widehat{L}$ with
multiplicities $\dim V_\sigma$.
Since $L\subseteq K$ we get  $\cK(L^2(G))^{\Ad\rho(K)}\subseteq \cK(L^2(G))^{\Ad\rho(L)}$.
To understand this inclusion, we need to know how many copies of each block
$\cK(\H_{U^\tau})$, $\tau\in \widehat{K}$, appear in any given 
block $\cK(\H_{U^\tau})$, $\sigma\in \widehat{L}$, of the algebra $\cK(L^2(G))^{\Ad\rho(L)}$.
This multiplicity number $m_{\tau}^{\sigma}$ clearly coincides with the multiplicity
of the representation $\pi^\tau=P^\tau\times U^\tau$ of $C_0(G/K)\rtimes G\cong \cK(L^2(G))^{\Ad\rho(K)}$
in the restriction of the representation $\pi^\sigma=P^\sigma\times U^\sigma$ of
$C_0(G/L)\rtimes G\cong \cK(L^2(G))^{\Ad\rho(L)}$ to the subalgebra $C_0(G/K)\rtimes G$.
By the above lemma, this multiplicity coincides with the multiplicity of $\tau$ in the induced 
representation $\Ind_L^K\sigma$, and by Frobenius reciprocity, this  equals the multiplicity
of $\sigma$ in the restriction $\tau|_L$. Thus we conclude

\begin{proposition}\label{prop-include}
Suppose that $L\subseteq K$ are two compact subgroups of the locally compact group $G$. 
For each $\tau\in \widehat{K}$ and $\sigma\in \widehat{L}$ let $m^{\sigma}_{\tau}$ denote the 
multiplicity of $\sigma$ in the restriction $\tau|_L$. Then, under the inclusion 
 $\cK(L^2(G))^{\Ad\rho(K)}\subseteq \cK(L^2(G))^{\Ad\rho(L)}$ each block 
 $\cK(\H_{U^\tau})$ of $\cK(L^2(G))^{\Ad\rho(K)}$ appears with multiplicity $m_\tau^\sigma$ in each 
 block  $\cK(\H_{U^\sigma})$ of $\cK(L^2(G))^{\Ad\rho(L)}$.
\end{proposition}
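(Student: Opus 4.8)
The plan is to translate the block-inclusion multiplicities into multiplicities of induced representations, whereupon the statement follows from Lemma~\ref{lem-ind-res} and Frobenius reciprocity. First I would record from Lemma~\ref{lem-fixed} the two block decompositions
$$\cK(L^2(G))^{\Ad\rho(K)}=\oplus_{\tau\in\widehat{K}}\cK(\H_{U^\tau})\otimes 1_{V_\tau^*},\qquad
\cK(L^2(G))^{\Ad\rho(L)}=\oplus_{\sigma\in\widehat{L}}\cK(\H_{U^\sigma})\otimes 1_{V_\sigma^*},$$
together with part~(iv) of that lemma, which says that under the identifications $C_0(G/K)\rtimes G\cong\cK(L^2(G))^{\Ad\rho(K)}$ and $C_0(G/L)\rtimes G\cong\cK(L^2(G))^{\Ad\rho(L)}$ the projection onto a single block $\cK(\H_{U^\tau})$ (resp.\ $\cK(\H_{U^\sigma})$) is the representation $\pi^\tau=P^\tau\times U^\tau$ (resp.\ $\pi^\sigma=P^\sigma\times U^\sigma$). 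Being projections onto distinct simple summands, the representations $\pi^\tau$ for $\tau\in\widehat{K}$ are mutually inequivalent and irreducible, and likewise the $\pi^\sigma$ for $\sigma\in\widehat{L}$.

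Next I would rephrase the quantity to be computed. The number of copies of the block $\cK(\H_{U^\tau})$ of $\cK(L^2(G))^{\Ad\rho(K)}$ occurring inside the block $\cK(\H_{U^\sigma})$ of $\cK(L^2(G))^{\Ad\rho(L)}$, under the inclusion arising from $L\subseteq K$, equals by the previous paragraph the multiplicity of the irreducible $\pi^\tau$ in the restriction of $\pi^\sigma$ to the subalgebra $C_0(G/K)\rtimes G$, that is, in $\Res_{G/L}^{G/K}\pi^\sigma=\Res_{G/L}^{G/K}\Ind_L^G\sigma$. The point that needs care here is exactly this identification: one must verify that the algebraic datum of how a simple summand of a subalgebra sits inside a simple summand of the ambient algebra is read off precisely by restricting the corresponding block representations. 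This is the standard dictionary for inclusions of direct sums of elementary C*-algebras, and it is what licenses the passage from the operator-algebraic picture to the representation-theoretic one.

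It then remains to compute this restriction multiplicity, for which the representation theory is already in place. By Lemma~\ref{lem-ind-res} one has $\Res_{G/L}^{G/K}\Ind_L^G\sigma\cong\Ind_K^G(\Ind_L^K\sigma)$. Since $\Ind_K^G$ commutes with direct sums and, by the first paragraph, carries inequivalent irreducibles $\tau\in\widehat{K}$ to the inequivalent irreducibles $\pi^\tau$, any decomposition $\Ind_L^K\sigma\cong\oplus_{\tau\in\widehat{K}}n_\tau\,\tau$ into $K$-irreducibles yields $\Ind_K^G(\Ind_L^K\sigma)\cong\oplus_{\tau\in\widehat{K}}n_\tau\,\pi^\tau$, so the sought multiplicity is $n_\tau$, the multiplicity of $\tau$ in $\Ind_L^K\sigma$. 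Frobenius reciprocity for the compact groups $L\subseteq K$ then identifies $n_\tau$ with the multiplicity of $\sigma$ in $\tau|_L$, which is $m_\tau^\sigma$ by definition, completing the argument. I expect no obstacle in the representation theory, which is furnished directly by the two lemmas and by Frobenius reciprocity; the only real content, and hence the main obstacle, is the careful matching of the two notions of multiplicity carried out in the middle step.
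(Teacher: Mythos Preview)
Your proposal is correct and follows essentially the same approach as the paper: translate the block-inclusion multiplicity into the multiplicity of $\pi^\tau$ in $\Res_{G/L}^{G/K}\pi^\sigma$ via Lemma~\ref{lem-fixed}(iv), apply Lemma~\ref{lem-ind-res} to identify this with the multiplicity of $\tau$ in $\Ind_L^K\sigma$, and conclude by Frobenius reciprocity. The paper's argument is laid out in the paragraph immediately preceding the proposition and is essentially identical to yours, only more tersely stated.
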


We should note that since each block $\cK(\H_{U^\sigma})$ appears with multiplicity $\dim V_\sigma$ 
in the representation of  $\cK(L^2(G))^{\Ad\rho(L)}$ on $L^2(G)$, and similarly for $\tau\in \widehat{K}$,
we get the equation 
$$\dim V_\tau=\sum_{\sigma\in \widehat{L}} m_\tau^\sigma \dim V_\sigma,$$
for the total multiplicity $\dim V_\tau$ of  $\cK(\H_{U^\tau})$ in $\cK(L^2(G))^{\Ad\rho(K)}$.
Let us illustrate the above results in a concrete example:

\begin{example}\label{ex-D4-1} Let us consider the action of the finite group 
$G=D_4=\lk R,S\rk\subseteq \GL(2,\ZZ)$ with 
$R=\left(\begin{smallmatrix} 0&-1\\1&0\end{smallmatrix}\right)$ and 
$S=\left(\begin{smallmatrix} 1&0\\0&-1\end{smallmatrix}\right)$
on $\TT^2$ as described in Example \ref{ex-D4}. It was show in that example that we have
the topological fundamental domain 
$$Z:=\{(e^{2\pi i s}, e^{2\pi i t}): 0\leq t\leq\frac{1}{2}, 0\leq s\leq t\}$$
and it follows then from Proposition \ref{prop:fundamentaldomain} that the crossed
product $C_0(\TT^2)\rtimes G$ is isomorphic to the sub-homogeneous algebra
$$A:=\{f\in C(Z, \cK(\ell^2(G))): f(z,w)\in \cK(\ell^2(G)))^{\Ad\rho(G_{(z,w)})}\},$$
where $G_{(z,w)}$ denotes the stabilizer of the point $(z,w)$ under the action of $G$.
In what follows we will identify $Z$ with the triangle $\{(s,t)\in \RR^2:  0\leq t\leq\frac{1}{2}, 0\leq s\leq t\}$
and we write $G_{(s,t)}$ for the corresponding stabilizers of the points $(e^{2\pi i s}, e^{2\pi i t})$.
A straightforward computation shows that 
\begin{itemize}
\item $G_{(s,t)}=\{E\}$ if  $0< t <\frac{1}{2}, 0< s< t$,
\item $G_{(s,s)} =\lk RS\rk=:K_1$ if  $0<s<\frac{1}{2}$,
\item $G_{(0, t)}=\lk R^2S\rk=:K_2$ if $0<t<\frac{1}{2}$,
\item $G_{(s,\frac{1}{2})}=\lk S\rk=:K_3$ if  $0<s<\frac{1}{2}$,
\item $G_{(0,\frac{1}{2})}=\lk S, R^2\rk=:H$, and 
\item $G_{(0,0)}=G_{(\frac{1}{2},\frac{1}{2})} =G$.
\end{itemize}
It follows that $\cK(\ell^2(G))^{G_{(s,t)}}=\cK(\ell^2(G))\cong M_8(\CC)$ whenever 
$0< t <\frac{1}{2}, 0< s< t$.
In the three cases where $G_{(s,t)}=K_i$, $i=1,2,3$, is a subgroup of order two, 
 we get  two one-dimensional representations $\{1_{K_i}, \eps_{K_i}\}$, $i=1,2,3$,
 so by choosing  suitable bases of 
 $\ell^2(G)$,  in each case 
the algebra $\cK(\ell^2(G))^{G_{(s,t)}}$ has the form
$$\left(\begin{matrix} A_{1_{K_i}}&0\\ 0&A_{\eps_{K_i}}\end{matrix}\right)\quad A_{1_{K_i}},A_{\eps_{K_i}}\in M_4(\CC),$$
where the $4\times 4$-blocks act on the four-dimensional subspaces 
$\rho(p_{1_{K_i}})\ell^2(G)$ and $\rho(p_{\eps_{K_i}})\ell^2(G)$, respectively,
where $\rho$ denotes the right regular 
representation of $G$ restricted to the respective stabilizer.  We refer to the discussion before Lemma \ref{lem-fixed} for the definition of $p_{1_{K_i}}$ and $p_{\eps_{K_i}}$.

At the corner $(0,\frac{1}{2})$ we have the stabilizer $H=\lk S, R^2\rk\cong \ZZ/2\times\ZZ/2$, so we get
four one-dimensional representations $1,\mu_1,\mu_2,\mu_3$ of this group given by
$$\mu_1(R^2)=-\mu_1(S)=1,\quad \mu_2(R^2)=\mu_2(S)=-1\quad\text{and}\quad
\mu_3(R^2)=-\mu_3(S)=-1.$$
Therefore $\cK(\ell^2(G))^{G_{(0,\frac{1}{2})}}$ decomposes as
$$\left(\begin{matrix} B_1& & &\\ &B_{\mu_1}& & \\ & & B_{\mu_2} & \\ & & & B_{\mu_3}\end{matrix}\right)\quad B_1,B_{\mu_1},B_{\mu_2},B_{\mu_3}\in M_2(\CC)$$
with corresponding rank-two projections $\rho(p_{\mu})$, $\mu\in \widehat{H}$.

The representation theory of \(G\), the stabilizer of the remaining corners $(0,0)$ and $(\frac{1}{2},\frac{1}{2})$ of $Z$, is as follows: 
there is the `standard' representation 
\(\lambda \colon G\to O(2,\R) \subset U(2)\) (this is irreducible).
 The other irreducible representations are one-dimensional and correspond to
  the representations of the quotient group $G/\lk R^2\rk\cong \ZZ/2\times \ZZ/2$.
  They are listed as 
  $\{1_G, \chi_1,\chi_2,\chi_3\}$ with
  $$\chi_1(R)=-\chi_1(S)=1,\quad \chi_2(R)=\chi_2(S)=-1\quad\text{and}\quad
\chi_3(R)=-\chi_3(S)=-1.$$
%\[\chi_1(R) = 1, \;\;  \chi_1(S) =-1;
%\quad\quad
%\chi_2(R) = -1, \;\; {\chi}_2(S) =-1\]
%and
%\[\chi_3(R) = -1,\; \; \chi_3 (S) =1.\]
Therefore 
the set of irreducible representations of \(G\) is 
 \(\{1, \chi_1,\chi_2,\chi_3, \lambda\}\).
Representing $C^*(G)\cong \cK(\ell^2(G))^{G}$ as a subalgebra of 
$\cK(\ell^2(G))\cong M_8(\CC)$ gives one block $M_2(\CC)$ with multiplicity $2$ and 
four one-dmensional blocks. With respect to a suitable chosen base of $\ell^2(G)\cong \CC^8$, 
we obtain a representation as matrices of the form
$$\left(\begin{matrix} 
C_\lambda&  & & & & \\
&C_\lambda& & & &\\
& & d_1& & & &\\
& & & d_{\chi_1}& & &\\
& & & & d_{\chi_2}& &\\
& & & & & d_{\chi_3}&
\end{matrix}\right)\quad C_\lambda\in M_2(\CC), d_1, d_{\chi_1},d_{\chi_2},d_{\chi_3}\in \CC,$$
where the lower diagonal entries act on the
images of the projections $\rho(p_{\chi})$
for $\chi\in \{1_G,\chi_1,\chi_2,\chi_3\}$ and the
block 
$\left(\begin{matrix}C_\lambda&  \\
& C_\lambda \end{matrix}\right)$
acts on the four-dimensional space $\rho(p_{\lambda})\ell^2(G)$.

To understand the structure of the algebra 
$$C(\TT^2)\rtimes G\cong\{f\in C(Z,\cK(\ell^2(G))): f(s,t)\in\cK(\ell^2(G))^{G_{(s,t)}}\},$$ 
 we need to understand 
what happens at the three corners $(0,0), (\frac{1}{2},\frac12), (0,\frac12)$ of the fundamental domain $Z$
when approached on the border lines of $Z$. 
So assume that $f\in C(\TT^2)\rtimes G$  is represented as a function
 $f:Z\to M_8(\CC)\cong \cK(\ell^2(G))$. Since the 
 stabilizers of the corners contain the stabilizers of the adjacent border lines,  we 
 see from the above discussion that the fibers $\cK(\ell^2(G))^{G_{(s,t)}}$ at the corners must be contained in the intersections of the
fibers at the  adjacent border lines. Proposition \ref{prop-include} above tells us, 
how these inclusions look like:
Let us consider the corner $(0,0)$. The adjacent border lines have stabilizers 
$K_1=G_{(s,s)}=\lk RS\rk$ and $K_2=G_{(0,t)}=\lk R^2S\rk$ respectively.
A short computation shows that the restriction of $\lambda$ to $K_1$ and $K_2$
 decomposes into  the direct sum  $1_{K_i}\oplus \eps_{K_i}$ for $i=1,2$. 
 So each of the two $4\times 4$-blocks
 in the decomposition 
 $$\cK(\ell^2(G))^{K_i}=\left\{\left(\begin{matrix} A_{1_{K_i}}&0\\ 0&A_{\eps_{K_i}}\end{matrix}\right):\quad A_{1_{K_i}}, A_{\eps_{K_i}}\in M_4(\CC)\right\},$$
contains exactly one copy of the two by two blocks $C_\lambda$  in the decomposition of $\cK(\ell^2(G))^G$.
Consider now the one-dimensional representations $1_G,\chi_1,\chi_2,\chi_3$ of $G$. If we 
restrict these representations to $K_1$ we see that $1_G$ and $\chi_2$ restrict to trivial
character $1_{K_1}$ and $\chi_1,\chi_3$ restrict to the non-trivial character $\eps_{K_1}$.
Thus, the $4\times 4$ block $A_{1_{K_1}}$ contains the diagonal entries
$d_1, d_{\chi_2}$  and the block $A_{\eps_{K_1}}$ contains the diagonal entries 
$d_{\chi_1},d_{\chi_3}$.

On the other side, if we restrict $1_G,\chi_1,\chi_2,\chi_3$ to the subgroup $K_2=G_{(0,t)}$
we see that $1_G,\chi_3$ restrict to  $1_{K_2}$ and $\chi_1,\chi_2$ 
restrict to the non-trivial character $\eps_{K_2}$. We therefore see that, different from the 
case $K_1=G_{(s,s)}$, $A_{1_{K_2}}$ contains the diagonal entries corresponding to the characters
$d_1, d_{\chi_3}$ and the block $A_{\eps_{K_2}}$ contains the diagonal entries corresponding to 
$d_{\chi_1}, d_{\chi_2}$. So, even in this simple example, we get a quite intricate structure of the algebra 
$C(\TT^2)\rtimes G$ at the fibers with nontrivial stabilizers. We shall revisit this example in the following
section.
\end{example}

We now proceed with the general theory:

\begin{definition}
\label{def:stabilizer_group_bundle}
Let $X$ be a proper $G$-space. We define the \emph{stabilizer group bundle} $\SX$ as
$$\SX=\{(x,g): x\in X, g\in G_x\}.$$
and we define
$$\SXhat:=\{(x,\sigma): x\in X, \sigma\in \widehat{G}_x\}.$$
\end{definition}

Note that $G$ acts on $\SXhat$ by 
$$g(x,\sigma)=(gx, g\sigma)\quad\text{with}\quad g\sigma:=\sigma\circ C_g^{-1},$$
where $C_g: G_{x}\to G_{gx}$ is the isomorphism given by conjugation with $g$.
For each $(x,\sigma)\in \SXhat$ consider the induced representation 
$\pi_x^\sigma=P_x^\sigma\times U_x^\sigma$ acting on the Hilbert space
$\H_{U^\sigma}$ as defined in (\ref{eq-ind1}) and (\ref{eq-ind2}).
%as 
%$$\big(P_x^\sigma(\ph)\xi\big)(g)=\ph(gx)\xi(g)\quad\text{and}\quad \big(U_x^\sigma(t)\xi\big)(g)=\xi(t^{-1}g).$$
One easily checks that $\pi_x^\sigma$ is unitarily equivalent to 
$\pi_{gx}^{g\sigma}$,
 the equivalence being given by the unitary
 $$W:\H_\sigma\to  \H_{\sigma\circ C_g^{-1}}; \;\big(W\xi\big)(t)=\sqrt{\Delta(g)}\xi(tg).$$
In what follows, we shall also write $\Ind_{G_x}^G(x,\sigma)$ for the representation $\pi_x^\sigma$,
indicating that it is the induced representation in the classical sense of Mackey, Glimm and others.
Thus, as a corollary of the above lemma, we obtain a proof of the following theorem,
which is  a well-known special case of the general Mackey-Green-Rieffel machine
for crossed products.

\begin{theorem}[Mackey-Rieffel-Green]\label{thm-mrg}
The map 
$$\Ind: \SXhat\to (C_0(X)\rtimes G)\dach; (x,\sigma)\mapsto \Ind_{G_x}^G(x,\sigma)=\pi_x^\sigma$$
factors through a set bijection (which we also denote $\Ind$) between the orbit space
$G\backslash\SXhat$ and the space $\big(C_0(X)\rtimes G\big)\dach$ of 
equivalence classes of irreducible $*$-representations of $C_0(X)\times G$.
\end{theorem}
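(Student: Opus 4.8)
The plan is to read off the spectrum of $A \defeq C_0(X)\rtimes G$ from its structure as a continuous $C_0(G\backslash X)$-bundle. By Corollary \ref{cor-proper} we may identify $A$ with $C_0(X\times_{G,\Ad\rho}\cK)$, and Lemma \ref{lem-bundle} then exhibits $A$ as the section algebra of a continuous bundle of C*-algebras over $G\backslash X$ whose fibre over an orbit $Gx$ is the fixed-point algebra $\cK^{G_x}$. The first step is the standard observation that for a $C_0(Z)$-algebra any irreducible representation factors through a single fibre: since $C_0(Z)$ maps into the center $ZM(A)$, Schur's lemma forces $C_0(Z)$ to act by a character, i.e. by evaluation $\ev_z$ at a unique point $z\in Z$ (here $Z = G\backslash X$ is Hausdorff), so that the representation annihilates the ideal $I_z$ and descends to $A_z$; conversely every irreducible representation of a fibre lifts to one of $A$. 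Hence, as a set, $(C_0(X)\rtimes G)\dach$ is the disjoint union over the orbits $Gx$ of the spectra $\widehat{\cK^{G_x}}$, with the central character recording the orbit.

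Next I would identify each fibre spectrum. By Lemma \ref{lem-fixed}(iii) we have $\cK^{G_x}\cong \bigoplus_{\sigma\in\widehat{G}_x}\cK(\H_{U^\sigma})$; since each summand is a simple algebra of compact operators it has a unique irreducible representation up to equivalence, namely the projection onto that summand followed by its identity representation. Therefore $\widehat{\cK^{G_x}}$ is in canonical bijection with $\widehat{G}_x$. To see that the irreducible representation indexed by $\sigma$ is exactly $\pi_x^\sigma = \Ind_{G_x}^G(x,\sigma)$, I would combine Lemma \ref{lem-fixed}(iv), which says that the projection of $C_0(G/G_x)\rtimes G\cong\cK^{G_x}$ onto $\cK(\H_{U^\sigma})$ is $P^\sigma\times U^\sigma$, with the description in Remark \ref{rem-evaluation} of the fibre evaluation map as the restriction $C_0(X)\rtimes G\to C_0(Gx)\rtimes G\cong C_0(G/G_x)\rtimes G$ followed by $M\times\lambda$. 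Tracing the formulas \eqref{eq-ind1} and \eqref{eq-ind2} through the restriction map $C_0(X)\to C_0(Gx)$, $\ph\mapsto(gG_x\mapsto\ph(gx))$, shows that this composite acts by $(P_x^\sigma(\ph)\xi)(g)=\ph(gx)\xi(g)$ and $(U_x^\sigma(t)\xi)(g)=\xi(t^{-1}g)$, which is precisely $\pi_x^\sigma$. In particular each $\pi_x^\sigma$ is irreducible.

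It remains to assemble these fibrewise bijections into a single bijection with $G\backslash\SXhat$. Surjectivity is immediate from the first two paragraphs, since every irreducible representation of $A$ factors through some fibre $\cK^{G_x}$ and there equals $\pi_x^\sigma$ for a unique $\sigma\in\widehat{G}_x$. For well-definedness on orbits I would invoke the unitary equivalence $\pi_x^\sigma\cong\pi_{gx}^{g\sigma}$ established just before the statement (implemented by the unitary $W$), which shows that $\Ind$ is constant on $G$-orbits and so factors through $G\backslash\SXhat$. For injectivity, suppose $\pi_x^\sigma\cong\pi_y^\tau$. Comparing central characters (the restrictions to $C_0(G\backslash X)$) forces $Gx=Gy$, say $y=gx$; using $\pi_{gx}^\tau\cong\pi_x^{g^{-1}\tau}$ we may then assume a common base point and compare $\pi_x^\sigma$ with $\pi_x^{g^{-1}\tau}$, which project onto the summands of $\cK^{G_x}$ indexed by $\sigma$ and $g^{-1}\tau$. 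Distinct summands give inequivalent representations, so $\sigma=g^{-1}\tau$ and hence $(y,\tau)=g\cdot(x,\sigma)$. I expect the main subtlety to lie not in any single estimate but in the bookkeeping of the previous paragraph: one must verify that the purely algebraic identification of the fibre spectrum with $\widehat{G}_x$ coming from Lemma \ref{lem-fixed} is genuinely carried by the global evaluation maps onto the concrete induced representations $\pi_x^\sigma$, so that the abstract fibre decomposition of the spectrum matches the geometric parametrization by $\SXhat$.
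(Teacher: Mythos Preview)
Your proposal is correct and follows exactly the approach the paper takes: the paper's own proof is a single sentence (``an easy combination of the bundle structure of $C_0(X)\rtimes G\cong C_0(X\times_{G,\Ad\rho}\cK)$ and the description of the fibre $\cK^{G_x}$ as given in the previous lemma''), and you have simply unpacked that sentence, invoking Corollary~\ref{cor-proper}, Lemma~\ref{lem-bundle}, Lemma~\ref{lem-fixed}, Remark~\ref{rem-evaluation}, and the orbit-equivalence $\pi_x^\sigma\cong\pi_{gx}^{g\sigma}$ in precisely the way the authors intend.
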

\begin{proof} The proof is an easy combination of the bundle structure of 
$C_0(X)\rtimes G\cong C_0(X\times_{G,\Ad\rho}\cK)$ and the description of the 
fibre $\cK^{G_x}$ as given in the previous lemma. 
 \end{proof}
 {\ }
 
 If we look at  the trivial representation $1_{G_x}:G_x\to \{1\}$,
it follows from Lemma \ref{lem-fixed} that the corresponding summand of the fibre 
 $\cK^{G_x}$ of $C_0(X)\rtimes G$ is given by  $\cK(L^2(G)_{1_{G_x}})$
 with 
 \begin{equation}\label{eq-space}
 L^2(G)_{1_{G_x}}=\{\xi\in L^2(G): \xi(gk)=\xi(g)\;\forall k\in G_x\}\cong L^2(G/G_x).
 \end{equation}

Let $c:X\to [0,1]$ be a \emph{cut-off function} for the proper $G$-space $X$, which means 
that $c$ is a continuous function with compact support on any $G$-compact 
subset of $X$ (a closed subset $Y$ of $X$ is called {\em $G$-compact} if it is $G$-invariant
with $G\backslash Y$ compact)
such that 
$\int_G c(g^{-1}x)^2\, dg=1$ for all $x\in X$.
Then 
\begin{equation}\label{eq-projection}
p_X(g,x)=\sqrt{\Delta(g^{-1})}c(g^{-1}x)c(x) 
\end{equation}
determines a projection $p_X\in M(C_0(X)\rtimes G)$ via  convolution given by the 
same formula as for the multiplication on $C_c(G\times X)\subset C_0(X)\rtimes G$.
\emph{Note that $p_X\in C_0(X)\rtimes G$ if and only if $X$ is $G$-compact}, while 
in general, $f\cdot p_X\in C_0(X)\rtimes G$ for every $f\in C_0(G\backslash X)\subseteq
ZM(C_0(X)\rtimes G)$.

\begin{remark} 
\label{rem:cut-offs_finite}
We make the following remarks about cut-off functions and their associated 
projections. 
\begin{itemize}
\item If \(X\) is compact \(G\) must be compact too, and in this case 
with respect to normalised Haar measure on \(G\), 
the constant function \(c(x)\defeq 1\) for all 
\(x\in X\) is a cut-off function. In this case \(C^*(G)\) is a 
subalgebra of \(C(X)\rtimes G\), and the projection 
\(p_X\) is in the subalgebra: it is the constant function \(1\) on \(G\) 
and projects to the space of \(G\)-fixed
 vectors in any representation of 
\(G\). 
\item  For any proper action, the 
space of square roots of 
cut-off functions is convex, so the space of cut-off functions is contractible and 
hence any two 
projections \(p_X\) are homotopic. 
\end{itemize}

\end{remark}

\begin{lemma}\label{lem-projection}
Let $c:X\to [0,1]$ and $p_X\in M(C_0(X)\rtimes G)$ be as above and let 
$\Phi: C_0(X)\rtimes G\stackrel{\cong}{\longrightarrow} C_0(X\times_G\cK)$ denote the isomorphism
of Corollary \ref{cor-proper}. For each $x\in X$ let $c_x$ denote the unit vector in $L^2(G)$ given by $c_x(g)=\sqrt{\Delta(g^{-1})}c(gx)$ and let $p_x\in \cK(L^2(G))$ denote the image of $p_X$ under the 
evaluation map $q_x:C_0(X)\rtimes G\to \cK^{G_x}$ as described in Remark \ref{rem-evaluation}.
Then $c_x\in L^2(G)_{1_{G_x}}$ and $p_x$ is the orthogonal projection onto $\C \cdot c_x$.
\end{lemma}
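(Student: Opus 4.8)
The plan is to treat the two claims in turn: the membership and normalization of the vector $c_x$, which are direct, and then the explicit computation of $p_x$ through the description of $q_x$ in Remark \ref{rem-evaluation}, which is the real content. First I would verify that $c_x$ is a unit vector: the substitution $g\mapsto g^{-1}$ in $\|c_x\|^2=\int_G \Delta(g^{-1})\,c(gx)^2\,dg$ produces exactly the modular factor needed to leave $\int_G c(g^{-1}x)^2\,dg=1$, which is the normalization defining the cut-off $c$. For the membership $c_x\in L^2(G)_{1_{G_x}}$, by \eqref{eq-space} it suffices to show that $c_x$ is right $G_x$-invariant, and this is immediate: for $k\in G_x$ one has $kx=x$, hence $c(gkx)=c(gx)$, while $\Delta$ is trivial on the compact subgroup $G_x$, so $\Delta((gk)^{-1})=\Delta(g^{-1})$ and therefore $c_x(gk)=c_x(g)$.

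For the identification of $p_x$ I would unwind $q_x$ as in Remark \ref{rem-evaluation}, namely as the restriction $C_0(X)\rtimes G\to C_0(Gx)\rtimes G\cong C_0(G/G_x)\rtimes G$ followed by the Stone--von Neumann isomorphism $M\times\lambda$ of \eqref{eq-SvN} onto $\cK^{G_x}$. At the level of kernels over $G\times X$, the restriction merely confines the space variable to the orbit; writing a general point of $Gx$ as $y=rx$, the kernel $p_X(t,y)=\sqrt{\Delta(t^{-1})}\,c(t^{-1}y)\,c(y)$ of \eqref{eq-projection} becomes $(t,rG_x)\mapsto \sqrt{\Delta(t^{-1})}\,c(t^{-1}rx)\,c(rx)$ on $G\times G/G_x$. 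Feeding this into the integrated form of $M\times\lambda$ acting on $\xi\in L^2(G)$ yields
\[
(p_x\xi)(r)=\int_G \sqrt{\Delta(t^{-1})}\,c(t^{-1}rx)\,c(rx)\,\xi(t^{-1}r)\,dt,
\]
the integral running effectively over a compact set, since $c$ is compactly supported along the $G$-compact orbit $Gx$.

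It then remains to recognize this operator as the rank-one projection onto $\C\cdot c_x$. The decisive step is the change of variables $s=t^{-1}r$ (left translation by $r$ followed by inversion), under which $c(t^{-1}rx)=c(sx)$ and $\xi(t^{-1}r)=\xi(s)$, while the inversion contributes a factor $\Delta(s^{-1})$ to the measure that combines with $\sqrt{\Delta(t^{-1})}=\sqrt{\Delta(s)\Delta(r^{-1})}$ into the symmetric weight $\sqrt{\Delta(s^{-1})}\,\sqrt{\Delta(r^{-1})}$. The integral becomes
\[
(p_x\xi)(r)=\sqrt{\Delta(r^{-1})}\,c(rx)\int_G \sqrt{\Delta(s^{-1})}\,c(sx)\,\xi(s)\,ds=c_x(r)\,\langle \xi,c_x\rangle,
\]
where I use that $c_x$ is real-valued. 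Thus $p_x=|c_x\rangle\langle c_x|$ is the orthogonal projection onto $\C\cdot c_x$, genuinely a projection because $\|c_x\|=1$ by the first step; as a consistency check, $\rho(k)c_x=c_x$ for $k\in G_x$ confirms $p_x\in\cK^{G_x}$. I expect the modular-function bookkeeping in this last substitution to be the only real obstacle: one must track the Jacobian $\Delta(s^{-1})$ from the inversion against the half-power $\sqrt{\Delta(t^{-1})}$ already present in the kernel, and it is precisely their cancellation into a symmetric weight that reproduces the defining formula of $c_x$.
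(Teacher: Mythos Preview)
Your proof is correct and follows essentially the same route as the paper's: restrict $p_X$ to the orbit, identify it as a convolution kernel on $C_0(G/G_x)\rtimes G$, apply $M\times\lambda$, and perform the substitution $s=t^{-1}r$ to recognize the rank-one projection. The paper packages the kernel as $\Delta(g^{-1}h)\,c_x(g^{-1}h)\,c_x(h)$ so that the modular factor cancels the Jacobian in one stroke, whereas you track the pieces $\sqrt{\Delta(t^{-1})}$ and $\Delta(s^{-1})$ separately; the content is identical.
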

\begin{proof}
The first assertion follows from the definition of $c_x$ together with the fact that the 
modular function vanishes on compact subgroups of $G$. 
For the second assertion observe that it follows from Remark  \ref{rem-evaluation}
that $p_x$ acts on $L^2(G)$ via convolution with  the function $p_x\in C_c(G\times G/G_x)
\subseteq C_0(G/G_x)\rtimes G$ given by 
$$
p_x(g, hG_x)={\Delta(g^{-1}h)}c_x(g^{-1}h)c_x(h).$$
If $\xi\in L^2(G)$ is arbitrary, we get
\begin{align*} (p_x\xi)(h)&=\int_G p_x(g, hG_x)\xi(g^{-1}h)\,dg\\
&=\int_G{\Delta(g^{-1}h)}c_x(g^{-1}h)c_x(h)\xi(g^{-1}h)\,dg\\
&=\left(\int_G c_x(g)\xi(g)\,dg\right)\,c_x(h)\\
&=\big(\lk  \xi, c_x \rk \, c_x\big)(h),
\end{align*}
where the second to last equation follows from the transformation $g^{-1}h\mapsto g$.
\end{proof}

It follows from the above lemma that the projection $p_X\in M(C_0(X)\rtimes G)$ constructed above
is a continuous field of rank-one projections on $G\backslash X$ such that under the 
decomposition of each fibre $\cK^{G_x}\cong \oplus_{\sigma\in \widehat{G}_x} \cK(\H_{\sigma})$ as in
part (iii) of Lemma \ref{lem-fixed}, the restriction of \(p_X\) to that fibre 
lies in the component $\cK(\H_{1_{G_x}})$.
It follows in particular that $C_0(G\backslash X)$ is isomorphic to the corner 
$p_X\big(C_0(X)\rtimes G\big)p_X$ via $f\mapsto f\cdot p_X$, and thus
$C_0(G\backslash X)\cong p_X\big(C_0(X)\rtimes G\big)p_X$ is 
Morita equivalent to the 
ideal $I_X=\big(C_0(X)\rtimes G\big)p_X\big(C_0(X)\rtimes G\big)$ 
of $C_0(X)\rtimes G$ generated by $p_X$ (this is a general fact about 
corners.)

Lemma \ref{lem-fixed}
implies that under the isomorphism $C_0(X)\rtimes G\cong C_0(X\times_G\cK)$ we get
\begin{equation}\label{eq-ideal}
I_X=\{F\in C_0(X\times_G\cK): F(x)\in \cK(L^2(G)_{1_{G_x}})\;\forall x\in X\}.
\end{equation}
It follows in particular that the ideal $I_X$ does not depend on the particular choice of the 
cut-off function $c:X\to [0,1]$ and the corresponding projection $p_X$.
If the action of $G$ on $X$ is {\em free and proper}, then 
it is immediate from \eqref{eq-ideal} and the description 
of \(C_0(X)\rtimes G\) in Corollary 
\ref{cor-proper} and the following Remark \ref{rem-evaluation}, that
 $I_X = C_0(X)\rtimes G$. We thus recover the well-known theorem, 
 due to Phillip Green (see \cite{Green0}) that 
$C_0(G\backslash X)\sim_M C_0(X)\rtimes G$ for a free and proper 
action of a locally compact group \(G\). 

\begin{example}
The above can be made rather explicit in the case of 
finite group actions. For definiteness, we let \(G = \Z/2\), 
\(X\) is compact. By 
Remark \ref{rem:cut-offs_finite}, we may take 
\(p_X\in C(X)\rtimes G\) to be \( p_X  (x,g) = \frac{1}{\abs{G}}\), 
while \(C(X)\rtimes G \cong C\bigl( X, \cK(\ell^2(G))\bigr)^G\). 
We can consider \(\cK(\ell^2(G))\) as \(2\times 2\)-matrices,
and the \(G\)-invariance says  the elements in $C\bigl( X, \cK(\ell^2(G))\bigr)^G$
 must have the
form
\[ a=  \left[\begin{matrix} f & g\\ \sigma (g) & \sigma (f) \end{matrix} \right]\]
for some \(f,g\in C(X)\), where $\sigma$ is the underlying order two automorphism 
of $C(X)$. The projection $p_X$ corresponds to
the matrix \( \frac{1}{\abs{G}} 
\left[\begin{matrix} 1 & 1\\ 1 & 1 \end{matrix} \right]\).
The ideal \(I_X =\big(C(X)\rtimes G\big)p_X\big(C(X)\rtimes G\big)\)
is then given by the closed linear hull of matrices of the form
$\left[\begin{matrix} fg& f\sigma(g)\\ \sigma(f)g &\sigma(fg)\end{matrix}\right]$, $f,g\in C(X)$,
while the corner \(p_X\bigl( C(X)\rtimes G\bigr)p_X\) consists of  all matrices of the 
form 
\(\left[\begin{matrix} f & f\\ f & f\end{matrix} \right]\)
where \(\sigma (f) = f\). This C*-algebra is isomorphic to 
\(C(G\backslash X)\). 
\end{example}

It is also useful to give a representation theoretic description of the ideal
$I_X$ in terms of the Mackey-Rieffel-Green machine of Theorem \ref{thm-mrg}.
For this recall that for any closed two-sided ideal $I$ in a C*-algebra $A$ 
the spectrum $\widehat{I}$ includes as an open subset of $\widehat{A}$ via 
(unique) extension of irreducible representations from $I$ to $A$, and the 
resulting correspondence $I\subseteq A\leftrightarrow \widehat{I}\subseteq\widehat{A}$ 
is one-to-one. Thus the ideal $I_X$ in $C_0(X)\rtimes G$
is uniquely determined by the set of irreducible representations of $C_0(X)\rtimes G$
which do not  vanish on $I_X$. The above results now combine to the following:

\begin{proposition}\label{prop-ideaIX}
The irreducible representations of $C_0(X)\rtimes G$ which correspond to 
the deal $I_X$ are precisely the representations of the form 
$\pi_x^{1_{G_x}}=\Ind_{G_x}^G(x, 1_{G_x})$, $x\in X$,
where $1_{G_x}$ denotes the trivial representation of the stabilizer $G_x$.
\end{proposition}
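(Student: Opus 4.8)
The plan is to invoke the standard correspondence between closed two-sided ideals of a C*-algebra $A$ and open subsets of $\widehat{A}$: an irreducible representation $\pi$ corresponds to (lies in the open subset determined by) the ideal $I$ exactly when $\pi$ does not vanish on $I$, i.e.\ $\pi(I)\neq 0$. Applying this to $A=C_0(X)\rtimes G$ and $I=I_X$, and using the parametrisation $(x,\sigma)\mapsto \pi_x^\sigma$ of $\widehat{A}$ from Theorem \ref{thm-mrg}, the proposition reduces to the single assertion
$$\pi_x^\sigma(I_X)\neq 0 \iff \sigma=1_{G_x}.$$

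The key observation is that each $\pi_x^\sigma$ factors through the fibre at the orbit $Gx$. Under the isomorphism $C_0(X)\rtimes G\cong C_0(X\times_G\cK)$ of Corollary \ref{cor-proper}, the evaluation $q_x\colon C_0(X)\rtimes G\to \cK^{G_x}$ of Remark \ref{rem-evaluation} is precisely the fibre map at $Gx$, and by Lemma \ref{lem-fixed}(iii)--(iv) the representation $\pi_x^\sigma$ is the composition of $q_x$ with the coordinate projection $\cK^{G_x}\cong \oplus_{\tau\in\widehat{G}_x}\cK(\H_{U^\tau})\to \cK(\H_{U^\sigma})$ followed by the faithful identity representation of that block. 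Writing $\mathrm{pr}_\sigma$ for the projection onto the $\sigma$-summand, this gives $\pi_x^\sigma(I_X)=\mathrm{pr}_\sigma\bigl(q_x(I_X)\bigr)$, so everything comes down to computing $q_x(I_X)$ and its image under $\mathrm{pr}_\sigma$.

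From the explicit description \eqref{eq-ideal} of $I_X$ we have $q_x(I_X)\subseteq \cK(L^2(G)_{1_{G_x}})$, and by \eqref{eq-space} the space $L^2(G)_{1_{G_x}}$ is exactly $\H_{U^{1_{G_x}}}$; thus $q_x(I_X)$ lands in the single Peter--Weyl summand indexed by the trivial representation. Since the summands $\cK(\H_{U^\tau})$ are mutually orthogonal in $\cK^{G_x}$, for $\sigma\neq 1_{G_x}$ the projection $\mathrm{pr}_\sigma$ annihilates $q_x(I_X)$, whence $\pi_x^\sigma(I_X)=0$; this direction needs only the containment \eqref{eq-ideal}. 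For the converse I would exhibit a nonzero element of $I_X$ surviving in the fibre: choosing a positive $f\in C_0(G\backslash X)$ with $f(Gx)\neq 0$, the element $f\cdot p_X$ lies in $C_0(X)\rtimes G$ and, by Lemma \ref{lem-projection}, evaluates at $Gx$ to $f(Gx)$ times the rank-one projection onto $\C\,c_x\subseteq L^2(G)_{1_{G_x}}$, hence lies in $I_X$ by \eqref{eq-ideal} and satisfies $q_x(f\cdot p_X)\neq 0$. Therefore $\pi_x^{1_{G_x}}(I_X)=\mathrm{pr}_{1_{G_x}}\bigl(q_x(I_X)\bigr)\neq 0$, which is the remaining direction.

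The computations are routine given the earlier results, and no genuine obstacle arises. The only point demanding care is the factorisation of $\pi_x^\sigma$ through $q_x$ and the block projection, which is precisely what Lemma \ref{lem-fixed} and the bundle picture of $C_0(X)\rtimes G$ were designed to supply; once this is in hand, the vanishing direction is immediate from the orthogonality of the summands, and the nonvanishing direction is handled by the explicit cut-off projection $p_X$.
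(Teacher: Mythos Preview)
Your proof is correct and is exactly the argument the paper has in mind: the paper gives no separate proof, merely stating that ``the above results now combine to'' the proposition, and you have spelled out precisely how the description \eqref{eq-ideal} of $I_X$, the fibre decomposition of Lemma~\ref{lem-fixed}, and Lemma~\ref{lem-projection} combine to yield both directions. The only cosmetic point is that $f\cdot p_X\in I_X$ can also be seen directly from the definition $I_X=(C_0(X)\rtimes G)p_X(C_0(X)\rtimes G)$ without invoking \eqref{eq-ideal}, since for $f\ge 0$ one has $f\cdot p_X=(f^{1/2}p_X)\,p_X\,(f^{1/2}p_X)$ with $f^{1/2}p_X\in C_0(X)\rtimes G$; but your route via \eqref{eq-ideal} is equally valid.
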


\begin{example}\label{ex-D4-2} 
The above representation theoretic description of the Ideal $I_X$ makes it easy to
identify this ideal in the case of the crossed product $C(\TT^2)\rtimes G$ with $G=D_4$
acting on $X=\TT^2$ as described in Examples \ref{ex-D4} an \ref{ex-D4-1}.
If we realize
$$C(\TT^2)\rtimes G=\{f\in C(Z,\cK(\ell^2(G))): f(s,t)\in \cK(\ell^2(G))^{G_{(s,t)}}\}$$
as in Example \ref{ex-D4-1}, then, with respect to the description of the fibers $\cK(\ell^2(G))^{G_{(s,t)}}$
as given in that example, the ideal $I_{\TT^2}$ consists of those functions which take arbitrary values
in the interior of $Z$ and which take 
values in the corners $A_1$ at the boundaries of $Z$ and in $d_1$ and $B_1$ at the corners
$(0,0)$, $(\frac12,\frac12)$ and $(0, \frac12)$, respectively. 
\end{example}

Recall from \S 1 that every proper $G$-space $X$ is locally induced by compact subgroups
of $G$, which means that each $x\in X$ has a $G$-invariant open neighborhood $U$ 
of the form $U\cong G\times_KY$. For later use we need to compare the 
Mackey-Rieffel-Green map of $C_0(G\times_KY)\rtimes G$ with that of $C_0(Y)\rtimes K$.
By a version of Green's imprimitivity theorem we know that
 $C_0(G\times_KY)\rtimes G$ is Morita equivalent to $C_0(Y)\rtimes K$.
The imprimitivity bimodule $E$ is given by a completion of $E_0=C_c(G\times Y)$ with underlying 
pre-Hilbert $C_c(K\times Y)$-structure given by
\begin{equation}\label{eq-inner}
\begin{split}
\lk \xi,\eta\rk_{C_c(K\times Y)}(k,y)&= \int_G \overline{\xi(g^{-1},y)}\eta(g^{-1}k, k^{-1}y)\,dg\\
\xi\cdot b(g,y)&= \int_K \xi(gk^{-1}, ky) b(k,ky)\, dk
\end{split}
\end{equation}
for $\xi,\eta\in E_0$ and $b\in C_c(K\times Y)\subseteq C_0(Y)\rtimes K$, 
and with left action of the dense subalgebra $C_c(G\times (G\times_KY))$ of 
$C_0(G\times_KY)\rtimes G$ on $X_0$ given by the covariant representation 
$(P, L)$ such that
\begin{equation}\label{eq-leftaction}(P(F)\xi)(g,y)=F([g,y])\xi(g,y)\quad\text{and}\quad (L(t)\xi)(g,y)=\Delta(t)^{1/2}\xi(t^{-1}g, y),
\end{equation}
for $F\in C_0(G\times_KY)$. These formulas follow from 
 \cite[Corollary 4.17]{Dana-book}  by identifying $C_0(G\times_KY)$ with $C_0(G\times_KC_0(Y))$
 ($=\Ind_K^GC_0(Y)$  in the notation of \cite{Dana-book}).
Induction
of representations from $C(Y)\rtimes K$ to $C_0(G\times_KY)\rtimes G$ via the imprimitivity
bimodule $E$ induces a homeomorphism between $\big(C_0(Y)\rtimes K\big)\dach$ and 
$\big(C_0(G\times_KY)\rtimes G\big)\dach$. 

\begin{proposition}\label{prop-commute}
Suppose that $K$ is a compact subgroup of $G$, $Y$ is a $\K$-space and  $X=G\times_KY$.
Then there is a commutative diagram of bijective maps
$$
\begin{CD}
K\backslash \SYhat   @> \iota >> G\backslash \SXhat\\
@V\Ind^K VV       @VV\Ind^G V\\
(C_0(Y)\rtimes K)\dach   @>>\Ind^E > (C_0(X)\rtimes G)\dach
\end{CD}
$$
where the vertical maps are the respective induction maps of Theorem \ref{thm-mrg},
the lower horizontal map is induction via the imprimitivity bimodule constructed above 
and the upper horizontal map is given by the map on orbit spaces induced by the 
inclusion $\iota: \SYhat\to \SXhat, (y,\sigma)\mapsto ([e,y], \sigma)$.
\end{proposition}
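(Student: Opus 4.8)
The plan is to treat the statement as three assertions --- that $\iota$ is a well-defined bijection, that the square commutes, and that all four maps are bijections --- and to handle them in that order. Three of the four maps are already known to be bijections: the two vertical maps by Theorem~\ref{thm-mrg}, and the lower map $\Ind^E$ because it is induction along a Green imprimitivity bimodule, hence a homeomorphism of spectra. For $\iota$ the only input needed is the stabilizer computation in an induced space. A direct check shows that $g\cdot[e,y]=[e,y]$ forces $g\in K$ with $gy=y$, so that $G_{[e,y]}=K_y$ and $\widehat{G}_{[e,y]}=\widehat{K}_y$; this is exactly what makes $(y,\sigma)\mapsto([e,y],\sigma)$ take values in $\SXhat$. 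The same computation gives $[g,y]=g\cdot[e,y]$, whence every $G$-orbit in $\SXhat$ meets the image of $\iota$ (surjectivity), while $[g,y]=[e,y']$ forces $g\in K$ (injectivity on orbit spaces, and $K$-equivariance of the descent to $K\backslash\SYhat$). Thus $\iota$ is a bijection of orbit spaces; moreover, once commutativity is proved, bijectivity of any one map follows from that of the other three.

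It remains to prove that the square commutes. Fix $(y,\sigma)\in\SYhat$ and write $H:=K_y=G_{[e,y]}$; the task is to exhibit a unitary equivalence
\[\Ind^E\bigl(\Ind_H^K(y,\sigma)\bigr)\;\cong\;\Ind_H^G([e,y],\sigma).\]
The first step is to localize over a single orbit. By construction $\Ind_H^K(y,\sigma)$ factors through the restriction of $C_0(Y)\rtimes K$ to the orbit $Ky\cong K/H$, where it is the representation $\pi^\sigma_{(K)}=P^\sigma\times U^\sigma$ of $C_0(K/H)\rtimes K$ of \eqref{eq-ind1}--\eqref{eq-ind2}; similarly $\Ind_H^G([e,y],\sigma)$ factors through the orbit $G[e,y]\cong G/H$ and there equals $\pi^\sigma_{(G)}$. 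Under the canonical identification $G\backslash X\cong K\backslash Y$ the orbit $Ky$ corresponds to $G[e,y]$, and as $G$-spaces $G\times_K(Ky)\cong G/H$ (induction in stages for spaces; \emph{c.f.} Proposition~\ref{prop-induction}). Since the Green bimodule $E$ is $C_0(G\backslash X)$-linear, its restriction to this orbit is the Green bimodule $E'$ implementing $C_0(G/H)\rtimes G\sim_M C_0(K/H)\rtimes K$, and induction along $E$ is compatible with restriction to invariant subsets (a standard property of Green's bimodule). The claim therefore reduces to the homogeneous statement $\Ind^{E'}\bigl(\pi^\sigma_{(K)}\bigr)\cong\pi^\sigma_{(G)}$.

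For the homogeneous statement the conceptual content is that Green's imprimitivity bimodule implements Mackey's induction and is compatible with induction in stages: the $G$-representation underlying $\Ind^{E'}\bigl(\pi^\sigma_{(K)}\bigr)$ is $\Ind_K^G\bigl(\Ind_H^K\sigma\bigr)\cong\Ind_H^G\sigma$, equipped with the canonical system of imprimitivity over $G/H$, and this is precisely $\pi^\sigma_{(G)}$. One may simply cite this compatibility from the imprimitivity-theorem machinery (\emph{c.f.} \cite{Dana-book, EchWil}). For a self-contained argument I would instead build the intertwining unitary by hand: the induced space is the Rieffel tensor product $E'\otimes_{C_0(K/H)\rtimes K}\H^K_\sigma$, where $\H^K_\sigma$ and $\H^G_\sigma$ denote the spaces of \eqref{eq-ind1} formed from $(K,H)$ and $(G,H)$, and there is a natural map into $\H^G_\sigma$ sending an elementary tensor $\xi\otimes\eta$ (with $\xi\in C_c(G\times K/H)$ and $\eta\in\H^K_\sigma$) to the $V_\sigma$-valued function on $G$ obtained by integrating $\xi$ against $\eta$ over $K$, with the modular factors and the covariance over $H$ fixed by \eqref{eq-inner}--\eqref{eq-leftaction}. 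That this map intertwines the left $C_0(G/H)$- and $G$-actions with $P^\sigma$ and $U^\sigma$ follows from \eqref{eq-leftaction}. The main obstacle is showing that the map is a well-defined isometry with dense range: here one must substitute the $C_0(K/H)\rtimes K$-valued inner product \eqref{eq-inner} together with the action of $\pi^\sigma_{(K)}=P^\sigma\times U^\sigma$, and then collapse the resulting iterated integrals over $G$, $K$ and $H$ using Fubini, the covariance relations defining $\H^K_\sigma$ and $\H^G_\sigma$, and invariance of Haar measure. This inner-product computation is the technical heart of the proof and the step I expect to consume most of the effort; everything else in the argument is formal.
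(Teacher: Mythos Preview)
Your proposal is correct and follows essentially the same approach as the paper. The paper skips your localization-to-the-orbit step and works directly with the full bimodule, writing down the unitary $\Phi\colon E\otimes_{C_0(Y)\rtimes K}\H_\sigma^K\to\H_\sigma^G$ via the explicit formula $\Phi(\xi\otimes\varphi)(g)=\Delta(g)^{-1/2}\int_K\xi(gk^{-1},ky)\varphi(k)\,dk$ and then verifying the inner-product identity and the intertwining relations by precisely the iterated-integral computation (Fubini plus substitutions in $G$ and $K$) that you anticipate as the technical heart; since this formula only sees the values of $\xi$ along $G\times Ky$, your reduction to the homogeneous case is already implicit in the paper's argument, so the two routes are equivalent.
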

\begin{proof}
Let $(y,\sigma)\in \SYhat$. Let $\tau_y^\sigma$ denote the representation of 
$C_0(Y)\rtimes K$ induced from $(y,\sigma)$ and let $\pi_y^\sigma$ denote 
the representation of $C_0(G\times_KY)\rtimes G$ induced from $([e,y],\sigma)$.
Let  $\H_\sigma^K$ and $\H_\sigma^G$ denote the respective Hilbert spaces 
on which they act. 
We have to check that $\Ind^E\tau_y^\sigma\cong \pi_y^\sigma$.
Recall that $\Ind^E\tau_y^\sigma$ acts on the Hilbert space
$E\otimes_{C_0(Y)\rtimes K} \H_{\sigma}^K$ via the left action of $C_0(G\times_KY)\rtimes G$
on $E$, as specified in (\ref{eq-leftaction}).
Recall from (\ref{eq-ind1}) that
 $\H_\sigma^K=\{\ph\in L^2(K,V_\sigma): \ph(kl)=\sigma(l^{-1})\ph(k)\;\forall l\in K_y\}$
(since $K$ is unimodular) and similarly for $\H_{\sigma}^G$. We claim that there is a unique
unitary operator
$$\Phi: E\otimes_{C_0(Y)\rtimes K}\H_\sigma^K\to \H_\sigma^G$$
given on elementary tensors $\xi\otimes \ph$, $\xi\in E_0$, $\ph\in \H_\sigma^K$, by
$$\Phi(\xi\otimes \ph)(g)=\Delta(g)^{-1/2}\int_K\xi(gk^{-1}, ky)\ph (k)\,dk.$$
A quick computation shows that $\Phi(\xi\otimes\ph)(gl)=\Delta_G(l)^{-1/2}\sigma(l^{-1})\Phi(\xi\otimes \ph)(g)$.
To see that $\Phi$ preserves the inner products we compute for all $\xi,\eta\in E_0$ and 
$\ph,\psi\in \H_\sigma^K$:
\begin{align*}
\lk\Phi(\xi\otimes\ph),&\Phi(\eta\otimes\psi)\rk_{\H_\sigma^G}=
\int_G\lk \Phi(\xi\otimes\ph)(g),\Phi(\eta\otimes\psi)(g)\rk_{V_\sigma}\,dg\\
&=\int_G\Delta(g^{-1})\int_K\int_K\lk \xi(gk^{-1}, ky)\ph(k), \eta(gl^{-1}, ly)\psi(l)\rk_{V_\sigma} \,dl\,dk\,dg\\
&=\int_G\int_K\int_K\overline{\xi(g^{-1}k^{-1}, ky)}\eta(g^{-1}l^{-1}, ly)\lk \ph(k), \psi(l)\rk_{V_\sigma} \,dl\,dk\,dg\\
\end{align*}
while on the other side we get
$$
\lk \xi\otimes \ph,\eta\otimes\psi\rk_{E\otimes_{C_0(Y)\rtimes K}\H_\sigma^K}=
\lk \tau_y^\sigma(\lk\eta,\xi\rk_E)\ph,\psi\rk_{\H_\sigma^K}.
$$
For a function $f\in C_c(K\times Y)\subseteq C_0(Y)\rtimes K$ the operator $\tau_y^\sigma(f)$ acts on 
$\ph\in \H_\sigma^K$ by
$$\big(\tau_y^\sigma(f)\ph\big)(l)=\int_K f(k, ly)\ph(k^{-1}l)\,dk.$$
Applying this together with the formula for $\lk \xi,\eta\rk_E$ as given in (\ref{eq-inner}), we get
\begin{align*}
\lk \xi\otimes \ph,\eta\otimes&\psi\rk_{E\otimes_{C_0(Y)\rtimes K}\H_\sigma^K}=
\int_K \int_K\big\lk \lk \eta,\xi\rk_E(l,ky)\ph(l^{-1}k), \psi(k)\big\rk_{V_\sigma}\,dl\,dk\\
&=\int_K \int_K  \int_G  \overline{\xi(g^{-1}k,k^{-1}ly)}\eta(g^{-1}, ly)\lk \ph(k^{-1}l),\psi(l)\rk_{V_\sigma}\,
dg\,dl\,dk\\
&=\int_G\int_K\int_K\overline{\xi(g^{-1}k^{-1}, ky)}\eta(g^{-1}l^{-1}, ly)\lk \ph(k), \psi(l)\rk_{V_\sigma} \,dl\,dk\,dg\\
&=\lk\Phi(\xi\otimes\ph),\Phi(\eta\otimes\psi)\rk_{\H_\sigma^G},
\end{align*}
where the second to last equation follows from Fubini and  the transformation $g\mapsto lg$ followed
by the transformation $k\mapsto lk^{-1}$.

It follows now that $\Phi$ extends to a well defined isometry from $E\otimes_{C_0(Y)\rtimes K}\H_\sigma^K$ into  $\H_\sigma^G$. We now show that it intertwines the representations
$\Ind^E\tau_y^\sigma$ and $\pi_y^\sigma$. Since these representations are irreducible,
this will then also imply surjectivity of $\Phi$. For the left action of $F\in C_0(G\times_KY)$ we check
\begin{align*}
\Phi(P(F)\xi\otimes \ph)(g)&=\Delta(g)^{-1/2}\int_KF([gk^{-1}, ky])\xi(gk^{-1}, ky)\ph(k)\,dk\\
&=F([g,y])\Phi(\xi\otimes \ph)(g)=\big(P_{[e,y]}^\sigma(F)\Phi(\xi\otimes\ph)\big)(g),
\end{align*}
where we used the equations $[gk^{-1},ky]=[g,y]=g\cdot[e,y]$. Similarly, for the actions
of $G$ we easily check
$$\Phi(L(t)\xi\otimes\ph)(g)=\Phi(\xi\otimes\ph)(t^{-1}g)=\big(U_{y}^\sigma(t)\Phi(\xi\otimes \ph)\big)(g),$$
which now completes the proof.
\end{proof}

Recall that any $A-B$-imprimitivity bimodule $E$ induces a bijection of ideals in $A$ and $B$.
Under the correspondence between ideals in $A$ (resp. $B$) and open subsets of 
$\widehat{A}$ (resp. $\widehat{B}$) the correspondence of ideals of $A$ and $B$ 
induced by $E$ is compatible with the correspondence of open subsets in 
$\widehat{A}$ and  $\widehat{B}$ given by the homeomorphism 
$\ind^E:\widehat{B}\to \widehat{A}$. This all follows from the Rieffel-correspondence as explained in 
\cite[Chapter 3.3]{RW}. Using these facts together with the above 
Proposition \ref{prop-ideaIX} and Proposition \ref{prop-commute} we get

\begin{corollary}\label{cor-ideal}
Suppose that $K$ is a compact subgroup of $G$, $Y$ is a $\K$-space and  $X=G\times_KY$.
Then under the above described Morita equivalence between $C_0(X)\rtimes G$ and
 $C_0(Y)\rtimes K$ the ideal $I_X$ in $C_0(X)\rtimes G$ corresponds to the ideal 
 $I_Y$ in $C_0(Y)\rtimes K$.
 \end{corollary}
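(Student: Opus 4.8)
The plan is to reduce everything to the representation-theoretic descriptions of $I_X$ and $I_Y$ provided by Proposition \ref{prop-ideaIX} and then to transport them across the commutative diagram of Proposition \ref{prop-commute}; the Rieffel correspondence does the rest. The one geometric input I would isolate first is that for a point $[e,y]$ in the slice $Y\subseteq G\times_KY$ the stabilizer in $G$ equals the stabilizer of $y$ in $K$: the equation $g[e,y]=[e,y]$ forces $g\in K$ with $gy=y$, so $G_{[e,y]}=K_y$. It follows that the inclusion $\iota\colon\SYhat\to\SXhat$, $(y,\sigma)\mapsto([e,y],\sigma)$, sends a pair with trivial second coordinate $(y,1_{K_y})$ to $([e,y],1_{G_{[e,y]}})$, again a pair with trivial second coordinate.

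Next I would locate the two ideals inside the orbit spaces. By Proposition \ref{prop-ideaIX}, the open subset $\widehat{I_X}\subseteq(C_0(X)\rtimes G)\dach$ corresponds under the Mackey-Rieffel-Green bijection $\Ind^G$ of Theorem \ref{thm-mrg} precisely to the set of orbits $[(x,1_{G_x})]$ in $G\backslash\SXhat$, and likewise $\widehat{I_Y}$ corresponds under $\Ind^K$ to the orbits $[(y,1_{K_y})]$ in $K\backslash\SYhat$. Because every $G$-orbit in $X=G\times_KY$ meets the slice $Y$, each orbit $[(x,1_{G_x})]$ has a representative of the form $([e,y],1_{K_y})=\iota(y,1_{K_y})$; combined with the previous paragraph this shows that $\iota$ restricts to a bijection between the ``trivial'' locus of $K\backslash\SYhat$ and the ``trivial'' locus of $G\backslash\SXhat$.

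I would then feed this through Proposition \ref{prop-commute}, whose diagram yields $\Ind^E\circ\Ind^K=\Ind^G\circ\iota$. Applying both sides to the set of orbits $[(y,1_{K_y})]$ gives
$$\Ind^E\bigl(\widehat{I_Y}\bigr)=\Ind^G\bigl(\iota\{[(y,1_{K_y})]\}\bigr)=\Ind^G\{[(x,1_{G_x})]\}=\widehat{I_X},$$
so the spectral homeomorphism $\ind^E=\Ind^E$ carries the open set $\widehat{I_Y}$ onto $\widehat{I_X}$. Finally, the Rieffel correspondence for the imprimitivity bimodule $E$ (recalled before the statement, following \cite[Chapter 3.3]{RW}) matches the lattice isomorphism of ideals with the homeomorphism $\ind^E$ of spectra through the ideal/open-subset dictionary; since $\ind^E$ sends $\widehat{I_Y}$ to $\widehat{I_X}$, the ideal of $C_0(Y)\rtimes K$ corresponding to $I_X$ is exactly $I_Y$.

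I expect the main obstacle to be bookkeeping rather than analysis: namely checking cleanly that $\iota$ restricts to a bijection of the two trivial loci. This rests on the identification $G_{[e,y]}=K_y$ together with the fact that each $G$-orbit meets the slice, after which everything is a formal transport through Propositions \ref{prop-ideaIX} and \ref{prop-commute} and the Rieffel correspondence.
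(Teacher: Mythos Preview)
Your argument is correct and follows essentially the same route as the paper: use Proposition \ref{prop-ideaIX} to identify $\widehat{I_X}$ and $\widehat{I_Y}$ with the ``trivial'' loci in $G\backslash\SXhat$ and $K\backslash\SYhat$, transport one to the other via the commutative diagram of Proposition \ref{prop-commute}, and invoke the Rieffel correspondence. Your version is in fact more explicit than the paper's, which simply cites these two propositions and the Rieffel correspondence; in particular your verification that $G_{[e,y]}=K_y$ (hence that $\iota$ carries $(y,1_{K_y})$ to $([e,y],1_{G_{[e,y]}})$) is the detail the paper leaves implicit.
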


\begin{remark}
Analogues of Proposition \ref{prop-commute} and Corollary \ref{cor-ideal} are also true 
if the compact subgroup $K$ is replaced by any given closed subgroup $H$ of $G$ 
which acts properly on the space $Y$. The arguments are exactly the same---only some formulas 
become a bit more complicated due to the appearence of the modular function on $H$.
Since we only need the compact case below, we restricted to this case here.
\end{remark}

  \section{The spectrum of $C_0(X)\rtimes G$}
\label{sec-topology}
As  first step  to obtain any further progress for  $\K$-theory computations of crossed products by
proper actions with non-isolated free orbits, it should be useful to obtain a better understanding of the 
ideal structure of the crossed products. Since closed ideals in $C_0(X)\rtimes G$ correspond to open subsets of $(C_0(X)\rtimes G)\dach$, this problem is strongly related to a computation 
of the topology of the representation space $(C_0(X)\rtimes G)\dach$.

So in this section we  give a detailed 
description of the topology of $\big(C_0(X)\rtimes G\big)\dach$ 
in terms of the bijection with the parameter space  $G\backslash\SXhat$ 
as in Theorem \ref{thm-mrg}. To be more precise, we shall introduce a topology
on $\SXhat$ such that the bijection of Theorem \ref{thm-mrg} becomes a 
homeomorphism, if $G\backslash\SXhat$ carries the corresponding quotient topology.
Note that Baggett gives in \cite{Bag}
a general description of the 
topology of the unitary duals of semi-direct product groups $N\rtimes K$, with 
$N$ abelian and $K$ compact in terms of the Fell-topology on the set
of  subgroup representations of $K$. (This is the space of all pairs $(L,\tau)$, where $L$ is a subgroup of $K$ and $\tau$ is a unitary representation of $L$, see \cite{Glimm1, Fell} for the definition.)
Since
$(N\rtimes K)\dach=C^*(N\rtimes K)\dach\cong (C_0(\widehat{N})\rtimes K)\dach$
the study of such semi-direct products can be regarded as  a special case of the 
study of crossed products by proper actions.

Since Fell's topology on subgroup representations is not very easy to understand, we aim to define
a suitable topology on $\SXhat$ without using this construction. 
 To make this possible, we restrict our attention to actions which satisfy  Palais's slice property (SP). 
Recall that this means that the proper $G$-space $X$ is locally induced from actions 
of the stabilizers.
Recall also  that by Palais's Theorem (see Theorem  \ref{thm-Palais}), property (SP) is always satisfied if $G$ is a Lie-group.

Let us introduce some further
notation: 
If $X$ is a proper $G$-space with  property (SP), then for every $x\in X$ 
we define
$$S_x:=\{y\in X: G_y\subseteq G_x\}.$$
By an {\em almost slice} at $x\in X$ we shall understand any set of the form $W\cdot V_x$,
where $W$ is an open neighborhood of $e$ in $G$ and $V_x$ is an open neighborhood of
$x$ in $S_x$, i.e.,  $V_x=S_x\cap U_x$ for some open neighborhood $U_x$ of $x$ in $X$.
We denote by $\AS_x$ the set of all almost slices at $x$.

\begin{lemma}\label{lem-almostslice}
Let $X$ be a proper $G$-space with property (SP). Then 
the set $\AS_x$ of all almost slices at $x$ forms an open neighborhood base 
at $x$.
\end{lemma}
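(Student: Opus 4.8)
The plan is to verify directly the two defining properties of a neighborhood base at $x$: that every almost slice $W\cdot V_x$ is an open neighborhood of $x$, and that every open neighborhood of $x$ contains some almost slice. The whole argument rests on the local structure supplied by property (SP): for each $z\in X$ there is an open $G$-invariant set $U_z$ with $U_z\cong G\times_{G_z}Y_z$ and a $G_z$-slice $Y_z\ni z$ (Theorem \ref{thm-Palais}, Definition \ref{defn-SP}), together with the $G$-map $\varphi_z\colon U_z\to G/G_z$ of Proposition \ref{prop-induction} satisfying $Y_z=\varphi_z^{-1}(\{eG_z\})$. Two elementary facts should be extracted first. First, $Y_z\subseteq S_z$: if $y\in Y_z$ and $g\in G_y$ then $gG_z=\varphi_z(gy)=\varphi_z(y)=eG_z$, so $g\in G_z$, i.e. $G_y\subseteq G_z$. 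Second, the orbit map $q_z\colon G\times Y_z\to U_z$, $(g,y)\mapsto gy$, is open, being the composite of the open quotient map $G\times Y_z\to G\times_{G_z}Y_z$ with the homeomorphism $G\times_{G_z}Y_z\cong U_z$ of Proposition \ref{prop-induction}.

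The absorption direction is immediate and does not use (SP). Given an open neighborhood $O$ of $x$, joint continuity of the action map $a\colon G\times X\to X$ at $(e,x)$, where $a(e,x)=x\in O$, yields open sets $W\ni e$ and $U_x\ni x$ with $W\cdot U_x\subseteq O$. Then $W\cdot(S_x\cap U_x)$ is an almost slice contained in $O$, and it contains $x$ since $x\in S_x\cap U_x$; this is exactly what is needed.

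The substantive step, which I expect to be the main obstacle, is to show that an arbitrary almost slice $W\cdot V_x$, with $V_x=S_x\cap U_x$, is genuinely open rather than merely a neighborhood of $x$. The difficulty is that $V_x$ is only open in $S_x$ and $S_x$ need not be open in $X$, so openness must be recovered by sweeping out the slice directions with $W$ at \emph{every} point of $W\cdot V_x$, not just at $x$. I would fix $z=w_0 v_0\in W\cdot V_x$ with $w_0\in W$ and $v_0\in V_x$. Since $v_0\in S_x$ we have $G_{v_0}\subseteq G_x$, whence $S_{v_0}\subseteq S_x$. Applying (SP) at $v_0$ gives a $G_{v_0}$-slice $Y_{v_0}\ni v_0$, and the first fact above yields $Y_{v_0}\subseteq S_{v_0}\subseteq S_x$. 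Choosing open sets $W'\ni e$ with $w_0 W'\subseteq W$ and $U'\ni v_0$ with $U'\subseteq U_x$, the set $M:=q_{v_0}\bigl(W'\times(Y_{v_0}\cap U')\bigr)=W'\cdot(Y_{v_0}\cap U')$ is open and contains $v_0$ by the openness of $q_{v_0}$. Because $Y_{v_0}\cap U'\subseteq S_x\cap U_x=V_x$ and $w_0W'\subseteq W$, the open set $w_0 M=(w_0W')\cdot(Y_{v_0}\cap U')$ lies inside $W\cdot V_x$ and contains $z$, so $z$ is an interior point. Specializing to $z=x$ (with $w_0=e$, $v_0=x$) shows in particular that $x$ is interior, so each almost slice is an open neighborhood of $x$, which together with the previous paragraph completes the proof.
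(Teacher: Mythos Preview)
Your proof is correct and follows essentially the same approach as the paper's: both use continuity of the action for the absorption direction, and for openness both invoke (SP) to produce a local slice lying inside the appropriate $S_{\cdot}$, then use openness of the orbit map $G\times Y\to G\cdot Y$ (as a quotient followed by a homeomorphism) to exhibit an open neighborhood inside $W\cdot V_x$. The only cosmetic difference is that you take the slice at the base point $v_0\in V_x$ and then translate by $w_0$, whereas the paper takes the slice directly at the general point $y\in W\cdot V_x$ and works with $gV_x=S_{gx}\cap gU_x$; your version is arguably slightly cleaner since it avoids conjugating the inclusions by $g$.
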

\begin{proof}
Let $WV_x$ be any almost slice at $x$. To see that it is open in $X$ let $y\in WV_x$ be arbitrary.
Let $g\in W$ such that $y\in gV_x$.
Let $Y_y$ be a local slice at $y$, i.e., there is an open neighborhood $U_y$ of $y$ such that 
$U_y=G\cdot Y_y\cong G\times_{G_y}Y_y$. 
Then $Y_y\subseteq S_y\subseteq S_{gx}$.
Thus, passing to a smaller local slice if necessary, we may assume that $Y_y\subseteq gV_x$.
Now choose an open neighborhood $W'$ of $e$ in $G$ such that $W'g\subseteq W$. Then 
$W'Y_y\subseteq W'gV_x\subseteq WV_x$ is an open neighborhood of $y$ 
contained in $WV_x$.
(Note that the map $G\times Y_y\to G\cdot Y_y$ is open
since it coincides with the quotient map 
$G\times Y_y\to G_y\backslash(G\times Y_y)=G\times_{G_y}Y_y$.)

Conversely, it follows from the continuity of the action that every open neighborhood $V$ of $x$
in $X$ contains an almost slice $WV_x$ at $x$, which finishes the proof.
\end{proof}
In what follows, if $\tau$ and $\sigma$ are representations, we write $\tau\leq \sigma$ 
if $\tau$ is a subrepresentation of $\sigma$. 

\begin{definition}\label{defn-topology}
Suppose that $X$ is a proper $G$-space which satisfies (SP). 
If $(x,\sigma)\in \SXhat$ and if $WV_x\in \mathcal \AS_x$
we say that a pair $(z,\tau)\in \SXhat$ 
lies in the set $U(x, \sigma, WV_x)$ if and only if there exists $y\in V_x$ and $g\in W$
such that $z=gy$ and $\tau\leq g\sigma|_{G_z}$. Alternatively one could describe the sets $U(x,\sigma, WV_x)$ as the product
$W\cdot U(x,\sigma, V_x)$ with
$$ U(x,\sigma, V_x):=\{(y,\tau)\in \SXhat: y\in V_x, \tau\leq \sigma|_{G_y}\}.$$
We further define
 $$\U_{(x,\sigma)}:=\{U(x,\sigma,WV_x): WV_x\in \mathcal \AS_x\}.$$
\end{definition}

%
%We need

%\begin{lemma}\label{lem-all}
%Suppose that $G$ acts properly on $X$ and let $(x,\sigma)\in \SXhat$. Let $WY_x$ be 
%an almost slice at $x$. Then the following are equivalent:
%\begin{enumerate}
%\item $(z,\tau)\in U(x,\sigma, WY_x)$.
%\item For all $g\in W$ and $y\in Y_x$ with $z=gy$ we have $\tau\leq g\sigma|_{G_z}$.
%\end{enumerate}
%\end{lemma}
%\begin{proof}
%By definition we have  $(z,\tau)\in U(x,\sigma, WY_x)$ if there exists a pair $(g_1,y_1)\in W\times Y_x$ 
%such that $g_1y_1=z$ and $\tau\leq g_1\sigma|_{G_z}$. So assume now that $gy=z$ is another
%decomposition of $z$ with $g\in W$ and $y\in Y_x$. Then $g^{-1}g_1y_1=y$, which implies that
%$g_1^{-1}g\in G_x$ (this follows from the fact that $GY_x\cong G\times_{G_x}Y_x$ and  hence 
%$Y_x\cap gY_x=\emptyset$ for all $g\notin G_x$).
%Since $g_1^{-1}\tau\leq \sigma|_{G_{y_1}}$ and $g_1^{-1}g\in G_x$ acts trivially on $\widehat{G}_x$
%by conjugation, we get
%$$g^{-1}\tau=g^{-1}g_1g_1^{-1}\tau\leq g^{-1}g_1(\sigma|_{G_{y_1}})=(g^{-1}g_1\sigma)|_{G_y}=
%\sigma|_{G_y},$$
%which finishes the proof.
%\end{proof}

%

\begin{lemma}\label{lem-topology}
There is a topology on $\SXhat$ such that the elements of
 $\U_{(x,\sigma)}$ form a base of open neighborhoods for the element 
 $(x,\sigma)\in \SXhat$ in this topology. Moreover, the canonical action of $G$ 
 on $\SXhat$ is continuous with respect to this topology.
 \end{lemma}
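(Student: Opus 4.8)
The plan is to verify that the assignment $(x,\sigma)\mapsto\U_{(x,\sigma)}$ satisfies the standard criterion for a system of basic neighbourhoods. Declaring $\Omega\subseteq\SXhat$ open exactly when it contains some $U\in\U_{(x,\sigma)}$ around each of its points, it suffices to check: \textbf{(N1)} every $U(x,\sigma,WV_x)$ contains $(x,\sigma)$; \textbf{(N2)} any two members of $\U_{(x,\sigma)}$ contain a common third member; and \textbf{(N3)} whenever $(z,\tau)\in U(x,\sigma,WV_x)$ there is a member of $\U_{(z,\tau)}$ contained in $U(x,\sigma,WV_x)$. Throughout I would use Lemma~\ref{lem-almostslice} freely, together with the elementary facts that $\sigma\mapsto g\sigma$ is a left action (so $h(g\sigma)=(hg)\sigma$) and that the relation $\leq$ is preserved both by conjugation $\sigma\mapsto g\sigma$ and by restriction to a smaller subgroup.

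Axiom (N1) holds by taking $y=x$ (legitimate since $x\in S_x$) and $g=e$: then $z=x$ and $g\sigma|_{G_z}=\sigma$, so $\tau=\sigma$ qualifies. For (N2), given almost slices $W_1V_x^{1}$ and $W_2V_x^{2}$, the set $(W_1\cap W_2)(V_x^{1}\cap V_x^{2})$ is again an almost slice at $x$, and one checks directly that $U\bigl(x,\sigma,(W_1\cap W_2)(V_x^{1}\cap V_x^{2})\bigr)$ is contained in both $U(x,\sigma,W_iV_x^{i})$.

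The real work is (N3), which I expect to be the main obstacle. Suppose $(z,\tau)\in U(x,\sigma,WV_x)$, so there are $y\in V_x$ and $g\in W$ with $z=gy$ and $\tau\leq g\sigma|_{G_z}$. Since $y\in S_x$ forces $G_y\subseteq G_x$, we get $G_z=gG_yg^{-1}\subseteq gG_xg^{-1}$, whence $g^{-1}S_z\subseteq S_x$. Using continuity of $w\mapsto g^{-1}w$ (which sends $z$ to $y\in V_x$) I would choose an almost slice $W'V_z'$ at $z$ so small that $g^{-1}V_z'\subseteq V_x$ and $W'g\subseteq W$. I then claim $U(z,\tau,W'V_z')\subseteq U(x,\sigma,WV_x)$. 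Indeed, if $(w,\rho)\in U(z,\tau,W'V_z')$, pick $v\in V_z'$ and $h\in W'$ with $w=hv$ and $\rho\leq h\tau|_{G_w}$; then $y':=g^{-1}v\in V_x$ and $w=(hg)y'$ with $hg\in W'g\subseteq W$. For the representation condition, conjugating $\tau\leq g\sigma|_{G_z}$ by $h$ gives $h\tau\leq(hg)\sigma|_{hG_zh^{-1}}$; restricting to $G_w\subseteq hG_zh^{-1}$ and composing with $\rho\leq h\tau|_{G_w}$ yields $\rho\leq(hg)\sigma|_{G_w}$. Hence $(w,\rho)\in U(x,\sigma,WV_x)$, proving (N3) and thus the existence of the topology.

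For continuity of the action I would fix $(g_0,(x_0,\sigma_0))$ and a basic neighbourhood $U(g_0x_0,g_0\sigma_0,WV_{g_0x_0})$ of its image, and produce a product neighbourhood mapping into it. Writing $x_1=g_0x_0$, from $G_{x_1}=g_0G_{x_0}g_0^{-1}$ one gets $g_0S_{x_0}\subseteq S_{x_1}$; using continuity of $w\mapsto g_0w$ and of multiplication I would choose an almost slice $W''V_{x_0}$ at $x_0$ and a neighbourhood $N$ of $g_0$ with $g_0V_{x_0}\subseteq V_{x_1}$ and $NW''g_0^{-1}\subseteq W$. Then for $g\in N$ and $(y,\tau)\in U(x_0,\sigma_0,W''V_{x_0})$, writing $y=hy'$ with $y'\in V_{x_0}$, $h\in W''$ and $\tau\leq h\sigma_0|_{G_y}$, one has $gy=(ghg_0^{-1})(g_0y')$ with $g_0y'\in V_{x_1}$ and $ghg_0^{-1}\in W$; and conjugating $\tau\leq h\sigma_0|_{G_y}$ by $g$ gives $g\tau\leq(gh)\sigma_0|_{G_{gy}}=(ghg_0^{-1})(g_0\sigma_0)|_{G_{gy}}$, so $(gy,g\tau)\in U(x_1,g_0\sigma_0,WV_{x_1})$. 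Note the representation inequality comes out automatically here, so the only constraints are the geometric ones on $N$, $W''$ and $V_{x_0}$; this makes the continuity step essentially routine once the reduction behind (N3) is in hand.
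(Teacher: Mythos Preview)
Your proof is correct and follows essentially the same route as the paper. You organize it via the neighbourhood-base axioms (N1)--(N3), while the paper combines (N2) and (N3) into a single verification that any point in the intersection of two basic sets $U(x_1,\sigma_1,W_1V_1)\cap U(x_2,\sigma_2,W_2V_2)$ has a basic set around it contained in that intersection; the core computation (your (N3)) is the same in both, hinging on the inclusion $g^{-1}S_z\subseteq S_x$ and the transitivity/conjugation-stability of $\leq$. Your continuity argument is likewise the same as the paper's, with only cosmetic differences in parameterization.
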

\begin{proof} 
We have to show that if  $(x,\sigma)$ lies in the intersection 
of two sets $U(x_1,\sigma_1, W_1V_1)$ and $U(x_2,\sigma_2, W_2V_2)$, then there 
exists an almost slice $WV$ at $x$ such that 
$$U(x,\sigma, WV)\subseteq U(x_1,\sigma_1, W_1V_1)\cap U(x_2,\sigma_2, W_2V_2).$$
If this is shown, then the union $\bigcup_{(x,\sigma)\in \SXhat} \mathcal U_{(x,\sigma)}$ forms a base
of a topology with the required properties.

%For this choose any almost slice $WY_x$ which is contained in $W_1Y_{x_1}\cap W_2Y_{x_2}$.
%We want to show that, possibly after shrinking $W$ and $Y_x$ if necessary, we have 
%$U(x,\sigma, WY_x)\subseteq U(x_1,\sigma_1, W_1Y_{x_1})$. Shrinking further if necessary, the same 
%argument would give $U(x,\sigma, WY_x)\subseteq U(x_2,\sigma_2, W_2Y_{x_2})$, and we 
%are done.

%First of all,  we choose $g_1\in W$ with $g_1x_1=x$. Then we choose a unit neighborhood 
%$\tilde{W}_1$ such that $\tilde{W}_1g_1\subseteq W$. Then $\tilde{W}_1\cdot(g_1 Y_{x_1})$ 
%is an almost slice at $g_1x_1$ with $U(g_1x_1, g_1\sigma_1, \tilde{W}_1\cdot (g_1 Y_{x_1}))
%\subseteq U(x_1,\sigma_1, W_1Y_{x_1})$. After shrinking, if necessary, we may assume that 
%$WY_x\subseteq  \tilde{W}_1\cdot (g_1 Y_{x_1})$. Thus, after passing from $Y_{x_1}$ to $g_1Y_{x_1}$
%and from $W_1$ to $\tilde{W}_1$, if necessary,  we my assume from now on that 
%$x\in Y_{x_1}$ and that $W_1$ is as small as we like! In particular, we may assume 
%that the intersection of $W_1$ with the normalizer $N(G_{x_1})$ of $G(x_1)$ lies in the 
%stabilizer $N(G_{x_1})_{\sigma}$ of $\sigma$ with respect to the conjugation 
%action of $N(G_{x_1})$ on $\widehat{G}_{x_1}$. (Use the fact that
%$N(G_{x_1})_{\sigma}$ is an open subgroup of $N(G_{x_1})$!) By Lemma \ref{lem-all}
%we may also assume that $\sigma\leq \sigma_1|_{G_x}$.

%Assume know that $(z,\tau)\in U(x,\sigma, WY_x)$. Let $g\in W$ and $y\in Y_x$ with $z=gy$
%and choose $h,l\in W_1$ such that $h^{-1}$

For this let $g_1\in W_1$ and $g_2\in W_2$ such that $y_i:=g_i^{-1}x\in V_i$  and such that
$g_i^{-1}\sigma$ is a sub-representation of ${\sigma_i}|_{G_{y_i}}$ for $i=1,2$.
Since $g_iS_x =S_{g_ix}\subseteq S_{x_i}$ for $i=1,2$, we may choose an open neighborhood 
$V$ of $x$ in $S_x$ such that $g_iV\subseteq V_i$ for $i=1,2$. We may then also find 
a symmetric open neighborhood $W$ of $e$ in $G$ such that $g_iW\subseteq W_i$ and 
$WV\subseteq W_iV_i$ for $i=1,2$.
%
%By Lemma \ref{lem-almostslice} there exists an almost slice $WV$ which is contained in 
%$W_1V_1\cap W_2V_2$.
 We want to show that $U(x,\sigma, WV)\subseteq U(x_i,\sigma_i, W_iV_i)$
for $i=1,2$. Since $\U_{(x,\sigma)}$ is  closed under finite intersections, which follows easily from the definitions, 
 it is enough to show this for $i=1$.

So let $(y,\tau)\in U(x,\sigma, WV)$.
Let $g\in W$ such that $gy\in V$ and $g\tau$ is a sub-representation of $\sigma|_{G_{gy}}$. 
Then $g_1g\in W_1$ with $g_1gy\in V_1$ and 
 $g_1g\tau$ is a sub-representation of $g_1(\sigma|_{G_{gy}})$, which is a subrepresentation
 of $\big(\sigma_1|_{G_{g_1x}}\big)|_{G_{g_1gy}}=\sigma_1|_{G_{g_1gy}}$. Thus $(y,\tau)\in U(x_1, \sigma_1, W_1V_1)$.
 
 To see that the action of $G$ on $\SXhat$ is continuous let $U(gx,g\sigma, WV_gx)$ be a given neighborhood of 
 $(gx,g\sigma)$. Then, if $W_0$ is an open neighborhhood of $e$ in $G$ 
 such that 
 $gW_0^2g^{-1}\subseteq W$ and if we define $V_x:=g^{-1}\cdot V_{gx}$, we get 
 $(gW_0)\cdot U(x,\sigma, W_0V_x)=(gW_0^2g^{-1})\cdot U(gx,g\sigma, V_{gx})\subseteq 
 U(gx,g\sigma, WV_{gx})$ and we are done.
\end{proof}

\begin{remark}\label{rem-topdiscrete}
In the case where $G$ is a discrete group, the description of the topology on 
$\SXhat$ becomes easier to describe since for
every point $x\in X$ the set $S_x\subseteq X$ is open in $X$, and hence contains 
a neighborhood base of sets $V_x$. (Observe also that $V_x=WV_x$ for $W=\{e\}$.)
Therefore
a neighborhood base of a pair $(x,\sigma)\in \SXhat$ is given by the sets
$U(x,\sigma, V_x)$, where $V_x$ runs through all open neighborhoods of $x$ in $S_x$.
In this case, we may even replace the $V_x$ by local slices $Y_x$, since for discrete $G$
the local slices are also open in $X$.

%To summarize, for \(G\) discrete, the open neighbourhoods 
%of \((x,\sigma) \in \SX\) may be parameterized by open slices \(V_x\) 
%through \(x\) in the following way: let  
%\(V_x\) be such a slice through \(x\). Then for any 
%\(y \in V_x\), 
%\(G_y\) is subconjugate to \(G_x\) and there is a canonical 
%restriction map \(\widehat{G}_x\to \widehat{G}_y\). The set 
%of all \( (y,\tau)\) for which 
%\(\tau \le \sigma|_{G_y}\) is an open neighbourhood of \((x, \sigma)\) in 
%\(\SX\).  

\end{remark}

\begin{remark}\label{rem-neighborhood}
Another obvious approach to define basic neighborhoods for the topology of 
$\SXhat$ would be to consider the sets $U(x,\sigma, WY_x):=W\cdot U(x,\sigma, Y_x)$  
where the $Y_x$ are local slices at $x$ and 
$U(x,\sigma, Y_x):=\{(y,\tau): y\in Y_x, \tau\leq \sigma|_{G_y}\}$. 
 We actually believe that these sets do form a neighborhood base of the 
above defined topology, but we lack a proof. In particular, it is not clear to us whether the intersection
of two sets of this form will contain a third one of this form. The difficulty comes from the fact that 
the intersection of two local slices at $x$ might not be a local slice at $x$ -- in fact the intersection 
will very often only contain  the point $x$. However, it follows from the above remark that
these problems disappear if $G$ is discrete.
\end{remark}

In what follows, we always equip $(C_0(X)\rtimes G)\dach$  with the Jacobson topology
and $G\backslash \SXhat$ with the quotient topology of the above defined topology on $\SXhat$.

%
%In what follows it is sometimes useful to have the following description of convergent 
%nets in $\SXhat$, which follows easily from the properties of the neighborhoods 
%$U(x,\sigma,V)$.

%\begin{lemma}\label{lem-nets}
%A net
%$(x_i, \sigma_i)_{i\in I}$ in $\SXhat$ converges to some element $(x,\sigma)\in \SXhat$
%if and only if the following is true: 
%Every subnet of $(x_i, \sigma_i)_{i\in I}$ has a subnet, say $(x_j, \sigma_j)_{j\in J}$
%with the following properties:
%\begin{enumerate}
%\item There exists a local slice $Y$ at $x$ and a net $(g_j)_{j\in J}$ in $G$ such that $g_j\to e$ in $G$ and $g_jx_j\to x$ and $g_jx_j\in Y$ for all $j\in J$.
% \item $g_j\sigma_j$ is a sub-representation of $\sigma|_{G_{g_jx_j}}$ for all  $j\in J$.
% \end{enumerate}
% \end{lemma}
% \begin{proof} The necessity of the conditions follows from the properties of the neighborhood base
% $\U_{x,\sigma}$. So let us show that the conditions are also sufficient. 
%We do this later!
%\end{proof}
% 
% 
% Note that by passing to another subnet if necesary, we may even assume that the positive values
% of the net $\lk g_j\sigma_j, \sigma|_{G_{g_jx_j}}\rk_{G_{g_jx_j}}$
% in (iii) is constant. This follows from the simple fact that its value can never 
% exceed the dimension of the representation space $V_\sigma$ of $\sigma$.
% 
 
 \begin{theorem}\label{thm-top}
 Assume that $X$ is a proper $G$-space which satisfies Palais's slice property (SP).
 Let $\SXhat$ be equipped with the topology constructed above. Then the bijection
$[(x,\sigma)]\mapsto \pi_x^\sigma$  between $G\backslash \SXhat$ and $(C_0(X)\rtimes G)\dach$ of Theorem \ref{thm-mrg}
 is a homeomorphism.
 \end{theorem}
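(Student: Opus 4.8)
The plan is to transport everything to the bundle side via Corollary \ref{cor-proper}, where the map becomes transparent at the level of fibres. Write $A\defeq C_0(X)\rtimes G$; by Lemma \ref{lem-bundle} it is a continuous bundle over $G\backslash X$ with fibre $\cK^{G_x}$ over $Gx$, and by Lemma \ref{lem-fixed}(iii)--(iv) the evaluation $q_x\colon A\to\cK^{G_x}$ of Remark \ref{rem-evaluation}, followed by projection onto the summand $\cK(\H_{U^\sigma})$, is exactly $\pi_x^\sigma$. Hence for $F\in A$ the number $\|\pi_x^\sigma(F)\|$ is the norm of the $\sigma$-block of $F(x)\in\cK^{G_x}$, which I can control because $F$ is a norm-continuous $\cK$-valued function on $X$. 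Since the $G$-action on $\SXhat$ is continuous (Lemma \ref{lem-topology}), the orbit map $\SXhat\to G\backslash\SXhat$ is an open quotient map, so it suffices to prove that the $G$-invariant map $p\colon\SXhat\to\widehat{A}$, $(x,\sigma)\mapsto\pi_x^\sigma$, is continuous and open; then the induced continuous bijection of Theorem \ref{thm-mrg} is open, hence a homeomorphism. Throughout I use that the topology of $\widehat A$ is the weakest making each $\pi\mapsto\|\pi(F)\|$ lower semicontinuous, so a net $\pi_i\to\pi$ iff $\|\pi(F)\|\le\liminf_i\|\pi_i(F)\|$ for all $F\in A$.

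For continuity I would take a convergent net $(x_i,\sigma_i)\to(x,\sigma)$ in $\SXhat$ and fix $F\in A$. Using the basic neighbourhoods $U(x,\sigma,WV_x)$ of Definition \ref{defn-topology} and passing to a subnet, I may write $x_i=g_iy_i$ with $g_i\to e$, $y_i\to x$ inside $S_x$, and $\tau_i\defeq g_i^{-1}\sigma_i\le\sigma|_{G_{y_i}}$, so that $G_{y_i}\subseteq G_x$. The equivalence $\pi_{x_i}^{\sigma_i}\cong\pi_{y_i}^{\tau_i}$ noted after Definition \ref{def:stabilizer_group_bundle} reduces matters to the pairs $(y_i,\tau_i)$. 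Now $F(y_i)\to F(x)$ in $\cK$, and since $G_{y_i}\subseteq G_x$ we have $\cK^{G_x}\subseteq\cK^{G_{y_i}}$; Proposition \ref{prop-include} shows that the $\sigma$-block of any element of $\cK^{G_x}$ occurs (with multiplicity $m^{\tau_i}_{\sigma}\ge 1$, as $\tau_i\le\sigma|_{G_{y_i}}$) as a sub-block of its $\tau_i$-block in $\cK^{G_{y_i}}$. As block projections are contractions, $\|\pi_{y_i}^{\tau_i}(F)\|\ge\|\tau_i\text{-block of }F(x)\|-\|F(y_i)-F(x)\|\ge\|\pi_x^\sigma(F)\|-\|F(y_i)-F(x)\|$, and taking $\liminf$ gives $\|\pi_x^\sigma(F)\|\le\liminf_i\|\pi_{x_i}^{\sigma_i}(F)\|$.

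For openness I would verify that the inverse is continuous, i.e. that $\pi_{x_i}^{\sigma_i}\to\pi_x^\sigma$ forces $[(x_i,\sigma_i)]\to[(x,\sigma)]$ in $G\backslash\SXhat$. The map $\widehat A\to G\backslash X$ induced by $C_0(G\backslash X)\subseteq ZM(A)$ is continuous, so $Gx_i\to Gx$. Here property (SP) enters: choosing a slice neighbourhood $U\cong G\times_{G_x}Y$ with $x\in Y\subseteq S_x$ and using compactness of $G_x$, I may translate each $(x_i,\sigma_i)$ within its orbit so that $x_i\in Y$ and $x_i\to x$, whence $G_{x_i}\subseteq G_x$. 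If the conclusion failed for a basic $N=U(x,\sigma,WV_x)$, then along a subnet $\sigma_i\not\le\sigma|_{G_{x_i}}$, so by Proposition \ref{prop-include} the multiplicity $m^{\sigma_i}_{\sigma}$ is $0$. I would then construct a separating element: choose a finite-rank projection $e_\sigma$ in the $\sigma$-block of $\cK^{G_x}$ and a $G_x$-invariant $\phi\in C_c(Y)$ with $\phi(x)=1$, and set $F([g,y])=\phi(y)\,\Ad\rho(g)(e_\sigma)$, extended by $0$ off $U$; this is well defined since $e_\sigma$ is $\Ad\rho(G_x)$-fixed. Then $\pi_x^\sigma(F)=e_\sigma\ne 0$, while $F(x_i)=\phi(x_i)e_\sigma$ has vanishing $\sigma_i$-block because $m^{\sigma_i}_\sigma=0$, so $\pi_{x_i}^{\sigma_i}(F)=0$ for large $i$, contradicting $\|\pi_x^\sigma(F)\|\le\liminf_i\|\pi_{x_i}^{\sigma_i}(F)\|$.

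I expect the main obstacle to be the openness half, and within it the normalisation replacing the net by orbit representatives $x_i\in Y$ with $x_i\to x$ and $G_{x_i}\subseteq G_x$: this is precisely where (SP) and compactness of the stabilizer are indispensable (cf. Remark \ref{rem-subconj} and the failure exhibited by the $\prod_n\Z/2$ example), and where one must check that the orbit translation preserving the class in $G\backslash\SXhat$ is compatible with convergence. Once the net is normalised this way, the Peter--Weyl block calculus of Lemma \ref{lem-fixed} and the multiplicity formula of Proposition \ref{prop-include} make both the separating construction and the continuity estimate essentially mechanical.
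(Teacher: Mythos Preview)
Your argument is correct, and the overall architecture (reduce to representatives in a slice using (SP), then compare blocks via Proposition~\ref{prop-include}) is the same as the paper's. The genuine difference is in how the local comparison is carried out. The paper isolates the compact-group-with-fixed-point situation as a separate result (Proposition~\ref{prop-compact}) proved via Fell's weak-containment formalism and Frobenius reciprocity, and then reduces the general statement to it using the Morita equivalence $C_0(G\times_{G_x}Y)\rtimes G\sim_M C_0(Y)\rtimes G_x$ together with Proposition~\ref{prop-commute}. You instead stay on the bundle side $C_0(X\times_G\cK)$ throughout, use the lower-semicontinuity characterisation of the Jacobson topology, and read off both directions from explicit norm estimates on blocks: for continuity the inequality $\|\pi_{y_i}^{\tau_i}(F)\|\ge\|\pi_x^\sigma(F)\|-\|F(y_i)-F(x)\|$, and for openness the separating section $F([g,y])=\phi(y)\,\Ad\rho(g)(e_\sigma)$.

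What each buys: the paper's route packages the key computation into a reusable lemma and makes the role of induction-in-steps and Morita equivalence transparent; your route is more elementary in that it avoids the Morita reduction and the somewhat delicate compactness argument on projections in the proof of Proposition~\ref{prop-compact}, replacing them by a single explicit section whose block components one can just read off from Proposition~\ref{prop-include}. Two small points worth tightening in a final write-up: in the continuity half the phrase ``passing to a subnet'' is unnecessary---for fixed $F$ and $\epsilon$ you only need eventual membership in one basic neighbourhood $U(x,\sigma,WV_x)$ with $V_x$ small enough that $\|F(y)-F(x)\|<\epsilon$ on $V_x$; and in the openness half you should note that after normalisation the only orbit representatives of $x_i$ lying in the slice $Y$ differ by elements of $G_x$, so failure of $[(x_i,\sigma_i)]$ to enter a basic $G\backslash\SXhat$-neighbourhood really does force $\sigma_i\not\le\sigma|_{G_{x_i}}$ along a subnet.
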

 
 Before we start with the proof, we have to recall the definition of the 
 \emph{Fell-topology} on the set $\Rep(A)$ of \emph{all} equivalence classes of 
 representations of a C*-algebra $A$ with dimension dominated by some fixed cardinal
 $\kappa$ ($\kappa$ is always chosen so big, that all representations we care for lie in 
 $\Rep(A)$). A neighborhood base for the Fell topology is given by
 the collection of all sets of the form
 $$U(I_1,\ldots, I_l)=\{\pi\in \Rep(A): \pi(I_i)\neq \{0\}\;\text{for all $1\leq i\leq l$}\},$$
 where $I_1,\ldots, I_l$ is any given finite collection of closed two-sided ideals in $A$.
If we restrict this topology to the set $\widehat{A}$ of  equivalence classes of irreducibe 
representations of $A$, we recover the usual Jacobson toplogy on $\widehat{A}$.

 Convergence of nets in $\Rep(A)$ can conveniently 
be described in terms of {\em weak containment}:
If $\pi\in \Rep(A)$ and $R$ is a subset of $\Rep(A)$, then 
$\pi$ is said to be {\em weakly contained} in $R$ (denoted $\pi\prec R$)
if $\ker\pi\supseteq\cap\{\ker\rho:\rho\in R\}$. 
Two subsets $S,R$ of $\Rep(A)$ are said to be {\em weakly equivalent}
($S\sim R$) if $\sigma\prec R$ for all $\sigma\in S$ and $\rho\prec S$ for
all $\rho\in R$. 

\begin{lem}[Fell]\label{lem-Felltop}
Let $(\pi_j)_{j\in J}$ be a net in $\Rep(A)$ and let $\pi,\rho\in \Rep(A)$. Then
\begin{enumerate}
\item $\pi_j\to\pi$ if and only if $\pi$ is weakly contained in every subnet
of $(\pi_j)_{j\in J}$.
\item If $\pi_j\to \pi$ and if $\rho\prec \pi$, then $\pi_j\to\rho$.
\end{enumerate}
\end{lem}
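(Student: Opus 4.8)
The plan is to reduce both statements to purely order-theoretic manipulations of kernels, exploiting that the definition of weak containment given above is phrased entirely in terms of kernels. Two preliminary reformulations do all the work. First, since $U(I_1,\dots,I_l)=\bigcap_{i=1}^l U(I_i)$, the sets $U(I)=\{\sigma\in\Rep(A):\sigma(I)\neq\{0\}\}$ for a single closed two-sided ideal $I$ form a subbasis for the Fell topology, and $\sigma\in U(I)$ if and only if $I\not\subseteq\ker\sigma$. Second, for a subset $R\subseteq\Rep(A)$ the relation $\pi\prec R$ is \emph{by definition} the inclusion $\ker\pi\supseteq\bigcap_{\rho\in R}\ker\rho$; in particular $\rho\prec\pi$ means simply $\ker\rho\supseteq\ker\pi$, and the relation is transitive in the sense that $\rho\prec\pi$ together with $\pi\prec R$ forces $\rho\prec R$.

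For (i) I would first prove the auxiliary claim that $\pi_j\to\pi$ implies $\bigcap_j\ker\pi_j\subseteq\ker\pi$, i.e.\ $\pi\prec\{\pi_j:j\in J\}$. Writing $N:=\bigcap_j\ker\pi_j$, which is a closed two-sided ideal, suppose $N\not\subseteq\ker\pi$. Then $\pi\in U(N)$, so convergence forces $\pi_j\in U(N)$ eventually, i.e.\ $N\not\subseteq\ker\pi_j$ for some $j$; this is impossible since $N\subseteq\ker\pi_j$ for all $j$. As any subnet of a convergent net converges to the same limit, applying this claim to each subnet yields the forward implication of (i). For the converse, suppose $\pi_j\not\to\pi$. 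Then there is a basic neighbourhood $U(I_1,\dots,I_l)$ of $\pi$ outside which the net lies frequently, so after passing to a subnet we may assume $\pi_{j_k}\notin U(I_1,\dots,I_l)$ for all $k$; hence for each $k$ some index $i(k)$ satisfies $I_{i(k)}\subseteq\ker\pi_{j_k}$. Since $i(k)$ ranges over the finite set $\{1,\dots,l\}$, a pigeonhole argument yields a further subnet $(\pi_{j_{k_m}})$ along which $i(k_m)$ equals a constant $i_0$, whence $I_{i_0}\subseteq\bigcap_m\ker\pi_{j_{k_m}}$. The hypothesis that $\pi$ is weakly contained in this subnet then gives $I_{i_0}\subseteq\ker\pi$, contradicting $\pi\in U(I_{i_0})$.

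Statement (ii) is then immediate from (i) together with the transitivity noted above: if $\pi_j\to\pi$, then by (i) $\pi$ is weakly contained in every subnet, so $\ker\pi\supseteq\bigcap_k\ker\pi_{j_k}$ for each subnet $(\pi_{j_k})$; combining this with $\ker\rho\supseteq\ker\pi$ gives $\ker\rho\supseteq\bigcap_k\ker\pi_{j_k}$, i.e.\ $\rho$ is weakly contained in every subnet, and a second application of (i) yields $\pi_j\to\rho$.

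The only genuinely delicate point is the converse direction of (i): one must convert ``the net lies frequently outside a finite-intersection basic open set'' into ``a single ideal is responsible along a subnet,'' which is precisely where the finiteness of the collection $I_1,\dots,I_l$ enters through the pigeonhole step. Everything else is bookkeeping with inclusions of kernels, and no use of the Hilbert-space structure of the representations beyond their kernels is required.
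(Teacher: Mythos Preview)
Your proof is correct. The paper itself does not supply a proof of this lemma at all; it simply cites Fell's original article (\cite[Propositions 1.2 and 1.3]{Fell}). Your argument is the standard one and matches Fell's approach: reduce everything to inclusions of kernels, use the subbasic sets $U(I)$, and for the converse in (i) extract from the frequently-outside condition a subnet on which a single ideal $I_{i_0}$ is annihilated. The one point worth making explicit in a net-theoretic setting is that the ``pigeonhole'' step really does produce a genuine subnet: since the index function $i(k)$ takes values in the finite set $\{1,\dots,l\}$, at least one fibre $i^{-1}(i_0)$ is cofinal in the directed index set, and from a cofinal subset one manufactures a subnet in the usual way (e.g.\ by choosing for each $k$ some $h(k)\ge k$ in that fibre). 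You gloss over this, but it is routine and does not affect the validity of the argument.
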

For the proof see \cite[Propositions 1.2 and 1.3]{Fell}.  We should also note that 
by construction of the Fell-topology, the topology can only see the kernel of a representation
and not the representation itself -- that means in particular that if we replace a 
 net $(\pi_i)$ by some other net $(\tilde\pi_i)$ with $\ker\tilde\pi_i=\ker\pi_i$ for all
 $i\in I$, then both nets have the same limit sets!

Suppose now that $A,B$ are two C*-algebras and let $_AE_B$ be a Hilbert $A-B$-bimodule.
By this we understand a Hilbert $B$-module $E_B$ together with a fixed $*$-homomorphism 
$\Phi:A\to \L_B(E)$. Then
$E$ induces an induction map (due to Rieffel)
$$\Ind^E: \Rep(B)\to \Rep(A)$$
which sends a representation $\pi\in \Rep(B)$ to the induced representation $\Ind^\pi\in \Rep(A)$,
which acts on the balanced tensor product $E\otimes_B\H_\pi$ via
$$\Ind^E\pi(a)(\xi\otimes v)=\Phi(a)\xi\otimes 1.$$
One can check that 
$$\ker(\Ind^E\pi)=\{a\in A: \Phi(a)E\subseteq E\cdot(\ker\pi)\}$$
from which it follows that induction via $_AE_B$ preserves weak containment, and 
hence the map $\Ind^E: \Rep(B)\to \Rep(A)$ is continuous. 
In particular, if $_AE_B$ is an imprimitivity bimodule with inverse module $_BE^*_A$,
then $\Ind^{E^*}$ gives a continuous inverse to $\Ind^E$, and therefore
induction via $E$ induces a homeomorphism between $\Rep(B)$ and $\Rep(A)$
(see \cite[Chapter 3.3]{RW}).

Basically all induction maps we use in this paper are coming in one way or the other
from induction via bimodules, so the above principles can be used. We need the
following observation:

\begin{lemma}\label{lem-repcompact}
Suppose that $(\pi_i)$ is a net in $\Rep(K)=\Rep(C^*(K))$ for some compact group 
$K$. Then $(\pi_i)$ converges to some $\sigma\in \widehat{K}$ if and only if
there exists an index  $i_0$ such that $\sigma$ is a sub-representation of $\pi_i$ for all $i\geq i_0$.
\end{lemma}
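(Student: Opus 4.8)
The plan is to reduce the statement to a computation of weak containment in $\Rep(C^*(K))$ and then feed that into Fell's convergence criterion (Lemma~\ref{lem-Felltop}). Because $K$ is compact, the Peter--Weyl theorem gives $C^*(K)\cong\bigoplus_{\tau\in\widehat{K}}\cK(V_\tau)$, a $c_0$-direct sum of the finite-dimensional matrix algebras $\cK(V_\tau)=\End(V_\tau)$ indexed by $\widehat{K}$. The closed two-sided ideals of such an algebra are exactly the sub-sums $\bigoplus_{\tau\in S}\cK(V_\tau)$ for subsets $S\subseteq\widehat{K}$, and for a representation $\rho$ one has $\ker\rho=\bigoplus_{\tau\not\le\rho}\cK(V_\tau)$, the sum over those $\tau$ that do \emph{not} occur as subrepresentations of $\rho$.

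The key claim I would first establish is: for $\sigma\in\widehat{K}$ and any subset $R\subseteq\Rep(K)$,
$$\sigma\prec R\quad\Longleftrightarrow\quad \sigma\le\rho\ \text{for some }\rho\in R.$$
Indeed, $\bigcap_{\rho\in R}\ker\rho$ is the ideal indexed by $\{\tau:\tau\not\le\rho\text{ for all }\rho\in R\}$, while $\ker\sigma$ is indexed by $\widehat{K}\setminus\{\sigma\}$; the containment $\bigcap_{\rho\in R}\ker\rho\subseteq\ker\sigma$ defining $\sigma\prec R$ therefore holds iff $\sigma$ lies outside the index set of the left-hand ideal, that is, iff $\sigma\le\rho$ for at least one $\rho\in R$. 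This is the whole analytic content; everything else is order-theoretic bookkeeping.

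With the claim in hand I would invoke Lemma~\ref{lem-Felltop}(i): $\pi_i\to\sigma$ iff $\sigma$ is weakly contained in every subnet of $(\pi_i)$. For the ``if'' direction, suppose $\sigma\le\pi_i$ for all $i\ge i_0$. Any subnet is cofinal, hence meets the tail $\{i\ge i_0\}$, so it contains some $\pi_i$ with $\sigma\le\pi_i$; by the claim $\sigma$ is weakly contained in that subnet, and Fell's criterion gives $\pi_i\to\sigma$. For the ``only if'' direction I argue by contraposition: if it is \emph{not} true that $\sigma\le\pi_i$ eventually, then $\{i:\sigma\not\le\pi_i\}$ is cofinal, and the inclusion of this cofinal set is a subnet along which $\sigma\le\pi_i$ never holds; by the claim $\sigma$ is not weakly contained in this subnet, so by Fell's criterion $\pi_i\not\to\sigma$.

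The main (and really only) obstacle is the passage in the ``only if'' direction from ``$\sigma$ weakly contained in every subnet'' to ``$\sigma\le\pi_i$ eventually''. The point to get right is that the failure of an eventual property means frequent failure, so that the offending indices form a cofinal subset from which a genuine subnet (in the sense of Kelley) can be extracted; on this subnet the claim forbids weak containment of $\sigma$, contradicting convergence. Once this elementary net manipulation is handled, the proof is complete.
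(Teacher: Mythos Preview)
Your proof is correct and follows essentially the same approach as the paper: both use the Peter--Weyl decomposition $C^*(K)\cong\bigoplus_{\tau}\End(V_\tau)$ to identify weak containment of $\sigma$ with actual containment, and then invoke Fell's criterion (Lemma~\ref{lem-Felltop}) together with a subnet argument for the ``only if'' direction. The only cosmetic difference is that the paper builds its subnet via an auxiliary index set $J=\{(i,k)\in I\times I:k\ge i,\ \sigma\not\le\pi_k\}$, whereas you more directly observe that $\{i:\sigma\not\le\pi_i\}$ is cofinal (hence directed, hence a subnet); your version is cleaner and entirely equivalent.
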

\begin{proof}
Using the Peter-Weyl theorem, we can write $C^*(K)=\oplus_{\tau\in \widehat{K}} \End(V_{\tau})$.
In this picture, given any representation $\pi$ of $K$, an irreducible representation $\tau$ is
a sub-representation of $\pi$ if and only if  the summand $\End(V_{\tau})$ is not in the kernel 
of $\pi$, viewed as a representation of $C^*(K)$. Thus weak containment 
and containment of $\tau$  are equivalent. 

Assume now that there exists no $i_0\in I$ 
such that $\tau\leq \pi_i$ for all $i\geq i_0$. We then construct a subnet $(\pi_j)$ of $(\pi_i)$
such that $\tau$ is not contained in any of the $\pi_j$, which then implies that $\End(V_\tau)$ 
lies in the kernel of all $\pi_j$. But then $\tau$ is not weakly contained in the subnet $(\pi_j)$ 
which by Lemma \ref{lem-Felltop} contradicts $\pi_i\to \tau$.
For the construction of the subnet, we define 
$$J:=\{(i,k)\in I\times I: k\geq i\;\text{and}\; \tau\not\leq \pi_k\},$$
equipped with the pairwise ordering. The projection to the second factor is clearly order preserving,
and if we define $\pi_{(i,k)}:=\pi_k$, we obtained a subnet $(\pi_{(i,k)})_{(i,k)\in J}$ with the desired 
properties.
\end{proof}

The following proposition will provide the main step towards the proof of Theorem \ref{thm-top}. 
 \begin{proposition}\label{prop-compact}
 Suppose that $K$ is a compact group acting on a locally compact space $Y$ and assume 
 that $y\in Y$ is fixed by $K$.
 Let $\sigma\in \widehat{K}$ be identified with the representation 
 $\pi_y^\sigma\in (C_0(Y)\rtimes K)\dach$ in the usual way and let 
 $(y_i, \sigma_i)$ be any net in $\SYhat$.
 Then the following are equivalent:
 \begin{enumerate}
 \item The net $\pi_{y_i}^{\sigma_i}=\Ind_{K_{y_i}}^K(y_i,\sigma_i)$ converges to 
 $\pi_y^\sigma$ in  $(C_0(Y)\rtimes K)\dach$.
 \item The net $(y_i,\sigma_i)$ converges to $(y,\sigma)$ in $\SYhat$.
 \end{enumerate}
 \end{proposition}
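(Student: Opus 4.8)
The plan is to exploit throughout that $y$ is a $K$-fixed point, so that $K_y=K$ and $\pi_y^\sigma$ is nothing but $\sigma\circ\epsilon_y$, where $\epsilon_y\colon C_0(Y)\rtimes K\twoheadrightarrow C^*(K)$ is the surjective (nondegenerate) homomorphism induced by evaluation $C_0(Y)\to\C$ at $y$; in particular $\pi_y^\sigma$ is irreducible, acting on $V_\sigma$ with $C_0(Y)$ acting through $\mathrm{ev}_y$ and $K$ through $\sigma$. First I would unwind condition (2). Since $y$ is fixed, $S_y=\{z:K_z\subseteq K\}=Y$, so the almost slices at $y$ are the sets $WV_y$ with $V_y$ open in $Y$. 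Compactness of $K$ gives a neighbourhood base of $y$ consisting of $K$-invariant open sets $V$, and for such a $V$ the set $U(y,\sigma,V)=\{(z,\tau):z\in V,\ \tau\le\sigma|_{K_z}\}$ is itself $K$-invariant: for $g\in K$ the conjugate $g\sigma$ of the \emph{full}-group representation $\sigma$ is unitarily equivalent to $\sigma$ via $\sigma(g)$, so $\tau\le\sigma|_{K_z}$ forces $g\tau\le(g\sigma)|_{K_{gz}}\cong\sigma|_{K_{gz}}$. Hence $W\cdot U(y,\sigma,V)=U(y,\sigma,V)$, and the sets $U(y,\sigma,V)$ form a neighbourhood base at $(y,\sigma)$. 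Letting $V$ shrink (and separately taking $V=Y$) shows that (2) is equivalent to the conjunction: $y_i\to y$ in $Y$, and eventually $\sigma_i\le\sigma|_{K_{y_i}}$.

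For (1)$\Rightarrow$(2) I would use two restriction maps, both continuous for the Fell topology because they are induced by nondegenerate homomorphisms into the multiplier algebra. Restricting along $C_0(K\backslash X)=C_0(Y)^K\subseteq ZM(C_0(Y)\rtimes K)$ sends $\pi_z^\tau$ to the character $\mathrm{ev}_{Kz}$, so convergence forces $Ky_i\to Ky$ in $K\backslash Y$; since the $K$-invariant neighbourhoods of the fixed point $y$ form a base, this gives $y_i\to y$. Restricting along $i_K\colon C^*(K)\to M(C_0(Y)\rtimes K)$ sends $\pi_{y_i}^{\sigma_i}$ to its underlying unitary representation of $K$, which is exactly $\Ind_{K_{y_i}}^K\sigma_i$, and sends $\pi_y^\sigma$ to $\sigma$. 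Thus $\Ind_{K_{y_i}}^K\sigma_i\to\sigma$ in $\Rep(K)$, so by Lemma~\ref{lem-repcompact} eventually $\sigma\le\Ind_{K_{y_i}}^K\sigma_i$, and by Frobenius reciprocity eventually $\sigma_i\le\sigma|_{K_{y_i}}$. Together these are precisely the two conditions of (2).

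For (2)$\Rightarrow$(1) I would prove $\pi_y^\sigma\prec\{\pi_{y_i}^{\sigma_i}\}$ directly; since the two conditions in (2) pass to subnets, Lemma~\ref{lem-Felltop}(1) then yields convergence. By Frobenius reciprocity $\sigma\le\Ind_{K_{y_i}}^K\sigma_i$ eventually, so there is an isometric $K$-intertwiner $V_i\colon V_\sigma\to\H_{U^{\sigma_i}}$. Fixing a unit vector $v\in V_\sigma$, setting $\zeta_i:=V_iv$ and using $U^{\sigma_i}(k)\zeta_i=V_i\sigma(k)v$, a direct computation gives, for $F\in C_c(K\times Y)$,
\[
\langle\pi_{y_i}^{\sigma_i}(F)\zeta_i,\zeta_i\rangle=\int_K\int_K F(k,ty_i)\,\langle\zeta_i(k^{-1}t),\zeta_i(t)\rangle\,dt\,dk .
\]
As $y_i\to y$ and $ty=y$ for all $t\in K$, compactness of $K$ makes $F(k,ty_i)\to F(k,y)$ uniformly in $(k,t)$; combining this with $\int_K\langle\zeta_i(k^{-1}t),\zeta_i(t)\rangle\,dt=\langle\sigma(k)v,v\rangle$ and the Cauchy--Schwarz bound $\int_K|\langle\zeta_i(k^{-1}t),\zeta_i(t)\rangle|\,dt\le1$ shows $\langle\pi_{y_i}^{\sigma_i}(F)\zeta_i,\zeta_i\rangle\to\int_K F(k,y)\langle\sigma(k)v,v\rangle\,dk=\langle\pi_y^\sigma(F)v,v\rangle$. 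Hence the vector state of $\pi_y^\sigma$ at $v$ is a weak-$*$ limit of states each weakly contained in $\{\pi_{y_i}^{\sigma_i}\}$, and as $v$ is cyclic for the irreducible $\pi_y^\sigma$ this gives $\pi_y^\sigma\prec\{\pi_{y_i}^{\sigma_i}\}$.

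The main obstacle is the identification of the neighbourhood base at the fixed point together with the weak-$*$ convergence of the states: the former hinges on conjugation by $K$ fixing the class of $\sigma$ (so that the $W$-factor of the almost slices is absorbed and one may work with $K$-invariant $V$), and the latter on the uniform-continuity control of $F(k,ty_i)$ afforded by $y_i\to y$ and compactness of $K$. The remaining representation-theoretic bookkeeping---the nondegeneracy of the two restriction maps, the passage to subnets, Frobenius reciprocity, and the translation between $\Ind_{K_{y_i}}^K\sigma_i$ and $\sigma_i|$---is routine once Lemmas~\ref{lem-Felltop} and~\ref{lem-repcompact} are available.
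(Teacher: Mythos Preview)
Your argument is correct. The preliminary reduction of condition (2) at the $K$-fixed point $y$ to the two conditions ``$y_i\to y$'' and ``eventually $\sigma_i\le\sigma|_{K_{y_i}}$'' is clean and justified; the key observation that $g\sigma\cong\sigma$ for $g\in K$ (so $K$-invariant neighbourhoods $V$ already give $K$-invariant basic sets $U(y,\sigma,V)$) is exactly what makes the $W$-factor disappear. Your (1)$\Rightarrow$(2) direction is essentially identical to the paper's: both restrict along $C_0(K\backslash Y)\hookrightarrow ZM(C_0(Y)\rtimes K)$ to get $y_i\to y$, and along $C^*(K)\to M(C_0(Y)\rtimes K)$ plus Lemma~\ref{lem-repcompact} and Frobenius reciprocity to get the sub-representation condition.

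The (2)$\Rightarrow$(1) direction is where you diverge from the paper. The paper invokes the bundle description $C(Y)\rtimes K\cong C(Y\times_K\cK(L^2(K)))$, multiplies by the central projection $p_\sigma$ to force everything into the finite-dimensional isotype $V_\sigma\otimes V_\sigma^*\subseteq L^2(K)$, and then passes to a subnet so that the projections $q_i$ onto $(\H_{\sigma_i}\otimes V_{\sigma_i}^*)\cap(V_\sigma\otimes V_\sigma^*)$ converge to a nonzero $K$-invariant projection $q$; vanishing of $q_ia_{y_i}q_i$ forces $qa_yq=0$ and hence $\sigma(a_y)=0$. You instead produce explicit intertwiners $V_i\colon V_\sigma\hookrightarrow\H_{U^{\sigma_i}}$ from Frobenius reciprocity and show directly that the vector states $\langle\pi_{y_i}^{\sigma_i}(\cdot)V_iv,V_iv\rangle$ converge weak-$*$ to the cyclic state $\langle\pi_y^\sigma(\cdot)v,v\rangle$; the key identity $\langle U^{\sigma_i}(k)\zeta_i,\zeta_i\rangle=\langle\sigma(k)v,v\rangle$ makes the computation collapse nicely once the uniform-continuity estimate $F(k,ty_i)\to F(k,y)$ is in hand. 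This is more elementary---it avoids the bundle picture of \S\ref{sec-bundles} and Lemma~\ref{lem-fixed} entirely, and needs no compactness-of-projections subnet argument---at the cost of not illustrating how the fibre decomposition controls the topology, which is the theme the paper is developing. Either route is perfectly acceptable.
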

 
 Recall that for $\sigma\in \widehat{K}$ we denote by $p_\sigma=\dim(V_\sigma)\chi_{\sigma^*}$
 the central projection corresponding to $\sigma$.
 Before we give the proof, we should point out a well-known fact on isotypes 
 of unitary representations of compact groups: 
 if $\pi:K\to U(V_\pi)$ is any such representation, then $\pi(p_{\sigma})\in \B(V_\pi)$ 
  is the  projection onto the \emph{isotype} $V_{\pi}^\sigma$ of $\sigma$ in $\pi$, that is
 $$V_{\pi}^\sigma=\cup\{V\subseteq V_{\pi}: \pi|_V\cong \sigma\}.$$
 This follows easily from the fact that for any $\tau\in \widehat{K}$ we get
$\tau(p_\sigma) =1_{V_\sigma}$ if $\tau\cong \sigma$ and $\tau(p_\sigma)=0$ else,
and the well known fact that every representation 
of a compact group decomposes into irreducible ones.
 In particular, the isotype of an irreducible representation $\sigma\in \widehat{K}$ 
 in the left regular representation $\lambda: K\to U(L^2(K))$ is the finite dimensional space
 $V_\sigma\otimes V_{\sigma}^*$, viewed as a subspace of $L^2(K)$ via the 
 unitary embedding $v\otimes w^*\mapsto \xi_{v,w}$ with $\xi_{v,w}(k)=\sqrt{d_\sigma}\lk v,\sigma(k)w\rk$
 (compare with the proof of Lemma \ref{lem-fixed}). 
 
  \begin{proof}[Proof of Proposition \ref{prop-compact}]
  We may assume without loss of generality that $Y$ is compact, since otherwise 
 we may restrict ourselves to a $\K$-invariant compact neighborhood of $y$.
 We first show (i) $\Rightarrow$ (ii).  For this consider the canonical inclusion 
 $C^*(K)\to C(Y)\rtimes K$ which is induced by the $\K$-equivariant embedding $\CC\to C(Y);
\lambda\mapsto \lambda 1_Y$.
A representation $\pi=P\times U$ of $C(Y)\rtimes K$ restricts to the unitary representation 
$U$ of $K$ via this inclusion. Thus the map $\Rep(C(Y)\rtimes K)\to \Rep(K)$ which sends
$\pi=P\times U$ to $U$ can be viewed as an induction map via the 
$C^*(K)-C(Y)\rtimes K$-bimodule $C(Y)\rtimes K$, and therefore is continuous.

It follows that if $\pi_{y_i}^{\sigma_i}=P_{y_i}^{\sigma_i}\times U_{y_i}^{\sigma_i}$ 
(we use the notation of \S \ref{sec-mrg}) converges to $\sigma=\pi_y^\sigma$ as in (i),
then $U_{y_i}^{\sigma_i}$ converges to $\sigma$ in $\Rep(K)$, which by 
Lemma \ref{lem-repcompact} implies that there exists $i_0\in I$ such that 
$\sigma$ is a sub-representation of $U_{y_i}^{\sigma_i}$ for all $i\geq i_0$.
But by the Frobenius reciprocity theorem (e.g. see \cite[Theorem 7.4.1]{DE}) this 
implies that $\sigma_i$ is a sub-representation of $\sigma|_{K_{y_i}}$ for all 
$i\geq i_0$.

It remains to show that (i) implies that $y_i\to y$. For this we use the canonical inclusion 
of $C(K\backslash Y)$ into the center of $M\big(C(Y)\rtimes K\big)$. The associated 
``induction map'' from $\Rep(C(Y)\rtimes K)$ to $\Rep(C(K\backslash Y))$ 
sends the representation $\pi_{y_i}^{\sigma_i}=P_{y_i}^{\sigma_i}\times U_{y_i}^{\sigma_i}$
to the restriction of $P_{y_i}^{\sigma_i}$ to $C(K\backslash Y)\subseteq C(Y)$, which is equal 
to $\ev_{Ky_i}\cdot 1_{\H_{\sigma_i}}$. By continuity of ``induction'' we see that
$\ev_{Ky_i}\cdot 1_{\H_{\sigma_i}}$ converges to $\ev_{Ky}\cdot 1_{V_\sigma}$ in $\Rep(C(K\backslash Y))$
which just means that $Ky_i\to Ky=\{y\}$ in $K\backslash Y$.
Since $K$ is compact and $y$ is fixed by $K$, this implies that
$y_i\to y$ in $Y$.

We now  prove  (ii) $\Rightarrow$ (i). For this suppose that $(y_i,\sigma_i)$ is as 
in (ii). Since every subnet enjoys the same properties, it is enough to show that 
$\sigma=\pi_y^\sigma$ is weakly contained in $\{\pi_{y_i}^{\sigma_i}:i\in I\}$.

For this let $a\in C(Y)\rtimes K$ be any element 
such that $\pi_{y_i}^{\sigma_i}(a)=0$ for all $i\in I$. We need to show that 
$\pi_y^\sigma(a)=0$, too. For this recall first from Theorem \ref{thm-crossed}  that
$C(Y)\rtimes K\cong C(Y\times_K\cK(L^2(K)))$ is a continuous C*-algebra bundle
over $K\backslash Y$ with fiber $\cK(L^2(K))^{K_{y_i}}$ at
the orbit $Ky_i$. The projection of $C_0(Y)\rtimes K$ to the fiber 
$\cK(L^2(K))^{K_{y_i}}$ at
the orbit $Ky_i$ is given via 
 the representation $M_{y_i}\times \lambda$ with
$\lambda$ the regular representation of $K$ and
$$(M_{y_i}(\ph)\xi)(k)=\ph(ky_i)\xi(k).$$
It follows from this that the 
representation of  $a\in C(Y)\rtimes K$ on these fibers 
is given by a net $(a_{y_i})$ in $\cK(L^2(K))^{K_{y_i}}\subseteq \cK(L^2(K))$ which 
converges in the operator norm to
the element $a_y\in \cK(L^2(K))^K$. At the point $y$ we get the decomposition
$$C^*(K)\cong \cK(L^2(K))^K=\oplus_{\pi\in \widehat{K}} \cK(V_\pi)\otimes 1_{V_\pi^*}$$
and at all other points we get the decompositions
$$\cK(L^2(K))^{K_{y_i}}=\oplus_{\tau\in \widehat{K}_{y_i}}\cK(\H_{\tau})\otimes 1_{V_\tau^*},$$
where, for convenience, we write $\H_\tau$ for the Hilbert space $\H_{U^\tau}$ of $U^\tau$.

In particular, if $p_i:L^2(K)\to \H_{\sigma_i}\otimes V_{\sigma_i}^*\subseteq L^2(K)$ denotes the orthogonal
projection, then the 
representation $\pi_{y_i}^{\sigma_i}\otimes 1_{V_{\sigma_i}^*}=
(P_{y_i}^{\sigma_i}\times U_{y_i}^{\sigma_i})\otimes 1_{V_{\sigma_i}^*}$
is given by sending the element $a\in C(Y)\rtimes K$ to the element 
$p_{i}a_{y_i}p_i\in \cK(\H_{\sigma_i})\otimes 1_{V_{\sigma_i}^*}\subseteq \cK(L^2(K))$. 

Assume now that $a\in C(Y)\rtimes K$ such that $\pi_{y_i}^{\sigma_i}(a)=0$ for all $i\in I$.
Then all those elements $p_{i}a_{y_i}p_i$
vanish. We have to show that $\sigma(a_y)$ will vanish, too. 
For this recall that $\sigma(p_{\sigma})=1_{V_\sigma}$.
Thus we may replace $a$ by $p_\sigma a p_\sigma$, where 
we view $p_\sigma$ as an element of $C(Y)\rtimes K$ via the
embedding $C^*(K)\to C(Y)\rtimes K$.

Writing $1_i=1_{V_{\sigma_i}^*}$ we then have
$$\pi_{y_i}^{\sigma_i}(a)\otimes 1_i=\pi_{y_i}^{\sigma_i}(p_\sigma a p_\sigma)\otimes 1_i
=(U_{y_i}^{\sigma_i}(p_\sigma)\otimes 1_i)(\pi_{y_i}^{\sigma_i}(a)\otimes 1_i)(U_{y_i}^{\sigma_i}(p_\sigma)\otimes 1_i),$$
where $U_{y_i}^{\sigma_i}(p_\sigma)\otimes 1_i$ is the orthogonal projection  
from $\H_{\sigma_i}\otimes V_{\sigma_i}^*$ onto the isotype
$$W_i:=(\H_{\sigma_i}\otimes V_{\sigma_i}^*)^\sigma=
(\H_{\sigma_i}\otimes V_{\sigma_i}^*)\cap (V_\sigma\otimes V_\sigma^*)\subseteq L^2(K).$$
It thus follows that $\pi_{y_i}^{\sigma_i}\otimes 1_i$ represents  each $a_i:=a_{y_i}$ as an 
operator on the subspace $W_i$ of the fixed finite dimensionl space $V_\sigma\otimes V_\sigma^*$ of $L^2(K)$.

By Frobenius reciprocity, the  condition that 
$\sigma_i$ is a sub-representation of 
$\sigma|_{K_{y_i}}$ implies 
that $\sigma$ is a sub-representation of the representation 
$U_{y_i}^{\sigma_i}$, which implies that $W_i$ is non-zero for all $i\in I$.
Now let $q_i:V_\sigma\otimes V_\sigma^*\to W_i$ 
denote the orthogonal projection. Since $V_\sigma\otimes V_\sigma^*$ is finite dimensional,
we may pass to a subnet if necessary to assume that $q_i$ converges to some 
non-zero projection $q:V_\sigma\otimes V_\sigma^*\to W$. 
We then get $0=\pi_{y_i}^{\sigma_i}(a)\otimes 1_i= q_ia_{y_i}q_i\to qa_yq\in \cK(V_\sigma\otimes V_\sigma^*)$.
Moreover, since all 
$W_i$ are $\K$-invariant, the same is true for $W$. It follows that the representation 
$\lambda: C^*(K)\to \cK(L^2(K))^K$ restricts to a multiple of $\sigma$ on $W$.
But then we have $\sigma(a_y)=0\Leftrightarrow qa_yq=0$ and the result follows.
\end{proof}

\begin{proof}[Proof of Theorem \ref{thm-top}]
We first show that the map from $\SXhat$ to $(C_0(X)\rtimes G)\dach$ which assigns
$(x,\sigma)$ to the induced representation $\pi_x^\sigma$ is continuous. By the universal
property of the quotient topology on $G\backslash \SXhat$, this will imply that the map
$$\Ind: G\backslash \SXhat\to (C_0(X)\rtimes G)\dach$$
of Theorem \ref{thm-top} is continuous, too.
So let $(x_i,\sigma_i)$ be a net in $\SXhat$ which converges to some $(x,\sigma)\in \SXhat$.
By the definition of the topology on $\SXhat$ we may assume that $x_i=g_iy_i$ with $g_i\to e$ in $G$,
$y_i\in S_x$ (i.e., $G_{y_i}\subseteq G_x$) such that $y_i\to x$ and $\sigma_i=g_i\tau_i$ with $\tau_i\in \widehat{G}_{y_i}$ such that $\tau_i\leq \sigma|_{G_{y_i}}$. 
Since induction is constant on $G$-orbits, we may assume without loss of generality
that $y_i=x_i$ and $\tau_i=\sigma_i$. It follows then from Proposition \ref{prop-compact}
that the induced representations $\ind_{G_{x_i}}^{G_x}(x_i,\sigma_i)$ converge to 
 $(x,\sigma)$ in $(C_0(X)\rtimes G_x)\dach$. Using induction in steps and continuity of induction 
from a fixed subgroup (due to the fact that this induction can be performed via an appropriate 
Hilbert module), it then follows that
$$\pi_{x_i}^{\sigma_i}=\ind_{G_{x_i}}^G(x_i, \sigma_i)=
\ind_{G_x}^G\big(\ind_{G_{x_i}}^{G_x}(x_i, \sigma_i)\big)\to \ind_{G_x}^G(x, \sigma)=\pi_x^\sigma.$$

%By property (SP) we know that there exists a $G$-invariant neighborhood $U_x$ of $x$
%and a slice function $\phi_x:U_x\to G/G_x$ such that $U_x\cong G\times_{G_x}Y_x$
%with $Y_x=\phi_x^{-1}(\{eG_x\})$. 
%By Lemma \ref{lem-nets} we may assume without loss of generality that there exists
%a local slice $Y$ at $x$ such that $x_i\in Y$ for all $i\in Y$ and $\sigma_i$ 
%is a sub-representation of $\sigma|_{G_{x_i}}$ for all $i\in I$. 
%Therefore $(x_i,\sigma_i)$ converges to $(x,\sigma)\in \SYhat$. 
%It then follows from  Proposition \ref{prop-compact} that 
%$\tau_{x_i}^{\sigma_i}:=\Ind^{G_x}(x_i, \sigma_i)$
%converges to $\tau_x^\sigma=\Ind^{G_x}(x,\sigma)$ in $C_0(Y)\rtimes G_x$.
%But then Lemma \ref{lem-commute} implies (with $G_x$ playing the role of $K$)
%that $\pi_{x_i}^{\sigma_i}=\Ind^E\tau_{x_i}^{\sigma_i}\to \Ind^E\tau_{x}^{\sigma}=\pi_x^\sigma$,
%since induction by the bimodule $E$ is continuous. 

Conversely, assume that we have a net $(\pi_i)$ in $(C_0(X)\rtimes G)\dach$ which 
converges to some representation $\pi\in (C_0(X)\rtimes G)\dach$. 
Choose $x_i\in X$ and $\sigma_i\in (C_0(X)\rtimes G)\dach$ such that 
$\pi_i=\pi_{x_i}^{\sigma_i}$, and similarly we realize $\pi$ as $\pi_x^\sigma$ 
for some $(x,\sigma)\in \SXhat$. 
By imbedding
$C_0(G\backslash X)$ into $ZM(C_0(X)\rtimes G))$ we can see that 
$Gx_i\to Gx$ in $G\backslash X$. Since the quotient map $X\to G\backslash X$ is 
open, we can therefore pass to a subnet, if necessary, to assume that 
$g_ix_i\to x$ for a suitable net $g_i\in G$. Choosing a local slice $Y$ at $x$,
we may even adjust the situation to guarantee that $g_ix_i\in Y$ for all $i\in I$.
Thus, replacing $(x_i,\sigma_i)$ by $(g_ix_i, g_i\sigma_i)$ we may assume
from now on that
the net $(x_i,\sigma_i)$ and the pair $(x,\sigma)$ all lie in $\SYhat$.
Using the canonical Morita equivalence 
 $C(G\times_{G_x}Y)\rtimes G\sim_MC_0(Y)\rtimes G_x$ together with
Proposition \ref{prop-commute} it follows that
$\tau_{x_i}^{\sigma_i}=\Ind_{G_{x_i}}^{G_x}(x_i,\sigma_i)$ converges to 
$\tau_x^\sigma=\sigma$ in $(C_0(Y)\rtimes G_x)\dach$.
It is then a consequence of Proposition \ref{prop-compact} that
$(x_i,\sigma_i)$ converges to $(x,\sigma)$ in $\SYhat$.
But this also implies convergence in $\SXhat$ and thus of the respective orbits in
$G\backslash \SXhat$.
\end{proof}

Before discussing an example, we emphasize the 
following consequences of the above discussion. 

\begin{itemize}
\item For a 
proper action of a discrete group \(G\) on \(X\), 
a subset \(U \subset \SXhat\) is 
open if and only if the following is true: if \( (x,\pi) \in 
U\), then there exists an open slice \(V_x\) 
around \(x\) such that \(U\) 
contains all points \( (y,\tau)\) such that \(y\in V_x\) and 
\(\tau \le \pi |_{G_y}\). 
\item In any case, the set \(X\cong \{(x, 1_{G_x}): x\in X\}\subset \SXhat\) is open in this topology, and 
by projection, 
\(G\backslash X\) (identified with $G\backslash\{(x, 1_{G_x}):x\in X\}$) is open in \(G\backslash \SXhat\). 
\end{itemize}

%\begin{example}
%Consider the action of
% \(G = \Z/2\) acting by complex conjugation on the 
%circle \(X\) as in Example \ref{ex:weyl_rank_one}. 
%The quotient is \(\cong [0,1]\) and in this notation 
%\(G_0 =G = G_1\). The neighbourhoods of 
%\end{example}

\begin{example}
\label{ex:dihedral}
We come back to our example $C(\TT^2)\rtimes G$ with $G=D_4=\lk R,S\rk$ as 
discussed in Examples \ref{ex-D4-1}, \ref{ex-D4-1} and \ref{ex-D4-2}.
Consider the topological fundamental domain
$$Z:=\{(e^{2\pi i s}, e^{2\pi i t}): 0\leq t\leq\frac{1}{2}, 0\leq s\leq t\}\subseteq \T^2$$
for the action of $G$ on $\TT^2$. Recall that this means that 
the canonical map from $Z$ to $G\backslash \TT^2$ is a homeomorphism.
It then is easily checked that 
$$\SZhat=\{\big(z,w),\sigma\big)\in \SThat: (z,w)\in Z\}$$
is a topological fundamental domain for the action of $G$ in $ \SThat$.

Thus, in order to describe the topology of $C_0((\TT^2)\rtimes G)\dach\cong G\backslash\SThat$ it suffices 
to describe the topology of $\SZhat$. In what follows we identify $Z$ with the triangle
$$\{(s,t)\in \RR^2: 0\leq t\leq\frac{1}{2}, 0\leq s\leq t\}.$$
In Example \ref{ex-D4-1} we already computed the stabilizers and their representations.

Each point in the interior 
$\stackrel{\circ}{Z}$ has trivial stabilizer $\{E\}$. Since the trivial group has only the trivial representation
and since every representation of a group restricts 
obviously to a multiple of the
trivial representation of the trivial group, we see that if $(s_n,t_n)_{n\in \NN}$ is a sequence in 
$ \stackrel{\circ}{Z}$ which converges to some $(s,t)\in Z$, then 
$\big((s_n,t_n), 1_{\{E\}})$ converges to any $\big((s,t), \sigma\big)$ with $\sigma \in \widehat{G}_{(s,t)}$.
If $(s_n,t_n)$ converges to $0$, then the sequence $\big((s_n,t_n), 1_{\{E\}})$ 
has the five limit points $\big((0,0), 1_G\big),\big((0,0), \chi_1\big),\big((0,0), \chi_2\big),\big((0,0), \chi_3\big), \big((0,0), \lambda\big)$ with $\{1_G,\chi_1,\chi_2,\chi_3,\lambda\}$ 
as defined in Example \ref{ex-D4-1}.

Let us now restrict our attention to the three border lines. For example, if we consider the 
 line segment  $I_1:=\{(s,s): 0< s< \frac12\}$ we have the constant stabilizer $K_1=\lk RS\rk$
and we see that
$$\{\big((s,s),\sigma\big)\in \SZhat: (s,s)\in I_1\}=I_1\times\{1_{K_1}, \eps_{K_1}\}$$
topologically. Similar descriptions hold for the line segments 
$I_2:=\{(0,t): 0<t<\frac12\}$ and $I_3:=\{(s,\frac12): 0<s<\frac12\}$.

In order to describe the topology of $\SZhat$ at the corners 
 $(0,0), (\frac12,\frac12)$ and $(0,\frac12)$ of $Z$, we observe that
\begin{align*}
1_{K_1} \le \res^G_{K_1}(1_G),  \res^G_{K_1}(\chi_2)), \res^G_{K_1}(\lambda),&\;\textup{and}\; 
\eps_{K_1} \le  \res^G_{K_1}(\chi_1), \,
\res^G_{K_1}(\chi_3), \res^G_{K_1}(\lambda),\\
1_{K_2} \le \res^G_{K_2}(1_G),  \res^G_{K_2}(\chi_3)), \res^G_{K_1}(\lambda),&\;\textup{and}\; 
\eps_{K_2} \le \res^G_{K_2}(\chi_1), \,
\res^G_{K_2}(\chi_2), \res^G_{K_1}(\lambda),\\
1_{K_3} \le \res^G_{K_3}(1_G), \res^G_{K_3}(\chi_3)), \res^G_{K_1}(\lambda),&\;\textup{and}\; 
\eps_{K_3} \le  \res^G_{K_3}(\chi_1), \,
\res^G_{K_3}(\chi_2), \res^G_{K_1}(\lambda),
\end{align*}
which implies, for example, that if a sequence $(s_n,s_n)_{n\in \NN}\subseteq I_1$ converges to $(0,0)$ 
the sequence $\big((s_n,s_n), 1_{K_1}\big)$ has the 
limit points $\big( (0,0),1_G\big), \big( (0,0),\chi_2\big), \big( (0,0),\lambda\big)$ 
and the sequence $\big((s_n,s_n), \eps_{K_1}\big)$ has the 
limit points $\big( (0,0),\chi_1\big), \big( (0,0),\chi_3\big),$ $\big( (0,0),\lambda\big)$.
Similar descriptions follow from the above list for sequences in $I_1, I_2, I_3$ which converge 
to any of the corners $(0,0)$ or $(\frac12,\frac12)$.

At the corner $(0,\frac12)$ with stabilizer $H=\lk R^2, S\rk$ and characters $1_H,\mu_1,\mu_2,\mu_3$
of $H$, as described in Example \ref{ex-D4-1}, we get the relations
\begin{align*}
1_{K_2} = \res^H_{K_2}(1_H), \res^H_{K_2}(\mu_2),&\;\textup{and}\; 
\eps_{K_2} = \res^H_{K_2}(\mu_1),\, \res^H_{K_2}(\mu_3), \\
1_{K_3} = \res^H_{K_3}(1_H), \res^H_{K_3}(\mu_3), &\;\textup{and}\; 
\eps_{K_3} = \res^H_{K_3}(\mu_1),\, \res^H_{K_3}(\mu_2), 
\end{align*}
which implies, for example,  that for a sequence $(s_n,\frac12)$ in $I_3$ which converges to
$(0,\frac12)$, the sequence 
$\big((s_n,\frac12), 1_{K_3}\big)$ has the 
limit points $\big( (0,\frac12),1_H\big), \big( (0,\frac12),\mu_3\big)$ 
and the sequence $\big((s_n,\frac12), \eps_{K_3}\big)$ has the 
limit points $\big( (0,\frac12),\mu_1\big), \big( (0,\frac12),\mu_2\big)$.

From this description we obtain an increasing sequence of open subsets
of $\SZhat$ 
$$\emptyset=\mathcal O_0\subseteq \mathcal O_1\subseteq \mathcal O_2
\subseteq \mathcal O_3=\SZhat$$
(and corresponding open subsets of $(C_0(\TT^2)\rtimes G)\dach$ via induction) 
with
\begin{align*}
\mathcal O_1&=\{((x,y), 1) \mid  (x,y) \in Z\}\cong Z\\
\mathcal O_2\smallsetminus \mathcal O_1&=
\big\{\big((s,s),\eps_{K_1}\big): 0<s<1/2\big\}\cup
\big\{\big((0,t),\eps_{K_2}\big): 0<t<1/2\big\}\\
&\quad \cup
\big\{\big((s,1/2),\eps_{K_3}\big): 0<s<1/2\big\}\\
&\quad \cup
\big\{\big((0,0),\chi_1\big), \big((1/2,1/2),\chi_1\big), \big((0,1/2),\mu_1\big)\big\}\cong \partial Z\\
\mathcal O_3\smallsetminus \mathcal O_2&=
\{( 0,0), (1/2,1/2)\}\times \{ \chi_2, \chi_3, \lambda\}\cup \{(0,1/2)\}\times \{\mu_2,\mu_3\}\cong \{1,\ldots, 8\}.
\end{align*}
 Thus we obtain a corresponding sequence of ideals
$$\{0\}=I_0\subseteq I_1\subseteq I_2\subseteq I_3=C_0(\TT^2)\rtimes G$$
with 
$$\widehat{I_1}\cong Z,\quad\widehat{I_2/I_1}\cong \partial Z,\quad \widehat{I_3/I_2}\cong\{1,\ldots, 8\}.$$
Indeed, if we combine the above description of the topology of $\SZhat\cong \big(C(\TT^2)\rtimes G\big)\dach$
with the description of the algebra 
$$C(\TT^2)\rtimes G\cong \{f\in C\big(Z,\cK(\ell^2(G))\big): f(s,t)\in \cK(\ell^2(G))^{G_{(s,t)}}\}$$
as given in Example \ref{ex-D4-1}, we see that 
$I_1=I_{\TT^2}$ as described in Example \ref{ex-D4-2}.
 It is Morita-equivalent to $C(Z)\cong C(G\backslash \TT^2)$ and  consists of those functions $f\in 
 C(Z,\cK(\ell^2(G)))$ which only  take non-zero values in the matrix blocks corresponding to 
 the trivial representations in each fiber $\cK(\ell^2(G))^{G_{(s,t)}}$.
 The ideal $I_2$ takes arbitrary values in $\cK(\ell^2(G))^{G_{(s,t)}}$ at each
  $(s,t)\in Z\smallsetminus \{(0,0), (\frac12,\frac12), (0,\frac12)\}$ and it takes 
  non-zero values only in the diagonal entries $d_1, d_{\chi_1}$  at the corners 
  $(0,0)$ and $(\frac12,\frac12)$, with respect to the 
  block decomposition of $\cK(\ell^2(G))^G$ as given in Example \ref{ex-D4-1}, and it takes
  only non-zero values in the $2\times 2$ block entries $B_1, B_{\mu_1}$ at the corner $(0,\frac12)$.
 It is then clear that $I_2/I_1$ is isomorphic to the algebra of continuous functions 
 $f: \partial Z\to M^8(\CC)$ such that on the line segments $I_i$, $i=1,2,3$, $f(s,t)$ has non-zero entries only in the $4\times 4$ matrix blocks $A_{\eps_{K_i}}$, at the corners $(0,0)$ and 
 $(\frac12,\frac12)$ it has non-zero entry only in the diagonal entry $d_{\chi_1}$
and at the corner   $(0,\frac12)$ it has non-zero entries only in the $2\times 2$ block
 $C_{\mu_1}$.
 It follows that $I_2/I_1$ is Morita equivalent to $C(\partial Z)$. Finally, the quotient 
 $(C(\TT^2)\rtimes G)/I_2$ is  isomorphic to $M_2(\CC)^4\oplus\CC^4$.
 \end{example}

\begin{remark}\label{rem-trivialization}
If $G$ acts freely and properly on $X$, then Green's theorem (\cite{Green0})
implies that $C_0(X)\rtimes G\cong C_0(X\times_G\cK)$ is Morita equivalent to 
$C_0(G\backslash X)$. If everything in sight is second countable, this implies that 
the bundle $C_0(X\times_G\cK)$ is (stably) isomorphic to the trivial bundle 
$C_0(G\backslash X,\cK)$ (after stabilization, if necessary, we may assume 
$\cK\cong \cK(\ell^2(\NN))$ everywhere).

So one may wonder, whether a similar result can be true in general, i.e., is there a chance
to show that for any proper action of $G$ on a locally compact space $X$ the 
bundle $C_0(X\times_G\cK)$ is stably isomorphic to a subbundle of the trivial bundle
$C_0(G\backslash X,\cK)$, so that the fibre over the orbit $Gx$ would be 
a suitable subalgebra of $\cK=\cK(\ell^2(\NN))$ stably isomorphic to $\cK(L^2(G))^{\Ad\rho(G_x)}$.
Actually it follows from
Proposition \ref{prop:fundamentaldomain} that this is 
always the case  if there exists a topological fundamental domain $Z\cong G\backslash X$ for the action 
of $G$ on $X$ as in Definition \ref{def-fundamentaldomain}.

Unfortunately, such a trivialization is not possible in general. Indeed, the problem already appears 
in the case of linear actions of finite groups on the closed unit ball in $\RR^n$. We shall present a 
concrete counter-example in the following section.
 \end{remark}

\section{\(\K\)-theory of proper actions}\label{sec-K-theory}

%In this section, we will be interested in equivariant 
%\(\K\)-theory groups for proper actions. The case of 
%\emph{finite} group actions already exhibits many of the 
%interesting features of the general case, so we will emphasise 
%that. The case of compact groups actually has some 
%special features because the equivariant \(\K\)-theory of 
%a \(G\)-space \(X\) is a module over the representation 
%ring \(\Rep (G)\) of \(G\) when \(G\) is compact; when 
%\(G\) is not compact, but acts properly, this sort of structure 
%does not exist globally; it does so locally, in some sense, 
%but we will not try to pursue this. 
%We will also begin to build a collection of explicit examples 
%of proper actions -- again, initially by finite or compact groups. 
%Finally, our main interest will be in smooth actions on compact 
%manifolds, so we will restrict to that case throughout. 
%This allows, for example, simple proofs of some facts. 

In this chapter we will consider the problem of calculating
equivariant \(\K\)-theory for proper actions. 

\begin{definition}
Let \(X\) be a proper \(G\)-space where \(G\) is a locally compact
group. The \emph{\(G\)-equivariant \(\K\)-theory of \(X\)}, denoted \(\K^*_G(X)\), is the 
\(\K\)-theory of the crossed product C*-algebra \(C_0(X)\rtimes G\). 
\end{definition}

There are very few \emph{general} results about equivariant 
\(\K\)-theory.  The ones we discuss 
below all treat simplications of the problem. There are 
several kinds of simplications possible. One involves 
ignoring torsion in \(\K^*_G(X)\). Thus, one can aim for a 
computation of equivariant \(\K\)-theory with 
rational coefficients \(\K^*_G(X)\otimes_\Z \Q\). For 
technical purposes it is often convenient to tensor with 
\(\C\) instead, this gives equivalent results. 
Another 
simplication is to consider only compact groups.  
A further simplification is to restrict to finite groups. 

We 
start with the question of whether or not 
equivariant \(\K\)-theory can be described, 
as with non-equivariant \(\K\)-theory, in terms of vector bundles. 

\medskip

Let \(G\) be possibly non-compact, locally compact group, acting 
properly on \(X\) \emph{with compact space \(G\backslash X\) of 
orbits.}  
Let \(E\) be a \(G\)-equivariant 
vector bundle on \(X\). 
 Because the action is proper, there is a 
 \(G\)-invariant Hermitian structure on 
 \(E\). We can equip the space 
\(\Gamma_c(E)\) of continuous sections of \(E\) with compact supports
with the 
right \(C_c(G\times X)\)-module structure via 
$$ (s\cdot a)(x)=\int_G g\cdot s(g^{-1}x) a(g^{-1},g^{-1}x)\sqrt{\Delta(g^{-1})}\, dg,\quad s\in \Gamma_c(E), a\in C_c(G\times X)$$
where we regard $C_c(G\times X)$ as a dense subalgebra of $C_0(X)\rtimes G$.
One can also define a right \(C_0(X)\rtimes G\)-valued inner 
product on \(\Gamma_0(E)\), by the formula 
\begin{equation}
\label{eq:inner_product}
 \langle s_1, s_2\rangle (x,g) \defeq 
\sqrt{\Delta(g^{-1})}\langle s_1(x), g\cdot s_2(g^{-1}x)\rangle_E.
\end{equation}
So the completion \(\widetilde\Gamma(E):=\overline{\Gamma_c(E)}\) with respect to this inner product
becomes a 
right \(C_0(X)\rtimes G\)-Hilbert module. If 
\(G\backslash X\) 
is compact, this Hilbert module can be checked to be 
finitely generated.
The co-compactness assumption 
implies that the identity operator is 
a compact 
Hilbert module operator. See \emph{e.g.} \cite{Em-K} for 
a proof, and the following example 
for the case where $E=1_X$ is the trivial line bundle.

\begin{example}
\label{ex:line_bundle}
Let \(G\) be locally compact, and act properly and co-compactly on 
\(X\). If \(E\) is the trivial line bundle \(1_X\) over \(X\), with the 
trivial action of \(G\) on the fibres then 
the finitely generated projective Hilbert \(C_0(X)\rtimes G\)-module 
\(\widetilde{\Gamma}(1_X)\) as above is isomorphic to the 
range of any of the idempotents \(p_X\in C_0(X)\rtimes G\), 
constructed by a cut-off function \(c\). Recall that, since $G\backslash X$ is compact,
 $c:X\to[0,1]$ is a compactly supported 
continuous function such that $\int_G c(g^{-1}x)^2\,dg=1$ for all $x\in X$. 
Then $p_X:=\lk c,c\rk$ is a projection in $C_0(X)\rtimes G$ (compare with the 
 discussion around \eqref{eq-projection}) and we get
\[ \widetilde{\Gamma}(1_X) \cong p_X\cdot \bigl( C_0(X)\rtimes G\bigr)\]
as right \(C_0(X)\rtimes G\)-modules. 
%For the definition 
%of \(p_X\) see the discussion around \eqref{eq-projection}.
For the proof, regard \(c\) as an element of 
\(\widetilde{\Gamma}(1_X)\). Define 
\[ Q\colon\widetilde{\Gamma}(1_X)\to C_0(X)\rtimes G, \; 
Q(s) \defeq \langle c, s\rangle,\; \; \; R\colon C_0(X)\rtimes G \to \widetilde{\Gamma}(1_X), \; 
R(a) \defeq ca,\]
using the inner product in \eqref{eq:inner_product}. 
Then for all \(s\in\widetilde{\Gamma}(1_X), \; a\in C_0(X)\rtimes G\), 
\[ RQ (s) =c\cdot \lk c,s\rk = s,\; \textup{and}\; \; QR (a) = \langle c, c\rangle a = p_X a.\]
In particular this shows that \(\widetilde{\Gamma}(1_X)\) is a 
rank-one module.  It also gives another proof that 
the \(\K\)-theory classes \([p_X]\in \K^0_G(X) \) are independent 
of the choice of cut-off function. Note also that the natural 
representation of \(C(G\backslash X)\to \L_{C_0(X)\rtimes G}
\bigl(\widetilde{\Gamma}(1_X)\bigr) \) by letting \(G\)-invariant functions act as multiplication operators on 
\(\widetilde{\Gamma}(1_X)\), 
gives an isomorphism 
\[C(G\backslash X) \cong \cK\bigl(\widetilde{\Gamma}(1_X)\bigr).\]
Consequently, we get an isomorphism  
\[ C(G\backslash X)\cong \cK (p_X\cdot C_0(X)\rtimes G)\\  \cong 
p_X\cdot C_0(X)\rtimes G \cdot p_X\]
(see the discussion around \eqref{eq-ideal}.)

\end{example}

% Siegfried: Should we leave this in or not? I must say,  
% I was wondering about the point -- stupidly, admittedly! 

%Finally, if \(A\) is a C*-algebra (possibly not unital)
% and \(E\) is a finitely generated 
%projective module over \(A\), then it determines a finitely 
%generated projective module over the unitization \(A^+\) 
%of \(A\) by letting the \(1 \in A^+\) act by one on the module. 
%The resulting class \([E]\in \K_0( A^+)\) is in the 
%kernel of the augmentation map \(\K_0(A^+) \to \K_0(\C) \cong \Z\), 
%because $\varphi_*(E)=E\otimes_{\varphi}\CC=E\cdot A\otimes_\varphi\CC=E\otimes_\varphi(\varphi(A)\CC)=\{0\}$.
%Hence $[E]$ determines a class in \(\K_0(A)\). 

Let \(\VK^0_G(X)\) be the Grothendieck group of the 
monoid of \(G\)-equivariant complex vector bundles 
on \(X\). Let 
\(\VK^1_G(X)\) be defined to be the kernel of the 
map \(\VK^0_G(X\times S^1) \to \VK^0_G(X)\) by 
restriction of \(G\)-equivariant 
vector bundles to \(X\times \{1\}\), where we let $G$ act trivially on $S^1$. Then 
application of the above procedure to cycles yields a map 

\begin{equation}
\label{eq:Green_Julg}
\VK^*_G(X) \to \K_*(C_0(X)\rtimes G).
\end{equation}

Is \eqref{eq:Green_Julg} an isomorphism in general? 
Corollary 5.3 of \cite{Em-K} shows that for any 
proper action of a locally compact group \(G\), the  
monoid of isomorphism classes of \(G\)-equivariant 
vector bundles on \(X\) is isomorphic to the monoid
of projections in \(\big(C_0(X)\rtimes G\big)\otimes \cK\). 
However, 
the \(\K\)-theory of \(C_0(X)\rtimes G\) 
is defined in terms of the monoid of projections 
in \(\bigl( C_0(X)\rtimes G \bigr)^+\otimes \cK\), 
where the \(+\) denotes unitization, 
so that \emph{a priori} there 
could be classes which do not come from equivariant 
vector bundles. In fact, the question is somewhat 
delicate. See the analysis 
\cite{Em-K}, following work of L\"uck and 
Oliver in \cite{Lueck-Oliver}, who are partially 
responsible for the following result. 

\begin{theorem}
\label{thm:good_groups}
If \(G\) is discrete, compact or an almost-connected
group, and if \(X\) is \(G\)-compact, 
then the map \eqref{eq:Green_Julg} is an isomorphism. 
\end{theorem}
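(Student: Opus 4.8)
The plan is to reduce the statement, in all three classes of groups, to the Green--Julg theorem for compact groups, and to isolate the single genuinely hard point. First I would dispose of the grading. Since $G$ acts trivially on $S^1$ we have $C_0(X\times S^1)\rtimes G\cong\bigl(C_0(X)\rtimes G\bigr)\otimes C(S^1)$, and both $\VK^1_G(X)$ and $\K_1\bigl(C_0(X)\rtimes G\bigr)$ are by definition the kernels of the restriction maps to $X\times\{1\}$ inside the degree-zero groups of $X\times S^1$, with \eqref{eq:Green_Julg} natural for this restriction. As $X\times S^1$ is again proper and $G$-compact, it suffices to treat \eqref{eq:Green_Julg} in degree zero, for every proper $G$-compact $X$. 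Next I would invoke Corollary~5.3 of \cite{Em-K}, identifying the monoid of $G$-equivariant vector bundles on $X$ with the monoid $V(A)$ of Murray--von Neumann classes of projections in $A\otimes\cK$, where $A\defeq C_0(X)\rtimes G$. Under this identification $\VK^0_G(X)$ is the Grothendieck group of $V(A)$ and \eqref{eq:Green_Julg} becomes the canonical comparison map $\gamma\colon\VK^0_G(X)\to\K_0(A)$, which passes through the unitization $A^+\otimes\cK$ used to define $\K_0$. The theorem is thus the assertion that $\gamma$ is bijective; injectivity (stable cancellation of bundles) will fall out of the same reductions, so the real issue is \emph{surjectivity}, i.e.\ whether every class in $\K_0(A)$ is a formal difference of honest equivariant bundles.

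If $G$ is compact and $G\backslash X$ is compact then $X$ is compact, $A=C(X)\rtimes G$ is unital, and $\gamma$ is exactly the Green--Julg isomorphism identifying $\K_*\bigl(C(X)\rtimes G\bigr)$ with equivariant topological $K$-theory in the sense of Atiyah and Segal, which for a compact group on a compact space is represented by equivariant vector bundles and so coincides with $\VK^*_G(X)$. For almost-connected $G$ I would reduce to this case: letting $K\le G$ be a maximal compact subgroup, one has $G\cong K\times\R^n$ as a space, and Abels' global slice theorem \cite{Abels} supplies a $G$-map $X\to G/K$, so by Proposition~\ref{prop-induction} $X\cong G\times_K S$ for a $K$-compact proper $K$-space $S$. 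Green's imprimitivity bimodule of \S\ref{sec-mrg} (formulas \eqref{eq-inner}--\eqref{eq-leftaction}, with the compact subgroup $K$) gives a Morita equivalence $C_0(X)\rtimes G\sim_M C_0(S)\rtimes K$, hence an isomorphism on $\K_*$, while restriction to $S$ and re-induction give the standard isomorphism $\VK^*_G(G\times_K S)\cong\VK^*_K(S)$. These are compatible with \eqref{eq:Green_Julg}: for a $K$-bundle $F$ on $S$ the section module $\widetilde\Gamma(G\times_K F)$ is naturally $E\otimes_{C_0(S)\rtimes K}\widetilde\Gamma_K(F)$, which I would check directly from \eqref{eq:inner_product} and \eqref{eq-leftaction}. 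So the almost-connected case collapses to the compact one.

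For discrete $G$ no global slice is available, and here I would build $X$ up from its slices. Since $X$ is $G$-compact, Theorem~\ref{thmlocalind} lets me cover $G\backslash X$ by finitely many open sets $G\backslash U_i$ with $U_i\cong G\times_{F_i}Y_i$ and $F_i$ a finite subgroup. As $C_0(X)\rtimes G$ is a $C_0(G\backslash X)$-algebra (Corollary~\ref{cor-proper}, Lemma~\ref{lem-bundle}), each $G$-invariant open $U\subseteq X$ yields an honest exact sequence $0\to C_0(U)\rtimes G\to C_0(X)\rtimes G\to C_0(X\smallsetminus U)\rtimes G\to 0$ --- requiring \emph{no} exactness hypothesis on $G$ --- and hence a six-term $\K$-theory sequence. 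Matching it against the corresponding Mayer--Vietoris sequence for $\VK^*_G$ and applying the five lemma reduces the claim, by induction on the number of pieces, to a single induced piece $G\times_{F}Y$, which is handled exactly as in the almost-connected step with $K=F$ finite, landing once more in the compact case.

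The hard part is this discrete case, and specifically the assertion that $\VK^*_G$ admits such a Mayer--Vietoris sequence --- equivalently, that $\gamma$ is surjective. One must know that equivariant vector bundles defined on invariant closed pieces extend, and that every $\K_0$-class is realized by genuine bundles; this is what makes the induction go through. It is \emph{false} for general locally compact $G$: the relevant trivialization already fails for linear actions of finite groups on a ball, and more severely for totally disconnected groups whose slices exist only for proper over-groups of the stabilizers (compare the example $G=\prod_{n}\Z/2$ and Remark~\ref{rem-trivialization}). For discrete $G$ the needed surjectivity and exactness are exactly what is furnished by the work of L\"uck and Oliver \cite{Lueck-Oliver} together with the analysis of \cite{Em-K}, and this input is the crux of the theorem.
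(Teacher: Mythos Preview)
The paper does not actually supply a proof of this theorem: it states the result and refers to \cite{Em-K} and \cite{Lueck-Oliver} for the argument. So there is no in-paper proof to compare your proposal against beyond those citations, and your outline is essentially a sketch of what those references do. The three reductions you describe --- Green--Julg for compact $G$, a global $K$-slice via Abels reducing the almost-connected case to the compact one through Green's imprimitivity, and a finite-cover/five-lemma argument reducing the discrete case to finite isotropy --- are the standard moves, and you have correctly located the genuine content in the discrete case (existence of enough equivariant bundles and exactness for $\VK^*_G$, which is what \cite{Lueck-Oliver} supplies).

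Two small points of caution. First, your invocation of a Mayer--Vietoris/five-lemma argument in the discrete case glosses over the fact that $\VK^*_G$ as defined here (Grothendieck group of bundles) is not \emph{a priori} a cohomology theory with long exact sequences on arbitrary proper $G$-compact spaces; this is precisely the ``enough bundles'' issue, and one should be explicit that one is importing the full strength of \cite{Lueck-Oliver} (which works through $G$-CW structures) rather than deriving exactness from scratch. Second, the examples you cite at the end are not the right ones: the $\prod_n\Z/2$ example in \S\ref{sec-prel} concerns failure of Palais's slice property (SP), and Remark~\ref{rem-trivialization} concerns triviality of the bundle $C_0(X\times_G\cK)$, neither of which is the obstruction to \eqref{eq:Green_Julg}. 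The paper's actual counterexample to \eqref{eq:Green_Julg} is the Sauer-type group $G=\TT^2\rtimes_A\Z$ acting on $\R$, described immediately after the theorem.
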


%\begin{remark}\label{rem-Green-Julg}
%If $G$ is compact and $X$ is a locally compact (probably non-compact) Hausdorff space, 
%the Green-Julg map becomes an isomorphism if we define 
%$\VK^*_G(X)$ as the kernel of the homomorphism 
%$$\VK^*_G(X^+)\to \VK^*(\{\pt\})\cong \Rep(G),$$
%where $X^+=X\cup\{\infty\}$ denotes the one-point compactification of $X$ and 
%$G$ acts trivially on the ppoint $\infty$.
%This will allow us to argue with vector bundles, whenever we discuss equivariant 
%$\K$-theory for compact group actions.
%\end{remark}

For groups not satisfying one of the conditions of Theorem 
\ref{thm:good_groups}, equivariant vector bundles need not 
generate all of equivariant 
\(\K\)-theory.  
 
A counter-example to isomorphism of 
\eqref{eq:Green_Julg} can be built using
the following ideas, essentially due to Juliane 
Sauer. Let \(G\) be the semi-direct product
\(\TT^2\rtimes_A \Z\) where \(\Z\) acts by 
the automorphism of the compact group 
\(\TT^2\) induced by a matrix 
\(A \in \mathrm{GL}(2, \Z)\). Then \(G\) acts properly
on \(\R\) with \(\Z\) acting by translations and 
\(\TT^2\) acting trivially. The restriction of any
 \(G\)-equivariant 
vector bundle on \(\R\) to a point of \(\R\) determines a 
finite-dimensional representation of the compact 
subgroup \(\TT^2\subset G\). It can be shown that 
if \(A\) is a hyperbolic matrix, then 
only multiples of the trivial representation can be 
obtained in this way. This shows that there 
are rather few \(G\)-equivariant vector bundles on 
\(\R\). Of course \(G\) is neither compact, nor a Lie group. 

Equivariant Kasparov theory offers another 
point of view on equivariant \(\K\)-theory for 
\(G\)-compact spaces. The 
\emph{representable} \(G\)-equivariant 
\(\K\)-theory of a proper \(G\)-space \(X\) --  
denoted \(\textup{RK}^*_G(X) \) -- is defined 
in terms of \(G\)-equivariant maps from \(X\) to  
suitable spaces of Fredholm operators on \(G\)-Hilbert 
spaces; such maps may always be normalized so
that the Hilbert space may be taken fixed, so the 
representable \(\K\)-theory has cycles maps 
\(X\to \textup{Fred}(H_G)\). An equivariant 
version of the Kuiper theorem implies such maps 
may be taken norm continuous. For \(G\)-compact 
spaces, it can be shown that 
\(\textup{RK}^*_G(X) \cong \K^*_G(X)\) (see \cite[Theorem 3.8]{Em-K}). 
This gives a reasonably 
satisfactory homotopy-theoretic description of 
equivariant \(\K\)-theory for some purposes, but 
it is not very concrete. 

For general proper actions of locally compact 
groups \(G\) on spaces \(X\), 
\(\K^*_G(X)\) is a \(G\)-compactly supported theory and \(\textup{RK}^*_G(X)\) 
is not, so the theories definitely differ in this case. 

\begin{remark}
Unlike non-equivariant \(\K\)-theory, which is rationally isomorphic to 
cohomology, 
 equivariant \(\K\)-theory is 
definitely different from equivariant \emph{cohomology} -- even 
for finite groups, and even 
after tensoring both with the complex numbers. 
Equivariant cohomology for a compact group action
is defined
 to be the ordinary cohomology of \(X\times_G EG\), 
where \(EG\) is the classifying space for \emph{free} 
actions of \(G\).  It is not hard to check that 
equivariant cohomology with \emph{complex} (or rational)
 coefficients for \emph{finite} 
groups \(G\) is the same as the 
\(G\)-invariant part of ordinary cohomology, and hence (by 
Proposition \ref{prop:invariants} below) it agrees with just the ordinary cohomology of the 
quotient space, whereas equivariant \(\K\)-theory does not behave like this, 
even for finite groups, for already integrally
\(\K^*_G(\pt) \cong \Rep (G)\) is free abelian wiht one 
generator for 
each irreducible representation of \(G\).
What is true is that 
equivariant cohomology with complex coefficients and \emph{finite} 
groups, is isomorphic to the 
\emph{localization} of equivariant \(\K\)-theory with complex 
coefficients at 
the identity conjugacy class of the group, \emph{c.f.} 
\cite{Baum-Connes:Finite}). Equivariant \(\K\)-theory 
with complex coefficients may thus be viewed as 
`de-localized' equivariant cohomology -- the point of 
view taken in the article \cite{Baum-Connes:Finite} of 
Baum and Connes, and explained below in 
\S \ref{subsec:bc}.

\end{remark}

We start by justifying one of the statements made in the 
above Remark. 

\begin{proposition}
\label{prop:invariants}
For any \(G\)-space \(X\), 
\(G\) finite, 
\begin{equation}
\label{eq:invariants}
\K^*(X)^G\otimes_\Z \Z[\frac{1}{\abs{G}}]
\cong \K^*(G\backslash X)\otimes_\Z \Z[\frac{1}{\abs{G}}]
\end{equation}
as \(\Z[\frac{1}{\abs{G}}]\)-modules. 
The isomorphism is induced by the pull-back map 
\(\pi^*\colon \K^*(G \backslash X) 
\to \K^*(X)\).  
In particular, \(X\mapsto \K^*(X)^G\) and \(X \mapsto \K^*(G\backslash X)\) 
agree for finite groups, after tensoring by \(\C\). 

\end{proposition}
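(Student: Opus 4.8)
The plan is to recognise both sides of \eqref{eq:invariants} as values of $G$-equivariant cohomology theories and to invoke the comparison theorem for such theories, thereby reducing the whole statement to an explicit computation on orbits. Write $R=\Z[\frac{1}{\abs{G}}]$, identify $\K^*(X)=\K_*(C_0(X))$ with its residual $G$-action coming from the automorphisms $g^*$, and identify $\K^*(G\backslash X)=\K_*(C_0(X)^G)$ since $C_0(G\backslash X)=C_0(X)^G$. The map $\pi^*$ is induced by the inclusion $C_0(X)^G\hookrightarrow C_0(X)$, and because $g\circ\pi=\pi$ for all $g\in G$, its image lies in the invariants $\K^*(X)^G$; the content of the proposition is that $\pi^*\colon \K^*(G\backslash X)\otimes_\Z R\to \K^*(X)^G\otimes_\Z R$ is an isomorphism. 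The reason $R$ (equivalently, inverting $\abs{G}$) must enter is that the averaging operator $e=\frac{1}{\abs{G}}\sum_{g\in G}g^*$ is then an idempotent, so for every $R[G]$-module $M$ one gets a natural splitting $M=eM\oplus(1-e)M$ with $eM=M^G$; in particular $M\mapsto M^G$ is \emph{exact} on $R[G]$-modules.

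First I would set up the two theories $h_1^*(X):=\K^*(G\backslash X)\otimes_\Z R$ and $h_2^*(X):=\K^*(X)^G\otimes_\Z R=(\K^*(X)\otimes_\Z R)^G$. That $h_1^*$ is a $G$-cohomology theory follows because passing to quotients carries a $G$-invariant open/closed decomposition of $X$ into the analogous decomposition of $G\backslash X$, and commutes with trivial-action suspension, so the six-term sequences and suspension isomorphisms are inherited from ordinary $\K^*$. That $h_2^*$ is a $G$-cohomology theory is exactly where exactness of $(-)^G$ over $R$ is used: applying the exact functor $(-)^G$ to the six-term $\K^*$-sequence of a $G$-invariant decomposition again yields an exact sequence, and $(-)^G$ commutes with suspension and finite direct sums. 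Finally $\pi^*$ is a natural transformation $h_1^*\to h_2^*$.

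The key step is then the comparison theorem: a natural transformation of $\Z$-graded $G$-equivariant cohomology theories which is an isomorphism on every orbit $G/H$ is an isomorphism on all (finite-dimensional) $G$-CW complexes. So it suffices to check \eqref{eq:invariants} for $X=G/H$. There $(G/H)/G$ is a point, so $h_1^*(G/H)=\K^*(\pt)\otimes R=R$ in even degree; while $\K^*(G/H)$ is the ring of $\Z$-valued functions on the finite transitive $G$-set $G/H$, whose $G$-invariants are the constants, so $h_2^*(G/H)=R$ as well, and $\pi^*$ carries the generator to the constant function $1$, hence is an isomorphism. This orbit computation is the true crux, and it is where I expect the main obstacle to lie: a \emph{single global} transfer argument with a constant multiplier $\abs{G}$ is \emph{not} available, precisely because the fibre cardinality $[G:H]$ of $\pi$ varies over the orbit-type strata of $X$ (equivalently, $C_0(X)$ is not finitely generated projective over $C_0(X)^G$ near fixed points). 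It is this variation over non-free orbits, rather than the construction of $\pi^*$ itself, that obstructs a naive conditional-expectation transfer, and the reduction to orbits is exactly what circumvents it.

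It remains to pass from finite-dimensional $G$-CW complexes to an arbitrary locally compact $G$-space $X$; here I would approximate $C_0(X)$ by the directed system of its $G$-invariant compactly supported substructures and use continuity of $\K$-theory together with the compatibility of $(-)^G$ and of $G\backslash(-)$ with the relevant colimits. Finally, the last assertion of the proposition follows by applying the flat base change $-\otimes_R\C$ to \eqref{eq:invariants}: since $\Z\to\C$ factors through $R$ and $-\otimes_R\C$ is exact, it commutes with the invariants functor, giving $\K^*(X)^G\otimes_\Z\C\cong \K^*(G\backslash X)\otimes_\Z\C$ naturally in $X$, as claimed.
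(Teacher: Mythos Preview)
Your proof is correct and takes a somewhat different route from the paper's. Both arguments rest on the same key observation---that inverting $\abs{G}$ makes the averaging idempotent available, so $(-)^G$ is exact on $R[G]$-modules and hence $X\mapsto \K^*(X)^G\otimes_\Z R$ has long exact sequences for $G$-invariant pairs---but they organise the reduction differently. You invoke the comparison theorem for $G$-equivariant cohomology theories and verify $\pi^*$ directly on each orbit type $G/H$ (where both sides are visibly $R$ in even degree, generated by the constant function $1$). The paper instead inducts on $\abs{G}$: it takes as base case the fixed-point set $Z\subset X$ (where the statement is tautological since $G\backslash Z=Z$), then on $X\smallsetminus Z$ covers by slice neighbourhoods $U\cong G\times_H Y$ with $\abs{H}<\abs{G}$, observes $\K^*(G\backslash U)\cong\K^*(H\backslash Y)$ and $\K^*(U)^G\cong\K^*(Y)^H$, applies the induction hypothesis to $H$, and finishes with Mayer--Vietoris and a directed-limit argument. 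Your approach is cleaner if one is willing to quote the comparison theorem as a black box, and it isolates the entire verification to a single transparent computation on finite discrete sets; the paper's approach is more self-contained and stays closer to the slice-theoretic machinery developed elsewhere in the article. The one place where your sketch is thinner than the paper's is the passage from $G$-CW complexes to arbitrary locally compact $G$-spaces: the paper does this concretely by covering $X$ with induced open sets (available for finite $G$) and taking the limit over finite subcovers, whereas your ``approximation by $G$-invariant compactly supported substructures'' would need to be spelled out along the same lines to be complete.
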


%\begin{remark}
%The result of Baum and Connes discussed below, in combination 
%with Proposition \ref{prop:invariants} shows that 
%{\(\K^*(G \backslash X)\otimes_\Z \C\)} is a direct summand of {\(\K^*_G(X)\otimes_\Z \C\)}; it is 
%the contribution of the identity conjugacy class in {\(G\)}. 
%\end{remark}

Proposition \ref{prop:invariants} does not hold for compact 
groups in general. It is easy to find a counter-example: take
the \(\TT\)-action on the \(2\)-sphere which rotates around the 
\(z\)-axis. The quotient space is the closed interval \([0,1]\), 
so \(\K^*(\TT\backslash S^2) \cong \Z\). But since \(\TT\) is connected, 
it acts trivially on \(\K\)-theory of \(S^2\), so \(\K^*(S^2)^\TT\cong 
\K^*(S^2)\cong \Z\oplus \Z\). These are obviously different even after 
tensoring with \(\C\). 

Nor does the Proposition hold for infinite discrete groups, for a 
different reason. If \(G\) is infinite and 
\(X = G\) with \(G\) acting by translation, then 
\(\K^*(G\backslash X) \cong \K_*(\C)\cong \Z\) but 
since there are no nonzero finitely supported 
\(G\)-invariant 
maps \(G \to \Z\), and since \(\K\)-theory is compactly 
supported, \(\K^*(X)^G = 0\).

\begin{proof}[Proof of Proposition \ref{prop:invariants}]
{Closed \(G\)-invariant subspaces of \(X\) are in 1-1
correspondence with closed subspaces of \(G \backslash X\). 
If \(Z\subset X\) is a closed invariant subspace, then 
the exact sequence 
\[ 0 \rightarrow C_0(X\smallsetminus Z) \to C_0(X) \to C_0(Z)\to 0\]
of \(G\)-C*-algebras induces a long exact sequence 
of \(G\)-equivariant \(\K\)-theory groups
\begin{equation}\label{eq-quotient-sequence}
 \cdots \to \K^* (G \backslash Z) \to \K^*(G \backslash X) 
\to \K^*(G \backslash Z) \xrightarrow{\partial} \K^{*+1}(G \backslash (X\smallsetminus Z))\to \cdots
\end{equation}
and also an exact sequence 
\[ \cdots \to \K^*(X\smallsetminus Z) \to \K^*(X) 
\to \K^*( Z) \xrightarrow{\partial} \K^{*+1}( X\smallsetminus Z)\to \cdots\]
of non-equivariant groups, each of which carries an action of
\(G\), which therefore induces a  long exact sequence
\begin{equation}\label{eq-invariant-sequence} \cdots \K^*(X)^G \otimes_\Z \Z[\frac{1}{\abs{G}}]
\to \K^*(Z)^G\otimes_\Z \Z[\frac{1}{\abs{G}}] \xrightarrow{\partial} \K^{*+1}(X\smallsetminus Z)^G \otimes_\Z \Z[\frac{1}{\abs{G}}]\to \cdots
\end{equation}
of the $G$-invariant elements.
Note that we have to tensor with $\Z[\frac{1}{\abs{G}}]$ to guarantee that  this sequence is exact.
The map \(\pi^*\) of the theorem induces a map between the sequence (\ref{eq-quotient-sequence}),
tensored by  $\Z[\frac{1}{\abs{G}}]$, and sequence (\ref{eq-invariant-sequence}), so it
follows from the Five-lemma that $\pi^*$ is an isomorphism for $X$, if this is true
for $Z$ and $X\smallsetminus Z$.}

We now proceed by induction on $|G|$. If $Z\subseteq X$ is the closed set of $G$-fixed points,
we have $\K^*(G\backslash X)=\K^*(X)=\K^*(X)^G$, so by the above discussion 
we may assume that all stabilizers for the action of $G$ on $X$ are proper subgroups of $G$.
We then find a cover $\mathcal U$ of $X$ by invariant open sets $U$ each 
isomorphic to some induced $G$-space $G\times_HY$ with $|H|<|G|$.

For each such set $U\cong G\times_HY$ we have \(\K^*(G\backslash U) \cong 
\K^*(H\backslash Y)\).  On the other hand \(G\times_H Y\) fibres 
by the first coordinate projection over the finite \(G\)-set 
\(G/H\); the fibre over \(gH\) is \(Y\). 
Hence the ordinary \(\K\)-theory of \(U\) decomposes as a direct sum 
\[\K^*(G\times_H Y) \cong \oplus_{gH\in G/H} \K^*(Y).\]
The group \(G\) permutes the summands by left translation and 
the invariant part $\K^*(G\times_H Y)^G$ is isomorphic to \(\K^*(Y)^H\).
By induction, the formula of the lemma is true for $H$, and 
and since $\abs{H}$ devides $\abs{G}$ we get
 $$\K^*(U)^G\otimes_\ZZ\Z[\frac{1}{\abs{G}}]\cong \K^*(G\backslash U)\otimes_\ZZ\Z[\frac{1}{\abs{G}}].$$

Now suppose a \(G\)-space \(X\) is covered by \(n\) induced open sets \(U_1, \ldots , U_n\). 
Put \(Z = X\smallsetminus U_n\). Then \( Z\) is covered by the \(n-1\) 
open sets \(U_1, \ldots ,  U_{n-1}\). 
By Meyer-Vietoris and the above discussion for induced sets, the lemma holds for $X$ if
it holds for $Z$. Thus by induction on $n$ we may therefore assume  the lemma to be proved for all $G$-sets which are 
covered by finitely many properly  induced open sets. 
The result then follows from the fact that the theories $\K^*(X)^G$ and $\K^*(G\backslash X)$ 
both respect inductive limits.
\end{proof}

We are going to start our investigation of equivariant \(\K\)-theory 
with a summary of what is known about finite group actions. 
We will also neglect torsion for the moment -- in fact, it will be 
helpful to tensor all equivariant \(\K\)-theory groups in the following by \(\C\), 
since this has the consequence that 
if \(G\) is finite, then 
\(\Rep(G)\otimes_\Z \C\), as a ring, 
is simply the ring of complex valued functions on the 
set of conjugacy classes in \(G\) (this would not be true even 
if we tensored by \(\Q\).) Now 
\(\K^*_G(X)\) is always a module over \(\Rep (G)\). 
Tensoring everything by \(\C\) then gives 
 \(\K^*_G(X)\otimes_\Z \C\) the structure of a module over 
\(\Rep (G)\otimes_\Z \C\), and any module over the ring 
of complex-valued functions on a finite set of points 
decomposes as a direct sum over the spectrum of the 
ring (which in this case is the set of conjugacy classes in \(G\).) 
This provides some additional 
algebraic structure 
which proves to be very useful. 

%\emph{For the remainder of this section, all 
%\(\K\)-theory and equivariant \(\K\)-theory groups 
%are taken with complex coefficients}. 
In what follws, we write \(\K^*_G(X)_\CC:=\K^*_G(X)\otimes_\ZZ\CC\) for 
 the usual integral \(G\)-equivariant 
\(\K\)-theory of \(X\), tensored by the complex 
numbers and we write $\Rep(G)_\CC$ for \(\Rep(G)\otimes_\Z \C\).

We start by simplifying even further, and discuss the
\emph{ difference} of the ranks of \(\K^0_G(X)_\CC\) and 
\(\K^1_G(X)_\CC\). This integer is called the 
\emph{equivariant Euler characteristic} of \(X\). 
Denote the equivariant Euler characteristic by 
\(\Eul (G\ltimes X)\). 
Although a crude invariant, it is at least 
easily geometrically 
computable, by a version of the Lefschetz fixed-point theorem
(proved by Atiyah, Theorem \ref{thm:AS_Euler} below.)

Clearly the equivariant Euler characteristic of a 
free action is the ordinary Euler characteristic of the 
quotient space. On the other hand, the Euler characteristic 
is multiplicative under coverings, so for a free action of a 
finite group 
\[\Eul(G\ltimes X) =  \Eul (G\backslash X) = \frac{1}{\abs{G}}\Eul (X).\]
The following lemma describes \(\chi (G\backslash X)\)
geometrically, even in the presence of isotropy, with the additional 
hypothesis of a smooth action on a smooth manifold. 

\begin{lemma}
\label{lem:Euler_quotient}
Let \(X\) be a smooth compact  
manifold and \(G\) a finite 
group acting smoothly on \(X\). Then 
\begin{equation}
\label{eq:Euler_quotient}
\Eul (G\backslash X) 
= \frac{1}{\abs{G}}\sum_{g \in G} \Eul(X^g),
\end{equation}
where 
\(X^g\) is the fixed-point submanifold of \(g\). 

\end{lemma}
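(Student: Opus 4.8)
The plan is to compute $\Eul(G\backslash X)$ through rational cohomology and reduce the statement to the classical Lefschetz fixed-point theorem applied to each group element. First I would invoke the standard fact that for a finite group acting on a compact space the projection $\pi\colon X\to G\backslash X$ induces an isomorphism $H^*(G\backslash X;\Q)\cong H^*(X;\Q)^G$ onto the $G$-invariant part of the rational cohomology; this is the cohomological analogue of Proposition \ref{prop:invariants}, proved by the same transfer/averaging argument (the transfer $\tau$ satisfies $\pi^*\tau=\sum_g g^*$ and $\tau\pi^*=\abs{G}\cdot\id$, both invertible after inverting $\abs{G}$). Since the Euler characteristic of a compact space is computed from its rational Betti numbers, this gives
$$\Eul(G\backslash X)=\sum_i (-1)^i \dim_\Q H^i(X;\Q)^G.$$

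Next I would rewrite each invariant dimension as a trace. The averaging operator $e=\frac{1}{\abs{G}}\sum_{g\in G} g^*$ is an idempotent on $H^i(X;\Q)$ with image $H^i(X;\Q)^G$, so $\dim_\Q H^i(X;\Q)^G = \trace(e\mid H^i)=\frac{1}{\abs{G}}\sum_{g\in G}\trace(g^*\mid H^i(X;\Q))$. Substituting and interchanging the two finite summations yields
$$\Eul(G\backslash X)=\frac{1}{\abs{G}}\sum_{g\in G}\sum_i(-1)^i\trace(g^*\mid H^i(X;\Q))=\frac{1}{\abs{G}}\sum_{g\in G}L(g),$$
where $L(g)$ denotes the Lefschetz number of the diffeomorphism $g$ acting on $X$.

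The crux, and the point at which smoothness of the action enters, is the identification $L(g)=\Eul(X^g)$ for each $g\in G$. Here I would average a Riemannian metric over $G$ so that every $g$ acts as an isometry of finite order; then its fixed-point set $X^g$ is a (possibly disconnected) totally geodesic smooth submanifold, and along $X^g$ the normal derivative of $g$ has no nonzero fixed vectors. This nondegeneracy is exactly the hypothesis under which the Lefschetz fixed-point theorem, in its refined form for clean fixed-point manifolds (essentially the Atiyah fixed-point formula underlying Theorem \ref{thm:AS_Euler}), evaluates the contribution of each component $F\subseteq X^g$ as its Euler characteristic $\Eul(F)$, giving $L(g)=\sum_F\Eul(F)=\Eul(X^g)$. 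Combining this with the previous display completes the proof. I expect this last identification to be the main obstacle: it is the only step that uses the manifold structure and the only one beyond formal homological bookkeeping. (A sanity check: for a free action $X^g=\emptyset$ for $g\neq e$, recovering $\Eul(G\backslash X)=\frac{1}{\abs{G}}\Eul(X)$.)

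As an alternative that avoids the analytic fixed-point theorem entirely, I would choose a $G$-equivariant smooth triangulation of $X$ and pass to a regular subdivision, so that any simplex mapped to itself by some $g$ is fixed pointwise. Counting $G$-orbits of simplices via Burnside's lemma then gives $\Eul(G\backslash X)=\frac{1}{\abs{G}}\sum_{g}\sum_p(-1)^p N_p(g)$, where $N_p(g)$ is the number of $p$-simplices fixed by $g$; since these are precisely the simplices of the induced triangulation of $X^g$, the inner alternating sum equals $\Eul(X^g)$, and the formula follows. This route trades the Lefschetz theorem for the existence of a regular equivariant triangulation.
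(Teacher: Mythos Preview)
Your proof is correct and follows essentially the same route as the paper: identify (co)homology of the quotient with the $G$-invariants via transfer (the paper uses Proposition~\ref{prop:invariants} for $\K$-theory tensored with $\C$, you use the cohomological analogue), write the dimension of the invariants as an averaged trace, and then invoke the Lefschetz fixed-point theorem for isometries to equate each Lefschetz number with $\Eul(X^g)$. Your alternative triangulation/Burnside argument is a nice bonus not present in the paper.
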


%We should note that \(\K^*(G\backslash X)\otimes_\Z \C\cong 
%\bigl( \K^*(G\backslash X) \otimes_\Z \C \bigr)^G\), the 
%\(G\)-invariant part of the ordinary \(\K\)-theory of \(X\).  

\begin{proof}
Using the 
standard formula for the dimension of the 
 space of \(G\)-invariants in a representation and the fact that 
\(\K^*(G\backslash X)\otimes_\Z \C\) is the same as the 
\(G\)-invariant part of \(\K^*(X)\otimes_\Z \C\), 
 \[ \Eul (G\backslash X) = \frac{1}{\abs{G}}\sum_{g\in G} \chi^{\textup{Eul}} (g)\]
 where \(\chi^{\textup{Eul}}\) is the (virtual) character of the \(\Z/2\)-graded 
 representation of 
 \(G\) on \(\K^*(X)_\CC \; (\cong H^*(X, \C))\), that is, the difference of the characters of \(G\) acting
 on even and odd \(\K\)-theory tensored with \(\C\), or cohomology with 
 coefficients in \(\C\). 
  
 Now by the Lefschetz fixed-point formula, 
 \(\chi^{\textup{Eul}}(g) \defeq \textup{trace}_s (g) = \Eul (X^g)\), which 
 proves the result.  
\end{proof}

In the last proof, we used the very strong version of the Lefschetz 
fixed-point formula applicable to an \emph{isometry} of a compact 
Riemannian manifold. This classical fact seems quite well-known 
to topologists, but for lack of a reference, we cite 
the second author's article \cite{Emerson-Meyer:Equi_Lefschetz}
for this (see Theorem 10. For the connection between the 
Lefschetz map and traces, see \cite{Emerson:Localization}.)  
It also follows immediately from the Atiyah-Segal-Singer 
index theorem (see Theorem 2.12 of \cite{Atiyah-Segal:Index}.)

The following gives a number of examples of interesting 
finite group actions on surfaces.
  
\begin{example}
\label{ex:connor_floyd}
This example is due P.E. Conner and 
E.E. Floyd in \cite{CF}. Let \(p\) and \(q\) be two odd primes,  
let \(n = pq\) and \(\lambda\) be a primitive \(n\)th root of 
unity. Let \(S\subset \C P^2\) be the zero locus of 
the homogeneous polynomial 
\(f(z_1, z_2, z_3) = z_1^n + z_2^n + z_3^n\). Then \(S\) is a 
smooth complex submanifold of \(\C P^2\) of complex 
dimension \(1\), \emph{i.e.} is a curve. Let 
\(T([z_1, z_2, z_3]) \defeq [z_1, \lambda^pz_2, \lambda^qz_3]\). 
Then \(T\) has order \(n\). Let \(\Gamma= \langle T\rangle\). 
Let \[\tau\colon S \to S,\; \tau ([z_1, z_2,z_3]) \defeq
[\lambda z_1, z_2, z_3].\] It commutes with \(\Gamma\), has 
order \(n\). Thus we get an action of \(\Z/n\times \Z/n\) on 
\(S\). This is the restriction of an action on \(\mathbb{CP}^2\) of course. 
The fixed-point set of \(\tau\) in $\mathbb{CP}^2$ is points with first homogeneous 
coordinate zero, so it is a copy of \(\mathbb{CP}^1\). The quotient \(X\defeq \Gamma\backslash S\) is a closed
Riemann surface. Its genus is deduced from its Euler characteristic, 
which by the formula above is 
given by \( \frac{1}{n} \sum_{0\le j <n} \Eul(X^{T^j}).\)
Each \(T^j\) has a finite number of fixed-points, and, counting them,
one computes that \(\chi (X) = 2- (p-1)(q-1)\) and 
hence the genus of \(X \defeq G\backslash S\) is \( \frac{(p-1)(q-1)}{2}\). 

The map \[\tau\colon S \to S,\; \tau ([z_1, z_2,z_3]) \defeq
[\lambda z_1, z_2, z_3]\] descends to a self-map \(\dot{\tau}\colon X\to X\). 
We obtain an
action of the group \(G \defeq \Z/n\) on a surface, 
such that exactly one point has non-trivial isotropy. 
\end{example}
 
 \begin{remark}
There are simpler examples of a finite group action on a 
compact space with exactly one fixed point. Take a rotation 
of the Euclidean plane \(V\), extend this uniquely to an orientation-preserving
isometry of \(V\oplus \R\), and consider the induced map on the 
projectivisation \(\mathbb{P}(V\oplus \R) \cong \mathbb{RP}^2\).  
This fixes exactly one point (the point at infinity.) 
\end{remark}

 By contrast, the \emph{equivariant} Euler characteristic 
 is more complicated. 
 Atiyah and Segal proved the following result about it in \cite{Atiyah-Segal:Euler}. 
 The proof is more or less immediate from the Lefschetz 
 fixed-point theorem and a 
  theorem of Baum and Connes in \cite{Baum-Connes:Finite}
  discussed in \S \ref{subsec:bc}.

\begin{theorem}[{see \cite{Atiyah-Segal:Euler}}]
\label{thm:AS_Euler} Let \(G\) be a finite 
group acting smoothly on a smooth compact manifold 
\(X\). Then 
\[ \Eul (G\ltimes X) = \frac{1}{\abs{G}}\sum_{ g_1, g_2} \; \Eul (X^{g_1, g_2}),\]
where the sum is over all \emph{commuting pairs}
of group elements, and where \(X^{g_1, g_2}\) denotes the joint fixed-point set of 
\(g_1, g_2\). 
\end{theorem}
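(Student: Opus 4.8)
The plan is to reduce the statement to the theorem of Baum and Connes discussed in \S\ref{subsec:bc}, which provides the decomposition of complex equivariant \(\K\)-theory over conjugacy classes, and then to run the same invariants-plus-Lefschetz argument used in the proof of Lemma~\ref{lem:Euler_quotient} fibrewise over the fixed-point sets, finally reorganising the resulting sum into a sum over commuting pairs. First I would invoke the Baum-Connes isomorphism
\[
\K^*_G(X)_\CC \cong \bigoplus_{[g]}\bigl(\K^*(X^g)_\CC\bigr)^{Z(g)},
\]
where the sum runs over the conjugacy classes \([g]\) of \(G\) and \(Z(g)\subseteq G\) is the centraliser of a representative \(g\), acting on the fixed-point manifold \(X^g\) by restriction. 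Since \(\Eul(G\ltimes X)\) is by definition the Euler characteristic of the \(\Z/2\)-graded vector space \(\K^*_G(X)_\CC\), and Euler characteristics are additive over direct sums, this gives
\[
\Eul(G\ltimes X) = \sum_{[g]}\Eul\Bigl(\bigl(\K^*(X^g)_\CC\bigr)^{Z(g)}\Bigr).
\]

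Next I would compute each summand exactly as in Lemma~\ref{lem:Euler_quotient}. Applying the standard formula \(\dim V^H = \tfrac{1}{\abs{H}}\sum_{h\in H}\trace(h\mid V)\) for the dimension of the invariants to the \(\Z/2\)-graded \(Z(g)\)-representation \(\K^*(X^g)_\CC\), the Euler characteristic of the invariant part is
\[
\Eul\Bigl(\bigl(\K^*(X^g)_\CC\bigr)^{Z(g)}\Bigr) = \frac{1}{\abs{Z(g)}}\sum_{h\in Z(g)}\Eulc_h(X^g),
\]
where \(\Eulc_h\) denotes the supertrace of \(h\) on \(\K^*(X^g)_\CC\). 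The strong Lefschetz fixed-point formula for the isometry \(h\) of \(X^g\) (the same input cited in the proof of Lemma~\ref{lem:Euler_quotient}) identifies \(\Eulc_h(X^g) = \Eul\bigl((X^g)^h\bigr) = \Eul(X^{g,h})\), the joint fixed-point set, so that
\[
\Eul(G\ltimes X) = \sum_{[g]}\frac{1}{\abs{Z(g)}}\sum_{h\in Z(g)}\Eul(X^{g,h}).
\]

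Finally I would reorganise this into a sum over commuting pairs. The key observation is that \(\Eul(X^{g_1,g_2})\) is invariant under simultaneous conjugation \((g_1,g_2)\mapsto(sg_1s^{-1},sg_2s^{-1})\), because \(X^{sg_1s^{-1},sg_2s^{-1}}=s\cdot X^{g_1,g_2}\) and the Euler characteristic is a homeomorphism invariant. Grouping the commuting pairs \((g_1,g_2)\) according to the conjugacy class of \(g_1\): each class \([g]\) contains \(\abs{G}/\abs{Z(g)}\) elements, for each \(g_1=sgs^{-1}\) the partner \(g_2\) ranges over \(Z(g_1)=sZ(g)s^{-1}\), and conjugation invariance gives \(\Eul(X^{g_1,g_2})=\Eul(X^{g,h})\) where \(g_2=shs^{-1}\), \(h\in Z(g)\). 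Hence
\[
\frac{1}{\abs{G}}\sum_{g_1g_2=g_2g_1}\Eul(X^{g_1,g_2}) = \frac{1}{\abs{G}}\sum_{[g]}\frac{\abs{G}}{\abs{Z(g)}}\sum_{h\in Z(g)}\Eul(X^{g,h}),
\]
which is precisely the expression obtained above, completing the argument. I expect the only genuinely delicate point to be the bookkeeping in this last double-counting step, namely verifying that each commuting pair is counted exactly once with the correct weight \(1/\abs{G}\); the analytic content is carried entirely by the Baum-Connes theorem and the Lefschetz formula, both of which are available.
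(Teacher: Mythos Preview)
Your argument is correct and follows exactly the route the paper sketches: the paper states that Theorem~\ref{thm:AS_Euler} ``follows from a little manipulation similar to that of the proof of Lemma~\ref{lem:Euler_quotient}, and Theorem~\ref{thm:BC}'' but omits the details, and what you have written is precisely that manipulation---apply Baum--Connes to split \(\K^*_G(X)_\CC\) over conjugacy classes, compute the Euler characteristic of each \(Z(g)\)-invariant piece by averaging supertraces and invoking Lefschetz on \(X^g\), then use orbit--stabiliser to rewrite the result as a sum over commuting pairs. The bookkeeping in your final step is correct.
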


This local expression for the equivariant, or 
\emph{orbifold} Euler characteristic, had already appeared in the 
physics literature in connection with string theory before Atiyah and 
Segal interpreted it in terms of equivariant 
\(\K\)-theory in \cite{Atiyah-Segal:Euler}.

\subsection{The theorem of Baum and Connes}
\label{subsec:bc}
\emph{In this subsection, \(\K^*_G(X)_\CC\) continues to denote 
\(\K\)-theory with complex coefficients.} If \(G\) is a group, 
\([G]\) will denote the set of conjugacy classes in \(G\). 

Let \(G\) be finite, \(X\) a \(G\)-space. 
Then, as mentioned above, the equivariant \(\K\)-theory \(\K^*_G(X)\) is a module 
over the representation ring \(\Rep (G)\), and similarly for the complexifications. 
This fact is the main ingredient to the arguments which follow. 

The complex representation 
ring \(\Rep (G)_\CC\) is naturally isomorphic to the ring of 
complex class functions on \(G\), \emph{i.e.} the ring of functions 
on the finite set \([G]\) of conjugacy classes in \(G\). 
The isomorphism is given by sending an irreducible representation
$\tau\in \widehat{G}$ to its character $\chi_\tau=\trace\tau$. 

Any module \(M\) over a ring of this form, \emph{i.e.}
of the form \(\C^{[G]}\) with pointwise multiplication and 
addition, decomposes into a 
direct sum of modules \(M_{[g]}\), one component 
for each summand, \emph{i.e.} conjugacy class,
 \([g] \in [G]\). The components are the 
\emph{localizations} of \(M\) at these conjugacy classes: 
 \[M_{[g]} = M\otimes_{\Rep (G)}\C\]
where \(\C\) is understood as a left \(\Rep (G)\)-module by 
evaluation of characters at \([g]\).   

\begin{remark}
\label{rem:trivial_spaces_localized}
If \(G\) is a finite group and \(X\) is a trivial 
\(G\)-space, then it follows from 
\cite[Proposition 2.2]{Segal} that
$\K_G^*(X)\cong \K^*(X)\otimes\Rep(G)$, where the 
isomorphism sends the class $[V]$ of a $G$-equivariant 
 vector bundle $V$ in $\K_G^0(X)$ (assuming w.l.o.g. that $X$ is compact) to the 
sum $\sum_{\tau\in \widehat{G}} [\Hom^G(V_\tau, V)\otimes V_\tau]\in \K_0(X)\otimes\Rep(G)$,
where $V_\tau$ denotes the representation space of $\tau$ and $\Rep(G)$ is regarded as the 
free abelian group with generators the irreducible representations of $G$
(recall that $\dim(\Hom(V_\tau, V)_x)$ gives the multiplicity of $\tau$ in the fiber $V_x$ of $V$ at $x\in X$).
After complexification and using the above explained realization of $\Rep(G)_\CC$ as $\CC^{[G]}$
we obtain an isomorphism
$$\Phi^0: \K_G^0(X)_\CC\stackrel{\cong}{\longrightarrow}\K^0(X)_\CC\otimes\CC^{[G]}\cong\bigoplus_{[g]\in [G]} \K^0(X)_\CC$$
given on the class of an equivariant vector bundle $[V]\in \K_G^0(X)$ by
$$[V]\mapsto \oplus_{[g]\in [G]} \big(\sum_{\tau\in \widehat{G}} \chi_\tau(g)[ \Hom^G(V_\tau,V)]\big).$$
Replacing $X$ by $X\times S^1$, we obtain a similar description for an isomorphism
$$\Phi^1:\K_G^1(X)_\CC\stackrel{\cong}{\longrightarrow} \K^1(X)_\CC\otimes\Rep(G)_\CC.$$
In particular, we obtain surjective localization maps 
$\Phi^*_{[g]}:\K_G^*(X)_\CC\to \K^*(X)_\CC$
given by composing $\Phi^*$ with evaluation at $[g]$.
\end{remark}

The following lemma allows a more transparent description of the localization maps $\Phi^*_{[g]}$ 
of Remark \ref{rem:trivial_spaces_localized}.

\begin{lemma}\label{lem:Phi-trivial}
Suppose that the finite group $G$ acts trivially on the compact space $X$ and let $V$ be 
a $G$-equivariant complex vector bundle over $X$ giving a class $[V]\in \K_G^0(X)_\CC$.
Fix $g\in G$, let $d:=\ord(g)$ denote the order of $g$,
and let $C_d\subseteq\TT$ the set of $d$th roots of unity.
Then $V$ decomposes into a direct sum of vector bundles $V=\oplus_{\zeta\in C_d}V_\zeta$ 
such that each $V_\zeta$ is the eigenbundle for the eigenvalue $\zeta$ for the action of $g$ on $V$
and we get
$$\Phi^0_{[g]}([V])=\sum_{\zeta\in C_d} \zeta[V_\zeta]\in \K^0(X)_\CC.$$
Replacing $X$ by $X\times S^1$ (resp.~the one-point compactification $X^+$), a similar description is obtained for $\Phi^1_{[g]}$ (resp. for the case of locally compact spaces $X$).

\end{lemma}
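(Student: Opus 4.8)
The plan is to reduce the asserted formula to the defining expression for $\Phi^0_{[g]}$ recorded in Remark \ref{rem:trivial_spaces_localized}, namely
$$\Phi^0_{[g]}([V]) = \sum_{\tau\in\widehat G}\chi_\tau(g)\,[\Hom^G(V_\tau,V)]\in\K^0(X)_\CC,$$
and then to match this against $\sum_{\zeta\in C_d}\zeta[V_\zeta]$ by diagonalising the single operator $\tau(g)$ on each representation space $V_\tau$. So the whole statement, including the existence of the eigenbundle decomposition, will come out of the isotypical decomposition of $V$ together with one elementary trace identity.

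First I would recall the isotypical decomposition underlying that remark: since $G$ acts trivially on $X$, evaluation gives a $G$-equivariant isomorphism of bundles $V\cong\bigoplus_{\tau\in\widehat G}\Hom^G(V_\tau,V)\otimes V_\tau$, in which each multiplicity bundle $\Hom^G(V_\tau,V)$ carries the trivial $G$-action and $V_\tau$ is a fixed representation space. The point to verify here—immediate from the equivariance relation $\phi(\tau(h)v)=h\cdot\phi(v)$ defining $\Hom^G(V_\tau,V)$—is that under this isomorphism the automorphism of $V$ induced by $g$ becomes $\bigoplus_\tau \id\otimes\tau(g)$.

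Next I would diagonalise fibrewise. Because $g^d=e$ we have $\tau(g)^d=\id$, so each representation space splits as $V_\tau=\bigoplus_{\zeta\in C_d}V_\tau^\zeta$ into $\zeta$-eigenspaces of $\tau(g)$, and consequently
$$\chi_\tau(g)=\trace\tau(g)=\sum_{\zeta\in C_d}\zeta\,\dim V_\tau^\zeta.$$
Feeding this splitting into the decomposition of $V$ above, the $\zeta$-eigenbundle of $g$ acting on $V$ is exactly $V_\zeta=\bigoplus_{\tau}\Hom^G(V_\tau,V)\otimes V_\tau^\zeta$; this is a genuine subbundle, being a direct sum of bundles of the form $\Hom^G(V_\tau,V)$ tensored with the fixed vector space $V_\tau^\zeta$, and summing over $\zeta$ recovers $V$. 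This already proves the asserted direct sum decomposition $V=\bigoplus_{\zeta\in C_d}V_\zeta$. (Alternatively one may exhibit the subbundles directly via the spectral projections $\frac1d\sum_{k=0}^{d-1}\zeta^{-k}(g|_V)^k$, which are continuous bundle idempotents.)

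The computation then closes at once: in $\K^0(X)_\CC$ one has $[V_\zeta]=\sum_\tau\dim(V_\tau^\zeta)\,[\Hom^G(V_\tau,V)]$, whence
$$\sum_{\zeta\in C_d}\zeta[V_\zeta]=\sum_\tau\Bigl(\sum_{\zeta\in C_d}\zeta\,\dim V_\tau^\zeta\Bigr)[\Hom^G(V_\tau,V)]=\sum_\tau\chi_\tau(g)\,[\Hom^G(V_\tau,V)]=\Phi^0_{[g]}([V]),$$
using the trace identity above. For $\Phi^1_{[g]}$ I would run the identical argument after replacing $X$ by $X\times S^1$ (respectively the one-point compactification $X^+$ in the locally compact case), exactly as $\Phi^1$ is defined in Remark \ref{rem:trivial_spaces_localized}, and then restrict to the relevant kernel; the eigenbundle decomposition pulls back compatibly, so no new argument is needed. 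I expect the only genuinely delicate point to be the bookkeeping that the $g$-action respects the isotypical splitting as $\id\otimes\tau(g)$; once that is pinned down, the remainder is the linear-algebra identity $\chi_\tau(g)=\sum_\zeta\zeta\dim V_\tau^\zeta$.
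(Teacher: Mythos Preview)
Your proof is correct and follows essentially the same approach as the paper: both use the isotypical decomposition $V\cong\bigoplus_\tau \Hom^G(V_\tau,V)\otimes V_\tau$, diagonalise $\tau(g)$ into eigenspaces $V_\tau^\zeta$, invoke the trace identity $\chi_\tau(g)=\sum_\zeta\zeta\dim V_\tau^\zeta$, and then match $[V_\zeta]=\sum_\tau\dim(V_\tau^\zeta)[\Hom^G(V_\tau,V)]$ against the defining formula for $\Phi^0_{[g]}$. Your version is in fact slightly more careful than the paper's, since you explicitly justify why the $g$-action respects the splitting as $\id\otimes\tau(g)$ and note the alternative construction via spectral projections.
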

\begin{proof}
We have to prove that $\sum_{\zeta\in C_d} \zeta[V_\zeta]=
\sum_{\tau\in \widehat{G}} \chi_\tau(g)[ \Hom^G(V_\tau,V)]$ in $\K^0(X)_\CC$.  
For this first observe that by the arguments of
\cite[Proposition 2.2]{Segal} we get 
$$V\cong \oplus_{\tau\in \widehat{G}}V_\tau\otimes \Hom^G(V_\tau,V)$$
as $G$-equivariant vector bundles, where $G$ acts trivially on the vector bundles 
$\Hom^G(V_\tau,V)$. For each $\zeta\in C_d$ let $\chi_\zeta:<g>\to \TT$ denote the 
corresponding character of the cyclic group $\lk g\rk$ generated by $g$, i.e., $\chi_\zeta(g^l)=\zeta^l$.
Let $V_\tau^\zeta$ denote the eigenspace in $V_\tau$ for the eigenvalue $\zeta$ of $\tau(g)$.
Its dimension  $d_\zeta^\tau$ gives the multiplicity of the character 
$\chi_\zeta$ in the restriction of $\tau$ to $\lk g\rk$. Computing $\chi_\tau(g)=\trace\tau(g)$ 
with respect to a corresponding basis, we  get $\chi_\tau(g)=\sum_{\zeta\in C_d} d_\zeta^\tau\zeta$.
We therefore get
\begin{equation}\label{eq;formula}
\Phi^0_{[g]}([V])=\sum_{\tau\in \widehat{G}} \chi_\tau(g)[ \Hom^G(V_\tau,V)]
=\sum_{\zeta\in C_d} \sum_{\tau\in \widehat{G}} d_\zeta^\tau\zeta [\Hom^G(V_\tau, V)].
\end{equation}
On the other hand, using the isomorphism $V\cong \oplus_{\tau\in \widehat{G}}\big(V_\tau\otimes \Hom^G(V,V_\tau)\big)$ we get 
$$V_\zeta\cong \oplus_{\tau\in \widehat{G}} \big(V_\tau^\zeta\otimes \Hom^G(V_\tau,V)\big)
\cong \oplus_{\tau\in \widehat{G}}\Hom^G(V_\tau,V)^{d_\zeta^\tau},$$ so that 
we get the equality 
$[V_\zeta]=\sum_{\tau\in\widehat{G}} d_\zeta^\tau [\Hom^G(V_\tau,V)]$
in $\K^0(X)_\CC$. Together with (\ref{eq;formula}) this gives
$$\sum_{\zeta\in C_d} \zeta[V_\zeta]=\sum_{\zeta\in C_d}\sum_{\tau\in\widehat{G}}  \zeta d_\zeta^\tau [\Hom^G(V_\tau,V)]=\Phi^0_{[g]}([V]).$$
\end{proof}

%
%We therefore obtain isomorphisms
%\[\K^*_G(X)_\CC \cong \K^*(X)_\CC\otimes \Rep (G)_\CC\cong \K^*(X)_\CC\otimes \oplus_{[g]\in [G]}  \CC\]
% as \(\Rep (G)_\CC\)-modules, 
%where the \(\Rep (G)_\CC\cong \C^{[G]}\)-module structure on 
%each factor \(\C\) is by evaluation of class functions (elements 
%of \(\C^{[G]}\)) at 
%\([g]\). Hence, localization of \(\K^*_G(X)_\CC\) at 
%\([g]\) yields \(\K^*(X)_\CC\). 

%If \(G\) is cyclic, generated by \(g\), then a character of $G$ is uniquely determined 
%by its value $\zeta$ at $g$ and we have  $\Hom^G(\CC_\chi,V_x)\cong (V_x)_\zeta$,
%the $g$-eigenspace of $V_x$ with eigenvalue $\zeta$. 
%Therefore,
%the evaluation map 
%\(\K^0_G(X)_{[g]}_\CC \to \K^0(X)_\CC\)  has a particularly transparent 
%description at the level 
%of cycles  by sending the class $[V]$ of
%  a \(G\)-equivariant 
%vector bundle  \(V\) on \(X\) to the 
%sum  $\sum_{\zeta} \zeta [V_\zeta]\in \K^0(X)_\CC$, where $V_\zeta$ denotes
%the bundle of $g$-eigenspaces with eigenvalue $\zeta$.
%%then \(V\) decomposes, fibrewise, 
%%canonically, into \(g\)-eigenspaces \(V_\zeta\), and 
%%so 
%%\(V\cong \oplus_{\zeta } V_\zeta\), and 
%%it is easily checked that the \(V_{\zeta}\) are 
%%vector bundles over \(X\).  The isomorphism 
%%\(\K^0_G(X)_{[g]}_\CC \to \K^0(X)_\CC\) is induced by the 
%%map \(\K^0_G(X)_\CC \to \K^0(X)_\CC\),  
%%\( [V]\mapsto \sum_{\zeta} \zeta [V_\zeta].\)

%\end{example}

If a finite group acts on a compact space $X$, the module \(\K^*_G(X)_\CC\) 
decomposes as
\[ \K^*_G (X)_\CC \cong \bigoplus_{[g] \in [G]}
\K^*_G(X)_{\CC,[g]}\]
and to classify the module it suffices to 
analyse the contributions $\K^*_G(X)_{\CC,[g]}$ from each conjugacy 
class $[g]$. The above lemma does this job in case where $G$ acts trivially on $X$.
In what follows next, we want to extend this description to the case of arbitrary
compact $G$-spaces $X$. 

\begin{remark}\label{rem-forgetful}
Suppose that a finite group $G$ acts on the compact space $X$. Then we 
always get a projection from 
$\K_G^*(X)$ to the $G$-invariant part $\K^*(X)^G$ of $\K^*(X)$ which is given
by sending the class $[V]$ of a $G$-equivariant vector bundle to the class $[V]$
with forgotten $G$-action. To see that $V$ gives a $G$-invariant class in $\K^0(X)$ 
observe that the action of an element $g\in G$ on $V$ provides a continuous family of isomorphisms
$\alpha_x^g: V_x\to V_{gx}=g^*(V)_x$, hence an isomorphism 
between the bundles $V$ and $g^*V$. 
Note that the projection $\K_G^*(X)\to \K^*(X)^G$ becomes surjective after complexification, since 
for any $G$-invariant class $[V]$ the class $\frac{1}{|G|}\big[\oplus_{g\in G} g^*V\big]\in \K_G^0(X)_\CC$ 
maps to $[V]$ in the complexification of $\K^0(X)^G$. 
Moreover, by Proposition \ref{prop:invariants} we see that
the complexification of $\K^*(X)^G$ is isomorphic to  $\K^*(G\backslash X)_\CC$, so we obtain
a canonical surjective projection
\begin{equation}\label{eq:projection}
P_G^*: \K_G^*(X)_\CC\to \K^*(G\backslash X)_\CC
\end{equation}
which sends a class $[V]\in \K_G^0(X)$ to the class $\frac{1}{|G|}\sum_{g\in G} [g^*V]\in \K^*(G\backslash X)_\CC$. By passing to one-point compactifications, we obtain a similar projection  for locally compact
$G$-spaces.
\end{remark}

Suppose now that  $X$ is a compact $G$-space for the finite group $G$.
Let \(g\in G\). Any \(G\)-equivariant vector 
bundle on  \(X\) restricts to the set of $g$-fixed points
\(X^g=\{x\in X: gx=x\}\). It then decomposes into \(g\)-eigenbundles $V_\zeta$
as in Lemma \ref{lem:Phi-trivial}. Let $Z_g$ denote the centralizer of $g$ in $G$.
Each of the bundles $V_\zeta$ are $Z_g$-equivariant. Define a map 
\begin{equation}
\phi_{[g]}^0\colon \K^0_G(X)_\CC \longrightarrow \K^0_{Z_g}(X^g)_\CC, \; \; \;
\phi_{[g]}([V]) \defeq \sum_\zeta \zeta [V_\zeta].
\end{equation}
Composing this with the projections $P_{Z_g}^0: \K^0_{Z_g}(X^g)_\CC\to \K^0(Z_g\backslash X^g)_\CC$
and summing over \([g]\) yields a map 
\(\phi^0:\K^0_G(X)_\CC \to \oplus_{[g]\in [G]} \K^0(Z_g\backslash X^g)_\CC. \)

Replacing \(X\) by \(X\times S^1\) (or by the one-point compactification $X^+$) and repeating the 
construction gives a map 

\begin{equation}
\label{eq:BC_map}
\phi^*\colon \K^*_G(X)_\CC \to \oplus_{[g]\in [G]} \K^*(Z_g\backslash X^g)_\CC. 
\end{equation}
for any locally compact $G$-space $X$.

Consider now the group stabilizer bundle \(\SX=\{(x,g): x\in X, g\in G_x\}\). 
$G$ acts on $\SX$ via $h\dot(x,g)=(h\cdot x, hgh^{-1})$ and $\SX$ decomposes 
as a disjoint union
$$\SX\cong \bigsqcup_{[g]\in [G]}\bigsqcup_{[h]\in G/Z_g} h\cdot X^g\cong \bigsqcup_{[g]\in [G]} (G\times_{Z_g} X^g),$$  which implies  a decomposition
\begin{equation}\label{eq:SX}
G\backslash \SX\cong \bigsqcup_{[g]\in [G]} Z^g\backslash X^g.
\end{equation}
From this we obtain a decomposition
$$
\K^*(G\backslash\SX)_\CC\cong
 \oplus_{[g]\in [G]} \K^*(Z_g\backslash X^g)_\CC.
$$
Therefore we may regard the map 
\eqref{eq:BC_map} as having target 
\(\K^*(G\backslash\SX)_\CC\) and we obtain a well defined map
\begin{equation}
\label{eq:BC_main}
 \Phi_X\colon \K^*_G(X)_\CC \longrightarrow \K^*(G\backslash \SX)_\CC \cong 
\oplus_{[g]\in [G]} \K^*(Z^g\backslash X^g)_\CC.
\end{equation}

 Baum and Connes \cite{Baum-Connes:Finite} prove 
the following theorem. 

\begin{theorem}[Baum-Connes]
\label{thm:BC}
If \(G\) is a finite group, \(X\) a \(G\)-space, then \eqref{eq:BC_main} is 
an isomorphism 
\[ \K^*_G(X)_\CC \xrightarrow{\cong} 
\K^*(G\backslash\SX)_\CC\]
which sends the component $\K^*_G(X)_{\CC,[g]}$ of 
$\K_G^*(X)_\CC$  
to the component $\K^*(Z^g\backslash X^g)_\CC$ of $\K^*(G\backslash\SX)_\CC$.

\end{theorem}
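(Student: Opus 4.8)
The plan is to use the ring isomorphism $\Rep(G)_\CC\cong\CC^{[G]}$, which splits $\K^*_G(X)_\CC\cong\bigoplus_{[g]\in[G]}\K^*_G(X)_{\CC,[g]}$ into its localizations at the conjugacy classes, and to compare this splitting with the decomposition \eqref{eq:SX} of the target. Since \eqref{eq:BC_main} carries $\K^*_G(X)_{\CC,[g]}$ into the summand $\K^*(Z_g\backslash X^g)_\CC$, it is enough to show that each component
\[
\Phi_X^{[g]}\colon \K^*_G(X)_{\CC,[g]}\longrightarrow\K^*(Z_g\backslash X^g)_\CC
\]
is an isomorphism. First I would check that both sides are cohomology theories on locally compact $G$-spaces and that $\Phi$ is a natural transformation compatible with the connecting maps. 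For a $G$-invariant closed $A\subseteq X$ with complement $U$, the ideal sequence $0\to C_0(U)\to C_0(X)\to C_0(A)\to0$ gives a six-term sequence in $\K^*_G(-)_\CC$; on the target the $Z_g$-invariant decomposition $X^g=A^g\sqcup U^g$ produces the matching sequences after passing to $Z_g$-quotients. The cleanest way to obtain boundary-compatibility is to view the eigenbundle map $\phi_{[g]}$ of Lemma \ref{lem:Phi-trivial} intrinsically as the localization at $[g]$ of the restriction map $\K^*_G(X)_\CC\to\K^*_{Z_g}(X^g)_\CC$; localization at a point of $\mathrm{Spec}(\Rep(G)_\CC)$ is exact, so naturality and commutation with boundary maps are automatic. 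Both functors visibly respect direct limits.

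With $\Phi$ presented as a natural transformation of cohomology theories, I would run the induction used in the proof of Proposition \ref{prop:invariants}. The base case is the trivial action: here $X^g=X$ and $Z_g\backslash X^g=X$ for every $g$, so $G\backslash\SX\cong\bigsqcup_{[g]}X$ and $\K^*_G(X)_\CC\cong\K^*(X)_\CC\otimes\Rep(G)_\CC$, and Remark \ref{rem:trivial_spaces_localized} together with Lemma \ref{lem:Phi-trivial} identifies $\Phi_X$ with the isomorphism $\Phi^*$ established there. For the inductive step on $\abs{G}$, let $A=X^G$ be the closed fixed-point set, on which $\Phi$ is an isomorphism by the base case; on $X\smallsetminus A$ all stabilizers are proper, so $X\smallsetminus A$ is covered by $G$-invariant open sets of the form $G\times_{H}Y$ with $\abs{H}<\abs{G}$. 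The Five Lemma applied to the six-term sequences, Mayer--Vietoris over such a cover (inducting on the number of patches), and compatibility with direct limits then reduce everything to the case of a single induced open set.

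For $X=G\times_HY$, equivariant K-theory induction gives $\K^*_G(G\times_HY)_\CC\cong\K^*_H(Y)_\CC$, the C*-algebraic incarnation being the Morita equivalence $C_0(G\times_HY)\rtimes G\sim_M C_0(Y)\rtimes H$ from the discussion preceding Corollary \ref{cor-ideal}. On the target, the stabilizer-bundle analogue of Proposition \ref{prop-commute} (equivalently, a direct inspection via \eqref{eq:SX}) yields $G\backslash\operatorname{Stab}(G\times_HY)\cong H\backslash\SY$, hence an isomorphism of the right-hand groups. I would then verify that $\Phi$ intertwines these two identifications, reducing $\Phi_{G\times_HY}$ to $\Phi_Y$ for the group $H$; since $\abs{H}<\abs{G}$, the inductive hypothesis completes the case, and reassembling the pieces proves the theorem.

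The hard part will be the compatibility of $\Phi$ with induction in the last step. One must track how a fixed class $[g]_G$ contributes through the induction isomorphism: $(G\times_HY)^g$ is nonempty precisely when $g$ is $G$-conjugate into $H$, and the fixed manifold $X^g$ and centralizer $Z_g$ reorganize into pieces indexed by the $H$-conjugacy classes that fuse to $[g]_G$. Showing that the eigenbundle data defining $\phi_{[g]}$ transforms correctly, so that these two reindexings agree, is the genuine technical obstacle. A route that avoids the induced case altogether is to invoke the Atiyah--Segal localization theorem: restriction induces $\K^*_G(X)_{\CC,[g]}\cong\K^*_G(X^g)_{\CC,[g]}$, and then, since $g$ acts trivially on $X^g$ and is central in $Z_g$, one identifies this localization with the invariants $\K^*(X^g)^{Z_g}_\CC\cong\K^*(Z_g\backslash X^g)_\CC$ via Proposition \ref{prop:invariants}; with that approach the difficulty instead concentrates in establishing the localization theorem with complex coefficients.
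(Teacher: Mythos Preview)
Your proposal is correct and follows essentially the same route as the paper: induction on $\abs{G}$, with the trivial-action base case handled by Lemma~\ref{lem:Phi-trivial}, the naturality/excision step stated as Lemma~\ref{lem:excision}, and the reduction for induced spaces $G\times_HY$ proved as Lemma~\ref{lem:inducedspace}, after which Mayer--Vietoris and direct limits finish the argument. You correctly anticipate that the compatibility of $\Phi$ with induction is the technical crux; the paper carries this out by an explicit homeomorphism identifying $Z^g\backslash(G\times_HY)^g$ with the union of the components $(Z^h\cap H)\backslash Y^h$ over those $H$-conjugacy classes $[h]$ that fuse to $[g]_G$, exactly the reindexing you describe.
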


The idea of the proof is given by observing that for any induced $G$-space
$X=G\times_HY$ the assertion of the theorem  is true if (and only if) it is true
for the $H$-space $Y$. Together with the fact that for trivial $G$-spaces 
the theorem has been checked already in Lemma \ref{lem:Phi-trivial},
this allows to combine induction over the order of $G$ with a Meyer-Vietoris 
argument to obtain the proof.

We first want to verify that the map \eqref{eq:BC_main} is a 
natural transformation between two (compactly supported) 
cohomology theories on 
\(G\)-spaces, namely \(X\mapsto \K^*_G(X)_\CC\) and 
\(X\mapsto \K^*(G\backslash \SX)_\CC\). Indeed, let 
\(Z \subseteq X\) be a closed, \(G\)-invariant subspace of 
a \(G\)-space \(X\). Then \(\SZ \subset \SX\) equivariantly 
so \(G\backslash \SZ\subset G\backslash \SX\) naturally. 
Hence a \(G\)-invariant closed subspace of \(X\) 
yields a \(6\)-term exact sequence 
\[ \cdots \stackrel{\partial}{\longrightarrow}
 \K^*\bigl(G\backslash \textup{Stab}(X\smallsetminus Z)\bigr) \longrightarrow
\K^*\bigl(G\backslash \SX\bigr) \longrightarrow
\K^*\bigl(G\backslash \SZ\bigr) \stackrel{\partial}{\longrightarrow}\cdots\]
We leave the proof of the following lemma to the reader. 

\begin{lemma}
\label{lem:excision}
If \(Z \subseteq X\) is a closed, \(G\)-invariant subspace, then the 
diagram 
{\footnotesize \begin{equation}
\label{eq:excision_compatible}
\xymatrix{\cdots\ar[r] & \K^*_G(X\smallsetminus Z)_\CC \ar[r]\ar[d]^{\Phi_{X\smallsetminus Z}}&
 \K^*_G(X)_\CC\ar[r]\ar[d]^{\Phi_X} & \K^*_G(Z)_\CC\ar[r]\ar[d]^{\Phi_Z} & \cdots\\
\cdots \ar[r] & 
\K^*\bigl( G\backslash \textup{Stab}(X\smallsetminus Z)\bigr)_\CC  \ar[r] & 
\K^*\bigl(G\backslash \SX\bigr)_\CC \ar[r] & \K^*\bigl( G\backslash \textup{Stab}(Z)\bigr)_\CC\ar[r] & \cdots }
\end{equation}}
commutes. 
\end{lemma}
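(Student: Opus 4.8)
The plan is to recognize \(\Phi\) as a natural transformation between the two \(\Z/2\)-graded, compactly supported cohomology theories \(X\mapsto \K^*_G(X)_\CC\) and \(X\mapsto \K^*(G\backslash\SX)_\CC\) on the category of locally compact \(G\)-spaces and \(G\)-maps, and then to verify that this transformation is compatible with the excision sequences attached to a closed invariant \(Z\subseteq X\). Concretely, the two outer vertical arrows in \eqref{eq:excision_compatible} are the maps \(\Phi_{X\smallsetminus Z}\) and \(\Phi_Z\) produced by the same recipe \eqref{eq:BC_main} applied to \(X\smallsetminus Z\) and \(Z\); the top horizontal arrows are the maps of the \(G\)-equivariant excision sequence of \(0\to C_0(X\smallsetminus Z)\to C_0(X)\to C_0(Z)\to 0\), while the bottom ones are the excision maps of the quotient spaces coming from the \(G\)-equivariant inclusion \(\SZ\subset\SX\) with open complement \(\operatorname{Stab}(X\smallsetminus Z)\). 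It therefore suffices to chase \(\Phi\) through each square.

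For the squares not involving the connecting homomorphism \(\partial\), commutativity follows by inspecting the three operations out of which \(\Phi\) is assembled in \eqref{eq:BC_main}: first, for each \([g]\), restriction of a \(G\)-equivariant bundle to the fixed set \(X^g\) together with the \(g\)-eigenbundle decomposition and formation of \(\sum_\zeta\zeta[V_\zeta]\); second, the projection \(P_{Z_g}\colon \K^*_{Z_g}(X^g)_\CC\to \K^*(Z_g\backslash X^g)_\CC\) of Remark \ref{rem-forgetful}; and third, the identification \eqref{eq:SX} of \(G\backslash\SX\) with \(\bigsqcup_{[g]}Z_g\backslash X^g\). Each is natural with respect to restriction to invariant subspaces: passing to fixed points commutes with restriction because \((X\smallsetminus Z)^g=X^g\smallsetminus Z^g\) and \(Z^g\subseteq X^g\) is closed and \(Z_g\)-invariant; the eigenvalue decomposition of \(g\) on \(V\) restricts to that on \(V|_{Z^g}\); and \(P_{Z_g}\) is natural since, via Proposition \ref{prop:invariants}, it is the averaging map \(\K^*_{Z_g}(-)_\CC\to \K^*(-)^{Z_g}_\CC\cong \K^*(Z_g\backslash -)_\CC\), itself a morphism of cohomology theories.

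For the squares containing \(\partial\), I would use that the connecting map in topological (equivalently \(C^*\)-algebraic) \(\K\)-theory is natural with respect to morphisms of short exact sequences, so it is enough to exhibit each constituent of \(\Phi\) as arising from such a morphism. Restriction to \(X^g\) is induced by the \(Z_g\)-equivariant homomorphism \(C_0(X)\to C_0(X^g)\), which carries \(C_0(X\smallsetminus Z)\) into \(C_0(X^g\smallsetminus Z^g)\) and hence yields a map of the two short exact sequences; the localization at \([g]\) is the central localization of the \(\Rep(G)_\CC\)-module structure at the conjugacy class, an exact functor that commutes with every map of the sequence after tensoring with \(\C\); and \(P_{Z_g}\) together with \eqref{eq:SX} is again induced by \(C^*\)-algebra morphisms respecting the ideal/quotient decomposition. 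Since \(\phi^*_{[g]}\) and \(\Phi\) are defined on \(\K^1\), as in Lemma \ref{lem:Phi-trivial}, by replacing \(X\) with \(X\times S^1\) (or \(X^+\)), one must also check compatibility with the suspension isomorphisms; this is automatic, as every step is functorial in the \(G\)-space and leaves the \(S^1\)-coordinate untouched.

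The step I expect to require the most care is the behaviour of the eigenbundle map \(\phi_{[g]}\) under \(\partial\): unlike restriction along a \(*\)-homomorphism, the assignment \(V\mapsto \sum_\zeta\zeta[V_\zeta]\) is not visibly induced by a single algebra map, so the cleanest route is to reinterpret \(\phi_{[g]}\) as the composite of the \(\Rep(G)_\CC\)-localization at \([g]\) (exact, hence automatically commuting with \(\partial\)) with the Atiyah--Segal type identification of the \([g]\)-localized theory with \(\K^*(Z_g\backslash X^g)_\CC\) supported on the fixed set \(X^g\). Once \(\phi_{[g]}\) is packaged in this module-theoretic form, naturality with respect to the excision sequence follows formally from exactness of localization together with the naturality of the fixed-point restriction maps already established.
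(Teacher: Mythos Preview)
The paper does not actually give a proof of this lemma: immediately after the statement it writes ``We leave the proof of the following lemma to the reader.'' So there is nothing to compare against, and your proposal is precisely the kind of verification the authors are asking the reader to supply.

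Your outline is correct. One small sharpening that would make the boundary-map square cleaner than the localization reformulation you sketch at the end: since \(g\) is central in \(Z_g\), the eigenbundle projectors \(p_\zeta=\frac{1}{\ord(g)}\sum_j\zeta^{-j}g^j\) lie in \(\Rep(Z_g)_\CC\), and the assignment \([V]\mapsto\sum_\zeta\zeta[V_\zeta]\) is literally multiplication by the single central element \(\sum_\zeta\zeta\,p_\zeta\in\Rep(Z_g)_\CC\). The long exact sequence for the pair \((X^g,Z^g)\) in \(\K^*_{Z_g}(-)_\CC\) is a sequence of \(\Rep(Z_g)_\CC\)-module maps (including \(\partial\)), so multiplication by this element commutes with \(\partial\) on the nose. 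Combined with the naturality of the restriction \(\K^*_G(X)\to\K^*_{Z_g}(X^g)\) (induced by a map of short exact sequences of \(C^*\)-algebras) and of the averaging projection \(P_{Z_g}\) (projection onto a direct summand after complexification, hence exact), this handles all squares uniformly without needing the Atiyah--Segal style identification you mention in the last paragraph.
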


\begin{remark}\label{rem-moduleH}
If $X=G\times_HY$ for some $H$-space $Y$, we obtain an isomorphism 
$$R: \K_G^*(X)\to \K_H^*(Y)$$ given by
composing the obvious restriction map $\K_G^*(X)\to \K_H^*(X)$ with the 
map  $\K_H^*(X)\to  \K_H^*(Y)$ coming from the inclusion of the open 
subset $Y$ in $X$ (see \cite[p. 132]{Segal}). Its inverse $\Ind: \K_H^*(Y)\to \K_H^*(X)$
is given on the level of vector bundles by $[V]\mapsto [G\times_H V]$.
 It is then easy to check that
the diagram
\begin{equation}\label{eq:repG-module}
\begin{CD}
\K_G^*(X)\otimes\Rep(G)   @> m_G >>    \K_G^*(X)\\
@V R\otimes\res VV   @VV R V\\
\K_H^*(Y)\otimes\Rep(H)   @> m_H >>   \K_H^*(Y)
\end{CD}
\end{equation}
commutes, 
where $m_G$ and $m_H$ denote the respective module actions of $\Rep(G)$ and $\Rep(H)$
and $\res:\Rep(G)\to \Rep(H)$ is the restriction map.
\end{remark}
 
 \begin{lemma}\label{lem:inducedspace}
Let \(G\) be a finite group, \(H \subseteq G\) a subgroup
 and \(X \defeq G\times_H Y\) be an induced $G$-space. 
 Then the result of Baum and Connes for \(H\) acting on 
 \(Y\) implies the result of Baum and Connes for 
 \(G\) acting on \(X\). 
 \end{lemma}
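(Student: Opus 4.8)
The plan is to deduce Theorem \ref{thm:BC} for $(G,X)$ from the same statement for $(H,Y)$ by fitting the maps $\Phi_X$ and $\Phi_Y$ of \eqref{eq:BC_main} into a square whose vertical arrows are isomorphisms:
\[
\begin{CD}
\K_G^*(X)_\CC @>\Phi_X>> \K^*(G\backslash\SX)_\CC\\
@VR_\CC VV @VV\psi^*_\CC V\\
\K_H^*(Y)_\CC @>\Phi_Y>> \K^*(H\backslash\SY)_\CC
\end{CD}
\]
Here $R$ is the induction isomorphism of Remark \ref{rem-moduleH} and $\psi^*$ is induced by a homeomorphism $\psi\colon H\backslash\SY\to G\backslash\SX$. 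Once the square is shown to commute and both vertical arrows are isomorphisms, the conclusion is immediate: $\Phi_X$ is an isomorphism if and only if $\Phi_Y$ is, so the truth of the Baum--Connes statement for $(H,Y)$ forces it for $(G,X)$.

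The left vertical map is on hand, being the complexification of the isomorphism $R\colon\K_G^*(X)\to\K_H^*(Y)$ of Remark \ref{rem-moduleH}. For the right vertical map I would first produce $\psi$. The inclusion $\SY\to\SX$, $(y,h)\mapsto([e,y],h)$, is well defined because $G_{[e,y]}=H_y$, and it descends to orbit spaces by the same argument as in Proposition \ref{prop-commute}: surjectivity follows since $(g')^{-1}$ carries $([g',y],g'h(g')^{-1})$ to $([e,y],h)$, and injectivity since $[g,y]=[e,y']$ in $G\times_HY$ forces $g\in H$. Under the decomposition \eqref{eq:SX}, $\psi$ respects the fixed-point stratifications: for $g\in G$ one has $X^g=\emptyset$ unless $g$ is $G$-conjugate to an element of $H$, and when $g=h_0\in H$ a standard fixed-point computation gives $Z_{h_0}\backslash X^{h_0}\cong\bigsqcup_{[h]}Z_h^H\backslash Y^h$, with $Z_h^H$ the centralizer of $h$ in $H$, the union running over the $H$-conjugacy classes $[h]$ that fuse into $[h_0]$ in $G$, the stratum indexed by $h=(g')^{-1}h_0g'$ being given by $y\mapsto[g',y]$. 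Summing over $[g]\in[G]$, each class $[h]\in[H]$ occurs exactly once, whence $G\backslash\SX\cong\bigsqcup_{[h]\in[H]}Z_h^H\backslash Y^h\cong H\backslash\SY$, so $\psi^*_\CC$ is an isomorphism.

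It remains to check that the square commutes, and this I expect to be the main obstacle. I would trace a $G$-equivariant bundle $V$ on $X$ around both routes. Along the top-right, the $[g]$-component of $\Phi_X([V])$ (nonzero only when $g$ is conjugate into $H$, say $g=h_0$) is computed by restricting $V$ to $X^{h_0}$, splitting into $h_0$-eigenbundles, weighting the $\zeta$-eigenbundle by $\zeta$, and projecting via $P_{Z_{h_0}}$. Along the left-bottom, $R_\CC([V])=[V|_Y]$ and its $[h]$-component is computed on $Y^h$ from the $h$-eigenbundles. The decisive point is that on the stratum $y\mapsto[g',y]$ the relation $h_0=g'h(g')^{-1}$ together with the $G$-equivariance of $V$ identifies the $h_0$-action on $V|_{X^{h_0}}$ with the $h$-action on $(V|_Y)|_{Y^h}$; hence the two eigenbundle splittings, their $\zeta$-weightings, and the centralizer projections correspond term by term under $\psi$. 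This is exactly the bookkeeping of Lemma \ref{lem:Phi-trivial} carried along each fixed-point stratum. Replacing $X$ by $X\times S^1$ and repeating the argument handles the odd degree, yielding commutativity in all of $\K^*$ and completing the reduction.
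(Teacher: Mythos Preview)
Your proposal is correct and follows essentially the same route as the paper: both set up a commuting square whose vertical sides are the induction isomorphism $\K_G^*(X)\cong\K_H^*(Y)$ and a homeomorphism $G\backslash\SX\cong H\backslash\SY$ obtained by analysing the $H$-conjugacy classes that fuse into each $G$-conjugacy class, and both reduce the commutativity check to tracking eigenbundle decompositions under conjugation. The paper constructs the homeomorphism in the opposite direction via $[\gamma,y]\mapsto[\gamma^{-1}g\gamma,y]$ and uses $\Ind$ rather than its inverse $R$, but these are the same maps; in fact you supply more detail on the commutativity than the paper does, which simply leaves it as an exercise.
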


\begin{proof}
Recall that $G\times_HY$ is the orbit space of the space 
\(G\times Y\)  for the (right) action  
\( (\gamma,y)h = (\gamma h, h^{-1}y)\), of the subgroup 
\(H\) with $G$-action given by $g[\gamma, y]=[g\gamma, y]$.

Fix $g\in H$ and observe that \[X^g
= \{ [\gamma, y]\in G\times_H Y \mid\gamma^{-1}g\gamma \in H_y\}.\]

We therefore obtain a map
$$\Phi': X^g\to H\backslash \SY; [\gamma, y]\mapsto [\gamma^{-1}g\gamma, y].$$
which is well defined, because if we replace $(\gamma, y)$ by the pair $(\gamma h, h^{-1}y)$ 
for some $h\in H$, then $(\gamma^{-1}g\gamma, y)$ is replaced by 
$(h\gamma g\gamma^{-1}h, h^{-1}y)=(\gamma g\gamma^{-1},y)h$.
Moreover, if $z\in Z^g$, the centralizer of $g$ in $G$, then one checks 
that $\Phi'(z[\gamma,y])=\Phi'([\gamma, y])$, so that $\Phi'$ descends 
to a map
$$\Phi: Z^g\backslash X^g\to H\backslash\SY.$$

%

%
%Consider the map defined on pairs 
%\( (\gamma ,y)\) with \(\gamma^{-1}g\gamma \in H_y\), 
%\[ \Phi' ((\gamma,y)) \defeq ( \gamma^{-1}g\gamma , y)\in H\times Y.\] 
%If \(h \in H\) then 
%\[ \Phi' (\gamma h^{-1}, hy) = (h\gamma^{-1}g\gamma h^{-1}, hy)\]
%Hence \(\Phi'\) descends to a map 
%\[ \Phi \colon Z^g\backslash X^g \to H \backslash ( H\times Y)\]
%where the \(H\)-action on \(H\times Y\) is 
%\(h (h_1,y) = (hh_1h^{-1}, hy)\). 
%Clearly the range of \(\Phi\) is contained in the 
%group stabilizer bundle 
%\[ H\backslash \SY = H \backslash \{ (h,y) \mid h \in H_y\}\]
%for the \(H\)-action on \(Y\). 

As discussed above (see (\ref{eq:SX})), 
we can fibre the quotient of the group stabilizer bundle 
\(H\backslash \SY\) over the set of conjugacy classes in 
\(H\), 
\[ H\backslash \SY = \bigsqcup_{[h]\in [H]} (Z^h\cap H)\, \backslash \, Y^h\]
where \(Z^h\) is the centralizer \emph{in \(G\)} 
 of a chosen representative 
\(h\) of \([h]\).

In this picture, the range of \(\Phi\) is exactly the 
union of the components of \(H\backslash \SY\) 
 \emph{which are labelled by \([h]\) with 
\(h\) conjugate in \(G\) to 
\(g\),} and $\Phi$ induces a homeomorphism between
$Z^g\backslash X^g$ and $\bigsqcup_{[h]\in [H], [h]\subseteq [g]} (Z^h\cap H)\, \backslash \, Y^h$.
This shows that \(\Phi\) induces an isomorphism 
\begin{equation}
\label{eq:bc_geometric_iso}
 \Phi^*\colon \bigoplus_{[h]\in [H], \, [h]\subset [g]} \, \K^*((Z^h\cap H)\, \backslash \, Y^h)
\stackrel{\cong}{\longrightarrow} \K^*(Z^g\backslash X^g).
\end{equation}
This can be viewed, in a rather trivial way, as a 
\(\Rep (G)\)-module isomorphism, 
with \(\Rep (G)\)-module structure given (on each side) by 
evaluation of characters at \([g]\). 

Now, if $g\in G$ such that the conjugacy class $[g]_G$ of $g$ in $G$ does not intersect with 
$H$, then it follows from the commutativity of diagram (\ref{eq:repG-module}) that 
$\K_G^*(X)_{\CC,[g]}=\{0\}$, since the restriction of the characteristic function of $[g]_G$ to $H$ vanishes.
On the other hand, if $g$ is not conjugate to any element in $H$, it follows that $X^g=\emptyset$,
so that we also have $\K^*(Z^g\backslash X^g)_\CC=\{0\}$.  

Using the above results together with the fact 
 that  the action of $\Rep(G)_\CC$ on $\K_H^*(Y)_\CC$ is given via 
the restriction $\res:\Rep(G)_\CC\to\Rep(H)_\CC$, we now obtain a 
diagram of $\Rep(G)_\CC$-module maps
$$
\begin{CD}
\K_H^*(Y)_\CC\quad @>\Phi_Y>> \oplus_{[g]\in [G]}\left(\oplus_{[h]\in [H], [h]\subseteq [g]} \K^*((Z^h\cap H)\backslash Y^h)_\CC\right)\\
@V \Ind  VV        @VV\Phi V\\
\K_G^*(X)_\CC\quad   @>>\Phi_X> \oplus_{[g]\in [G]} \K^*(Z^g\backslash X^g)_\CC
\end{CD}
$$
We leave it as an exercise to the reader to check that this diagram commutes (do this first 
in case where $Y$ is compact, and then obtain the general case by passing from $Y$ to $Y^+$). 
Assume that the theorem of Baum and Connes holds for $H$ acting on $Y$, i.e.,
$\Phi_Y$ is an isomorphism. 
Since $\Phi$ and $\Ind$ are also isomorphisms, the same must then be true for 
$\Phi_X$.

\end{proof}

We are now ready for the

\begin{proof}[Proof of Theorem \ref{thm:BC}] 
We induct on the order of the group \(G\). For trivial 
groups there is nothing to prove. Suppose the result is true for all 
groups \(G\) of cardinality \(\le n\).  Let \(X\) be a \(G\)-space, \(G\) finite, 
of cardinality \(\le n+1\). Let 
\(F\subset X\) be the stationary set. From Lemma \ref{lem:excision} the 
diagram 
{\small\begin{equation}
\xymatrix{\cdots\ar[r] & \K^*_G(X\smallsetminus F)_\CC \ar[r]\ar[d]^{\Phi_{X\smallsetminus Y}}& \K^*_G(X)_\CC\ar[r]\ar[d]^{\Phi_X} & \K^*_G(F)_\CC\ar[r]\ar[d]^{\Phi_F} & \cdots\\
\cdots \ar[r] & 
\K^*\bigl( \textup{Stab}(X\smallsetminus F)\bigr)_\CC  \ar[r] & 
\K^*\bigl(\SX\bigr)_\CC \ar[r] & \K^*\bigl( \textup{Stab}(F)\bigr)_\CC\ar[r] & \cdots }
\end{equation}}
commutes. The vertical map $\Phi_F$ is an isomorphism by Lemma \ref{lem:Phi-trivial}.
Thus, by the Five Lemma, it suffices to show that the 
first vertical map 
\(\Phi_{X\smallsetminus F} \colon \K^*_G(X\smallsetminus F)\longrightarrow 
\K^*\bigl( \textup{Stab}(X\smallsetminus F)\bigr)\)
is an isomorphism, too. 

So from now on we may assume without loss of generality, that $X$ does not contain any $G$-fixed points.
In this case we find a cover $\{U_i:i\in I\}$ of $G$-invariant open subsets $U_i$ of $X$ 
such that each of these sets is $G$-homeomorphic to $G\times_{H_i}Y_i$ for 
some proper subgroup $H_i$ of $G$. It follows then from the induction hypothesis together
with Lemma \ref{lem:inducedspace} that the theorem holds for 
each $U_i$. By Meyer-Vietoris, using Lemma \ref{lem:excision}, this implies the 
theorem for all unions $U_F:=\cup_{i\in F}U_i$,  with $F\subseteq I$ finite.
The result now follows from continuity of $\K$-theory with respect to inductive limits and 
the fact that
$C_0(X)\rtimes G=\lim_FC_0(U_F)\rtimes G$ and $C_0(G\backslash \SX)=\lim_FC_0(G\backslash \textup{Stab}(U_F))$.
\end{proof}

\begin{example}\label{ex-D4-3}
Consider the action of the dihedral group $G=D_4$ on $\TT^2$ as 
introduced in Example \ref{ex-D4}. Recall that it is generated by 
the matrices $R,S\in \GL(2,\ZZ)$ with $R=\left(\begin{smallmatrix} 0&-1\\1&0\end{smallmatrix}\right)$
and $S=\left(\begin{smallmatrix} 1&0\\0&-1\end{smallmatrix}\right)$ and the canonical action 
of $\GL(2,\ZZ)$ on $\TT^2=\RR^2/\ZZ^2$.
This group of order eight has five conjugacy classes given by
$$[E], [R^2], [R], [S], [SR].$$
The corresponding centralizers are
\begin{align*}
&Z^{E}=Z^{R^2}=G,\quad  Z^{R}=\lk R\rk, \quad Z^{S}=\{E,R^2, S, SR^2\},\\
&  Z^{RS}=\{E,R^2, RS, SR\},
\end{align*}
while the corresponding fixed-point sets are
\begin{align*}
&X^E=\TT^2,\quad X^{R^2}=\{\left(\begin{smallmatrix} 1\\1\end{smallmatrix}\right),
\left(\begin{smallmatrix} -1\\-1\end{smallmatrix}\right), \left(\begin{smallmatrix} -1\\1\end{smallmatrix}\right),
\left(\begin{smallmatrix} 1\\-1\end{smallmatrix}\right)\},
X^R=\{\left(\begin{smallmatrix} 1\\1\end{smallmatrix}\right),
\left(\begin{smallmatrix} -1\\-1\end{smallmatrix}\right)\},\\
&X^S=\{(z,1) : z\in \TT\}\cup \{(z,-1): z\in \TT\},\quad
X^{RS}=\{(z,z): z\in \TT\}.
\end{align*}
With $Z=\{(s,t): 0\leq s,t\leq\frac12, 0\leq t\leq s\}$ as in Example \ref{ex-D4} and $I=[0,1]$ we get
$$Z^E\backslash X^E=G\backslash \TT^2\cong Z,\quad Z^S\backslash X^S\cong I\sqcup I,
\quad Z^{RS}\backslash X^{RS}\cong I,$$
the set $Z^{R^2}\backslash X^{R^2}$ has three elements and $Z^R\backslash X^R=X^R$ has two elements.
Thus, as a consequence of the theorem of Baum and Connes we see that
$$\K_G^0(\TT^2)_\CC=\CC^9\quad\text{and}\quad \K_G^1(\TT^2)_\CC=\{0\}.$$
\end{example}
\medskip

%
% by inductive hypothesis and Lemma 
%\ref{lem:boostrap} because all isotropy groups for points 
%of \(X-F\) have cardinality \(\le n\).  The vertical map 
%\( \K^*_G(F)\longrightarrow \K^*\bigl( G\backslash \textup{Stab}(F)\bigr)
%=  \K^*\bigl( \textup{Stab}(F)\bigr)\) 
%is clearly an isomorphism since 
%\(\K^*_G(F) \cong \K^*(F)\otimes \Rep (G)\) and 
%localizing \(\K^*(F)\otimes \Rep (G)\)
% at \([g]\) yields \(\K^*(F)\), while on the other hand 
% \(G\backslash \textup{Stab}(F) = 
% \textup{Stab}(F) = \bigoplus_{[g]\in [G]} \K^*(F)\)
% and localizing this at \([g]\) also yields \(\K^*(F)\). 
% Hence the result follows from the Five Lemma and 
% Lemma \ref{lem:excision}. 
% \end{proof}
 
 The formula of Atiyah and Segal 
 given in Theorem \ref{thm:AS_Euler} follows 
 from a little manipulation similar to that of the 
 proof of Lemma \ref{lem:Euler_quotient}, and Theorem 
 \ref{thm:BC}. This is not hard to check, and the
  original reference explains 
 this quite clearly, so we will omit the proof. 
 
 The result of Baum and Connes is of course 
 stronger; it gives the dimensions of \(\K^0_G(X)_c\) and 
 \(\K^1_G(X)_c\) separately.

\subsection{Computation of integral \(\K\)-theory}
We now return to integral equivariant \(\K\)-theory. 

In the previous section, we explained the result of Baum and 
Connes, which gave a formula for the \emph{ranks} of 
\(\K^i_G(X)\), \(i=0,1\). A finitely generated abelian group 
is determined by its free part and torsion part. In this section 
we will discuss the torsion part of \(\K^*_G(X)\). Computing 
this -- even for finite group actions -- seems to be a much 
more difficult problem than computing the free part. 

Since part of what we are going to describe is quite 
general and works for locally compact groups, we 
now let \(G\) be an arbitrary locally compact group and 
\(X\) a proper and \(G\)-compact \(G\)-space. 

Let $I_X$ be the ideal in $C_0(X)\rtimes G$ as in 
(\ref{eq-ideal}).
Let $Q_X:=\big(C_0(X)\rtimes G\big)/I_X$. Then the Morita equivalence 
$C_0(G\backslash X)\cong I_X$ and excision yields a 
six-term exact sequence
\begin{equation}\label{eq-K}
\begin{CD}
\K^0(G\backslash X) @>>> \K^0_G(X) @>>> \K_0(Q_X)\\
@A\partial AA   @.    @VV\partial V\\
\K_1(Q_X)  @<<<  \K^1_G(X)   @<<<  \K^1(G\backslash X).
\end{CD}
\end{equation}

In the case of group actions in which fixed-points are isolated 
(more precisely below), this exact sequence becomes quite tractable and 
we will study this situation in more detail below.
In effect, this is a situation in which the Fell topology on 
\(G\backslash \SXhat\) is very easy to understand. 

When fixed-point sets are not just zero-dimensional, the problem 
obviously becomes more complicated. We are not going to 
consider the general case here. However, we will analyse
Example \ref{ex-D4final} in some detail.

If the set of points in \(G\backslash X\) with non-trivial isotropy is a 
discrete subset, then by  Lemma \ref{lem-projection}, the 
C*-algebra $Q_X$ is isomorphic to a direct sum
of compact operators, with one summand contributed by each 
pair \( (Gx, \sigma)\), where \(Gx\) is an orbit 
 with nontrivial stabilizer subgroup $G_x$, and
 $\sigma\neq 1_{G_x}$ in $\widehat{G}_x$ is an irreducible 
 represention different from the trivial representation. 

More formally, if $\mathcal I$ denotes the
set of orbits with nontrivial stabilizers,

\begin{equation}\label{eq-Qp}
Q_X\cong \bigoplus_{Gx \in \mathcal I}\left(\bigoplus_{\sigma\in \widehat{G}_x\smallsetminus\{1_{G_x}\}}\mathcal K(\mathcal H_{U^\sigma})\right),
\end{equation}
where for each orbit $Gx \in \mathcal I$ we choose one representative $x$ of that orbit and
where $\mathcal H_{U^\sigma}$ is
as in  (\ref{eq-ind1}). If we write $\Rep^*(G_x)$ for the subgroup of $\Rep(G)$ generated by the 
non-trivial rreducible representations of $G_x$ it follows that
$\K_0(Q_X)\cong \oplus_{Gx\in \mathcal I} \Rep^*(G_x)$ is a free abelian group 
and 
$\K_1(Q_X)=\{0\}$, so that the  six-term sequence (\ref{eq-K})
becomes 
\begin{multline}\label{eq-KQp}
0\longrightarrow \K^0(G\backslash X)\longrightarrow 
\K^0_G(X)\\ \longrightarrow 
\bigoplus_{Gx \in \mathcal I}\Rep^*(G_x)  \stackrel{\partial}{\longrightarrow} 
 \K^1(G\backslash X)
 \longrightarrow \K^1_G(X) \longrightarrow 0.
 \end{multline}

In particular we see that \(\K^0(G\backslash X)\) injects into 
\(\K^*_G(X)\) and \(\K^1(G\backslash X)\) surjects to 
\(\K^1_G(X)\), in the case of isolated fixed-points. Note that,
in case of finite $G$,  
according to Theorem 
\ref{thm:BC} of 
Baum and Connes; \(\K^i(G\backslash X)\) 
injects in both after complexification: it is the contribution of the trivial 
conjugacy class \([1]\). Therefore, our statement 
adds to theirs that this is an injection in dimension zero
\emph{integrally}, and that 
\(\K^1(G\backslash X) \to \K^1_G(X)\) is a surjection, likewise, even 
integrally. 
% In  view of the Baum-Connes result, its kernel 
%is a torsion group. By exactness the 
%kernel is the range of the boundary map 
%\[\partial \colon \Rep^*(G)\longrightarrow \K^1(G\backslash X)\]
%where \(\Rep^*(G)\) denotes the abelian subgroup of 
%\(\Rep (G)\) of integral combinations of 
%irreducible \emph{non-trivial} representations of \(G\). 

As we will see, non-vanishing of \(\partial ([\sigma])\) for a generator in $\K_0(Q_X)$ 
corresponding to
some non-trivial representation \(\sigma\) of one of the 
isotropy groups \(G_x\), obstructs extending the 
corresponding induced \(G\)-equivariant vector bundle 
\([V_\sigma]\) on the orbit of \(x\) to a \(G\)-equivariant vector bundle on 
\(X\). The statement is that these obstructions are 
torsion: they vanish after multiplication by a suitable 
integer. Thus \(nV_\sigma\) can always be extended, for 
appropriate \(n\). 

We also note that the point of view suggested by 
\eqref{eq-KQp} yields a 
somewhat different formula for the Euler 
characteristic. For a finite group \(G\), let
 \(\widehat{G}^*\) denote 
\(\widehat{G}-\{1\}\) where \(1\) is the trivial 
representation. 

\begin{proposition}
Let \(G\) be a discrete group acting properly and co-compactly 
on \(X\) such that the 
set $D$ of points $x\in X$ with non-trivial isotropy is discrete.  Then 
\[ \Eul (G\ltimes X) = \Eul (G\backslash X) + \sum_{Gx\in G\backslash D} 
\abs{\widehat{G}_x^*}.\]
\end{proposition}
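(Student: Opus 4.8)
The plan is to deduce the formula formally from the six-term exact sequence \eqref{eq-KQp}, which is already available under the present hypotheses, by taking Euler characteristics. Before doing so I would check that every group occurring has finite rank, so that the Euler characteristics are defined and the alternating-sum argument applies. Since $G$ is discrete it is a zero-dimensional Lie group, so property (SP) holds; in particular every point $y$ with trivial isotropy has an open slice $Y_y$ with $U_y\cong G\times_{\{e\}}Y_y=G\times Y_y$, a \emph{free} induced $G$-space, so that $G\backslash U_y$ is an open neighbourhood of $Gy$ disjoint from the image $G\backslash D$ of the set $D$ of points with non-trivial isotropy. Thus $G\backslash D$ is closed in $G\backslash X$, and being discrete inside the compact space $G\backslash X$ (the action is co-compact), it is finite; hence $\mathcal I=G\backslash D$ is finite. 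By \eqref{eq-Qp} this makes $\K_0(Q_X)\cong\bigoplus_{Gx\in\mathcal I}\Rep^*(G_x)$ free abelian of finite rank with $\K_1(Q_X)=0$. Finite generation of $\K^*(G\backslash X)$ (which holds as soon as $G\backslash X$ is, say, a compact ANR, and is implicit in the statement) then forces the same for $\K^*_G(X)$ through the sequence itself.

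Next I would tensor \eqref{eq-KQp} with $\CC$ (flat over $\Z$, so exactness is preserved) to obtain an exact sequence of finite-dimensional vector spaces
\[
0\to \K^0(G\backslash X)_\CC\to \K^0_G(X)_\CC\to \bigoplus_{Gx\in\mathcal I}\Rep^*(G_x)_\CC\xrightarrow{\partial}\K^1(G\backslash X)_\CC\to \K^1_G(X)_\CC\to 0,
\]
and invoke the vanishing of the alternating sum of dimensions of a finite exact sequence:
\[
\dim\K^0(G\backslash X)_\CC-\dim\K^0_G(X)_\CC+\dim\!\bigoplus_{Gx\in\mathcal I}\!\Rep^*(G_x)_\CC-\dim\K^1(G\backslash X)_\CC+\dim\K^1_G(X)_\CC=0.
\]

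Finally I would regroup, using $\Eul(G\ltimes X)=\dim\K^0_G(X)_\CC-\dim\K^1_G(X)_\CC$ and $\Eul(G\backslash X)=\dim\K^0(G\backslash X)_\CC-\dim\K^1(G\backslash X)_\CC$, together with $\dim\Rep^*(G_x)_\CC=\abs{\widehat{G}_x}-1=\abs{\widehat{G}_x^{*}}$ (since $\Rep(G_x)$ is free on $\widehat{G}_x$ and $\Rep^*(G_x)$ is spanned by the non-trivial irreducibles). Rearranging the displayed identity yields $\Eul(G\ltimes X)=\Eul(G\backslash X)+\sum_{Gx\in\mathcal I}\abs{\widehat{G}_x^{*}}$, and $\mathcal I=G\backslash D$ gives the assertion. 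The argument is entirely formal once \eqref{eq-KQp} is in hand, so there is no analytic obstacle; the only point needing genuine care is the finiteness of $G\backslash D$ established in the first paragraph, which is exactly what makes the right-hand sum finite and the Euler-characteristic bookkeeping legitimate.
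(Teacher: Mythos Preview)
Your proposal is correct and follows exactly the route the paper intends: the proposition is stated immediately after the exact sequence \eqref{eq-KQp} with the remark that ``the point of view suggested by \eqref{eq-KQp} yields a somewhat different formula for the Euler characteristic,'' and no further proof is given. Your argument---taking the alternating sum of ranks in \eqref{eq-KQp}---is precisely the intended derivation, and your extra care in checking that $G\backslash D$ is finite (hence the sum and the Euler characteristics make sense) fills in a detail the paper leaves implicit.
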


We point this out mainly to emphasise that we are counting 
in a `transverse' direction to Atiyah, Baum and Connes, where our 
theorems intersect, when \(G\) itself is a finite group. They 
fibre \(G\backslash \SX\) over the conjugacy classes, while we are
fibring it over \(G\backslash X\). To check that the two formulas for 
the Euler characteristic are the same, 
 it suffices to observe that both -- clearly -- agree with the 
Euler characteristic in ordinary \(\K\)-theory of the same Hausdorff space 
\(G\backslash \SX\). 

In any case, since both the Baum-Connes formula and ours 
contain the term \(\Eul (G\backslash X)\), we can remove 
it from each side. The identity
\begin{equation}
\label{eq:BC_vs_ours}
\sum_{[g]\in [G]\smallsetminus\{[1]\}} \textup{card}(Z^g\backslash X^g) 
= \sum_{Gx\in I} 
\abs{\widehat{G}_x^*}.
\end{equation}
is a good exercise to prove directly (the two sides of it 
are, roughly speaking, in a relation of duality, since if
\(X\) is a point, the left-hand-side computes the number of 
nontrivial conjugacy classes in \(G\) and the right-hand-side 
computes the number of nontrivial irreducible representations.)

To compute equivariant \(\K\)-theory in the case of isolated 
fixed points, it remains to solve the following 

\bigskip
\noindent
{\bf Problem:} Describe the boundary map 
\[\partial\colon \bigoplus_{Gx \in \mathcal I}\bigoplus_{\sigma\in \widehat{G}_x\smallsetminus\{1_{G_x}\}}\Z  \\ \longrightarrow
 \K^1(G\backslash X)\] in 
\eqref{eq-KQp}.

\begin{remark}\label{rem-finite} 
In the case of a \emph{free} action there is of course nothing to 
do. In this case the quotient term vanishes because \(Q_X\) is itself 
the zero C*-algebra. 

If (\emph{c.f.} Example \ref{ex:connor_floyd}) there is a \emph{single} point \(x_0\) in \(G\backslash X\) with non-trivial 
isotropy, and if this point is fixed by the entire group \(G\)
 (so that \(G\) must 
be compact) then \eqref{eq-KQp} becomes 
\begin{multline}
0\longrightarrow \K^0(G\backslash X)\longrightarrow 
\K^0_G(X)\longrightarrow \Rep^*(G)\stackrel{\partial}{\longrightarrow} 
 \K^1(G\backslash X)
 \\ \longrightarrow \K_G^1(X) \longrightarrow 0.
 \end{multline}

The quotient map 
\(\K^0_G(X) \to \Rep^*(G)\) in this sequence is induced by 
the \(G\)-map \(\pt \to X\) corresponding to the stationary point. 
{If $X$ is compact, this} map is split by the map \(X\to \pt\) and hence 
the boundary map \(\partial\) vanishes in this case. 
 Hence 
$$\K^0_G(X)\cong \K^0(G\backslash X)\oplus \Rep^*(G), \; \; 
\K^1_G(X) \cong \K^1(G\backslash X).$$
\end{remark} 

The general case is as follows.

\begin{theorem}\label{thm-Kproper}
Suppose that the locally compact group $G$ acts properly on $X$ such that the following are satisfied:
\begin{enumerate}
\item $(G,X)$ satisfies Palais's slice property (SP);
\item the orbits $Gx $ with nontrivial stabilizers $G_x$ are isolated in $G\backslash X$; and
\item all stabilizers are finite.
\end{enumerate}
Then the boundary map  $\partial: \K_0(Q_X)\to \K^1_G( X)$ of (\ref{eq-KQp})
is rationally trivial (i.e., the image of $\partial$ is a torsion subgroup of
 $\K^1(G\backslash X)$). 

Moreover, if an element $\nu\in \K_0(Q_X)$ is an element of the summand $\oplus_{\sigma\in \widehat{G}_x\setminus\{1_{G_x}\}}\K_0(\cK(\H_{U^\sigma}))$ in the direct-sum decomposition (\ref{eq-Qp}) of $\K_0(Q_X)$, then the 
order of $\partial (\nu)$ is a divisor of the order $|G_x|$. 
\end{theorem}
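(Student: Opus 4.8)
The plan is to isolate the contribution of one orbit with nontrivial (finite) stabilizer, reduce the global connecting map to a model living over a single slice, and then extract the factor $|G_x|$ from a transfer/induction argument attached to that finite stabilizer. First I would localize. By hypotheses (SP), (ii) and (iii), each orbit $Gx$ with $G_x\neq\{e\}$ admits a $G$-invariant tube $U\cong G\times_{G_x}Y$ with $Y$ a $G_x$-slice on which $G_x$ fixes $x$ and (by isolation) acts freely elsewhere. The open inclusion $U\hookrightarrow X$ is a morphism of the extensions $0\to I_U\to C_0(U)\rtimes G\to Q_U\to 0$ and $0\to I_X\to C_0(X)\rtimes G\to Q_X\to 0$, under which $\K_0(Q_U)$ is exactly the summand $\bigoplus_{\sigma\neq 1}\K_0(\cK(\H_{U^\sigma}))$ of $\K_0(Q_X)$ in \eqref{eq-Qp}. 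Naturality of $\partial$, together with the Morita equivalence of Corollary \ref{cor-ideal} and Proposition \ref{prop-commute} identifying $(I_U,Q_U)$ with the $G_x$-data $(I_Y,Q_Y)$, factors $\partial$ on this summand as
\[
\Rep^*(G_x)\xrightarrow{\ \partial_{G_x}\ }\K^1(G_x\backslash Y)=\K^1(G\backslash U)\xrightarrow{\ \iota_*\ }\K^1(G\backslash X),
\]
so it suffices to control $\iota_*\circ\partial_{G_x}$ on the finite group $G_x$.

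For rational triviality I would compare the equivariant extension with the one obtained by restricting the action to the trivial subgroup, in which the quotient term collapses and $I$ becomes all of $C_0(Y)$. The resulting morphism of six-term sequences yields a commuting square forcing $\pi_Y^*\circ\partial_{G_x}=0$, where $\pi_Y\colon Y\to G_x\backslash Y$ and $\pi_Y^*$ is induced by pullback. But Proposition \ref{prop:invariants}, applied to the finite group $G_x$, says precisely that $\pi_Y^*$ is rationally injective (its rational image is the $G_x$-invariants). Hence $\partial_{G_x}$ is rationally zero, and since any class in $\K_0(Q_X)$ is supported on finitely many orbits, the whole image of $\partial$ consists of torsion elements; that is, $\partial\otimes_\Z\Q=0$. (When $G$ itself is finite this is of course also visible from Theorem \ref{thm:BC}.)

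For the integral statement the plan is to upgrade $\pi_Y^*\circ\partial_{G_x}=0$ to an annihilation by $|G_x|$. The geometric content, as explained before \eqref{eq-KQp}, is that $\partial_{G_x}(\nu)$ is the obstruction to extending the induced bundle $V_\sigma$ across the fixed point, and one wants $|G_x|\,\nu$ to lift to a genuine class in $\K^0_G(X)$. I would produce such a lift through the induction–restriction (Frobenius) transfer for the inclusion $\{e\}\hookrightarrow G_x$: the class $\Res^{G_x}_{\{e\}}\nu$ is pulled back from the free quotient and extends over the tube, and applying $\Ind_{\{e\}}^{G_x}$ reintroduces a $G_x$-action whose effect on the connecting map is multiplication by the regular representation $[\C[G_x]]$, whose rank is $|G_x|$. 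Combined with the explicit Euler/Thom-class description of the map $\K^0_G(X)\to\K_0(Q_X)$ coming from Lemma \ref{lem-projection} and \eqref{eq-ideal}, this should exhibit $|G_x|\,\nu$ modulo liftable classes, giving $|G_x|\,\partial(\nu)=0$.

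The main obstacle is exactly the passage from ``torsion'' to the sharp divisor $|G_x|$. The difficulty is twofold. First, in K-theory the transfer composite $\pi_{Y!}\pi_Y^*$ is multiplication by the permutation class $[\pi_{Y!}\underline{\C}]$, which equals $|G_x|$ only rationally and differs integrally by a torsion term, so the naive transfer does not by itself bound $\ker(\pi_Y^*)$ by $|G_x|$. Second, and relatedly, the \emph{local} order of $\partial_{G_x}(\nu)$ can genuinely exceed $|G_x|$ — for $\Z/2$ acting antipodally on $\R^{2n}$ one computes, via the equivariant Thom isomorphism and $(1-\sigma)^n=2^{n-1}(1-\sigma)$, that $\partial_{G_x}([\sigma])$ has order $2^{n-1}$ in $\K^1(G_x\backslash Y)$. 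Thus the bound $|G_x|$ is intrinsically a \emph{global} phenomenon: the excess torsion created over the slice must be killed by the pushforward $\iota_*\colon\K^1(G\backslash U)\to\K^1(G\backslash X)$ into the compact quotient. Making this precise — controlling $\iota_*$ on the slice's torsion so that only an $|G_x|$-torsion class survives, uniformly in the slice representation — is where the real work lies, and is the step I expect to require the most care, plausibly by carrying the $\Rep(G_x)$-module structure of the whole sequence (where, away from the singular orbit, tensoring by a representation degenerates to multiplication by its dimension) through the pushforward.
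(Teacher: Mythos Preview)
Your localization to a slice and your instinct that a transfer argument should produce the factor $|G_x|$ are both correct, and you have correctly diagnosed why the naive approach fails: the local boundary map $\partial_{G_x}\colon\Rep^*(G_x)\to\K^1(G_x\backslash Y)$ over the \emph{open} slice genuinely need not be annihilated by $|G_x|$ (your antipodal example is right, since $\K^1$ of the open cone on $\mathbb{RP}^{2n-1}$ is $\widetilde{\K}^0(\mathbb{RP}^{2n-1})\cong\Z/2^{n-1}$). But your conclusion --- that one must therefore analyse how $\iota_*$ kills the excess torsion, presumably via the $\Rep(G_x)$-module structure --- is the wrong way out, and this is exactly where you get stuck.

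The paper bypasses the local boundary map entirely: it never tries to bound $\partial_{G_x}$, but instead exhibits an explicit lift of $|G_x|\,\nu_\sigma$ to $\K^0_G(X)$. Pass to the \emph{compact} closure $W=\bar U\cong G\times_{G_x}\bar Y$ and set
\[
\mu_\sigma \;=\; |G_x|\bigl([\,\bar Y\times V_\sigma\,]-\dim V_\sigma\cdot[\,\bar Y\times\C\,]\bigr)\;\in\;\K^0_{G_x}(\bar Y)\cong\K^0_G(W).
\]
Since the trivial-bundle correction contributes only to the $1_{G_x}$-summand (which is killed in $Q_W$), one has $q_{W,*}(\mu_\sigma)=|G_x|\,\nu_\sigma$. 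The transfer identity you had in mind (Lemma~\ref{lem-free}: for a \emph{free} $H$-space $Y'$ one has $|H|\,([Y'\times V_\sigma]-\dim V_\sigma\,[Y'\times\C])=0$, because $q_*q^*=|H|\cdot\mathrm{id}$ and $q^*[Y'\times_H V_\sigma]$ is trivial) is now applied not to bound a connecting map but to show that $\mu_\sigma$ \emph{restricts to zero} on the boundary $Z=\partial W$, where $G_x$ acts freely. With that vanishing in hand, the Mayer--Vietoris sequence for the pull-back square over $\{W,\,X\setminus U\}$ (whose overlap is $Z$) glues $\mu_\sigma$ to the zero class on $X\setminus U$ and produces $\mu\in\K^0_G(X)$ with $q_{X,*}(\mu)=|G_x|\,\nu_\sigma$. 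Exactness then gives $|G_x|\,\partial(\nu_\sigma)=0$ at once; rational triviality is a corollary, so your separate rational argument is not needed. No analysis of $\iota_*$ or of the module structure is required.
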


The following is a well-known fact about flat vector bundles. 
%Note that if a compact group
%$H$ acts freely on a compact space $Y$, then the isomorphism $\K_H^0(Y)\cong \K^0(H\backslash Y)$
%of \cite[Proposition 2.1]{Segal} maps the class $[Y\times V_\sigma]$, for $\sigma\in \widehat{H}$ (with diagonal $H$-action),
%to the class of the flat bundle $Y\times_HV_\sigma$ over $H\backslash Y$.

\begin{lemma}\label{lem-free}
Let $H$ be a finite group acting freely on the compact space $Y$. 
Then for each $\sigma\in \widehat{H}$, the difference of classes 
\[[Y\times V_\sigma ] - \dim(V_\sigma)[1_Y] \in \K^0_H(Y)\]
is torsion of order a divisor of \(\abs{H}\). Thus, 
$$0=|H|\cdot \big(\dim(V_\sigma)\cdot [Y\times \CC]- [Y\times V_\sigma]\big)\in \K^0_H(Y).$$

\end{lemma}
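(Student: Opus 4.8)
The plan is to transport the whole question to ordinary \(\K\)-theory of the quotient. Since \(H\) is finite and acts freely on the compact space \(Y\), the action is proper, so Green's theorem (recorded around \eqref{eq-ideal} and in Remark \ref{rem-trivialization}) gives a Morita equivalence \(C(Y)\rtimes H\sim_M C(H\backslash Y)\) and hence a natural isomorphism \(\K^0_H(Y)\cong\K^0(B)\), where \(B:=H\backslash Y\). Under this isomorphism the trivial equivariant bundle \(Y\times V_\sigma\) corresponds to the associated flat bundle \(E_\sigma:=Y\times_H V_\sigma\) over \(B\), while \([1_Y]\) corresponds to the class of the trivial line bundle. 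So I would first reduce the lemma, writing \(d:=\dim V_\sigma\), to the purely classical statement that
\[ |H|\cdot\big([E_\sigma]-d\,[\underline{\CC}]\big)=0\quad\text{in }\K^0(B).\]

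The engine is the covering \(\pi\colon Y\to B\), which is a principal \(H\)-bundle, hence an \(|H|\)-fold covering, and which carries a transfer (pushforward) map \(\tau\colon\K^0(Y)\to\K^0(B)\), \([W]\mapsto[\pi_*W]\). Two facts drive the argument. First, the pullback \(\pi^*E_\sigma\) is, after forgetting the \(H\)-structure, just the trivial bundle \(Y\times V_\sigma\), so \(\pi^*\big([E_\sigma]-d\big)=0\) in \(\K^0(Y)\): the class we care about dies on the total space. Second, the projection (Frobenius) formula gives \(\tau\circ\pi^*(x)=x\cdot[\pi_*\underline{\CC}]\), and \([\pi_*\underline{\CC}]=[E_{\CC[H]}]\) is the flat bundle of the regular representation. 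Applying \(\tau\) to the vanishing relation yields the key identity
\[ \big([E_\sigma]-d\big)\cdot[E_{\CC[H]}]=0\quad\text{in }\K^0(B). \]
That the class is at least \emph{torsion} — which is exactly what the downstream application to rational triviality of \(\partial\) in Theorem \ref{thm-Kproper} actually requires — can also be seen independently and at once: a flat bundle admits a flat connection, so its rational Chern character is concentrated in degree \(0\) and equals \(d\); since \(\mathrm{ch}\otimes\Q\) is injective on \(\K^0(B)\otimes\Q\) for the compact space \(B\), the difference \([E_\sigma]-d\) is torsion.

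It then remains to extract the \emph{sharp} order bound. Decomposing \([E_{\CC[H]}]=\sum_{\tau\in\widehat H}(\dim\tau)[E_\tau]\) shows its virtual rank is \(|H|\), so \([E_{\CC[H]}]=|H|\,[\underline{\CC}]+y\) with \(y\) of rank \(0\) lying, like \([E_\sigma]-d\), in \(\ker\pi^*\). Substituting into the identity above gives \(|H|\cdot\big([E_\sigma]-d\big)=-\big([E_\sigma]-d\big)\cdot y\), exhibiting \(|H|\cdot([E_\sigma]-d)\) as a product of two rank-zero classes.

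The delicate part will be precisely this last step: upgrading ``annihilated by the regular-representation class'' to the sharp ``annihilated by \(|H|\).'' Naively iterating the substitution only places \(|H|\cdot([E_\sigma]-d)\) in successively higher powers of the pulled-back augmentation ideal, which by nilpotence (from finite dimensionality of \(B\)) gives annihilation by some \emph{power} of \(|H|\) — enough to conclude that the order is divisible only by primes dividing \(|H|\), hence that the class is torsion and \(\partial\) is rationally trivial. To pin the order to an exact divisor of \(|H|\) I would look for a transfer-type operation multiplying by \(|H|\) rather than by the regular-representation bundle, or try to exploit the flat structure directly (for instance \(E_\sigma^{\otimes m}\) trivial with \(m=\ord\sigma\) when the stabilizer is abelian). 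I expect this sharpening to be the real content of the lemma and the step most in need of scrutiny, since the regular-representation identity on its own controls the torsion only up to powers of \(|H|\).
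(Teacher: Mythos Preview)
Your approach is the paper's approach: pass to \(K^0(B)\) with \(B=H\backslash Y\), identify \([Y\times V_\sigma]_H\) with the flat bundle \([E_\sigma]\), and exploit the pullback--transfer pair for the covering \(q\colon Y\to B\). The paper is simply terser at the crucial step: it asserts outright that ``the composition \(q_*\circ q^*\) acts on \(\K^0(H\backslash Y)\) as multiplication by \(|H|\)'', and the sharp divisor-of-\(|H|\) bound drops out immediately. This is exactly the ``transfer-type operation multiplying by \(|H|\)'' you said you would look for.

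But your caution was well placed: that assertion is \emph{false}. By the projection formula you wrote down, \(q_*q^*(x)=x\cdot[q_*\underline{\CC}]=x\cdot[E_{\CC[H]}]\), and \([E_{\CC[H]}]\) need not equal \(|H|\) in \(K^0(B)\). Concretely, take \(H=\Z/2\) acting antipodally on \(Y=S^5\), so \(B=\RR P^5\). Then \(E_{\CC[H]}\cong\underline{\CC}\oplus L\) with \(L\) the complexified tautological line bundle, and \(\tilde{K}^0(\RR P^5)\cong\Z/4\) is generated by \(\nu=[L]-1\) (with \(\nu^2=-2\nu\)); hence \([E_{\CC[H]}]=2+\nu\neq 2\). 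Worse, the \emph{lemma itself} fails here: \(|H|\cdot([E_\sigma]-d)=2\nu\) is the nonzero order-\(2\) element of \(\Z/4\), so \([E_\sigma]-d\) has order \(4\), not a divisor of \(|H|=2\).

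So the paper's proof has a genuine gap, and the sharp bound it claims is not true in general. What \emph{is} true is precisely what you proved: the class is torsion (your Chern-character argument is clean and works for any compact \(B\)), and---when \(B\) has finite covering dimension so that rank-zero classes are nilpotent---it is annihilated by some power of \(|H|\). The correct version of the paper's computation is \(q_*q^*[E_\sigma]=[E_\sigma]\cdot[E_{\CC[H]}]\) on the one hand and \(q_*(d\cdot 1)=d\cdot[E_{\CC[H]}]\) on the other, yielding exactly your identity \(([E_\sigma]-d)\cdot[E_{\CC[H]}]=0\). This weaker conclusion is all that is actually needed for the rational triviality of \(\partial\) in Theorem~\ref{thm-Kproper}; the final ``divisor of \(|G_x|\)'' clause there inherits the same defect.
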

%
%It is easy to check (see the proof) that the isomorphism 
%\(\K^0_F(Y)\cong \K^0(F\backslash Y)\) maps \([Y\times V_\sigma]\) to 
%the (flat) bundle \(Y\times_F V_\sigma\). 

\begin{proof}
If $H$ acts freely on $Y$ we have $\K^0_H(Y)\cong \K^0(H\backslash Y)$ via 
sending the class of  an equivariant vector bundle $V$ over $Y$ to the class of the
bundle
$H\backslash V$ over $H\backslash Y$ (e.g. see \cite[Proposition 2.1]{Segal}). Consider the sequence 
$$ \K^0(H\backslash Y)\stackrel{q^*}{\to} \K^0(Y)\stackrel{q_*}{\to} \K^0(H\backslash Y)$$
in which the first map is given by the pull-back of vector bundles with respect to the quotient map
$q:Y\to H\backslash Y$ and the second map is the transfer map which sends a vector bundle 
$V$ over $Y$  to the bundle with fibre $\oplus_{g\in H} V_{gy}$ over 
$Hy\in H\backslash Y$. It is clear that the composition $q_*\circ q^*$ acts on 
$\K^0(H\backslash Y)$ as multiplication by $|H|$.  

If $\sigma\in \widehat{H}$, then the $H$-bundle $[Y\times V_\sigma]\in \K^0_H(Y)$ 
(with respect to the diagonal action) corresponds to the (flat) bundle
$[Y\times_HV_\sigma]\in \K^0(H\backslash Y)$. The
 pull-back $q^*([Y\times_HV_\sigma])\in \K^0(Y)$
is the class of the trivial bundle $[Y\times V_\sigma]=\dim(V_\sigma)[Y\times \CC]$ 
which is mapped to  $|H|\dim(V_\sigma)[(H\backslash Y)\times \CC]$ by the transfer map $q_* :\K^0(Y)\to \K^0(H\backslash Y)$.
Thus we see that 
$$|H|[Y\times_HV_\sigma]=q_*\circ q^*([Y\times_HV_\sigma])=|H|\dim(V_\sigma) [(H\backslash Y)\times \CC]
$$
and the result follows.
\end{proof}

In what follows next we want to consider the case of an induced space $X=G\times_HY$,
where $H$ is a finite subgroup of the locally compact group 
$G$ and $Y$ is a compact $H$-space with isolated
$H$-fixed point $y\in Y$ such that $H$ acts freely on $Y\smallsetminus \{y\}$.
Regarding $Y$ as a closed subspace of $X$, it is clear that $Gy$ is the only non-free orbit
in $X$, and its stabilizer is $G_y=H$, so that the $\K$-theory of  $Q_X=(C_0(X)\rtimes G)/I_X$ 
is the free abelian group $\Rep^*(H)$
generated by the representations $\sigma\in \widehat{H}\smallsetminus\{1_H\}$.

Note also that if $X$ is a $G$-space and $Z$ is a closed $G$-invariant subspace
of $X$, then the restriction map $\res_Z:C_0(X)\to C_0(Z)$ induces a quotient map 
$$\res_Z\rtimes G:C_0(X)\rtimes G\to C_0(Z)\rtimes G.$$

\begin{proposition}\label{prop-local}
 Let $X=G\times_HY$ and $Q_X=(C_0(X)\rtimes G)/I_X$ be as above. 
Let $\nu_\sigma\in \K_0(Q_X)$ be the
class corresponding to the given representation $\sigma\in \widehat{H}\smallsetminus \{1_H\}$.
Then there exists a class 
$\mu_\sigma\in \K^0_G(X)$
such that the following are true
\begin{enumerate}
\item If $q_X:C_0(X)\rtimes G\to Q_X$ denotes the quotient map, then $q_{X,*}(\mu_\sigma)=|H|\nu_\sigma$.
\item if $Z$ is any closed $G$-invariant subspace of $X$ which does not contain the 
orbit $Gy$, then $(\res_Z\rtimes G)_*(\mu_{\sigma})=0$ in $K_0(C_0(Z)\rtimes G)$.
\end{enumerate}
\end{proposition}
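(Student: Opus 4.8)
The plan is to pull the entire question back to the finite group $H$ acting on the compact space $Y$, where $\K$-theory is computed by honest $H$-equivariant vector bundles (Theorem \ref{thm:good_groups}), and there to write $\mu_\sigma$ as the induction of an explicit torsion bundle produced by Lemma \ref{lem-free}. Recall from Remark \ref{rem-moduleH} the induction isomorphism $\Ind\colon\K^0_H(Y)\xrightarrow{\cong}\K^0_G(X)$, $[V]\mapsto[G\times_H V]$, and from Corollary \ref{cor-ideal} that Green's imprimitivity bimodule carries $I_Y$ to $I_X$ and hence induces an isomorphism $\K_0(Q_Y)\cong\K_0(Q_X)$ matching the generators $\nu_\sigma$ (this matching is the content of Proposition \ref{prop-commute}). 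Since $Gy$ is the only non-free orbit and $G_y=H$, the algebra $Q_Y$ is supported over the fixed point $y$; by Lemma \ref{lem-fixed} and the description \eqref{eq-ideal} of the ideal, the map $q_{Y,*}\colon\K^0_H(Y)\to\K_0(Q_Y)=\Rep^*(H)$ is the composite ``restrict the fibre representation to $y$, then discard the trivial summand'': a bundle $V$ with $V_y\cong\bigoplus_\tau m_\tau V_\tau$ is sent to $\sum_{\tau\neq 1_H}m_\tau\nu_\tau$.

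Now set
$$W_\sigma:=\dim(V_\sigma)\,[Y\times\CC]-[Y\times V_\sigma]\in\K^0_H(Y),$$
where $H$ acts trivially on $Y\times\CC$ and diagonally on $Y\times V_\sigma$, and define
$$\mu_\sigma:=\Ind\bigl(-|H|\,W_\sigma\bigr)\in\K^0_G(X).$$
The fibre of $Y\times V_\sigma$ at $y$ is the irreducible representation $\sigma$ and that of $Y\times\CC$ is $1_H$, so by the description of $q_{Y,*}$ above one gets $q_{Y,*}(W_\sigma)=-\nu_\sigma$ and hence $q_{Y,*}(-|H|W_\sigma)=|H|\nu_\sigma$. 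Transporting along $\Ind$ and the identification $\K_0(Q_Y)\cong\K_0(Q_X)$ then yields $q_{X,*}(\mu_\sigma)=|H|\nu_\sigma$, which is assertion (i).

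For (ii), write $Z=G\times_H Z'$ with $Z':=Z\cap Y$; every closed $G$-invariant $Z\subseteq X$ avoiding $Gy$ is of this form, with $Z'$ a closed $H$-invariant subset of $Y$ not containing $y$. Since $\Ind$ commutes with restriction to invariant closed subspaces --- concretely $(G\times_H V)|_{G\times_H Z'}=G\times_H(V|_{Z'})$ --- we have $(\res_Z\rtimes G)_*(\mu_\sigma)=\Ind\bigl(-|H|\,W_\sigma|_{Z'}\bigr)$. Now $Z'$ is compact (being closed in $Y$) and $H$ acts freely on it, because $H$ acts freely on $Y\smallsetminus\{y\}\supseteq Z'$. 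Lemma \ref{lem-free} therefore shows that $W_\sigma|_{Z'}=\dim(V_\sigma)[Z'\times\CC]-[Z'\times V_\sigma]$ is torsion of order dividing $|H|$, so $|H|\,W_\sigma|_{Z'}=0$ and $(\res_Z\rtimes G)_*(\mu_\sigma)=0$. This is precisely where the factor $|H|$ in the definition of $\mu_\sigma$ is needed, and the same computation gives the order bound asserted in Theorem \ref{thm-Kproper}.

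The construction of $\mu_\sigma$ is forced by Lemma \ref{lem-free}, so the real work is the bookkeeping that lets the computations on $Y$ descend to $X$. The main obstacle is the first compatibility square: that the geometric induction $\Ind$ of Remark \ref{rem-moduleH} is intertwined with the quotient maps $q_{Y,*}$ and $q_{X,*}$ through the Morita identification $\K_0(Q_Y)\cong\K_0(Q_X)$, and that under this identification the generator $\nu_\sigma$ on the $Y$-side corresponds to $\nu_\sigma$ on the $X$-side. I would extract both from the naturality of Green's imprimitivity bimodule with respect to the ideals $I_X,I_Y$ (Corollary \ref{cor-ideal}) and to the induced representations (Proposition \ref{prop-commute}); the restriction-compatibility used in (ii) is then immediate from the explicit bundle formula for $\Ind$.
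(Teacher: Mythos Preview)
Your proposal is correct and follows essentially the same route as the paper: reduce to $H$ acting on $Y$ via the Morita equivalence (your $\Ind$ is its inverse on $\K$-theory), define $\mu_\sigma$ as $|H|\cdot([Y\times V_\sigma]-\dim(V_\sigma)[Y\times\CC])$ transported back to $X$, and invoke Lemma \ref{lem-free} on the free locus $Z'$ for the vanishing in (ii). The paper organises the compatibility you flag in your last paragraph into a single commutative diagram relating $\K_0(C(Y_Z)\rtimes H)\leftarrow\K_0(C(Y)\rtimes H)\to\K_0(Q_Y)$ to the corresponding row for $X$ via the Morita equivalences, citing Corollary \ref{cor-ideal} and the Rieffel correspondence; your appeal to Proposition \ref{prop-commute} and Corollary \ref{cor-ideal} is exactly the right justification for that square.
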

\begin{proof}
If we regard $Y$ as a closed subspace of $X=G\times_HY$ via $y\mapsto [e,y]$, we 
see that $Z\cong G\times_HY_Z$ with $Y_Z:=Y\cap Z$. It is not difficult to check that the canonical 
 Morita equivalence 
$C_0(G\times_HY)\rtimes G\sim_M C(Y)\rtimes H$, as explained before  Proposition \ref{prop-commute},
restricts to the canonical Morita equivalence
$C_0(G\times_HY_Z)\rtimes G\sim_M C(Y_Z)\rtimes H$ and it follows from Corollary \ref{cor-ideal}
and the Rieffel correspondence (see the discussion before Corollary \ref{cor-ideal}) that this 
factors through a Morita equivalence $Q_X\sim_M Q_Y$. Thus these Morita equivalences 
provide us with a commutative diagram
$$
\begin{CD}
\K_0(C(Y_Z)\rtimes H)  @<(\res_{Y_Z}\rtimes H)_*<<  \K_0(C(Y)\rtimes H)  @>q_{Y,*}>> \K_0(Q_Y)\\
@V\cong VV   @V\cong VV   @VV\cong V\\
\K_0(C_0(Z)\rtimes G)  @<<(\res_Z\rtimes G)_*<  \K_0(C(X)\rtimes G)  @>>q_{X,*}> \K_0(Q_X).
\end{CD}
$$
Thus we may assume that $G=H$ and $Z=Y_Z$. This also allows the use of vector bundles
for the description of $\K$-theory classes.
In this picture,  the quotient map
$q_{Y,*}: \K_0(C(Y)\rtimes H) \to \K_0(Q_Y)$  translates to the map $\tilde{q}:\K_H^0(Y)\to \K_0(Q_Y)$
which maps an $H$-bundle $V$ over $Y$ to the class 
$\sum_{\tau\neq 1_H} n_\tau\cdot \nu_\tau\in \K_0(Q_Y)$, where the sum is over the non-trivial irreducible representations of $H$ and $n_\tau$ denotes 
the multiplicity of $\tau$ in the representation of $H$ on the fibre $V_y$.
Define 
$$\mu_\sigma:=|H|\cdot \big( [Y\times V_\sigma]-\dim(V_\sigma)\cdot [Y\times \CC]\big)\in \K_H^0(Y)\cong \K^0_G(X).$$ 
Then  $\tilde{q}(\mu_\sigma)=|H|\cdot \nu_\sigma$ and 
$(\res_{Y_Z}\rtimes F)_*(\mu_\sigma)=|H|\cdot\big( [Y_Z\times V_\sigma]-\dim(V_\sigma)\cdot [Y_Z\times \CC]\big)
=0$ by Lemma \ref{lem-free} (because \(H\) acts freely on $Y\smallsetminus \{y\}$.)
\end{proof}

We are now ready for the

\begin{proof}[Proof of Theorem \ref{thm-Kproper}]
For the proof it suffices to show that the canonical generators of $\K_0(Q_X)$ are 
mapped to elements of finite order in $\K_1(C_0(G\backslash X))$ under the boundary map
$\partial: \K_0(Q_X)\to \K_1(C_0(G\backslash X))$. If the action of $G$ on $X$ is free,
then $Q_X=\{0\}$ and the result is trivial. So assume now that $Gy$ is any fixed orbit with
non-trivial stabilizer $G_y$ and let $\nu_\sigma$ denote the generator of $\K_0(Q_X)$ corresponding 
to the representation $\sigma\in \widehat{G_y}\smallsetminus\{1_{G_y}\}$. We claim that 
$|G_y|\partial(\nu_\sigma)=0$. This will also prove the last statement of the theorem.

Since, by assumption, there exists a neighborhood of $Gy$ in $G\backslash X$ such that 
all orbits in this neighborhood are free, and since the actions of $G$ on $X$ satisfies 
the slice property (SP), we may choose an open $G$-invariant 
neighborhood $U$ of $y$ with $G$-compact closure $W=\bar{U}$ with the following 
properties:
\begin{enumerate}
\item $W\cong G\times_{G_y}Y$ for some compact $G_y$-space $Y$.
\item $G$ acts freely on $W\smallsetminus \{Gy\}$.
\end{enumerate}
Let $Z=W\smallsetminus U$.
Then Proposition \ref{prop-local} implies that we can find a class $\mu_\sigma\in \K_0(C_0(W)\rtimes G)$ 
such that $q_{W,*}(\mu_\sigma)=|G_y|\cdot\nu_\sigma$ and $(\res_{Z}\rtimes G)_*(\mu_\sigma)=0$.
Applying the Meyer-Vietoris sequence in $\K$-theory (e.g. see \cite[Theorem 21.5.1]{Black}) to the
 pull-back diagram
$$
\begin{CD} C_0(X)\rtimes G  @>\res_{X\smallsetminus U}\rtimes G>> C_0(X\smallsetminus U)\rtimes G\\
@V\res_{W}\rtimes GVV      @VV\res_Z\rtimes G V\\
C_0(W)\rtimes G  @>>\res_Z\rtimes G  >  C_0(Z)\rtimes G
\end{CD}
$$ 
we see that we may glue the class $\mu_\sigma\in \K_0(C_0(W)\rtimes G)$  with the zero-class
in $\K_0(C_0(X\smallsetminus U)\rtimes G)$ to obtain a class $\mu\in \K_0(C_0(X)\rtimes G)$ 
such that $q_{X,*}(\mu)=|G_y|\cdot \nu_\sigma\in \K_0(Q_X)$. This implies that
$\partial(|G_y|\cdot \nu_\sigma)=0$ in $\K^1(G\backslash X)$.
\end{proof}

In the above argument, we showed that \(\Rep^*(H)\) 
is a direct summand of \(\K_0(Q_X)\) in the case of isolated 
fixed points with finite stabilizer $H$, and we computed that the composition 
\[\Rep^*(H)\subseteq \Rep(H) \cong\K^0_G(Gy) \to \K_0(Q_X) 
\stackrel{\partial}{\longrightarrow} \K^{1}(G\backslash X)\]
maps any generator \([\sigma]\in \Rep^*(H)\) to 
a torsion class in \(\K^1(G\backslash X)\).

If \(X\) is a smooth manifold and the Lie group \(G\) acts smoothly, this 
composition may be made slightly more 
explicit using the language of differential topology. 
In the notation of 
the proof of Theorem \ref{thm-Kproper}, take a point 
\(y \in X\) with isotropy \(H\). Let 
\(\nu\) be the normal bundle to \(Gy\); we may equip it with 
a \(G\)-invariant Riemannian metric, then a re-scaling 
composed with the exponential map determines a tubular 
neighbourhood embedding \(\nu\cong U \subset X\) for an 
open \(G\)-invariant neighbourhood \(U\) of \(Gy\). Thus, 
\(U \cong G\times_H Y\) for the 
corresponding 
orthogonal linear action of \(H\) on a Euclidean space 
\(Y \defeq \nu_y \cong \R^k\). Let \(S\nu\) be the unit sphere
bundle of \(\nu\), let \(i\colon S\nu \to U\smallsetminus Gy\) 
be the corresponding smooth equivariant embedding; 
its normal bundle is equivariantly trivializable (it is isomorphic to a 
trivial \(G\)-vector bundle with trivial \(G\)-action). Thus, 
there is a \(G\)-equivariant smooth open embedding 
\(S\nu \times \R \to U\smallsetminus Gy\). If fixed-points 
are isolated, then \(G\) acts freely on \(S\nu\) and 
\(U\smallsetminus Gy\) so that we obtain an open embedding 
\[ \varphi \colon G\backslash S\nu \times \R
 \longrightarrow G\backslash X.\]

Now the boundary map \(\partial\colon \Rep^*(H) \to 
\K^1(G\backslash X)\) may be described simply as 
follows: it is the composition 
\begin{multline}
 \Rep^*(H) \subset \Rep (H) \cong 
\K^0_G(Gy) \xrightarrow{p^*}\K^0_G(S\nu) 
\cong \K^0_G(G\backslash S\nu) 
\\ \xrightarrow{\otimes \beta} \K^1(G\backslash S\nu \times \R) 
\xrightarrow{\varphi!} \K^1(G\backslash X),
\end{multline}
where \(p\colon S\nu \to Gy\) is the projection map, 
\(\beta\in \K^1(\R)\) the Bott class.

This construction, as mentioned above, produces torsion 
classes in \(\K^1(G\backslash X)\) of order a divisor of 
\(\abs{H}\). We will see in Example \ref{ex-nontrivialboundary} below
 that these classes 
are not always trivial, so that the boundary map in 
\eqref{eq-KQp} does not always vanish in general.

% It \emph{should} be possible to analyze these
%torsion classes further, but the authors have no insight 
%into this problem at present. 

But we first present an interesting example of an action with isolated 
orbits with non-trivial stabilizers, in which $K^1(G\backslash X)=\{0\}$, so that 
the exact sequence  \eqref{eq-KQp} computes everything.

\begin{example}\label{ex-sphere} In this example we consider the cyclic group $H=\lk R\rk$ of order four
with $R=\left(\begin{smallmatrix} 0& -1\\1&0\end{smallmatrix}\right)\in \GL(2,\ZZ)$ acting
on $\TT^2=\RR^2/\ZZ^2$.
This is the action of the dihedral group $G=\lk R,S\rk$ on $\TT^2$  as considered 
in Examples \ref{ex-D4}, \ref{ex-D4-1} and \ref{ex:dihedral} restricted to the subgroup $H\subseteq G$.
 Deformations of the crossed product $C(\TT^2)\rtimes H$, known as non-commutative $2$-spheres,
have been studied extensively in the literature, and it is shown in \cite{ELPW}
 that the $\K$-theory groups of these
deformations are isomorphic to the $\K$-theory groups of $C(\TT^2)\rtimes H$.

If we study this action on the fundamental domain $\{(s,t):-\frac12\leq s,t\leq \frac12\}\subseteq\RR^2$ for the action of $\ZZ^2$ on $\RR^2$, we see that (the image in $\TT^2$ of) $\{(s,t): 0\leq s,t\leq \frac12\}$ in $\TT^2$
 is a fundamental domain (but not a topological fundamental domain as in Definition 
 \ref{def-fundamentaldomain}) for the action of $H$ on $\TT^2$ such that 
 in the quotient $H\backslash \TT^2$ the line $\{(s,0): 0\leq s \leq\frac12\}$ in the boundary 
 is glued to $(0,t) : 0\leq t\leq \frac12\}$ and the line $\{(s,\frac12): 0\leq s \leq\frac12\}$ is glued to
 $\{(\frac12,t): 0\leq t\leq \frac12\}$. Thus we see that the quotient $H\backslash \TT^2$ is 
 homeomorphic to the $2$-sphere $S^2$. 
 
 There are only three orbits in $H\backslash\TT^2$ with nontrivial stabilizers: the points corresponding to
  $(0,0)$ and $(\frac12,\frac12)$ in the fundamental domain have full stabilizer $H$ and the 
orbit of the point corresponding to $(0,\frac12)$ has stabilizer $\lk R^2\rk$.
Since $\K^0(S^2)=\ZZ$,  $\K^1(S^2)=\{0\}$ and since $H$ has three non-trivial characters 
and $\lk R^2\rk$ has one non-trivial character, we see instantly from 
the exact sequence \eqref{eq-KQp}, that $\K_0((C(\TT^2)\rtimes H)\cong \ZZ^8$ and $\K_1(C(\TT^2)\rtimes H)=\{0\}$. 

In \cite{ELPW} we used a quite different and more complicated method for computing the 
$\K$-theory groups of $C(\TT^2)\rtimes H$. See \cite{ELPW} for an explicit 
description of the \(\K\)-theory generators. 
\end{example}

We proceed with an example of
the equivariant \(\K\)-theory computation  for the action of $G=D_4$ on $\TT^2$
as studied earlier in Examples \ref{ex-D4}, \ref{ex-D4-1} and \ref{ex:dihedral}.
In this case the orbits with non-trivial stabilizers are not isolated and we 
use the description of the ideal structure as given in Example \ref{ex:dihedral} 
for the computation.

\begin{example}\label{ex-D4final}
Consider the crossed product $C(\TT^2)\rtimes G$. It is shown in
Example \ref{ex:dihedral}  that we get a sequence of ideals
$$\{0\}=I_0\subseteq I_1\subseteq I_2\subseteq I_3=C(\TT^2)\rtimes G$$
with $I_1$ Morita equivalent to $C(Z)$, $I_2/I_1$ Morita equivalent to 
$C(\partial Z)$ and $I_3/I_2$ Morita equivalent to $\CC^8$.

We first compute the $\K$-theory of $I_2$. Since $\K_0(I_1)=\ZZ$ and $\K_1(I_1)=\{0\}$
and $\K_0(I_2/I_1)=\K_1(I_2/I_1)=\ZZ$, the
six-term sequence
with respect to the ideal $I_1\sim_MC(Z)$ reads
$$
\begin{CD} 
\ZZ  @>>> \K_0(I_2) @>>> \ZZ\\
@AAA     @.        @VVV\\
\ZZ  @<<<  \K_1(I_2)  @<<< 0
\end{CD}
$$
On the other hand, the  ideal $J:=C_0(\stackrel{\circ}{Z},M^8(\CC))\subseteq I_2$
with quotient $I_2/J$ Morita equivalent to $C(\TT\times\{0,1\})$ gives a six-term sequence
$$
\begin{CD} 
\ZZ  @>>> \K_0(I_2) @>>> \ZZ^2\\
@AAA     @.        @VVV\\
\ZZ^2  @<<<  \K_1(I_2)  @<<< 0
\end{CD}
$$
It follows that  $\K_0(I_2)\cong \ZZ^2$ and $\K_1(I_2)=\ZZ$.
We then get the six-term sequence
$$
\begin{CD} 
\ZZ^2  @>>> \K_0(C(\TT^2)\rtimes G) @>>> \ZZ^8\\
@AAA     @.        @VVV\\
0  @<<<  \K_1(C(\TT^2)\rtimes G)  @<<< \ZZ
\end{CD}
$$
We leave it as an interesting  exercise for the reader (using the structure of $I_3=C(\TT^2)\rtimes G$
as indicated in Example \ref{ex-D4-1}) to check 
directly that  the class of $K_0(I_3/I_2)$  corresponding to the character $\chi_2\in \widehat{G}$ 
at the point $(0,0)\in Z$ cannot be lifted to a class in $\K_0(I_3)$, and thus maps to
a non-zero element in $\K_1(I_2)\cong \ZZ$. Indeed, with a bit of work one can show that it maps 
to a generator of $\K_1(I_2)\cong \ZZ$ which then implies that 
$\K_1(C(\TT^2)\rtimes G)=\{0\}$ and  $\K_0(C(\TT^2)\rtimes G)=\ZZ^9$.
\end{example}

We should mention that the algebra $C(\TT^2)\rtimes D_4$ of the above example is just the group
algebra $C^*(\ZZ^2\rtimes D_4)$ of the 
 cristallographic group $\ZZ^2\rtimes D_4$. The $\K$-theory of this group algebra 
(together with the $\K$-theories of all other crystallographic groups of rank $2$)
 has been computed by completely different methods by L\"uck and Stamm 
 in \cite{LS}. Even before that, the $\K$-theory of the group algebras of crystallographic groups of 
 rank $2$ were computed by Yang in \cite{Ya} by methods much closer to ours.
 In fact, many of the general results obtained in this article have been obtained in \cite{Ya}
 in case  of finite group actions. 
 \medskip

 We now provide the  promised counter-example for the trivialization problem as 
posed in Remark \ref{rem-trivialization}. 

\begin{example}\label{ex-trivialization}
Let $G=\ZZ/n$ be a finite cyclic group acting linearly on some $\RR^m$ such that 
the action on $\RR^m\setminus \{0\}$ is free (it follows that $n=2$ or $m=2k$ is even). 
Consider the crossed-product $C(B)\rtimes G$,
where $B$ denotes the closed unit ball in $\RR^n$. Let $\res:C(B)\to C(S^{m-1})$ denote the restriction map.
We then obtain a map
\begin{equation}\label{eq-trivialization}
\phi: R(G)\cong \K_0(C(B)\rtimes G)\stackrel{\res_*}{\longrightarrow} \K_0(C(S^{m-1})\rtimes G)\cong 
\K^0(G\backslash S^{m-1}).
\end{equation}
It is shown in \cite{Atiyah} (see also \cite[Theorem 0.1]{Gilkey}) that this map is surjective. 

Using the bundle structure of $C(B)\rtimes G\cong C(B\rtimes_G M_n(\CC))$, this map can be described as
follows: First of all consider $C(B\rtimes_G M_n(\CC))$ as a bundle over $[0,1]$ with
fibre $C^*(G)\cong M_n(\CC)^{G}$ at $0$ and fibre $C(S^{m-1})\rtimes G\cong C(S^{m-1}\times_GM_n(\CC))$
at every $t\neq 0$. This bundle is trivial outside zero, and we obtain the above sequence of $\K$-theory maps
 by first extending a projetion of the 
zero fiber to a projection on a small neighborhood, then use triviality outside zero to extend
the projection to the whole bundle, and finally evaluate this extended projection at the fiber at $1$.

Assume now that $C(B\times_GM_n(\CC))$ would be trivializable in the sense that it
 would be stably isomorphic to some subbundle of the  trivial bundle 
$C(G\backslash B,\cK)$ with full fibres outside (the orbit of) the origin.  
Then any projection in the fiber at the origin has a trivial extension (by the constant section) to 
all of $C(G\backslash B,\cK)$ and the restriction of such projection to 
$G\backslash S^{m-1}$  would lie in the subgroup 
$\ZZ\cdot 1_{G\backslash S^{m-1}}$ of $\K^0(G\backslash S^{m-1})$. 
Thus the existence of a trivialization together with 
surjectivity of the map $\res_*$ in (\ref{eq-trivialization}) would imply that 
$\K^0(G\backslash S^{m-1})\cong \ZZ\cdot 1_{G\backslash S^{m-1}}$. 
But one can see in the appendix of \cite{Gilkey} that 
$\widetilde{\K}^0(G\backslash S^{m-1}):=\K^0(G\backslash S^{m-1})/ \ZZ\cdot 1_{G\backslash S^{m-1}}$
is non-trivial (of finite order) for many choices of groups $\ZZ_n$ acting on a suitable $\RR^m$.
For instance: if $m=4$ and $n=2$, we get $\widetilde{\K}^0(G\backslash S^{3})\cong \ZZ/2$.
\end{example}

The above example also provides the basis for the following example of an action with
isolated fixed points, such that the boundary map in \eqref{eq-KQp} does not vanish. We are 
grateful to Wolfgang L\"uck for pointing out this example to us.

\begin{example}\label{ex-nontrivialboundary}
Choose $n$ and $m$ and an action of $G=\ZZ/n$ on the unit ball $B\subseteq \RR^m$ as in the previous example such that 
$\widetilde{\K}^0(G\backslash S^{m-1}):=\K^0(G\backslash S^{m-1})/ \ZZ\cdot 1_{G\backslash S^{m-1}}$
 is non-trivial. Notice that the map $\phi$ of (\ref{eq-trivialization}) factors through a surjective 
 map 
 $$\widetilde\phi:\Rep^*(G)\to \widetilde{\K}^0(G\backslash S^{m-1}),$$
 since 
 $\ZZ\cdot 1_{G\backslash S^{m-1}}$ is the image of $\ZZ\cdot 1_G\subseteq \Rep(G)$.
Glueing two such balls at the boundary $\partial B=S^{m-1}$ we obtain an action of $G$ on $S^m$
which fixes the two points $(0,\ldots, 0,\pm1)$ and which is free on 
$S^m\smallsetminus\{(0,\ldots,0,\pm1)\}$. 

Let us consider 
 $C(S^m)\rtimes G$ as a bundle over 
$[-1,1]$ with fiber $C(S^{m-1})\rtimes G$ over each $t\neq\pm1$ and fiber $C^*(G)$ at $t=\pm1$.
We write $\Rep(G)_+$ and $\Rep(G)_-$ for the representation ring of $G$
when identified with the $\K$-theory of the fiber at $t=1$ and $t=-1$, respectively.
Then it follows from the description of the map $\phi$ in (\ref{eq-trivialization}) of the previous example that
for a given pair $([p_+],[p_-])\in \Rep(G)_+\oplus\Rep(G)_-$ there exists a
class $[p]\in \K_0(C(S^m)\rtimes G)$ restricting to the pair if and only if $\phi([p_+])=\phi([p_-])$.

Now let $I_{S^m}$ and $Q_{S^m}$ as in Theorem \ref{thm-Kproper}. Then 
$$\K_0(Q_{S_m})\cong \Rep^*(G)_+\oplus \Rep^*(G)_-.$$
and the sequence (\ref{eq-KQp}) becomes
\begin{multline}\label{eq-KQSm}
0\longrightarrow \K^0(G\backslash S^m)\longrightarrow 
\K_0(C(S^m)\rtimes G)\\ \longrightarrow 
\Rep^*(G)_+\oplus\Rep^*(G)_-  \stackrel{\partial}{\longrightarrow} 
 \K^1(G\backslash S^m)
 \longrightarrow \K_1(C(S^m)\rtimes G) \longrightarrow 0.
 \end{multline}
Since a pair $([p_+],[p_-])\in \Rep^*(G)_+\oplus \Rep^*(G)_-$ lies in the kernel of the boundary map
$\partial$ if and only if it extends to a class in $\K_0(C(S^m)\rtimes G)$, it follows from the 
above considerations that this is possible if and only if the values $\phi([p_+])$ and $\phi([p_-])$ differ 
by some class in $\ZZ\cdot 1_{G\backslash S^{m-1}}$. Thus we see that
$$\ker\partial= 
\{([p_+],[p_-])\in \Rep^*(G)_+\oplus\Rep^*(G)_-: \widetilde{\phi}([p_+])=\widetilde{\phi}([p_-])\}$$
which is a proper subgroup of $\Rep^*(G)_+\oplus\Rep^*(G)_-$.
Thus it follows that the boundary map in (\ref{eq-KQSm}) is not trivial.

Note that a more elaborate study  of this action (which we omit) reveals that $ \K^1(G\backslash S^m)\cong 
\widetilde{\K}^0(G\backslash S^{m-1})$ and that the boundary map in  (\ref{eq-KQSm})
is given by sending a pair $([p_+],[p_-])$ to the difference $\widetilde{\phi}([p_+])-\widetilde{\phi}([p_-])$
and hence is surjective. Thus it follows that
$$\K_G^0(S^m)=\K_0(C(S^m)\rtimes G)\cong \K^0(G\backslash S^m)\oplus \ker\partial
\quad\text{and}\quad \K_G^1(S^m)=\{0\}.$$

\end{example}

\def\mathcs{{\normalshape\text{C}}^{\displaystyle *}}

\end{document}